\DeclareFontFamily{U}{mathx}{\hyphenchar\font45}
\DeclareFontShape{U}{mathx}{m}{n}{
	<5> <6> <7> <8> <9> <10>
	<10.95> <12> <14.4> <17.28> <20.74> <24.88>
	mathx10
}{}
\DeclareSymbolFont{mathx}{U}{mathx}{m}{n}
\DeclareMathAccent{\widecheck}{0}{mathx}{"71}
\DeclareMathAccent{\wideparen}{0}{mathx}{"75}
\newtheorem{thm}{Theorem}[section]
\newtheorem{lemma}[thm]{Lemma}
\newtheorem{rem}[thm]{Remark}
\newtheorem{defn}[thm]{Definition}
\newtheorem{prop}[thm]{Proposition}
\newtheorem{cor}[thm]{Corollary}
\newtheorem{claim}[thm]{Claim}
\newtheorem{propA}{Proposition}
\numberwithin{equation}{section}
\newcommand{\Q}{\mathbb Q}
\newcommand{\Z}{\mathbb Z}
\newcommand{\R}{\mathbb R}
\newcommand{\C}{\mathbb C}
\renewcommand{\H}{\mathbb H}
\newcommand{\N}{\mathbb N}
\newcommand{\D}{\mathbb D}
\newcommand{\E}{\mathbb E}
\renewcommand{\P}{\mathbb P}
\renewcommand{\1}{\mathbf 1}
\newcommand{\A}{\mathds A}
\newcommand{\B}{\mathcal B}
\newcommand{\ED}{\operatorname{ED}}
\newcommand{\F}{\mathcal F}
\renewcommand{\epsilon}{\varepsilon}
\newcommand{\An}{\mathbf A}
\renewcommand{\O}{O}
\newcommand{\ta}{\mathbf t}
\definecolor{pakistangreen}{rgb}{0.0, 0.4, 0.0}
\newcommand{\eps}{\epsilon}	
\newcommand{\crad}{\operatorname{CR}}
\renewcommand{\d}{{d}}
\newcommand{\avelio}[1]{{\color{red} #1}}
\begin{document}
\title{Extremal distance and conformal radius of a CLE$_4$ loop}
\date{ }

\author{Juhan Aru \and Titus Lupu \and Avelio Sep\'ulveda}
	
	\address {
		Institute of Mathematics,
		EPFL,
		1015 Lausanne,
		Switzerland}
	\email
	{juhan.aru@epfl.ch}
	
	\address{Univ Lyon, Université Claude Bernard Lyon 1, CNRS UMR 5208, Institut Camille Jordan, 69622 Villeurbanne, France}
	\email
	{sepulveda@math.lyon-1.fr}
	
	\address{CNRS and LPSM, UMR 8001, Sorbonne Université, 4 place Jussieu, 75252 Paris cedex 05, France}
	\email
	{titus.lupu@upmc.fr}

\subjclass[2010]{31A15; 60G15; 60G60; 60J65; 60J67; 81T40} 
\keywords{Brownian motion; conformal loop ensemble; Gaussian free field; isomorphism theorems; local set; loop-soup; Schramm-Loewner evolution}

\begin{abstract}
	Consider CLE$_4$ in the unit disk and let $\ell$ be the loop of the CLE$_4$ surrounding the origin. Schramm, Sheffield and Wilson determined the law of the conformal radius seen from the origin of the domain surrounded by $\ell$. We complement their result by determining the law of the extremal distance between $\ell$ and the boundary of the unit disk. More surprisingly, we also compute the joint law of these conformal radius and extremal distance. This law involves first and last hitting times of a one-dimensional Brownian motion. Similar techniques also allow us to determine joint laws of some extremal distances in a critical Brownian loop-soup cluster. 
	\end{abstract}
	
\maketitle

\section{Introduction}

The Conformal Loop Ensembles CLE$_{\kappa}$, $\kappa\in (8/3,4]$ form a one-parameter family of random collections of simple loops in a simply connected planar domain that are conformally invariant in law. They were introduced in 
\cite{SheffieldCLE09,SheffieldWerner2012CLE} as the conjectural scaling limits of the interfaces for various statistical physics models. This conjecture has been confirmed only for some values of the parameter $\kappa$. In particular for $\kappa=4$, it is known that the CLE$_{4}$ appears in the scaling limit of the outer boundaries of outermost sign components of a metric graph GFF \cite{Lupu2015ConvCLE} and thereby has a natural coupling to the 2D continuum Gaussian free field (GFF)
\cite{MS,WaWu,ASW}. Furthermore, the CLE$_4$ is conjectured to be scaling limit of loops 
in the double-dimer model 
\cite{Kenyon14DoubleDimers,Dubedat19DoubleDimers,
BasokChelkak18taufunc}
and in the loop $O(n)$ model with parameters 
$n=2, x=1/\sqrt{2}$
\cite{KagerNienhuis04,PeledSpinka17On}.

The main result of this paper concerns the geometry of the loop of a CLE$_4$ surrounding the origin. We compute the joint law of the extremal distance from this loop to the boundary of the domain, together with the conformal radius seen from the origin of the domain surrounded by the loop.
The conformal radii of CLE$_\kappa$ for 
$\kappa\in (8/3,4]$ have been already identified in
\cite{SheffieldCLE09}, by a different but related method, to first hitting times of particular diffusions. In \cite{SSW09CR} the explicit density of their laws has been further computed.
No identification of the law of the extremal distance has appeared so far.
Our approach for $\kappa=4$ uses the coupling with the GFF.

The laws that appear are related to certain random times of the Brownian motion. For $B$ a standard Brownian motion, define
\begin{align}
\label{Eq T tau}
&T:=\inf\{t\geq 0: |B_t|=\pi\},\\
&\tau:=\sup\{0\leq t \leq T: B_t=0\}.
\nonumber
\end{align}
Given two closed sets $C_1$, $C_2$, denote the extremal distance between them by $\ED(C_1,C_2)$. See Section 
\ref{SubsecED}
for details. The main theorem of this paper is as follows (see also Figure \ref{Fig1}).
\begin{thm}\label{t. main}
	Let $\ell$ be the loop of a CLE$_4$ in the unit disk $\D$ surrounding the origin. Let $\crad(0, \D\backslash \ell)$ be the conformal radius of the origin in the domain surrounded by $\ell$. Then, the law of 
$(2\pi \ED(\ell,\partial \D) , -\log\crad(0, \D\backslash \ell) )$ equals that of $(\tau,T)$.
\end{thm}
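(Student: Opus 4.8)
\emph{Proof proposal.} The strategy is to read off \emph{both} quantities from a single Brownian motion produced by a Gaussian free field exploration that discovers $\ell$, so that the joint law comes essentially for free. Fix the standard coupling of a (nested) CLE$_4$ with a zero boundary GFF $\Phi$ on $\D$ (as in \cite{MS,WaWu,ASW}, or via the metric graph construction \cite{Lupu2015ConvCLE}), in the normalization for which the harmonic average of $\Phi$ around a point, viewed as a function of minus the log conformal radius, has quadratic variation $t/(2\pi)$; in this normalization the CLE$_4$ height gap is $2\lambda$ with $\lambda=\sqrt{\pi/8}$. The loop $\ell$ is recovered by a continuous increasing family of local sets $(A_t)_{t\ge 0}$, $A_0=\emptyset$ (a continuous CLE$_4$ exploration targeted at $0$, in the sense of \cite{MS,WaWu,ASW}): writing $U_t$ for the component of $\D\setminus A_t$ containing $0$ and $h_t$ for the harmonic function on $U_t$ prescribed by $A_t$, the exploration is stopped at the first time $\sigma$ at which $U_\sigma=\mathcal U$ is the domain surrounded by $\ell$ and $\Phi|_{\mathcal U}=\epsilon\,2\lambda+\widehat\Phi$ for an independent zero boundary GFF $\widehat\Phi$ on $\mathcal U$ and a sign $\epsilon\in\{\pm1\}$. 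Reparametrizing by $t=-\log\crad(0,U_t)$ (licit since $t\mapsto\crad(0,U_t)$ is continuous and strictly decreasing), the local set martingale property together with L\'evy's characterization shows that $B_t:=\sqrt{2\pi}\,h_t(0)$ is a standard Brownian motion started at $0$, and the exploration stops exactly when $|h_t(0)|$ reaches $2\lambda$, i.e.\ when $|B_t|$ reaches $\sqrt{2\pi}\cdot 2\lambda=\pi$. Hence $\sigma=T:=\inf\{t:|B_t|=\pi\}$ and $-\log\crad(0,\D\setminus\ell)=T$; this is the Schramm--Sheffield--Wilson identity for $\kappa=4$ (cf.\ \cite{SheffieldCLE09,SSW09CR}), recast through the GFF, and $\epsilon=\operatorname{sign}(B_T)$ is uniform on $\{\pm1\}$ and independent of $|B|$ by reflection symmetry.

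It then remains to prove
\[ 2\pi\,\ED(\ell,\partial\D)=\tau^{*}:=\sup\{t\le T:\ B_t=0\}, \]
for the theorem follows at once: $(B_t)$ being a standard Brownian motion, $(2\pi\ED(\ell,\partial\D),-\log\crad(0,\D\setminus\ell))=(\tau^{*},T)\overset{d}{=}(\tau,T)$; moreover the a.s.\ inequality $\tau^{*}<T$ matches the potential theoretic fact $2\pi\,\ED(\ell,\partial\D)<-\log\crad(0,\D\setminus\ell)$ (equality would force $\mathcal U$ to be a round disc centred at $0$). Because the time has been normalized so that $\tau^{*}=-\log\crad(0,U_{\tau^{*}})$, what must be shown is the identity
\[ 2\pi\,\ED(\ell,\partial\D)=-\log\crad\!\left(0,\,U_{\tau^{*}}\right), \]
that is, the extremal distance from $\ell$ to $\partial\D$ is read off from the exploration domain at the \emph{last} moment the harmonic average at the origin vanishes. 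Unfolding this via the composition (serial) rule $\ED(\ell,\partial\D)\ge\ED(\ell,\partial U_{\tau^{*}})+\ED(\partial U_{\tau^{*}},\partial\D)$ and the strict inequality $2\pi\,\ED(\partial U_{\tau^{*}},\partial\D)<-\log\crad(0,U_{\tau^{*}})$, the claim is equivalent to: the modulus of the annular region between $\ell$ and $\partial U_{\tau^{*}}$ equals exactly the conformal radius defect $-\log\crad(0,U_{\tau^{*}})-2\pi\,\ED(\partial U_{\tau^{*}},\partial\D)$, and the serial rule at $\partial U_{\tau^{*}}$ is tight.

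To establish this I would use that on $(\tau^{*},T)$ the process $B$ keeps the constant sign $\epsilon$, so that, conditionally on $\mathcal F_{\tau^{*}}$ and on $\epsilon$, the part $A_T\setminus A_{\tau^{*}}$ of the exploration revealed after $\tau^{*}$ is, inside $U_{\tau^{*}}$ and for the field $\widehat\Phi_{\tau^{*}}+h_{\tau^{*}}$ (with $h_{\tau^{*}}(0)=0$), the first passage set from level $0$ up to level $\epsilon\,2\lambda$. Combining the explicit structure of first passage and two valued local sets from \cite{ASW} with the conformal behaviour of extremal distance should reduce the above identity to a potential theoretic computation. An alternative route, closer to the loop soup results announced in the abstract, is to express $\ED(\ell,\partial\D)$ through the critical Brownian loop soup / GFF occupation field in the annulus between $\ell$ and $\partial\D$ and to identify it, via a Ray--Knight type argument, with a last hitting time of $0$ by a one dimensional Brownian motion run until it exits $(-\pi,\pi)$. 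Either way, the main obstacle is precisely this identity. Its geometric heart --- that the last excursion of the exploration adds to the extremal distance exactly the non roundness defect of $U_{\tau^{*}}$ and no more --- is delicate because $U_{\tau^{*}}$ is a genuinely fractal domain and $A_T\setminus A_{\tau^{*}}$, though of zero Lebesgue measure, is spread out; it requires fine properties of (first passage / two valued) local sets and careful control of extremal distance along the exploration. A second, more technical, difficulty is that $\tau^{*}$ is a last exit time, not a stopping time, so the strong Markov property is unavailable at $\tau^{*}$; I would handle this by approximating $\tau^{*}$ from above by stopping times (the successive returns of $B$ to $0$, or the last return before a dyadic level of $|B|$), running the conditional analysis there, and passing to the limit using continuity of $t\mapsto\crad(0,U_t)$, continuity of extremal distance along the exploration, and convergence of the corresponding local sets. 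Minor points to be verified are the normalization ensuring the height gap is carried exactly to $\pi$ (so the conformal radius marginal matches \cite{SSW09CR} on the nose) and the removal of the conditioning on $\epsilon$, which affects neither $\tau^{*}$ nor $T$.
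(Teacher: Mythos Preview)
Your strategy hinges on the pathwise identity $2\pi\,\ED(\ell,\partial\D)=\tau^{*}=-\log\crad(0,U_{\tau^{*}})$, and this identity is false almost surely. The domain $U_{\tau^{*}}$ depends on the whole radial exploration up to time $\tau^{*}$ --- in particular on which contractible loops were traced and where --- not just on $\ell$. Since $\ell$ is a measurable function of the GFF while the exploration also depends on the chosen starting point on $\partial\D$, two explorations of the same field give the same $\ell$ but different $U_{\tau^{*}}$, hence different $\tau^{*}$; so $\tau^{*}$ cannot equal $2\pi\,\ED(\ell,\partial\D)$ pathwise. Your serial--rule reduction is then trying to prove a false geometric statement: equality in the composition law $\ED(\ell,\partial\D)=\ED(\ell,\partial U_{\tau^{*}})+\ED(\partial U_{\tau^{*}},\partial\D)$ would force $\partial U_{\tau^{*}}$ to be the image of a concentric circle under the uniformizing map of the annulus between $\ell$ and $\partial\D$, which it is not. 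The difficulty you flag at the end (``last exit time, not a stopping time'') is real but secondary; the primary obstruction is that the quantity you want to identify with $\ED(\ell,\partial\D)$ is simply a different random variable.

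The paper proceeds quite differently and never attempts a pathwise identification of $\ED(\ell,\partial\D)$ from the outward exploration. It first passes to an annulus $\An_r=\D\setminus r\overline{\D}$ and proves a \emph{reversibility} theorem: the sequence of non-contractible loops obtained by iterating $\A_{-2\lambda,2\lambda}$ from $\partial_o$ coincides, in reverse order, with the sequence obtained by iterating from $\partial_i$. The law of $\ED(\ell,\partial_o)$ then comes from an exploration started at the \emph{inner} boundary (where it appears as the remaining extremal distance at the last step, hence as a last-zero time of a Brownian bridge). For the \emph{joint} law, the paper does not couple both quantities on a single path; instead it shows (via explicit Radon--Nikodym derivatives under a change of inner boundary condition) that the conditional law of $\ED(\ell,\partial_o)$ given $\ED(\ell,\partial_i)$ and the label does not depend on the inner boundary value $v$, and then proves an abstract characterization: any family with the correct marginals and this conditional invariance must have the Brownian-bridge joint law $(\widehat\tau,L-\widehat T)$. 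Sending $r\to 0$ turns the bridge into a Brownian motion and $\ED(\ell,\partial_i)$ into $-\log\crad$. The missing idea in your proposal is precisely this second access to $\ED(\ell,\partial\D)$ via an inside-out exploration, together with the Radon--Nikodym characterization of the joint law; neither can be replaced by a geometric identity along the single outward exploration.
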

\begin{figure}[h!]
	\includegraphics[height=0.25\textwidth]{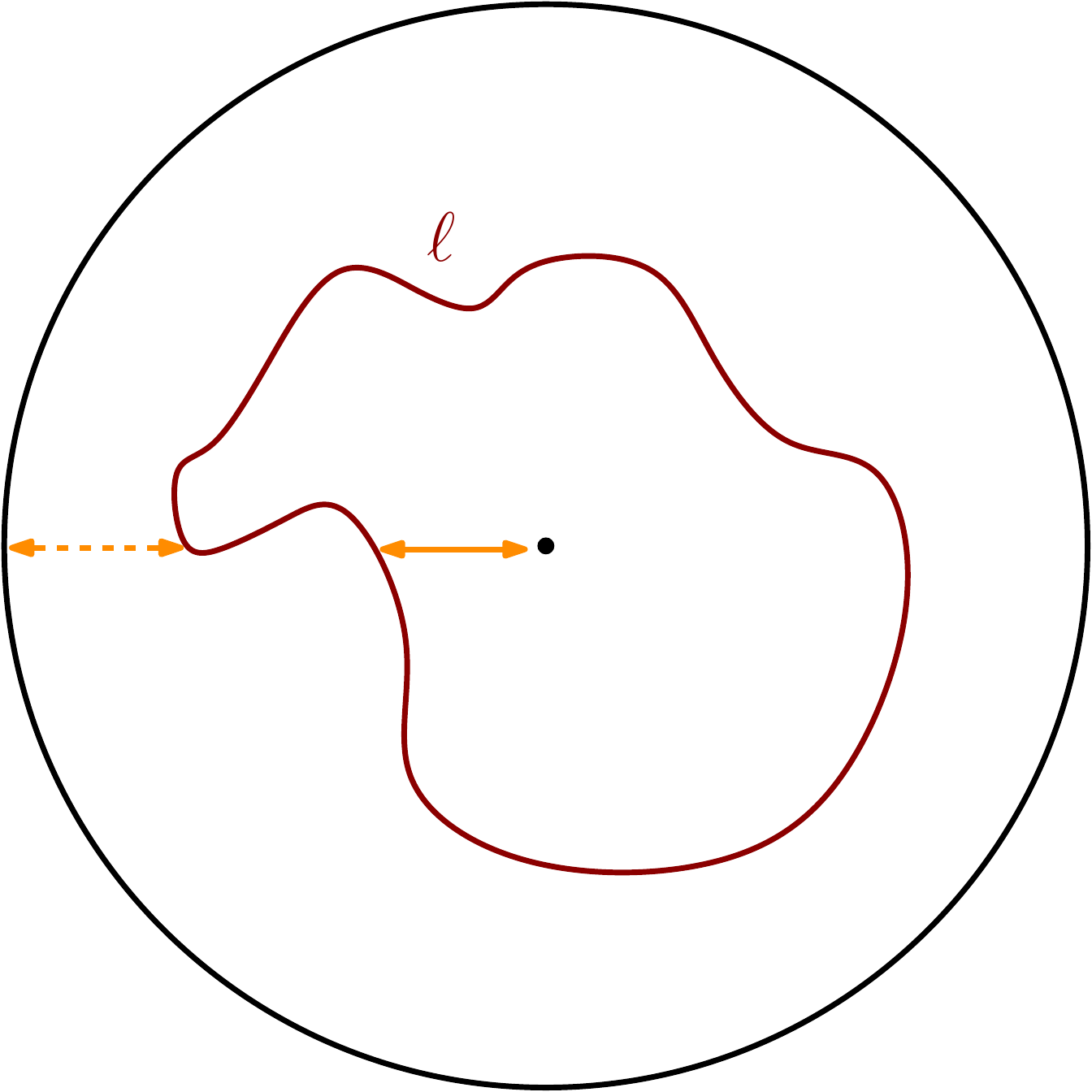}
	\includegraphics[height=0.18\textheight]{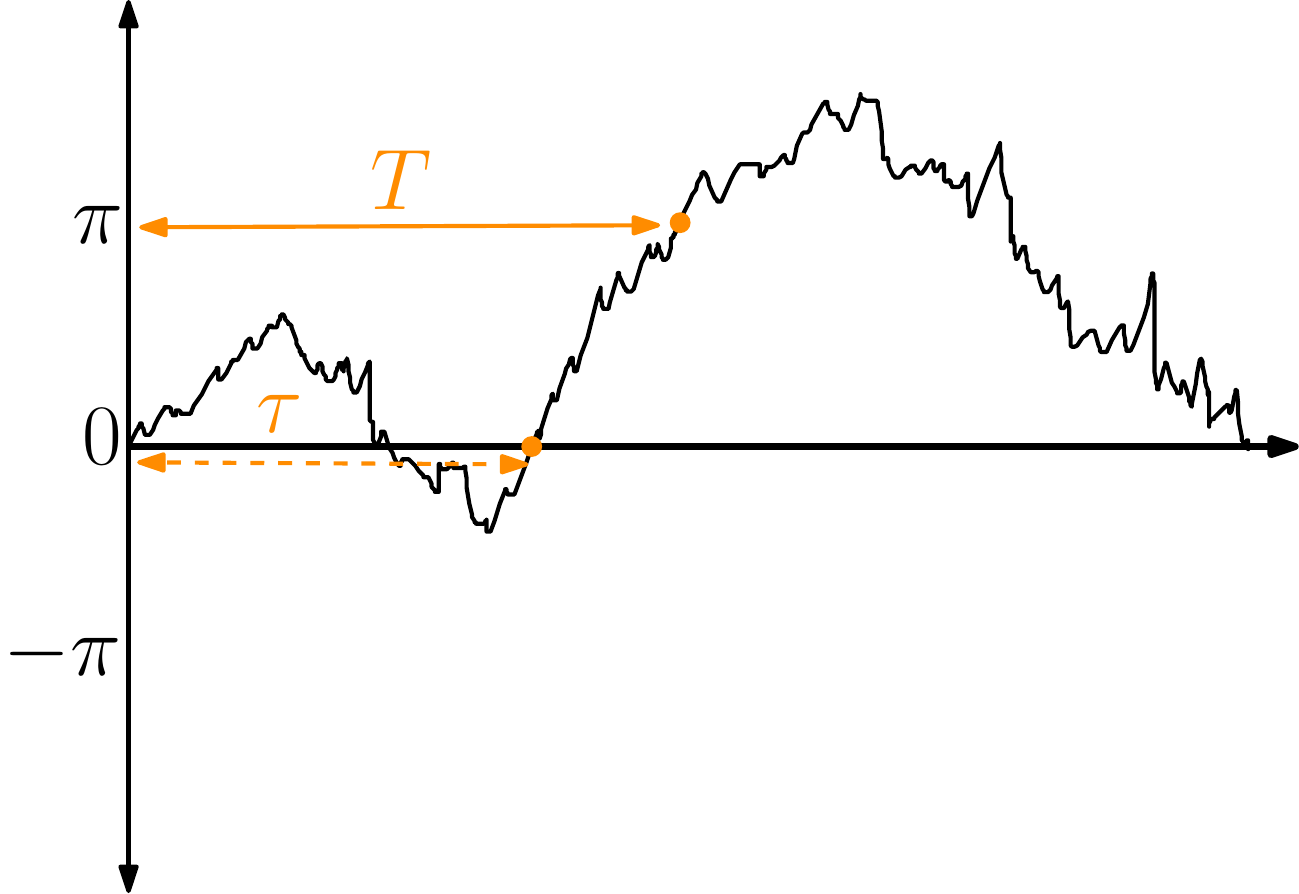}
	\caption{Graphic explanation of Theorem \ref{t. main}. The dashed line represents $2\pi \ED(\ell,\partial \D)$ and the continuous lines represents $-\log\crad(0, \D\backslash \ell)$.}
	\label{Fig1}
\end{figure}

Note that in particular a.s.
$2\pi \ED(\ell,\partial \D)\leq-\log\crad(0, \D\backslash \ell)$. This is actually a deterministic inequality, satisfied by any simple loop disconnecting $0$ from $\partial\D$
(see Corollary \ref{CorCrEDineq}), 
and the equality holds if and only if the loop is a circle centered at $0$.

Using standard distortion bounds one can deduce from 
Theorem \ref{t. main} some information on the size and shape 
the CLE$_{4}$ loop $\ell$ surrounding $0$, and in particular how far it is from being a circle centered at $0$. 
\begin{cor}
\label{CorAnnularExp}
Let $\ell$ be the loop of a CLE$_4$ in the unit disk $\D$ surrounding the origin.
Denote
\begin{align*}
r_{-}(\ell):= d(0,\ell),
	\qquad
	r_{+}(\ell):= \max\{ \vert z\vert : z\in\ell\}.
\end{align*}
Then one has the following exponents:
\begin{align*}
\lim_{R\to +\infty}
\dfrac{\log \mathbb{P}(r_{+}(\ell)^{-1}>R)}{\log R}
=
\lim_{R\to +\infty}
\dfrac{\log \mathbb{P}(r_{-}(\ell)^{-1}>R)}{\log R}
= - \dfrac{1}{8},
\end{align*}
and
\begin{align}
\label{Eq exp ratio r+ r-}
\lim_{R\to +\infty}
\dfrac{\log \mathbb{P}(r_{+}(\ell) / r_{-}(\ell)>R)}{\log R}
= -\dfrac{1}{2}.
\end{align}
Moreover, \eqref{Eq exp ratio r+ r-} also holds if $\ell$ is distributed according to the stationary CLE$_{4}$ distribution in $\C$ introduced in
\cite{KemppainenWerner16NestedCLE}.
\end{cor}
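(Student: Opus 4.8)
The plan is to reduce the statement to two deterministic conformal comparisons and then to extract the exponents from the tails of the random times $T$, $\tau$ and $T-\tau$, using Theorem \ref{t. main}. Write $U$ for the connected component of $\D\setminus\ell$ containing the origin, so that $\partial U=\ell$ and $\crad(0,\D\setminus\ell)=\crad(0,U)$. Koebe's $1/4$-theorem together with monotonicity of the conformal radius gives $\tfrac14\crad(0,U)\le d(0,\ell)=r_-(\ell)\le\crad(0,U)$. For $r_+(\ell)$, let $A$ be the doubly connected component of $\D\setminus\ell$, whose conformal modulus equals $\ED(\ell,\partial\D)$. Since $\ell\subset\overline{B(0,r_+(\ell))}$ one has $U\subset\overline{B(0,r_+(\ell))}$, hence $\D\setminus\overline{B(0,r_+(\ell))}\subseteq A$, and comparing moduli yields $\tfrac1{2\pi}\log\tfrac1{r_+(\ell)}\le\ED(\ell,\partial\D)$; conversely $A$ is a ring domain in $\D$ whose bounded complementary component contains $0$ and a point of modulus $r_+(\ell)$, so Grötzsch's module theorem gives $\ED(\ell,\partial\D)\le\tfrac1{2\pi}\mu(r_+(\ell))$, where $\mu$ is the Grötzsch modulus function and $\mu^{-1}(x)\le C e^{-x}$ for a universal constant $C$. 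Hence
\[
e^{-2\pi\ED(\ell,\partial\D)}\le r_+(\ell)\le C\,e^{-2\pi\ED(\ell,\partial\D)}.
\]
By Theorem \ref{t. main}, $\bigl(2\pi\ED(\ell,\partial\D),\,-\log\crad(0,U)\bigr)$ has the law of $(\tau,T)$, so up to universal multiplicative constants $r_-(\ell)^{-1}\asymp e^{T}$, $r_+(\ell)^{-1}\asymp e^{\tau}$ and $r_+(\ell)/r_-(\ell)\asymp e^{T-\tau}$, and it remains to compute $\lim_{t\to\infty}t^{-1}\log\P(X>t)$ for $X\in\{T,\tau,T-\tau\}$.

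For $T$, the probability $\P(T>t)$ that $B$ stays in $(-\pi,\pi)$ up to time $t$ decays at rate $-\lambda_1$, where $\lambda_1=\tfrac{\pi^2}{2(2\pi)^2}=\tfrac18$ is the principal Dirichlet eigenvalue of $\tfrac12\partial_{xx}$ on $(-\pi,\pi)$ (eigenfunction $\cos(x/2)$), so $\lim t^{-1}\log\P(T>t)=-\tfrac18$. For $\tau$, on $\{T>t\}$ the event $\{\tau>t\}$ is exactly that $B$ restarted from $B_t$ hits $0$ before leaving $(-\pi,\pi)$, so by the gambler's ruin formula $\P(\tau>t)=\E\bigl[(1-|B_t|/\pi)\,\I{T>t}\bigr]$; the bounded factor does not change the exponential rate, so $\lim t^{-1}\log\P(\tau>t)=-\tfrac18$ as well. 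For $T-\tau$, which is the duration of the last excursion of $B$ away from $0$ before $T$ — equivalently of the first excursion in the local‑time ordering that exits $(-\pi,\pi)$ — Itô's excursion theory and the symmetry of $B$ show that up to the instant it first reaches $\{-\pi,\pi\}$ this excursion is a $\mathrm{BES}(3)$ process started from $0$, so $T-\tau$ has the law of the hitting time of $\pi$ by a $\mathrm{BES}(3)$ from $0$. The substitution $\phi(x)=\psi(x)/x$ turns $\tfrac12\phi''+\tfrac1x\phi'=-\lambda\phi$ on $(0,\pi)$ into $\psi''=-2\lambda\psi$ with Dirichlet conditions, whose smallest eigenvalue is $\tfrac12$ (eigenfunction $\phi(x)=\sin(x)/x$), so $\lim t^{-1}\log\P(T-\tau>t)=-\tfrac12$.

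Combining the two displays above with the observation that, for a fixed constant $c$, $\P(r_-(\ell)^{-1}>R)$ is squeezed between $\P(T>\log R)$ and $\P(T>\log R-c)$, $\P(r_+(\ell)^{-1}>R)$ between $\P(\tau>\log R)$ and $\P(\tau>\log R-c)$, and $\P(r_+(\ell)/r_-(\ell)>R)$ between $\P(T-\tau>\log R)$ and $\P(T-\tau>\log R-c)$, one divides the logarithm by $\log R$ and lets $R\to\infty$ to obtain the exponents $-\tfrac18$, $-\tfrac18$ and $-\tfrac12$. For the stationary CLE$_4$ in $\C$ of \cite{KemppainenWerner16NestedCLE} the same scheme would apply with $r_-(\ell)\asymp\crad(0,U)$ and $r_+(\ell)\asymp\operatorname{cap}(\overline U)$ (logarithmic capacity), so that $r_+(\ell)/r_-(\ell)\asymp\operatorname{cap}(\overline U)/\crad(0,U)$: using the conformal Markov property to realise $\ell$ as the outermost loop around $0$ of a CLE$_4$ in the interior of the next loop out, mapping that interior to $\D$, applying Theorem \ref{t. main} and transporting the comparisons through the map, one checks that $\log\bigl(\operatorname{cap}(\overline U)/\crad(0,U)\bigr)$ also has exponential rate $-\tfrac12$; equivalently, by scale invariance the shape of $\ell$ is a weak limit of the (scale‑invariant) shapes of deeply nested CLE$_4$ loops around $0$ in $\D$, for which the exponent $-\tfrac12$ holds uniformly.

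The one genuinely nontrivial deterministic input is the upper bound $r_+(\ell)\le C e^{-2\pi\ED(\ell,\partial\D)}$, which rests on Grötzsch's extremal‑length estimate; among the probabilistic steps, the identification of $T-\tau$ with a $\mathrm{BES}(3)$ hitting time via excursion theory is the subtlest. In the stationary case the real difficulty is the bookkeeping needed to transport these comparisons through a random conformal map whose distortion near the outer boundary is not controlled a priori, and this is where I expect the proof to demand the most care.
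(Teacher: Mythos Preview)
Your proof of the three exponents in $\D$ is correct and follows essentially the same route as the paper: Koebe and Grötzsch for the deterministic comparisons (the paper packages these as Proposition~\ref{PropRplusED} and Corollary~\ref{CorRatio}), the Dirichlet eigenvalue for $T$, and the Bessel~3 identification for $T-\tau$. Your gambler's-ruin formula $\P(\tau>t)=\E[(1-|B_t|/\pi)\I{T>t}]$ is a pleasant alternative to the paper's appeal to the explicit density.

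For the stationary CLE$_4$ part, however, you are making it harder than it needs to be, and your sketch does not actually close. The paper's argument avoids both the logarithmic-capacity detour and the distortion bookkeeping you worry about. The key observation you are missing is that $\tau$ and $T-\tau$ are \emph{independent}: by the last-exit decomposition, $(B_{\tau+t})_{t\ge 0}$ is independent of $(\tau,(B_s)_{s\le\tau})$. Since $r_+(\ell)/r_-(\ell)$ is scale-invariant and satisfies the deterministic two-sided bound $e^{T-\tau}\le r_+/r_-\le 16\,e^{T-\tau}$, conditioning on $\ED(\ell,\partial\D)>L$ (equivalently on $\tau>2\pi L$) leaves the law of $T-\tau$ unchanged, so the tail exponent $-\tfrac12$ for $r_+/r_-$ holds \emph{uniformly in $L$} under this conditioning. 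One then invokes Theorem~\ref{ThmConvStatCLE} directly to transfer the exponent to the stationary measure. Your proposed route through a random conformal map with uncontrolled boundary distortion, or through ``uniform exponents along deeply nested loops'', would require exactly the uniformity that this independence gives for free; without identifying it, the stationary claim remains a gap.
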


The exponent for $r_{-}(\ell)$ has been known since \cite{SSW09CR} and is related to the dimension of the CLE$_4$ gasket.
The exponents for $r_{+}(\ell)$ and $r_{+}(\ell) / r_{-}(\ell)$ have not appeared in other works, however it is possible that they may also be computable by different methods.

A generalization of Theorem \ref{t. main}
describes the geometry of a
cluster of a Brownian loop-soup
\cite{LW2004BMLoopSoup} involved in the construction of the
CLE$_{4}$ as in \cite{SheffieldWerner2012CLE}. To state the theorem, let us introduce two further random times related to the Brownian motion $B$
\begin{align*}
&\overline T:=\inf\{t\geq T: B_t=0 \},\\
&\overline \tau :=\sup\{0\leq t \leq \overline T- T: B_{t+T}=B_{T}\}.
\end{align*}
We can now state our second main theorem (see also Figure \ref{Fig2}).
\begin{thm}\label{t.loop soup}
	Take a critical Brownian loop-soup in $\D$ and let $\mathcal C$ be the outer-most loop-soup cluster surrounding the origin. Let $\ell^o$ and $\ell^i$ denote the outer and inner boundary of $\mathcal C$ w.r.t. the origin, respectively. Then, 
	\begin{enumerate}
\item		The quadruple  $(2\pi \ED(\partial \D,\ell^o), - \log \crad(0, \D\backslash \ell^i),2\pi \ED(\ell^o,\ell^i), - \log \crad(0, \D\backslash \ell^i)) )$ is equal in law to $(\tau, T, \overline \tau, \overline T)$.
\item Furthermore,  the quadruple $(2\pi \ED(\partial \D,\ell^o), 2\pi \ED(\ell^o,\ell^i), 2\pi \ED(\partial \D,\ell^i), -\log \crad(0, D\backslash \ell^i) )$ has the same law as $(\tau,\overline \tau ,  \overline \tau+T, \overline T)$.
\end{enumerate}
\end{thm}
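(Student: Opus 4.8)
The plan is to deduce Theorem~\ref{t.loop soup} from Theorem~\ref{t. main} by exploiting the recursive/self-similar structure of the Brownian loop-soup construction of CLE$_4$. Recall that a critical loop-soup cluster $\CC$ surrounding the origin carries, via the coupling with the GFF, the structure of a ``first-passage'' set: the outer boundary $\ell^o$ is, conditionally on the domain it bounds, distributed like a CLE$_4$-type loop seen from the outside, while the inner boundary $\ell^i$ plays, relative to $\ell^o$, the role that $\ell$ plays relative to $\partial\D$ in Theorem~\ref{t. main}. Concretely, I would first establish a \emph{Markov/independence} statement: conditionally on $\ell^o$ (equivalently on the annular region between $\partial\D$ and $\ell^o$), the pair $\bigl(2\pi\ED(\ell^o,\ell^i),-\log\crad(0,\D\backslash\ell^i)+\log\crad(0,\D\backslash\ell^o)\bigr)$ is independent of the outer data and has the law of $(\tau,T)$ after applying the conformal map from the domain bounded by $\ell^o$ to $\D$. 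This is the analogue, one level in, of Theorem~\ref{t. main}, and it should follow from the restriction/Markov property of the loop-soup together with conformal invariance of extremal distance and the multiplicativity of conformal radius under composition of uniformizing maps.

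Next I would identify the law of the ``outer'' pair $\bigl(2\pi\ED(\partial\D,\ell^o),-\log\crad(0,\D\backslash\ell^o)\bigr)$. Here the key point is that $-\log\crad(0,\D\backslash\ell^o)$ should itself equal $T$ (the first time $|B|$ hits $\pi$): the conformal radius of the origin inside the \emph{outer} boundary of the cluster is the same as the conformal radius inside the loop $\ell$ of the associated CLE$_4$, since $\ell$ is by construction the outer boundary of the outermost cluster surrounding $0$. Thus the outer contribution is exactly the $(\tau,T)$ of Theorem~\ref{t. main}, with $\ED(\partial\D,\ell^o)$ playing the role of $\ED(\partial\D,\ell)$. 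Combining this with the previous paragraph, and writing $B$ as the concatenation of its path up to time $T$ with an independent Brownian motion started afresh at the origin (which is precisely what the strong Markov property at $T$ gives, and which is why $\overline T=\inf\{t\ge T:B_t=0\}$ and $\overline\tau$ are the natural objects), I get: $2\pi\ED(\partial\D,\ell^o)=\tau$, $-\log\crad(0,\D\backslash\ell^o)=T$, and the \emph{conditional} pair $(2\pi\ED(\ell^o,\ell^i),-\log\crad(0,\D\backslash\ell^i)-T)$ equals $(\overline\tau,\overline T-T)$ with the right joint law and independence. Unwinding the additive relation $-\log\crad(0,\D\backslash\ell^i)=T+(\overline T-T)=\overline T$ yields part (1).

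For part (2), the only new quantity is $2\pi\ED(\partial\D,\ell^i)$, the extremal distance across the \emph{full} annulus between $\partial\D$ and the innermost boundary. Since $\ell^o$ separates $\partial\D$ from $\ell^i$, the extremal distances are superadditive under concatenation of the two ring domains; the point is to show they are in fact \emph{additive} here, i.e. $\ED(\partial\D,\ell^i)=\ED(\partial\D,\ell^o)+\ED(\ell^o,\ell^i)$ almost surely. This additivity holds exactly when the extremal metric for the big annulus restricts to the extremal metrics of the two pieces, which is automatic for \emph{any} two nested ring domains sharing the curve $\ell^o$ as common boundary, because the extremal length of the family of curves joining the two ends of the composite annulus decomposes along $\ell^o$ (no admissible metric is penalized by the interior curve). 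Granting this, $2\pi\ED(\partial\D,\ell^i)=\tau+\overline\tau$, and together with $2\pi\ED(\partial\D,\ell^o)=\tau$, $2\pi\ED(\ell^o,\ell^i)=\overline\tau$, $-\log\crad(0,\D\backslash\ell^i)=\overline T$ from part (1), this gives the asserted joint law $(\tau,\overline\tau,\overline\tau+T,\overline T)$ --- using that $\tau+\overline\tau$ and $\overline\tau+T$ are just two different ways of writing the same random variable once one observes $\tau+\overline\tau$ versus the claimed $\overline\tau+T$; in fact the theorem's bookkeeping forces $\tau+\overline\tau$ to be replaced by the stated $\overline\tau+T$, which is consistent because these should be equal in law jointly with the rest, or one re-derives the third coordinate directly from additivity of $\ED$.

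The main obstacle I anticipate is \textbf{the conditional independence / Markov step}: making rigorous the claim that, conditionally on the outer boundary $\ell^o$ and the ring between it and $\partial\D$, the configuration inside $\ell^o$ is a scaled copy of the same picture. This requires the right statement of the domain Markov property for loop-soup clusters (as opposed to for individual loops or for the full loop-soup), and care that $\ell^o$ and $\ell^i$ are measurable with respect to, and ``generated in the correct order'' by, the exploration; the GFF coupling of \cite{Lupu2015ConvCLE,ASW} is the natural tool, since there $\ell^o$ and $\ell^i$ are the outer and inner boundaries of a two-valued local set $\mathbb{A}_{-\lambda,\lambda}$-type object and the Markov property is a local-set Markov property. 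A secondary, more technical point is to verify the additivity of extremal distance across nested ring domains with a rough common boundary curve $\ell^o$ --- superadditivity is standard, but equality needs an argument that the extremal metrics match up on $\ell^o$, which I would handle via the characterization of the extremal metric for a ring domain (a flat cylinder metric under the uniformizing map) and the fact that $\ell^o$ maps to a horizontal circle in each of the two cylinder pictures.
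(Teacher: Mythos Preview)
Your approach to part (1) is close in spirit to the paper's: one indeed first applies the CLE$_4$ result (Theorem~\ref{t. main}) to get the law of $(2\pi\ED(\partial\D,\ell^o),-\log\crad(0,\D\backslash\ell^o))$ as $(\tau,T)$, and then uses the local-set Markov property inside $\ell^o$. However, the conditional law of $\ell^i$ given $\ell^o$ is \emph{not} that of a CLE$_4$ loop; it is the boundary of a first passage set of level $0$ with boundary value $\pm 2\lambda$ (Proposition~\ref{PropClusterFPS}). So the relevant input is Theorem~\ref{t. law FPS}, not a second instance of Theorem~\ref{t. main}. You get the right final distribution $(\overline\tau,\overline T-T)$ for the inner pair, but the justification you wrote (``has the law of $(\tau,T)$'') is incorrect.

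Part (2) contains a genuine error. Extremal distance is only \emph{superadditive} across nested annuli (Theorem~\ref{ThmSuperAddED}), with equality precisely when the intermediate curve is a level line of the harmonic modulus function---for concentric round annuli, a concentric circle. For a CLE$_4$-type loop $\ell^o$ this fails almost surely, and indeed the theorem asserts $2\pi\ED(\partial\D,\ell^i)\stackrel{d}{=}\overline\tau+T$, not $\tau+\overline\tau$. Since $\tau<T$ a.s.\ (Corollary~\ref{CorCrEDineq}), these are not equal, even in law; you noticed the mismatch but waved it away. Your proposed ``proof'' of additivity---that the extremal metric of the large annulus restricts to those of the pieces---is exactly what fails: under the uniformizing map of the full annulus $\D\setminus\ell^i$ to a round annulus, $\ell^o$ does \emph{not} map to a circle, so the flat cylinder metric does not split.

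The paper's route to part (2) is substantially different and uses the main machinery developed earlier. One works in an annulus $\An_r$ and proves an analogue (Proposition~\ref{Prop cluster annulus}) before letting $r\to 0$. The marginal law of $\ED(\partial_o,\check\ell_0)$ (the analogue of $\ED(\partial\D,\ell^i)$) is obtained from the \emph{reversibility} of iterated two-valued sets (Subsection~\ref{Ss.Reversibility}). The joint law with $\ED(\check\ell_0,\partial_i)$ is then pinned down by a change-of-measure characterization (Proposition~\ref{p. joint law plus}), whose hypotheses are verified via the explicit Radon--Nikodym derivative for the cluster local set under a change of inner boundary value (Proposition~\ref{p.change of measure cluster}). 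Finally, the conditional law of $(\ED(\partial_o,\ell),\ED(\ell,\check\ell_0))$ given $\check\ell_0$ requires identifying the conditional GFF in the outer annulus as a GFF conditioned on a connectivity event (Corollary~\ref{Cor cond GFF several}) and matching this with a conditioned Brownian bridge (Lemma~\ref{Lem law exc}). None of this is replaceable by an additivity argument.
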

\begin{figure}[h!]
	\includegraphics[height=0.25\textwidth]{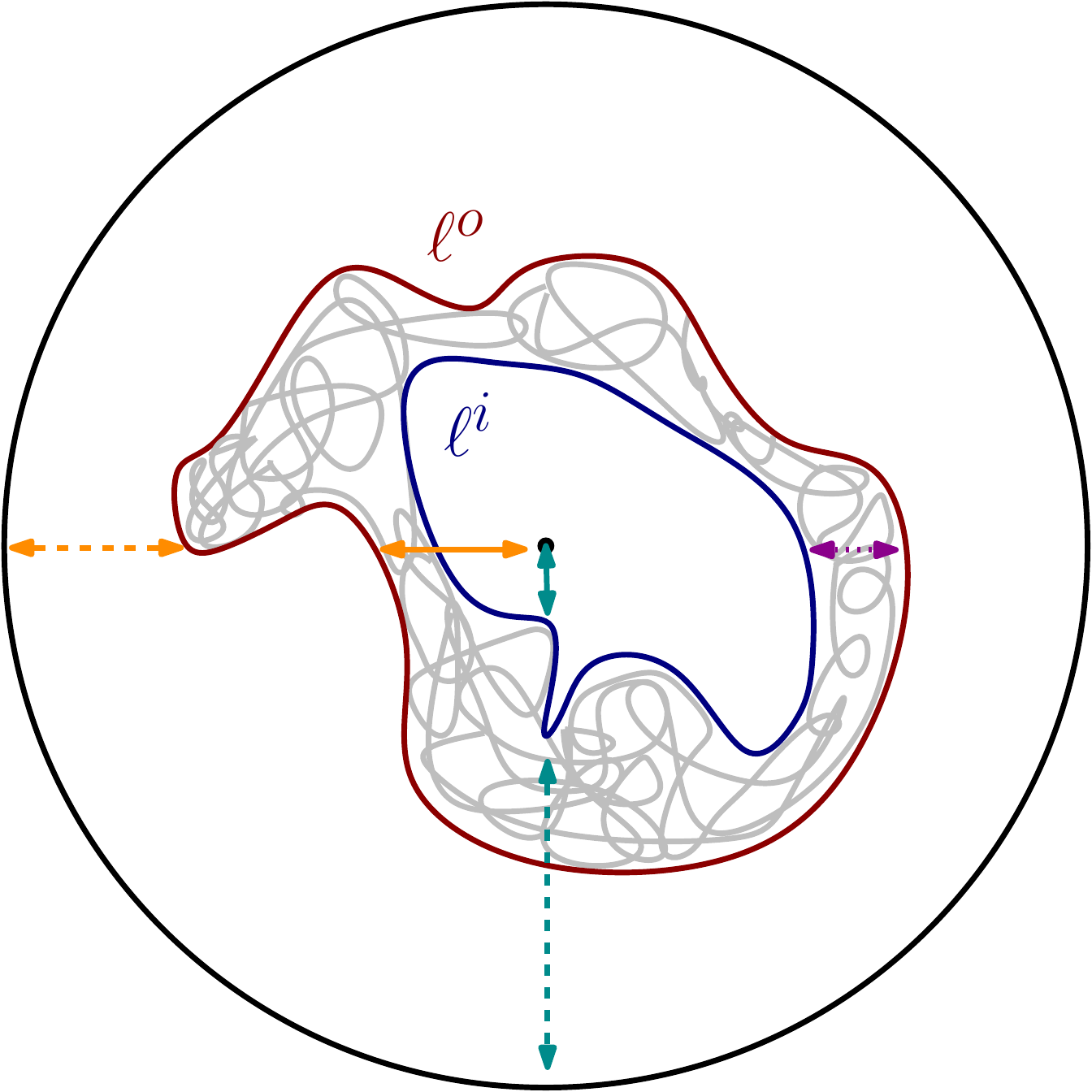}
	\includegraphics[height=0.18\textheight]{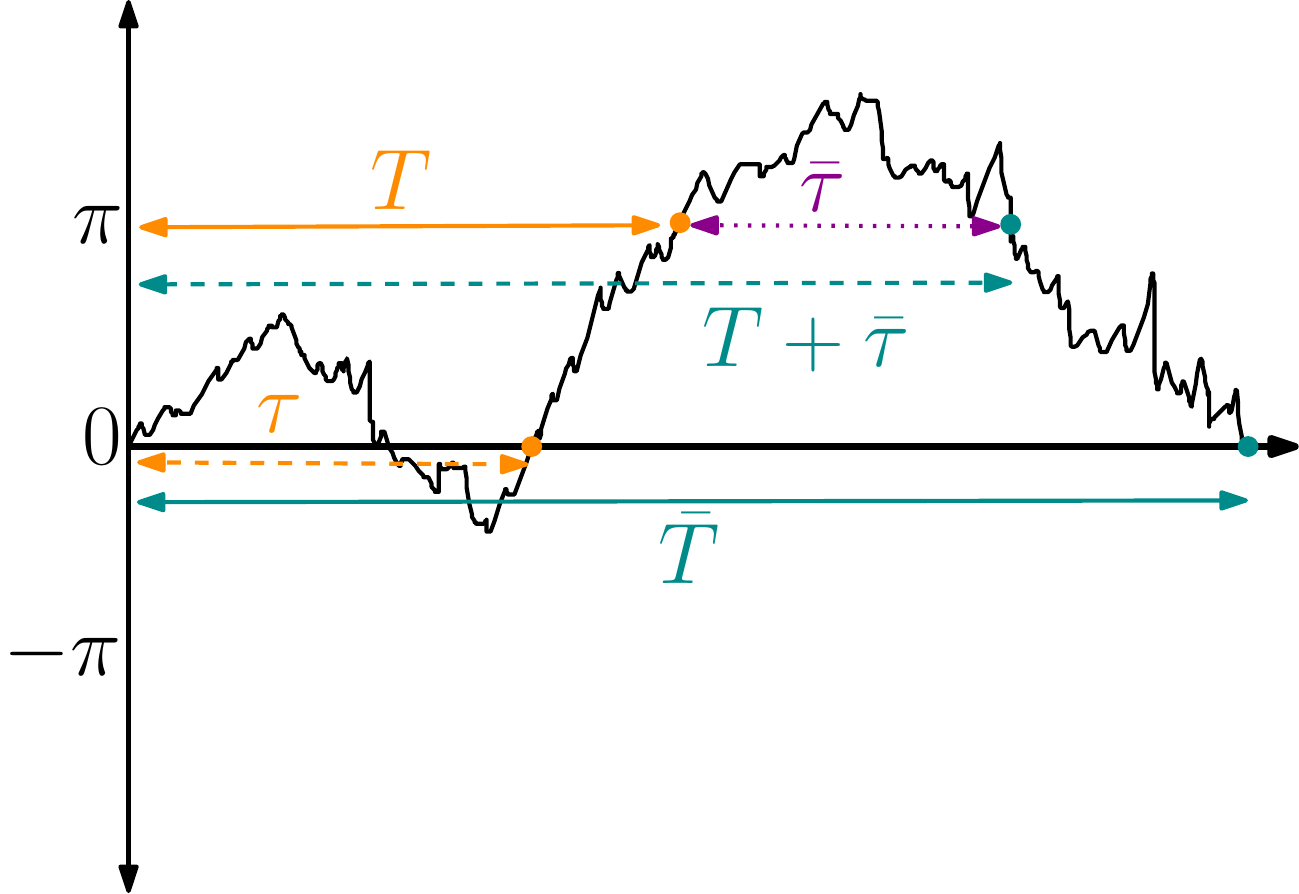}
	\caption{Graphic explanation of Theorem \ref{t.loop soup}. The dashed orange, resp. cyan, line represents $2\pi \ED(\ell^o,\partial \D)$, $2\pi \ED(\ell^i,\partial \D)$,  the continuous  orange, resp. cyan, line represents $-\log\crad(0, \D\backslash \ell^o)$, $-\log\crad(0, \D\backslash \ell^i)$ and the doted magenta line represents $2\pi\ED(\ell^o,\ell^i)$. Note that in this case, the joint law of $-\log(\crad(0, \D\backslash \ell^o))$ and $2\pi \ED(\ell^i,\partial \D))$ is not computed in Theorem \ref{t.loop soup}}.
	\label{Fig2}
\end{figure}

Notice that in both cases, we have only given the joint law for a quadruple of (reduced) extremal distances. The reason is the following:
\begin{itemize}
	\item The joint law of $(-\log \crad(0, \D\backslash \ell^o), 2\pi \ED(\partial \D,\ell^i)) )$ cannot be naturally obtained from the same Brownian motion $B$, i.e. it is not the same as the law of
$(T,\overline{\tau}+T)$. 
\end{itemize}
Indeed, if the joint law were equal to that of $(T, \overline \tau  + T)$, then $-\log \crad(0,\D\backslash \ell^o)< 2\pi \ED(\partial\D,\ell^i)$ would hold almost surely. However, this cannot be true as the following happens with positive probability: the conformal radius seen from $\ell^0$ is very small, hence
$-\log \crad(0,\D\backslash \ell^o)$ very large, and at the same time the extremal distance 
$\ED(\partial\D,\ell^i)$ is very small. See Figure \ref{f. no joint law} for an explanation.
\begin{figure}
	\includegraphics[height=0.25\textwidth]{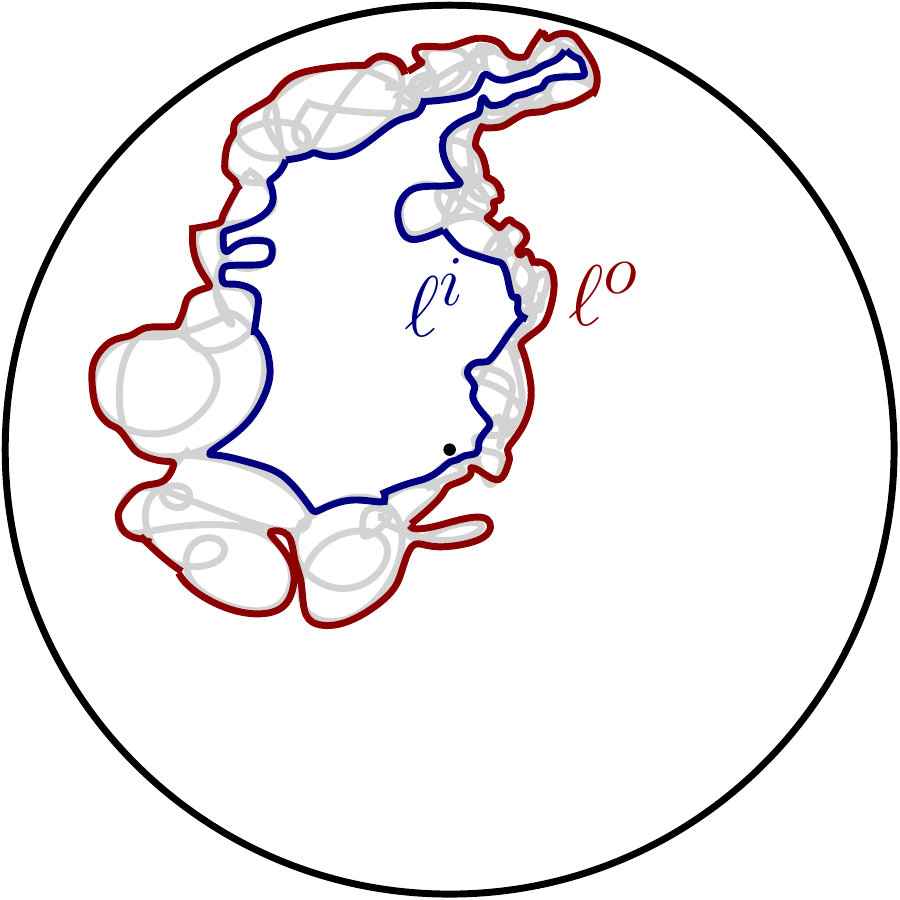}
	\caption{Main reason why we cannot compute the joint law of $(-\log \crad(0, \D\backslash \ell^o), 2\pi \ED(\partial \D,\ell^i)) )$: it is  possible that $\ell^i$ is arbitrarily close to the boundary, while $\ell^o$ is arbitrarily close to the center.}
	\label{f. no joint law}
\end{figure}

\bigskip

Let us now give an idea of the proof of this result. The first step is to couple the CLE$_{4}$ (Miller-Sheffield coupling) and the critical Brownian loop-soup to the continuum GFF. We will use the notion of local sets of the GFF \cite{SchSh2,WWln2} together with some recent developments in this topic \cite{ASW,ALS1,ALS2}. In this language CLE$_4$ is described by a two-valued local set, denoted $\A_{-2\lambda, 2\lambda}$. 

The connection between the 2D continuum GFF and the 1D Brownian motion is obtained from the exploration of the GFF by local sets. Indeed, let $(\eta_{t})_{t\geq 0}$ be a continuous growing family of local sets of a GFF in $\D$ and $h_{\eta_{t}}(0)$ be the harmonic extension in $0$ of the values of the GFF discovered by $\eta_{t}$. Then the process $(h_{\eta_{t}}(0))_{t\geq 0}$ is a continuous local martingale, and thus a time changed Brownian motion. Moreover, the natural time parametrization is such that at each time $t$, we have 
$-(2\pi)^{-1}\log\crad(z,\D\backslash \eta_{t})=t$.

In \cite{ASW}, the authors provide a construction of the local set $\A_{-2\lambda, 2\lambda}$ (that has the law of CLE$_4$) by a  growing family of local sets. This allows them to give a new proof of the fact that
$-\log(\crad(0, \D\backslash \ell) )$ has the law of the exit time of a Brownian motion \cite{ASW} from the interval $[-\pi,\pi]$, reproving the results
of \cite{SheffieldCLE09,SSW09CR} for $\kappa=4$ in a slightly different, but connected way: namely the Brownian motion entering the coupling with the GFF is very much related to the Brownian motion for the case $\kappa = 4$ in \cite{SheffieldCLE09,SSW09CR}.

The derivation of the law of 
$\ED(\ell,\partial \D)$ is considerably more involved than that of the $-\log(\crad(0, \D\backslash \ell) )$. To obtain it we cut a small hole around the origin and work with the GFF in an annular domain rather than in a disk. Then, the CLE$_{4}$ loop around the origin can be approximated, if the hole is sufficiently small, by an interface of the GFF in the annulus separating the inner and the outer boundary. The key point is that in an annulus the same interface can be discovered by explorations by local sets starting both from the outer and the inner boundaries. We call this property 
\textit{reversibility} and we deduce it as consequence of a certain commutativity property of two-valued local sets, reminiscent of commutativity properties of SLE curves coupled with the GFF, first studied in \cite{Dub, MS1}. An exploration from the inner boundary of the annulus, i.e. from the small hole around the origin, will give us the extremal distance of the interface to the outer boundary, i.e. $\partial\D$.

An additional step is needed to obtain the joint law of
$(2\pi \ED(\ell,\partial \D), -\log\crad(0, \D\backslash \ell) ) $. As before, we work in an annulus $\D\backslash r\overline \D$, and show that one can read the joint law of the pair $(2\pi \ED(\ell,\partial \D), 2\pi \ED(\ell,r\partial \D) )$ from the same Brownian motion (that is actually a Brownian bridge in the annulus). To do this, we first show that this joint law can be characterized by studying how it changes when changing the height of one endpoint of the Brownian bridge. We then study how the law of two-valued local sets of a GFF changes, when changing the boundary values of the field on $r\partial \D$, the inner boundary of the annulus, and deduce that the joint law of extremal distances for the loop $\ell$ satisfies the conditions of our characterisation. It should be noted that our argument does not extend to several nested interfaces, and indeed, as mentioned already above and explained on Figure \ref{f. no joint law}, in the case of several interfaces, one cannot simultaneously encode the joint law of all possible extremal distances and conformal radii on the same Brownian sample path, at least not in the obvious way.

We will further provide results analogous to Theorem
\ref{t. main} for some other interfaces induced by local sets of the GFF. Namely the gasket of the CLE$_{4}$ is a particular example of a two-valued local set, a general family of local sets first described in \cite{ASW} and further studied in \cite{AS2,QW,ALS1,SSV}. In Theorem
\ref{t. joint law TVS}, we provide a generalization of 
Theorem \ref{t. main} to the family of two-valued local sets $\A_{-a,b}$ with $a + b \in 2\lambda \N$, where $2\lambda$ is the height gap. A cluster of a critical Brownian loop-soup is related to the first passage sets of a particular level of the continuum Gaussian free field, as explained in \cite{ALS1,ALS2} . In Theorem \ref{t. law FPS} we give the joint law of the extremal distance and the conformal radius of an interface in a first passage set of any level.

\bigskip

Our article is organized as follows. In Section 
\ref{SecPrelim} we recall some notions of conformal geometry, basics of the GFF, its local sets and their relation to CLE$_4$ and the critical Brownian loop soup.
In Section \ref{SecRN} we give the Radon-Nikodym derivative of the two-valued local sets and first passage sets under a change of boundary values of the GFF or a change of domain. In Section \ref{S.Marginal laws} we compute the one-dimensional marginal laws of the extremal distances for a family of GFF interfaces in an annulus. The central step is in Subsection \ref{Ss.Reversibility}, where we prove the reversibility of interfaces.
Finally, in Section \ref{SecLaws} we deal with joint laws - we first characterize the joint laws in terms of the behaviour under a change of the boundary conditions, then we prove Theorems \ref{t. joint law TVS}, \ref{t. law FPS}, and \ref{t. main} in Subsection \ref{SubsecLawsFPSTVS}. 
Theorem \ref{t.loop soup} is proved in \ref{Subsec Thm_1_2}.
We finish by deriving Corollary \ref{CorAnnularExp}
in \ref{SebSec Ann_exp}.

\section{Preliminaries}
\label{SecPrelim}
In this section we collect some preliminaries on annular domains, GFF and its local sets. To start of let us also recall the notion of conformal radius: in any simply-connected domain\footnote{For us a domain is always a connected open subset of $\C$.} $D \subsetneq \C$, there is a conformally  invariant way to measure the size of $D$ seen from an interior point $z$. This notion of size is called the conformal radius of $D$ seen from $z$, is denoted by $\crad(z,D)$ and is defined by:
\begin{align*}
\crad(z,D) := \phi'(z),
\end{align*}
where $\phi: D \to \D$ is a conformal map with $\phi(z) = 0$ and $\phi'(z) > 0$. Such a map exists by the Riemann mapping theorem.
One has the bounds
\begin{align}
\label{EqKoebeQuarter}
\dfrac{1}{4}\crad(z,D) \leq d(z,\partial D)\leq \crad(z,D) .
\end{align}
The upper bound comes from simple monotony. 
The lower bound is the Koebe's one-quarter theorem
(Theorem 5-3 in \cite{Ahlfors2010ConfInv} and the subsequent Corollary).

	\subsection{Annular domains and extremal distance}
	\label{SubsecED}
	
Let $D\subset\C$ be a bounded open two-connected (i.e. with one hole) subset of $\C$. We also assume that the hole
(i.e. the bounded connected component of $\C\backslash D$) is not reduced to a point. Such a $D$ is called an annular domain. We will refer to the boundary of the bounded connected component of $\C \backslash D$ as the inner boundary of $D$ and denote it by $\partial_i$, the other boundary of $D$ is called the outer boundary and denoted $\partial_o$.

For any $0 < r < 1$ we denote by $\An_r$ the open annulus $\An_r := \D \backslash r\overline\D$; we also denote $\An=\An_r$ when we do not need to make reference to $r$. By the Riemann mapping theorem, for any annular domain there exits a unique $0 <r < 1$ and a conformal map such that $\phi: D \to \An_r$ mapping the inner boundary to $\partial_i$ and the outer boundary to $\partial_o$. Notice that $\An_r$ also has a conformal automorphism $z\mapsto r/z$ swapping the boundaries. 

In this way the number $0 < r < 1$ measures the distance between the boundaries in a conformally invariant way and parametrizes the conformal type of an annular domain. In fact, it comes out that it is more convenient to work with the logarithm of $r$: indeed, if we define 
\[\ED( \partial \D,r\partial \D) := \frac{1}{2\pi}\log \frac{1}{r}, \]
then by conformal invariance this defines $\ED(\partial_i, \partial_o)$ for any annular domain. We have that

	\begin{thm}[Theorem 4-5 of \cite{Ahlfors2010ConfInv}]
		\label{thmEL}
		Let $D$ be an annular domain. The inverse of the extremal distance $\ED(\partial_{o},\partial_{i})^{-1}$ is given by the Dirichlet energy 
		$\int_D |\nabla \bar u|^2$ of the harmonic function $\bar u$ equal to $0$ on $\partial_{o}$ and $1$ on 
$\partial_{i}$. Furthermore, if $\partial_i$, resp. $\partial_o$, has a piecewise smooth boundary, then $\int_D |\nabla \bar u|^2$ is equal to $\int_{\partial_i}\partial_n \bar u $, resp. $-\int_{\partial_o}\partial_n \bar u$.
	\end{thm}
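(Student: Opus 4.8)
The statement to prove is that for an annular domain $D$, the reciprocal extremal distance $\ED(\partial_o,\partial_i)^{-1}$ equals the Dirichlet energy $\int_D |\nabla \bar u|^2$ of the harmonic function $\bar u$ with boundary values $0$ on $\partial_o$ and $1$ on $\partial_i$, and that under smoothness assumptions this energy equals the boundary flux $\int_{\partial_i}\partial_n \bar u = -\int_{\partial_o}\partial_n \bar u$. The plan is to reduce everything to the standard annulus $\An_r$ by conformal invariance, do the computation there explicitly, and then package the flux identity via the divergence theorem.

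\textbf{Step 1: Conformal invariance of the Dirichlet energy.} First I would recall that the Dirichlet integral $\int |\nabla v|^2$ is invariant under conformal change of variables in two dimensions: if $\phi\colon D\to \An_r$ is the conformal map sending inner boundary to $r\partial\D$ and outer boundary to $\partial\D$, and $v$ is harmonic on $\An_r$, then $v\circ\phi$ is harmonic on $D$ with the transplanted boundary values and $\int_D|\nabla(v\circ\phi)|^2 = \int_{\An_r}|\nabla v|^2$. Hence it suffices to identify both the extremal distance and the energy-minimizing harmonic function on the model annulus $\An_r$. By the definition given just above the theorem, $\ED(\partial\D, r\partial\D) = \frac{1}{2\pi}\log(1/r)$, so the target is to show $\int_{\An_r}|\nabla\bar u|^2 = 2\pi/\log(1/r)$.

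\textbf{Step 2: Explicit computation on $\An_r$.} On the round annulus the harmonic function with the prescribed boundary data is radial: $\bar u(z) = \dfrac{\log|z| - \log 1}{\log r - \log 1} = \dfrac{\log|z|}{\log r}$, which is indeed harmonic (real part of $\frac{1}{\log r}\log z$, locally), equals $0$ on $|z|=1$ and $1$ on $|z|=r$. Then $|\nabla\bar u(z)| = \dfrac{1}{|\log r|}\cdot\dfrac{1}{|z|}$, so in polar coordinates
\[
\int_{\An_r}|\nabla\bar u|^2 = \int_0^{2\pi}\!\!\int_r^1 \frac{1}{(\log r)^2}\,\frac{1}{\rho^2}\,\rho\,d\rho\,d\theta = \frac{2\pi}{(\log r)^2}\int_r^1 \frac{d\rho}{\rho} = \frac{2\pi}{(\log r)^2}\cdot\log\frac{1}{r} = \frac{2\pi}{\log(1/r)},
\]
which is exactly $\ED(\partial\D,r\partial\D)^{-1}$. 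A small but necessary point is that $\bar u$ is the unique minimizer of the Dirichlet energy among functions with these boundary values (Dirichlet principle), so "the harmonic function" in the statement is the right object; I would note this and that it is what makes the quantity conformally canonical.

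\textbf{Step 3: The flux formulation.} For the last sentence, assume $\partial_i,\partial_o$ piecewise smooth (pull back to $\An_r$ where the boundary is smooth, or argue directly). Since $\bar u$ is harmonic, $\operatorname{div}(\bar u\,\nabla\bar u) = |\nabla\bar u|^2$, and the divergence theorem on $D$ gives $\int_D|\nabla\bar u|^2 = \int_{\partial D}\bar u\,\partial_n\bar u\,ds$ with $n$ the outward normal. On $\partial_o$ we have $\bar u = 0$, so that piece drops; on $\partial_i$ we have $\bar u = 1$, leaving $\int_{\partial_i}\partial_n\bar u\,ds$, where here $\partial_n$ is the normal pointing out of $D$, i.e. into the hole. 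For the equality with $-\int_{\partial_o}\partial_n\bar u$, apply the divergence theorem to $\nabla\bar u$ alone: $0 = \int_D\Delta\bar u = \int_{\partial D}\partial_n\bar u = \int_{\partial_i}\partial_n\bar u + \int_{\partial_o}\partial_n\bar u$, giving the sign-flipped equality. (One can also check the signs against the explicit $\bar u$ on $\An_r$, where $\partial_\rho\bar u = 1/(\rho\log r) < 0$ and the outward normal on $r\partial\D$ points in the $-\rho$ direction, so the flux through $\partial_i$ is positive, matching $\int|\nabla\bar u|^2 > 0$.)

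\textbf{Main obstacle.} There is no deep obstacle here — this is a classical fact and the excerpt even cites Ahlfors for it. The only things requiring care are bookkeeping: (i) making sure the conformal invariance argument is applied to the harmonic function with the correct transplanted boundary values and that uniqueness/Dirichlet principle is invoked so that the "minimal" energy and the "harmonic function's" energy coincide; and (ii) keeping the orientation of normals consistent in the flux identities, since the two displayed boundary integrals carry opposite signs precisely because the outward normal of $D$ points into the hole along $\partial_i$ and away from the hole along $\partial_o$. In a write-up I would likely just cite Theorem 4-5 of \cite{Ahlfors2010ConfInv} for the equivalence with extremal distance and include Steps 2–3 only to the extent the paper needs the explicit normalization $\frac{1}{2\pi}\log(1/r)$ and the flux form, which are used later in Section \ref{S.Marginal laws}.
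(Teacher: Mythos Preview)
The paper does not give its own proof of this statement; it is simply cited as Theorem~4-5 of \cite{Ahlfors2010ConfInv} and used as a black box. Your plan is a correct and standard derivation of that classical fact (conformal reduction to the round annulus, explicit radial computation, divergence theorem for the flux form), and indeed your closing remark that one would ``just cite Theorem 4-5 of \cite{Ahlfors2010ConfInv}'' is exactly what the paper does.
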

We refer the reader to the Section 4 of \cite{Ahlfors2010ConfInv}  for more details on extremal distance, its intrinsic definition using families of curves and its generalization called the extremal length.

\begin{lemma}
\label{LemCREDlim}
Let $\wp$ be a simple loop in $\D$, at positive distance from $0$, surrounding $0$. Denote $\crad(0,\D\backslash\wp)$ the conformal radius seen from $0$ of the domain surrounded by $\wp$. Then
\begin{align*}
\lim_{r\to 0}\big(\ED(\partial\D,r\partial\D)-
\ED(\wp,r\partial\D)\big)
= - \dfrac{1}{2\pi}
\log\crad(0,\D\backslash\wp),
\end{align*}
where in the limit $r$ is small enough so that
$\wp\subset \An_r$.
\end{lemma}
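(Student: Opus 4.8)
The plan is to reduce everything to the conformal type of explicit annular domains via Theorem \ref{thmEL} and the definition $\ED(\partial\D,r\partial\D)=\frac{1}{2\pi}\log(1/r)$. Fix $\wp$ and let $U$ denote the open region surrounded by $\wp$ (so $0\in U$, $\wp=\partial U$, and $\overline U\subset\D$). For $r$ small enough that $r\overline\D\subset U$, the domain $\An_r=\D\backslash r\overline\D$ decomposes as the disjoint union of the annular domain $A^{\mathrm{out}}_r:=\D\backslash\overline U$ (with boundaries $\partial\D$ and $\wp$), the loop $\wp$ itself, and the annular domain $A^{\mathrm{in}}_r:=U\backslash r\overline\D$ (with boundaries $\wp$ and $r\partial\D$). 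The first step is to record that, since $\ED$ (equivalently, the modulus) is additive under stacking annuli along a common boundary curve, one has
\begin{align*}
\ED(\partial\D,r\partial\D)=\ED(\partial\D,\wp)+\ED(\wp,r\partial\D),
\end{align*}
where $\ED(\partial\D,\wp)$ is computed in $A^{\mathrm{out}}_r$ and does \emph{not} depend on $r$. Hence $\ED(\partial\D,r\partial\D)-\ED(\wp,r\partial\D)=\ED(\partial\D,\wp)$ is already constant in $r$; the content of the lemma is that this constant equals $-\frac{1}{2\pi}\log\crad(0,\D\backslash\wp)=-\frac1{2\pi}\log\crad(0,U)$.

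Wait — that additivity requires justification, because the harmonic function realizing $\ED(\wp,r\partial\D)$ in $A^{\mathrm{in}}_r$ and the one realizing $\ED(\partial\D,\wp)$ in $A^{\mathrm{out}}_r$ do not automatically glue to the harmonic function on $\An_r$; one must check that the extremal-length additivity (series law) applies, i.e. that the two pieces are separated by the curve $\wp$ along which the harmonic function on $\An_r$ is (asymptotically, as $r\to0$) constant. This is exactly where the limit enters and where I expect the main work to be. So the cleaner route is: let $u_r$ be the harmonic function on $\An_r$ with boundary values $0$ on $\partial\D$ and $1$ on $r\partial\D$; by Theorem \ref{thmEL}, $\ED(\partial\D,r\partial\D)^{-1}=\int_{\An_r}|\nabla u_r|^2$. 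Using the conformal map $\An_r\to$ (standard annulus), $u_r$ is the real part of $\frac{1}{\log(1/r)}\log(1/z)$ pushed through, so $u_r$ restricted to $\wp$ converges uniformly, as $r\to0$, to a constant $c_r$ with $c_r=1+O(1/\log(1/r))$ away from a vanishing correction; more precisely $u_r$ on $\wp$ is $\frac{\log(1/r)+O(1)}{\log(1/r)}=1-O(1/\log(1/r))$. Then the Dirichlet energy splits as $\int_{A^{\mathrm{out}}_r}|\nabla u_r|^2+\int_{A^{\mathrm{in}}_r}|\nabla u_r|^2$, and on each piece $u_r$ is (a multiple of, plus vanishing error) the harmonic function solving the corresponding two-boundary-value problem; taking $r\to0$ and using that the normalized energies are moduli gives
\begin{align*}
\ED(\partial\D,r\partial\D)=\ED(\partial\D,\wp)+\ED(\wp,r\partial\D)+o(1).
\end{align*}

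It remains to identify $\ED(\wp,r\partial\D)$ up to $o(1)$. Here I would use the Riemann map $\psi:U\to\D$ with $\psi(0)=0$, $\psi'(0)=1/\crad(0,U)>0$. It sends $A^{\mathrm{in}}_r=U\backslash r\overline\D$ to $\D\backslash\psi(r\overline\D)$, and $\psi(r\overline\D)$ is a small quasi-disk around $0$ whose "size" is $r/\crad(0,U)$ to leading order (by $\psi(z)=\psi'(0)z+O(z^2)$ and the Koebe-type distortion bounds \eqref{EqKoebeQuarter}); precisely, $\psi(r\partial\D)$ is squeezed between circles of radii $r/\crad(0,U)\cdot(1+O(r))$. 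By conformal invariance of $\ED$ and monotonicity of $\ED(\partial\D,\rho\partial\D)=\frac1{2\pi}\log(1/\rho)$ in $\rho$, sandwiching gives
\begin{align*}
\ED(\wp,r\partial\D)=\frac{1}{2\pi}\log\frac{\crad(0,U)}{r}+o(1)=\ED(\partial\D,r\partial\D)+\frac{1}{2\pi}\log\crad(0,U)+o(1).
\end{align*}
Combining this with the previous display and letting $r\to0$ yields $\ED(\partial\D,r\partial\D)-\ED(\wp,r\partial\D)\to-\frac1{2\pi}\log\crad(0,\D\backslash\wp)$, as claimed. The only delicate points are the Dirichlet-energy splitting (controlling $\nabla u_r$ near $\wp$, which follows from standard interior-regularity/Harnack estimates once $u_r|_\wp$ is nearly constant) and the distortion estimate on $\psi(r\partial\D)$; both are routine but should be spelled out.
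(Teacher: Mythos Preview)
Your second step---mapping $U$ to $\D$ by the Riemann map $\psi$ with $\psi(0)=0$, $\psi'(0)=1/\crad(0,U)$, sandwiching $\psi(r\partial\D)$ between circles of radius $r\psi'(0)(1+O(r))$, and using monotonicity and conformal invariance of $\ED$---is correct and is exactly the paper's proof. But observe that this step \emph{alone} proves the lemma: since $\ED(\partial\D,r\partial\D)=\frac{1}{2\pi}\log(1/r)$, subtracting your display $\ED(\wp,r\partial\D)=\frac{1}{2\pi}\log(\crad(0,U)/r)+o(1)$ gives the result immediately. No combination with ``the previous display'' is needed.

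Your first step (approximate additivity via a Dirichlet-energy splitting) is therefore superfluous, and as written it also has problems. First, $u_r(z)=\log(1/|z|)/\log(1/r)$, so on $\wp$ one has $u_r=O(1)/\log(1/r)\to 0$, not $1$. Second, and more to the point, splitting $\int_{\An_r}|\nabla u_r|^2$ over $A^{\mathrm{out}}$ and $A^{\mathrm{in}}_r$ yields a relation among the \emph{reciprocals} of extremal distances (Theorem~\ref{thmEL} identifies $\ED^{-1}$, not $\ED$, with a Dirichlet energy), so it does not give $\ED(\partial\D,r\partial\D)=\ED(\partial\D,\wp)+\ED(\wp,r\partial\D)+o(1)$ directly. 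In general only the superadditivity of Theorem~\ref{ThmSuperAddED} holds, with equality precisely when $\wp$ is a level line of $u_r$; your heuristic that $u_r$ is ``nearly constant'' on $\wp$ could in principle be upgraded to asymptotic additivity, but this is more work than the problem requires and the paper bypasses it entirely.
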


\begin{proof}
This is probably well-known, but for completeness the proof is given in Appendix B.\end{proof}

Next we state a superadditivity property of the extremal distance.
It is referred to as composition law in \cite{Ahlfors2010ConfInv}.
\begin{thm}[Theorem 4-2 of \cite{Ahlfors2010ConfInv}]
\label{ThmSuperAddED}
Let $D$ be an annular domain with outer boundary $\partial_o$ and inner boundary $\partial_i$. Let $\wp$ be a simple non-contractible loop in $D$, that is to say $\partial_o$ and $\partial_i$ are in different connected components of 
$\overline{D}\backslash\wp$. Then
\begin{align}
\label{EqSuperAddED}
\ED(\partial_o,\partial_i)\geq
\ED(\partial_o,\wp)+\ED(\wp,\partial_i).
\end{align}
\end{thm}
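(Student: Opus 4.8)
The final statement in the excerpt is the composition law: if $\wp$ is a simple non-contractible loop separating $\partial_o$ from $\partial_i$ in an annular domain $D$, then $\ED(\partial_o,\partial_i)\geq \ED(\partial_o,\wp)+\ED(\wp,\partial_i)$. Wait — I should think about which formulation to use. Given that Theorem \ref{thmEL} (Ahlfors 4-5) is already available, expressing extremal distance via the Dirichlet energy of a harmonic function is the natural route, rather than going back to the definition via families of curves.

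Let me think through the Dirichlet-energy approach. Let $\wp$ cut $D$ into two annular domains: $D_1$ between $\partial_o$ and $\wp$, and $D_2$ between $\wp$ and $\partial_i$. By Theorem \ref{thmEL}, $\ED(\partial_o,\wp)^{-1}$ is the Dirichlet energy of the harmonic function $u_1$ on $D_1$ with $u_1=0$ on $\partial_o$, $u_1=1$ on $\wp$; similarly $\ED(\wp,\partial_i)^{-1}$ is the Dirichlet energy of the harmonic $u_2$ on $D_2$ with $u_2=0$ on $\wp$, $u_2=1$ on $\partial_i$. Also $\ED(\partial_o,\partial_i)^{-1}$ is the Dirichlet energy of the harmonic $u$ on $D$ with $u=0$ on $\partial_o$, $u=1$ on $\partial_i$. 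The idea: $u$ restricted to $\wp$ takes some value — no, that's not quite it. The clean way: let $c = \ED(\partial_o,\wp)^{-1}/(\ED(\partial_o,\wp)^{-1}+\ED(\wp,\partial_i)^{-1})$... hmm, let me reconsider. Actually the standard trick is a variational/Dirichlet-principle argument: build a competitor for the harmonic function $u$ on $D$ by gluing rescaled copies of $u_1,u_2$, and use that the true minimizer has energy no larger. Specifically, let $\alpha = \ED(\partial_o,\wp)$ and $\beta = \ED(\wp,\partial_i)$; define $v$ on $D$ by $v = \tfrac{\alpha}{\alpha+\beta}u_1$ on $D_1$ and $v = \tfrac{\alpha}{\alpha+\beta} + \tfrac{\beta}{\alpha+\beta}u_2$ on $D_2$. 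Then $v$ is continuous across $\wp$ (both sides give $\alpha/(\alpha+\beta)$), equals $0$ on $\partial_o$, $1$ on $\partial_i$, hence is an admissible competitor. Its Dirichlet energy is $\big(\tfrac{\alpha}{\alpha+\beta}\big)^2 \alpha^{-1} + \big(\tfrac{\beta}{\alpha+\beta}\big)^2\beta^{-1} = \tfrac{\alpha+\beta}{(\alpha+\beta)^2} = (\alpha+\beta)^{-1}$. Since $u$ minimizes Dirichlet energy among functions with these boundary values, $\ED(\partial_o,\partial_i)^{-1} = \int_D|\nabla u|^2 \leq \int_D |\nabla v|^2 = (\alpha+\beta)^{-1}$, which gives $\ED(\partial_o,\partial_i) \geq \alpha+\beta$, as desired.

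The plan, then, is: (i) recall that $\wp$ divides $D$ into the two annular domains $D_1,D_2$ (using that $\wp$ is non-contractible and separates the boundaries, so $\overline D\setminus\wp$ has exactly two components, each an annular domain); (ii) invoke Theorem \ref{thmEL} to realize each of the three reduced extremal distances as a reciprocal Dirichlet energy of the corresponding harmonic function; (iii) construct the glued competitor $v$ above and check admissibility (continuity across $\wp$, correct boundary values); (iv) compute its energy and conclude by the Dirichlet principle. I would also note the equality case (when $\wp$ coincides with a level curve of $u$), though that is not needed for the statement.

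The main obstacle is mostly bookkeeping rather than depth: one must be careful that $\wp$, being only assumed a simple loop, actually cuts $D$ into two well-behaved annular domains so that Theorem \ref{thmEL} applies, and that the harmonic functions $u_1,u_2,u$ have Dirichlet-finite energy and boundary traces that match up continuously along $\wp$ (a priori $\wp$ need not be smooth). If one wants to avoid delicate trace arguments along a rough curve, the cleanest fix is to argue via the intrinsic extremal-length definition using curve families — any curve in $D$ from $\partial_o$ to $\partial_i$ must cross $\wp$, so it decomposes into a piece in $D_1$ and a piece in $D_2$, and superadditivity of extremal length under such concatenation of curve families is the classical composition law. Since this is stated as a cited theorem from Ahlfors, in the paper it suffices to reference \cite{Ahlfors2010ConfInv}; the sketch above is the substance of why it holds.
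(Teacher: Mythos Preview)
The paper does not prove this statement; it is simply quoted as Theorem 4-2 of \cite{Ahlfors2010ConfInv} and used as a black box. Your Dirichlet-principle sketch (glue rescaled harmonic profiles across $\wp$ to build a competitor, then compare energies) is correct and is one of the two standard proofs; your fallback via the curve-family definition is in fact closer to how Ahlfors himself argues the composition law, and sidesteps the regularity issues you flagged about traces along a rough $\wp$.
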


Now take $D = \An_r$. We can now use Lemma \ref{LemCREDlim} to rewrite the inequality \eqref{EqSuperAddED} 
in the limit $r\to 0$ to obtain:
\begin{cor}
\label{CorCrEDineq}
Let $\wp$ be a simple loop in $\D$, at positive distance from $0$, surrounding $0$. Denote $\crad(0,\D\backslash\wp)$ the conformal radius seen from $0$ of the domain surrounded by $\wp$. Then
\begin{align*}
\ED(\partial\D,\wp)
\leq - \dfrac{1}{2\pi}
\log\crad(0,\D\backslash\wp).
\end{align*}
\end{cor}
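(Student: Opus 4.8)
The plan is to combine the superadditivity of extremal distance (Theorem~\ref{ThmSuperAddED}) with the limiting identity of Lemma~\ref{LemCREDlim}, exactly as indicated in the paragraph preceding the statement. First I would fix $r\in(0,1)$ small enough that $\wp\subset\An_r$; this is possible precisely because $\wp$ is at positive distance from $0$, so any $r<d(0,\wp)$ works. For such $r$ the loop $\wp$ is a simple non-contractible loop in the annular domain $\An_r=\D\backslash r\overline\D$: since $\wp$ surrounds $0$ and is contained in $\An_r$, the outer boundary $\partial\D$ and the inner boundary $r\partial\D$ lie in different connected components of $\overline{\An_r}\backslash\wp$. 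Hence Theorem~\ref{ThmSuperAddED}, applied with $D=\An_r$, $\partial_o=\partial\D$ and $\partial_i=r\partial\D$, gives
\[
\ED(\partial\D,r\partial\D)\ \geq\ \ED(\partial\D,\wp)+\ED(\wp,r\partial\D),
\]
equivalently $\ED(\partial\D,\wp)\leq\ED(\partial\D,r\partial\D)-\ED(\wp,r\partial\D)$.

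Then I would let $r\to 0$. The left-hand side does not depend on $r$, while by Lemma~\ref{LemCREDlim} the right-hand side converges to $-\tfrac{1}{2\pi}\log\crad(0,\D\backslash\wp)$. Passing to the limit yields the claimed inequality.

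I do not expect a real obstacle here: the only point requiring a little care is checking that $\wp$ is non-contractible in $\An_r$ and genuinely separates the two boundary components, so that Theorem~\ref{ThmSuperAddED} is applicable --- but this is immediate from the hypotheses on $\wp$ once $r<d(0,\wp)$. Everything else is a direct substitution followed by the limit supplied by Lemma~\ref{LemCREDlim}.
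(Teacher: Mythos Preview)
Your proposal is correct and is essentially identical to the paper's own argument: apply the superadditivity inequality \eqref{EqSuperAddED} in $\An_r$ and pass to the limit $r\to 0$ using Lemma~\ref{LemCREDlim}. The paper states this in one sentence without spelling out the verification that $\wp$ is non-contractible, which you have supplied.
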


Let $\wp$ be a simple loop in $\D$, at positive distance from $0$, surrounding $0$. Denote
	\begin{align*}
	r_{-}(\wp):= d(0,\wp),
	\qquad
	r_{+}(\wp):= \max\{ \vert z\vert : z\in\wp\}.
	\end{align*}
	Similarly to \eqref{EqKoebeQuarter}, one has bounds for
	$r_{+}(\wp)$ involving the extremal distance
	$\ED(\partial\D,\wp)$.

	\begin{prop}
		\label{PropRplusED}
		For $\wp$ a simple loop in $\D$, at positive distance from $0$, surrounding $0$,
		\begin{align*}
		e^{-2\pi \ED(\partial\D,\wp)}\leq r_{+}(\wp)
		\leq 4 e^{-2\pi \ED(\partial\D,\wp)}.
		\end{align*}
	\end{prop}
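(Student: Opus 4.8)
The plan is to reduce both inequalities to statements about the conformal modulus of the annular domain $A:=\D\setminus\overline{D_{\wp}}$, where $D_{\wp}$ denotes the Jordan domain enclosed by $\wp$. By the definition of extremal distance, $\ED(\partial\D,\wp)=\tfrac{1}{2\pi}\log\tfrac1r$, where $r\in(0,1)$ is the conformal parameter of $A$, so it is enough to prove $r\le r_{+}(\wp)\le 4r$. Write $E:=\overline{D_{\wp}}$; this is a non-degenerate compact connected subset of $\D$ with $0\in E$, and since $|z|$ is subharmonic we have $r_{+}(\wp)=\max_{z\in\wp}|z|=\max_{z\in E}|z|$.

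For the lower bound, I would fix $\rho\in(r_{+}(\wp),1)$. Because $E\subset D(0,\rho)$, the circle $\{|z|=\rho\}$ is a simple non-contractible loop in $A$, and the component of $A\setminus\{|z|=\rho\}$ adjacent to $\partial\D$ is exactly the round annulus $\{\rho<|z|<1\}$. Cutting $A$ along this circle and applying the composition law (Theorem \ref{ThmSuperAddED}) gives
\[
\ED(\partial\D,\wp)\ \ge\ \ED(\partial\D,\{|z|=\rho\})+\ED(\{|z|=\rho\},\wp)\ \ge\ \tfrac{1}{2\pi}\log\tfrac1\rho ,
\]
since the first term equals $\tfrac1{2\pi}\log\tfrac1\rho$ and extremal distances are non-negative. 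Letting $\rho\downarrow r_{+}(\wp)$ yields $r_{+}(\wp)\ge e^{-2\pi\ED(\partial\D,\wp)}$.

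For the upper bound, I would first use $0\in E$ to write $r_{+}(\wp)=\max_{z\in E}|z|\le\operatorname{diam}(E)$, and then invoke the classical area-theorem estimate $\operatorname{diam}(E)\le 4\operatorname{cap}(E)$ for connected compact sets, whose extremal case (a segment) is precisely the source of the constant $4$ in the proposition. It remains to bound $\operatorname{cap}(E)$, and here I would use the composition law again: for $R>1$ the circle $\partial\D$ is non-contractible in the annular domain $\{|z|<R\}\setminus E$ (because $E\subset\D$), and the component of $(\{|z|<R\}\setminus E)\setminus\partial\D$ adjacent to $\{|z|=R\}$ is exactly $\{1<|z|<R\}$, so
\[
\operatorname{mod}\big(\{|z|<R\}\setminus E\big)\ \ge\ \tfrac{1}{2\pi}\log R+\ED(\partial\D,\wp).
\]
Letting $R\to\infty$ and using the standard capacity asymptotics $\operatorname{mod}(\{|z|<R\}\setminus E)=\tfrac{1}{2\pi}\log R+\tfrac{1}{2\pi}\log\tfrac1{\operatorname{cap}(E)}+o(1)$ — which follows by mapping $\widehat{\C}\setminus E$ conformally onto the exterior of the unit disc and observing that the image of $\{|z|=R\}$ stays within distance $O(1)$ of the circle of radius $R/\operatorname{cap}(E)$ — gives $\ED(\partial\D,\wp)\le\tfrac{1}{2\pi}\log\tfrac1{\operatorname{cap}(E)}$. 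Combining the last two displays with the diameter–capacity bound finishes the proof.

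Apart from the composition law already recalled in the text, the argument uses only the two classical facts of geometric function theory quoted above, namely $\operatorname{diam}(E)\le 4\operatorname{cap}(E)$ and the capacity asymptotics of $\operatorname{mod}(\{|z|<R\}\setminus E)$. Consequently the only point that needs care is the topological bookkeeping in the two applications of Theorem \ref{ThmSuperAddED}: verifying that the auxiliary circles are genuinely non-contractible in the respective annular domains and that the relevant outer sub-annuli are genuinely round annuli. This is exactly where the hypotheses $\wp\subset\D$ (which makes $\overline{D_\wp}\subset\D$, hence $\partial\D$ non-contractible in $\{|z|<R\}\setminus E$) and $d(0,\wp)>0$ (which makes $E$ non-degenerate, so that $A$ is a genuine annular domain and $\operatorname{cap}(E)>0$) enter. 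I expect this bookkeeping, rather than any hard analysis, to be the main (and modest) obstacle.
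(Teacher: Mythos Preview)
Your proof is correct. The lower bound is essentially identical to the paper's (the composition law with the circle $\{|z|=\rho\}$ is just a way of stating the monotonicity $\ED(\partial\D,\wp)\ge\ED(\partial\D,r_+(\wp)\partial\D)$).

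For the upper bound the two arguments are genuinely different. The paper proceeds through explicit extremal annuli: Grötzsch's theorem gives $\ED(\partial\D,\wp)\le\ED(\partial\D,[0,r_+(\wp)])$, and then the known relation between the Grötzsch and Teichmüller moduli together with the Ahlfors bound $e^{2\pi\ED([-1,0],[R,+\infty))}\le 16(R+1)$ yields the factor $4$ after taking a square root. Your route instead passes through logarithmic capacity: you bound $r_+(\wp)\le\operatorname{diam}(E)\le 4\operatorname{cap}(E)$ via P\'olya's inequality (segment is extremal), and then bound $\operatorname{cap}(E)\le e^{-2\pi\ED(\partial\D,\wp)}$ by applying superadditivity in the exhausting disks $\{|z|<R\}$ and reading off the capacity from the large-$R$ asymptotics of the modulus. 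Both arguments ultimately trace the constant $4$ back to the extremality of a segment, but they invoke different classical facts (Grötzsch/Teichmüller modulus identities versus the diameter--capacity inequality). Your version is arguably more modular, since the capacity bound $\operatorname{cap}(E)\le e^{-2\pi\ED(\partial\D,\wp)}$ is a clean standalone statement; the paper's version has the advantage of staying entirely within the modulus machinery of Ahlfors already cited in Section~\ref{SubsecED}.
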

	\begin{proof}
		This follows from Sections
	4-11 and 4-12 in \cite{Ahlfors2010ConfInv}.
	For completeness the derivations are given in Appendix B.
	\end{proof}
	The ratio $r_{+}(\wp)/r_{-}(\wp)$ parametrizes the shape of the minimal annulus centered at $0$ containing $\wp$. 
	By combining Proposition \ref{PropRplusED} with \eqref{EqKoebeQuarter},
	one gets bounds for this ratio in terms of $\ED(\partial\D,\wp)$
	and $\crad(0,\D\backslash\wp)$. The constant 16 that appears below may be non-optimal.
	\begin{cor}
		\label{CorRatio}
		For $\wp$ a simple loop in $\D$, at positive distance from $0$, surrounding $0$,
		\begin{align*}
		e^{-2\pi \ED(\partial\D,\wp)}\crad(0,\D\backslash\wp)^{-1}
		\leq
		\dfrac{r_{+}(\wp)}{r_{-}(\wp)}
		\leq 
		16 e^{-2\pi \ED(\partial\D,\wp)}\crad(0,\D\backslash\wp)^{-1}.
		\end{align*}
	\end{cor}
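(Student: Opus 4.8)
The plan is to simply chain together the two two-sided estimates that have already been established, being careful with which quantity bounds which. We want bounds on $r_{+}(\wp)/r_{-}(\wp)$, so I would write this ratio as $r_{+}(\wp)\cdot r_{-}(\wp)^{-1}$ and bound the numerator from above and below using Proposition \ref{PropRplusED}, and bound $r_{-}(\wp)^{-1}$ from above and below using \eqref{EqKoebeQuarter} applied with $D$ the domain surrounded by $\wp$ and $z=0$ (recall $r_{-}(\wp)=d(0,\wp)=d(0,\partial(\D\backslash\wp))$).

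First I would record the consequences of \eqref{EqKoebeQuarter}: since $\tfrac14\crad(0,\D\backslash\wp)\leq r_{-}(\wp)\leq \crad(0,\D\backslash\wp)$, taking reciprocals gives
\begin{align*}
\crad(0,\D\backslash\wp)^{-1}\leq r_{-}(\wp)^{-1}\leq 4\,\crad(0,\D\backslash\wp)^{-1}.
\end{align*}
Next, Proposition \ref{PropRplusED} gives $e^{-2\pi\ED(\partial\D,\wp)}\leq r_{+}(\wp)\leq 4\,e^{-2\pi\ED(\partial\D,\wp)}$. For the lower bound on the ratio I multiply the lower bound for $r_{+}(\wp)$ by the lower bound for $r_{-}(\wp)^{-1}$, obtaining $r_{+}(\wp)/r_{-}(\wp)\geq e^{-2\pi\ED(\partial\D,\wp)}\crad(0,\D\backslash\wp)^{-1}$. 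For the upper bound I multiply the upper bound for $r_{+}(\wp)$ by the upper bound for $r_{-}(\wp)^{-1}$, obtaining $r_{+}(\wp)/r_{-}(\wp)\leq 16\,e^{-2\pi\ED(\partial\D,\wp)}\crad(0,\D\backslash\wp)^{-1}$. This is exactly the claimed inequality, with the constant $16=4\cdot4$ coming from the product of the two ``lossy'' constants (the one-quarter in Koebe and the factor $4$ from Proposition \ref{PropRplusED}).

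There is essentially no obstacle here — the only thing to be slightly careful about is that all four quantities $r_{\pm}(\wp)$, $\crad(0,\D\backslash\wp)$, $e^{-2\pi\ED(\partial\D,\wp)}$ are strictly positive (this uses the hypothesis that $\wp$ is at positive distance from $0$, so that $r_{-}(\wp)>0$ and the domain $\D\backslash\wp$ surrounding $0$ is a genuine simply connected domain with $0$ in its interior), so that multiplying inequalities and taking reciprocals preserves directions. As remarked in the statement, the constant $16$ is presumably not optimal, since one is combining the worst cases of two unrelated distortion estimates, but no sharpness is claimed or needed.
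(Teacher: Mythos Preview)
Your proof is correct and is exactly the argument the paper has in mind: the paper does not give a separate proof of this corollary but simply remarks that it follows ``by combining Proposition \ref{PropRplusED} with \eqref{EqKoebeQuarter}'', which is precisely what you have written out in detail.
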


	\subsection{The continuum GFF}
	\label{SubsecGFF}
	
	The (zero boundary) Gaussian Free Field (GFF) in a domain $D$ 
	is a centered Gaussian process $\Phi$ (we also sometimes write  $\Phi^D$ when we the domain needs to be specified) 
	indexed by the set of continuous functions with compact support in $D$, with covariance given by 
	$$ \E [(\Phi,f_{1}) (\Phi,f_{2})]  =  
	\iint_{D\times D} f_{1}(z) G_D(z,w) f_{2}(w) \d z \d w, $$ 
	where $G_D$ is the Green's function of Laplacian (with Dirichlet boundary conditions) in $D$, normalized such that $G_D(z,w)\sim (2\pi)^{-1} \log(1/|z-w|)$ as $z \to w$. In other words, the Gaussian free field is the normal random variable whose Cameron-Martin space is $H_0^1$, the completion of the set of smooth compactly supported functions for the inner product given by
	\[(f,g)_{\nabla}:=\int_{D} \nabla f (z) \nabla g(z) dz.\]

	For this choice of normalization of $G_D$ (and therefore of the GFF), we set\footnote{Sometimes, other normalizations are used in the literature: if ${G_D (z,w) \sim c \log(1/|z-w|)}$ as $z \to w$, then $\lambda$ should be taken to be $(\pi/2)\times \sqrt {c}$.}
	\begin{equation}\label{eq::lambda}
		\lambda=\sqrt{\pi/8}.
	\end{equation}
	It can be shown that the GFF has a version that lives in some space of generalized functions (Sobolev space $H^{-1}$), which justifies the notation $(\Phi,f)$ for $\Phi$ tested against the function $f$ (see for example \cite{Dub}).
	
	In this paper, $\Phi$ always denotes the zero boundary GFF. We also consider GFF-s with non-zero Dirichlet boundary conditions - they are given by $\Phi + u$ where $u$ is some bounded harmonic function.

	\subsection{Local sets: definitions and basic properties}
	\label{SubsecLocSet}
	Let us now introduce more thoroughly the local sets of the GFF. We only discuss items that are directly used in the current paper. For a more general discussion of local sets we refer to 
	\cite{SchSh2,WWln2}.
	
	\begin{defn}[Local sets]\label{d. local sets}
		Consider a random triple $(\Phi, A,\Phi_A)$, where $\Phi$ is a  GFF in $D$, $A$ is a random closed subset of $\overline D$ and $\Phi_A$ a random distribution that can be viewed as a harmonic function when restricted to
		$D \backslash A$.
		We say that $A$ is a local set for $\Phi$ if conditionally on $(A,\Phi_A)$, $\Phi^A:=\Phi - \Phi_A$ is a  GFF in $D \backslash A$. 
		\end {defn}
		
		Throughout this paper, we use the notation $h_A: D\rightarrow\R$ for the function  that is equal to $\Phi_A$ on $D\backslash A$ and $0$ on $A$.

		Let us list a few properties of local sets (see for instance \cite {SchSh2,Aru,AS} for derivations and further properties). 
		\begin{lemma}\label{BPLS}    $\ $
			\begin {enumerate}
			\item Any local set can be coupled in a unique way with a given GFF: Let $(\Phi,A,\Phi_A,\widehat \Phi_A)$ be a coupling, where $(\Phi,A,\Phi_A)$ and $(\Phi,A,\Phi'_A)$ satisfy the conditions of this definition. Then, a.s. $\Phi_A= \Phi'_A$. Thus, being a local set is a property of the coupling $(\Phi,A)$, as  $\Phi_A$ is a measurable function of $(\Phi,A)$. 
			\item If $A$ and $B$ are local sets coupled with the same GFF $\Phi$, and $(A, \Phi_A)$ and $(B, \Phi_B)$ are conditionally independent given $\Phi$, then $A \cup B$ is also a local set coupled with $\Phi$ and the boundary values of $\Phi_{A \cup B}$ agree with those of $\Phi_B$ or $\Phi_A$ at every point of the boundary of $A \cup B$ that is of positive distance of $A$ or $B$ respectively\footnote{We say that $\Phi_{A \cup B}$ agrees with $\Phi_A$ at a point $x\in \partial(A\cup B) \cap \partial A $ if for any sequence of $x_n \notin A\cup B$ converging to $x$, $\Phi_{A \cup B}(x_n)-\Phi_{A} (x_n) \to 0$ as $n\to \infty$.}. Additionally, $B\backslash A$ is a local set of $\Phi^A$ with $(\Phi^A)_{B\backslash A} = \Phi_{B\cup A}-\Phi_{A}$  . 
			\item Let $(\Phi, (A_n)_{n\in \N},(\Phi_{A_n}))_{n\in \N}$ a sequence of conditionally independent local sets coupled with the same GFF $\Phi$. Furthermore, assume that $A_n$ is increasing. Then $A_\infty = \overline{\bigcup_{n\in \N} A_n}$ is a local set. Furthermore, if a.s. for all $n\in \N$, $A_n$ is connected to the boundary, then a.s. $\Phi_{A_n}\to \Phi_{A}$.
		\end{enumerate}
	\end{lemma}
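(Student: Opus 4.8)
The plan is to recall the by-now standard arguments for these properties of local sets (see \cite{SchSh2,Aru,AS}); they all rest on the conditional Gaussian description in Definition~\ref{d. local sets} together with the domain Markov property of the GFF. The organizing principle is that, conditionally on $(A,\Phi_A)$, the field $\Phi^A=\Phi-\Phi_A$ is a zero-boundary GFF in $D\backslash A$; in particular, tested against a fixed smooth compactly supported $f$, the variable $(\Phi^A,f)$ is centered Gaussian with conditional variance $v_f(A):=\iint_{(D\backslash A)^2}f(x)G_{D\backslash A}(x,y)f(y)\,dx\,dy$, which depends on $A$ but not on $\Phi_A$. For part~(1), existence of a coupling is assumed, so only uniqueness is at stake. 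Given $(\Phi,A,\Phi_A)$ and $(\Phi,A,\Phi'_A)$ coupled through the same pair $(\Phi,A)$, it suffices to show $(\Phi_A-\Phi'_A,f)=0$ a.s.\ for every such $f$, by density. The route I would take is to exhibit $\Phi_A$ as an explicit measurable functional of $(\Phi,A)$: approximate $A$ from outside by $A^{(n)}$, the union of the closed dyadic squares of side $2^{-n}$ meeting $A$; on $D\backslash A^{(n)}$ the Markov property gives $\Phi=\Phi_{A^{(n)}}+\Phi^{A^{(n)}}$ with $\Phi_{A^{(n)}}$ the harmonic extension of the boundary data of $\Phi$ — manifestly a measurable function of $(\Phi,A)$ — and then invoke the martingale convergence of part~(3) to obtain $\Phi_{A^{(n)}}\to\Phi_A$.

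For part~(2) the strategy is to \emph{reveal $A$ and $\Phi_A$ first}. Conditionally on $(A,\Phi_A)$ the field $\Phi^A$ is a GFF in $D\backslash A$, and the conditional independence hypothesis says precisely that, given $(\Phi,A,\Phi_A)$, the pair $(B,\Phi_B)$ has the same conditional law as given $\Phi$. This is exactly what is needed to conclude that $B\backslash A$ is a local set of the GFF $\Phi^A$ in $D\backslash A$; one defines $\Phi_{A\cup B}:=\Phi_A+(\Phi^A)_{B\backslash A}$ and checks the defining property of a local set for $A\cup B$ directly, using that $\Phi-\Phi_{A\cup B}=\Phi^A-(\Phi^A)_{B\backslash A}$ is, conditionally on everything revealed, a GFF in $(D\backslash A)\backslash(B\backslash A)=D\backslash(A\cup B)$. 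The claim on boundary values is read off locally: near a boundary point of $A\cup B$ at positive distance from $B$ (resp.\ from $A$) only $\Phi_A$ (resp.\ only the harmonic function added in the second step) contributes, so $\Phi_{A\cup B}$ agrees there with $\Phi_A$ (resp.\ with $\Phi_B$).

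For part~(3), applying part~(2) inductively each $A_n$ is a local set with $\Phi_{A_n}=\Phi_{A_{n-1}}+(\Phi^{A_{n-1}})_{A_n\backslash A_{n-1}}$. Put $\mathcal F_n=\sigma(A_n,\Phi_{A_n})$. The key point is that $((\Phi_{A_n},f))_n$ is an $\mathcal F_n$-martingale for every fixed $f$: from $\E[(\Phi_{A_n},f)\mid\mathcal F_{n-1}]=\E[(\Phi,f)\mid\mathcal F_{n-1}]-\E[(\Phi^{A_n},f)\mid\mathcal F_{n-1}]$, the first term is $(\Phi_{A_{n-1}},f)$ and the second vanishes because, given $\mathcal F_{n-1}$, the field $\Phi^{A_n}$ is (after revealing the local set $A_n\backslash A_{n-1}$ inside $\Phi^{A_{n-1}}$) a GFF, hence has zero mean against $f$. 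This martingale is $L^2$-bounded, $\E[(\Phi_{A_n},f)^2]=\E[(\Phi,f)^2]-\E[v_f(A_n)]\le\E[(\Phi,f)^2]$, so it converges a.s.\ and in $L^2$; running this over a countable dense family of test functions upgrades the convergence to that of $\Phi_{A_n}$ towards a distribution $\Phi_{A_\infty}$. One then verifies the local-set property of $(A_\infty,\Phi_{A_\infty})$ by passing to the limit: the conditional mean of $\Phi-\Phi_{A_\infty}$ against $f$ is zero, its conditional covariance is $\lim_n G_{D\backslash A_n}$, which one identifies with $G_{D\backslash A_\infty}$, and conditional Gaussianity survives the $L^2$ limit. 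Finally, under the extra hypothesis that each $A_n$ is connected to $\partial D$, the domains $D\backslash A_n$ decrease to $D\backslash A_\infty$ without trapping new connected components, which is what makes $\Phi_{A_n}\to\Phi_A$ hold in the stated stronger sense.

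The step I expect to be the main obstacle is the gluing in part~(2): one must make the heuristic ``reveal $A$, then reveal $B$'' precise and check that $B\backslash A$ is honestly a local set of the smaller GFF $\Phi^A$ — this is where the conditional independence of $(A,\Phi_A)$ and $(B,\Phi_B)$ given $\Phi$ is indispensable — and then carry out the boundary-value bookkeeping behind the footnoted notion of two local-set fields agreeing at a boundary point. The remaining ingredients (the density reduction in part~(1), the dyadic approximation, and the $L^2$-bounded martingale convergence in part~(3)) are routine.
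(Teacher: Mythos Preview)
The paper does not give its own proof of this lemma; it states the properties and refers to \cite{SchSh2,Aru,AS} for derivations. Your sketch follows the standard arguments from those references and is essentially correct, so there is nothing to compare against here.
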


	We will now argue that a local set remains a local set under a Cameron-Martin shift of the underlying field.  Let $g$ a function in the Sobolev space $H_0^1(D)$ and $\Phi$ be a Gaussian free field under the probability measure $\P$. Define
	\begin{equation}\label{e.change measure}
	d\widetilde \P= \exp\left ((\Phi, g)_\nabla-\frac{1}{2}(g,g)_\nabla\right )d\P.
	\end{equation}
	According to the Cameron-Martin theorem, under the measure $\widetilde \P$, $\Phi-g$ is a GFF. Something similar is true for local sets.
	\begin{thm}\label{t. change of measure local sets}
		Let $(\Phi,A,\Phi_A)$ be a local set coupling under the measure $\P$. Then, under the law $\widetilde \P$, $(\Phi-g, A, \Phi_A -g_A)$ is a local set coupling, where $g_A$  is the orthogonal projection in $H_0^1(D)$ of $g$ on the subspace of functions that are harmonic in $D\backslash A$.
	\end{thm}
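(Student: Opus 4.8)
The plan is to reduce the statement to its defining property: I must show that under $\widetilde{\P}$, conditionally on $(A, \Phi_A - g_A)$, the field $(\Phi - g) - (\Phi_A - g_A) = \Phi^A - (g - g_A)$ is a zero-boundary GFF in $D \setminus A$. First I would decompose $g = g_A + g^A$ orthogonally in $H_0^1(D)$, where $g_A$ is harmonic off $A$ and $g^A := g - g_A$ is, by construction of the orthogonal complement, supported (in the $H_0^1$ sense) on $D \setminus A$ — more precisely $g^A$ lies in the closure of $H_0^1(D \setminus A)$ inside $H_0^1(D)$. Correspondingly the Gaussian pairing splits as $(\Phi, g)_\nabla = (\Phi, g_A)_\nabla + (\Phi, g^A)_\nabla = (\Phi_A, g_A)_\nabla + (\Phi^A, g^A)_\nabla$, using that $\Phi^A$ is orthogonal to harmonic-off-$A$ functions and $\Phi_A$ is harmonic off $A$; similarly $(g,g)_\nabla = (g_A,g_A)_\nabla + (g^A,g^A)_\nabla$. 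Hence the Radon–Nikodym density factorizes as a product of a $(A,\Phi_A)$-measurable term, $\exp((\Phi_A,g_A)_\nabla - \tfrac12(g_A,g_A)_\nabla)$, and the term $\exp((\Phi^A, g^A)_\nabla - \tfrac12(g^A,g^A)_\nabla)$.

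Next I would condition on $(A, \Phi_A)$. Under $\P$, conditionally on this data, $\Phi^A$ is a GFF in $D \setminus A$, and $g^A \in H_0^1(D\setminus A)$ is deterministic given $A$; so the second factor is exactly the Cameron–Martin density (relative to the conditional law of $\Phi^A$) that shifts $\Phi^A$ by $g^A$. Therefore, after also reweighting by the first ($(A,\Phi_A)$-measurable) factor — which only changes the law of $(A, \Phi_A)$, not the conditional law of $\Phi^A$ given $(A,\Phi_A)$ — I get: under $\widetilde\P$, conditionally on $(A, \Phi_A)$, the field $\Phi^A - g^A$ is a zero-boundary GFF in $D \setminus A$. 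Since $(A, \Phi_A - g_A)$ is a deterministic function of $(A, \Phi_A)$ (as $g_A$ is a measurable function of $A$) and conversely, conditioning on $(A,\Phi_A - g_A)$ is the same as conditioning on $(A,\Phi_A)$; and $(\Phi - g) - (\Phi_A - g_A) = \Phi^A - g^A$. This is precisely the local-set property for the triple $(\Phi - g, A, \Phi_A - g_A)$ under $\widetilde\P$, noting that $\Phi_A - g_A$ restricted to $D \setminus A$ is still harmonic there.

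The main obstacle I anticipate is making the $H_0^1$ orthogonal decomposition and the splitting of the pairings rigorous at the level of the distribution-valued GFF rather than for smooth test functions: one must justify that $(\Phi, g_A)_\nabla$ is a measurable function of $\Phi_A$ (equivalently, that the projection of $\Phi$ onto the harmonic-off-$A$ subspace agrees with the analogous projection of $\Phi_A$), and likewise that $(\Phi, g^A)_\nabla$ depends only on $\Phi^A$. This is standard once one knows that $\Phi_A$ encodes exactly the harmonic-off-$A$ part of $\Phi$ and $\Phi^A$ the orthogonal complement — a consequence of the definition of local sets together with the uniqueness in Lemma \ref{BPLS}(1) — but it requires a little care with the randomness of $A$, which can be handled by first conditioning on $(A, \Phi_A)$ so that $g_A, g^A$ become deterministic. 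A secondary routine point is checking the integrability needed to apply Cameron–Martin conditionally, which follows since $g^A \in H_0^1(D \setminus A)$ has finite Dirichlet energy bounded by that of $g$.
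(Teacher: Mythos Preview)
Your proposal is correct and follows essentially the same approach as the paper's proof: decompose $g=g_A+g^A$ orthogonally, use that $(\Phi^A,g_A)_\nabla=0$ a.s.\ to factor the Radon--Nikodym density into a $(A,\Phi_A)$-measurable piece and a Cameron--Martin shift of $\Phi^A$ by $g^A$, and then apply Cameron--Martin conditionally on $(A,\Phi_A)$. The paper's write-up is slightly terser but the argument is the same, including the observation that conditioning on $(A,\Phi_A-g_A)$ is equivalent to conditioning on $(A,\Phi_A)$.
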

    The proof of Theorem \ref{t. change of measure local sets} is essentially contained in Proposition 13 of \cite{ASW}. As that proposition is stated slightly differently, we provide the proof for the sake of completeness in Appendix B.
	\begin{rem}Note that the subspace of harmonic functions in $D\backslash A$ is closed in $H^1_0(D)$. Furthermore, when $A$ has empty interior and $g$ has a bounded trace on $A$, then $g_A$ is equal to the unique bounded harmonic function in $D\backslash A$ with boundary values $g$ on $A$ and $0$ on $\partial D$.
	\end{rem}

	\subsection{Parametrizing local sets}
	\label{SubSec Param Loc Set}
	
	Often one is interested in a growing family of local sets, which we call local set processes.
	
	\begin{defn}[Local set process]
		We say that a coupling $(\Phi,(\eta_t)_{t\geq 0})$ is a local set process if $\Phi$ is a GFF in $D$, $\eta_0\subseteq \partial D$, and $\eta_t$ is an increasing continuous family (for the Haussdorf topology) of local sets such that for all stopping time $\tau$ of the filtration $\F_t:=\sigma(\eta_s:s\leq t)$, 
		$(\Phi,\eta_\tau)$ is a local set.
	\end{defn}
	
	Let us note that in our definition $\eta_t$ is actually a random set. In other words, in our notation $\eta_t=\eta([0,t])$. In the rest of the paper, we are mostly interested in local set processes that (mostly) evolve as continuous curve. In those cases, if appropriate, we denote by $\eta(t)$ the tip of the curve at time $t$. 
	
	In a disk, local processes can be naturally parametrized using the conformal radius. In an annulus local set process can be parametrized by its extremal distance to a whole boundary component. 
	
	\begin{prop}[Proposition 2.7 of \cite{ALS1}] \label{BProcess} Let $D$ be an annular domain and $(\Phi,\eta_t)$ be a local set process with $\Phi $ a GFF in $D$. Then, if $\eta_t$ is parametrized by the inverse of its extremal distance, i.e., 
	\[t=\ED(\B, \partial D \backslash \B)-\ED(\B, \mathcal (\partial D \cup \eta_t) \backslash \B),\]
	 the process 
	\[\widehat{B}_t := \ED(\B, \mathcal (\partial D \cup \eta_t) \backslash \B)\int_{\B} \partial_n h_{\eta_t},\]
	 has (a modification with) the law of a Brownian bridge from 0 to 0 with length $\ED(\B, \mathcal \partial D \backslash \B)$.
	\end{prop}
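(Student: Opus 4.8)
The plan is to follow the same strategy that identifies the conformal-radius parametrization of local set processes in a simply connected domain, transported to the annular setting via the characterization of extremal distance through Dirichlet energy (Theorem \ref{thmEL}). First I would recall that, since $(\Phi,\eta_t)$ is a local set process, the process $M_t := \int_{\B}\partial_n h_{\eta_t}$ is, up to the multiplicative normalization coming from harmonic measure, the conditional mean of a fixed linear functional of the GFF given $\mathcal F_t$, and hence a continuous local martingale. Concretely, let $\bar u_t$ be the harmonic function in $D\backslash(\partial D\cup\eta_t)$ equal to $1$ on $\B$ and $0$ on the rest of the boundary, and let $\bar u$ be the corresponding function for $\eta_0=\partial D$ alone; by Theorem \ref{thmEL}, $\int_{\B}\partial_n\bar u_t = \ED(\B,(\partial D\cup\eta_t)\backslash\B)^{-1}$. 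The key observation is that the random variable $(\Phi,\Delta\bar u)$ (interpreted via Green's function / integration by parts against the zero-boundary field, with $\bar u$ the limit harmonic function attached to the whole configuration) has conditional expectation given $\mathcal F_t$ equal to a constant multiple of $\widehat B_t$: tested against the GFF, the harmonic function $h_{\eta_t}$ restricted to $D\backslash\eta_t$ contributes exactly $\int_{\B}\partial_n h_{\eta_t}$ after integrating by parts against $\bar u_t$, and the normalizing factor $\ED(\B,(\partial D\cup\eta_t)\backslash\B)$ converts the ``harmonic measure'' weighting into the right constant. This exhibits $\widehat B_t$ as a continuous local martingale; by Lévy's characterization it is a time-changed Brownian motion, and one must then (i) compute its quadratic variation in the given time parametrization, showing $d\langle\widehat B\rangle_t = dt$, and (ii) identify the terminal behavior.

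For step (i), the quadratic variation of $\widehat B$ is, by the local set property, the conditional variance increment of $(\Phi,\Delta\bar u)$, which equals the Dirichlet energy picked up by the newly revealed harmonic function; after the extremal-distance time change this is precisely $d\,\ED(\B,(\partial D\cup\eta_t)\backslash\B)^{-1}$ up to sign and the $\ED$-prefactor, and a direct computation with the composition/additivity structure of Dirichlet energy (Theorem \ref{thmEL}, and the superadditivity in Theorem \ref{ThmSuperAddED} in infinitesimal form) gives $d\langle\widehat B\rangle_t=dt$. For step (ii): as $\eta_t$ grows to disconnect $\B$ from the rest of $\partial D$, $\ED(\B,(\partial D\cup\eta_t)\backslash\B)\to 0$, so the time interval is $[0,\ED(\B,\partial D\backslash\B)]$ and $\widehat B_T\to 0$ with $T=\ED(\B,\partial D\backslash\B)$; starting from $\eta_0\subseteq\partial D$ one has $\widehat B_0=0$ because $h_{\eta_0}$ has zero normal derivative contribution (the revealed values are zero on $\partial D$). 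A continuous local martingale on $[0,T]$ with $d\langle\widehat B\rangle_t=dt$, pinned to $0$ at both ends, is a Brownian bridge — this follows e.g. by checking it is absolutely continuous with respect to Brownian motion up to any $t<T$ with the Brownian-bridge density, or by a time-reversal/$h$-transform argument.

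I would organize the write-up as: (1) set up $\bar u_t$, record $\int_{\B}\partial_n\bar u_t = \ED(\B,(\partial D\cup\eta_t)\backslash\B)^{-1}$ from Theorem \ref{thmEL}; (2) show $\widehat B_t$ is a continuous local martingale in $\mathcal F_t$ using Lemma \ref{BPLS}(1)–(2) and the definition of local set; (3) compute $\langle\widehat B\rangle_t = t$ via the Dirichlet-energy increment and the extremal-distance time change; (4) identify boundary values $\widehat B_0=0$, $\widehat B_T=0$; (5) conclude it is a Brownian bridge of length $T=\ED(\B,\partial D\backslash\B)$, and note the need to pass to a modification since a priori $\widehat B$ is only defined up to the time change being continuous. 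The main obstacle I anticipate is step (3): making rigorous the claim that the quadratic variation increment of the martingale $\int_{\B}\partial_n h_{\eta_t}$, after multiplying by the extremal-distance prefactor and reparametrizing, is exactly $dt$. This requires carefully tracking how the harmonic function attached to the local set and its normal-derivative functional interact under the conformal/energy bookkeeping — essentially an annular analogue of the computation that in the disk gives the $-(2\pi)^{-1}\log\crad$ parametrization — and dealing with the fact that $\B$ may be non-smooth, so the identity $\int_{\B}\partial_n\bar u=\int_D|\nabla\bar u|^2$ must be invoked through the conformally invariant (extremal-length) definition rather than a literal boundary integral. Since this proposition is quoted from \cite{ALS1}, in the paper itself one would simply cite it, but the above is how I would reconstruct the argument.
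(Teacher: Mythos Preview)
The paper does not prove this proposition; it is quoted verbatim from \cite{ALS1} and used as a black box. So there is no ``paper's own proof'' to compare against, and your closing remark that in the paper one would simply cite it is exactly what happens.

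That said, your reconstruction contains a genuine error that would make the argument collapse. You claim that $\widehat B_t$ itself is a continuous local martingale with $d\langle\widehat B\rangle_t=dt$, and then in step (5) that ``a continuous local martingale on $[0,T]$ with $d\langle\widehat B\rangle_t=dt$, pinned to $0$ at both ends, is a Brownian bridge.'' These two statements are incompatible: by L\'evy's characterization a continuous local martingale started at $0$ with $\langle\widehat B\rangle_t=t$ is a standard Brownian motion, which does \emph{not} hit $0$ at a deterministic time $T$ almost surely; conversely, a Brownian bridge from $0$ to $0$ on $[0,T]$ is not a martingale in its own filtration (it has the drift $-\widehat B_t/(T-t)\,dt$). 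So the process cannot satisfy both properties simultaneously unless it is identically zero.

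The fix is to identify the martingale correctly. Writing $L=\ED(\B,\partial D\backslash\B)$ and noting that in the stated parametrization $\ED(\B,(\partial D\cup\eta_t)\backslash\B)=L-t$, one has $\widehat B_t=(L-t)M_t$ with $M_t:=\int_{\B}\partial_n h_{\eta_t}$. It is $M_t$, not $\widehat B_t$, that is the continuous local martingale (being the conditional expectation of a fixed linear functional of $\Phi$ given $\mathcal F_t$). The variance computation via Theorem \ref{thmEL} gives $\langle M\rangle_t=\ED(\B,(\partial D\cup\eta_t)\backslash\B)^{-1}-\ED(\B,\partial D\backslash\B)^{-1}=(L-t)^{-1}-L^{-1}$, so $M_t=\int_0^t (L-s)^{-1}\,dW_s$ for a Brownian motion $W$. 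Then $\widehat B_t=(L-t)\int_0^t(L-s)^{-1}\,dW_s$, which is precisely the standard integral representation of a Brownian bridge from $0$ to $0$ of length $L$. Your steps (1), (2), (4) are on the right track; it is the bookkeeping in (3) and the logical slip in (5) that need to be repaired along these lines.
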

		\subsection{Two-valued local sets}
		\label{SubsecTVS}
		First, it is convenient to review a larger setting, that of bounded type local sets (BTLS) introduced in \cite{ASW}. These sets are thin local set $A$, for which its associated harmonic function $h_A$ remains bounded. 
		Let us introduce the definition of a thin 
local set when $h_A$ is integrable\footnote{For the general definition of a thin set, see \cite{Se}. }.
		\begin{defn}
We say that a local set $A$ is thin, if $h_A$ belongs to $\mathbb L^1(D \backslash A)$ and for any smooth function $f$
\begin{align*}
(\Phi_A,f)= \int_{D\backslash A} h_A(x)f(x)dx.
\end{align*}
		\end{defn}
			The following proposition provides a sufficient condition to show that a local set $A$ is thin.
		\begin{prop}[Proposition 4.3  of \cite{Se}]\label{p.thin} Let $A$ be a local set.
 If $h_A$ is $\mathbb L^1(D\backslash A)$  and for any compact set $K\subseteq D$, the Minkowski dimension of $A\cap K$ is strictly smaller than 2, then $A$ is thin. 
		\end{prop}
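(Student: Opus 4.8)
The plan is to verify the defining identity of thinness directly: I will show that for every test function $f\in C_c^\infty(D)$ one has $(\Phi_A,f)=\int_{D\setminus A}h_A(x)f(x)\,dx$, the integral being well defined since $h_A\in\mathbb L^1(D\setminus A)$ is assumed. The strategy is to split $f$ into a piece supported away from $A$, on which $\Phi_A$ literally coincides with the locally bounded harmonic function $h_A$, and a piece supported in a shrinking neighbourhood of $A$, whose pairing with $\Phi_A$ I control through an $L^2$ (Green-energy) estimate fed by the Minkowski-dimension hypothesis.

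First record a soft consequence of the hypothesis. Writing $A^r:=\{x:\operatorname{dist}(x,A)<r\}$, if $\dim_M(A\cap K)<2$ for every compact $K\subset D$, then for such $K$ and any $d'\in(\dim_M(A\cap K),2)$ one has $|A^{2\epsilon}\cap K|\lesssim\epsilon^{\,2-d'}$ for all small $\epsilon>0$ (cover $A\cap K$ by $\lesssim\epsilon^{-d'}$ balls of radius $\epsilon$ and inflate); in particular $|A|=0$, so $\int_{D\setminus A}h_Af=\int_D h_Af$ with $h_A$ extended by $0$ on $A$. Now fix $f$, set $K':=\operatorname{supp}f$, and for small $\epsilon$ pick $\chi_\epsilon\in C_c^\infty(D)$ with $0\le\chi_\epsilon\le1$, $\chi_\epsilon\equiv 1$ on $A^{\epsilon}$ and $\operatorname{supp}\chi_\epsilon\subset A^{2\epsilon}$ (mollify $\mathbf 1_{A^{3\epsilon/2}}$). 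Decompose $f=f(1-\chi_\epsilon)+f\chi_\epsilon$. The function $f(1-\chi_\epsilon)$ is supported in the compact set $\operatorname{supp}f\cap\{\operatorname{dist}(\cdot,A)\ge\epsilon\}\subset D\setminus A$, where $\Phi_A$ equals the harmonic (hence smooth, locally bounded) function $h_A$; thus $(\Phi_A,f(1-\chi_\epsilon))=\int_{D\setminus A}h_A\,f(1-\chi_\epsilon)$, and since $|h_Af(1-\chi_\epsilon)|\le|h_Af|\in\mathbb L^1$ while $\mathbf 1_{A^{2\epsilon}}\downarrow\mathbf 1_A$ with $|A|=0$, dominated convergence gives $(\Phi_A,f(1-\chi_\epsilon))\to\int_{D\setminus A}h_Af$ as $\epsilon\to0$.

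It remains to show $(\Phi_A,f\chi_\epsilon)\to0$. Write $\Phi_A=\Phi-\Phi^A$ and bound each term in $L^2$. With $S_\epsilon:=A^{2\epsilon}\cap K'$, using $|f\chi_\epsilon|\le\|f\|_\infty\mathbf 1_{S_\epsilon}$ and $G_D\ge0$,
\[
\E\big[(\Phi,f\chi_\epsilon)^2\big]\le\|f\|_\infty^2\iint_{S_\epsilon\times S_\epsilon}G_D(x,y)\,dx\,dy\lesssim|S_\epsilon|^2\log\!\big(1/|S_\epsilon|\big),
\]
the last step using $G_D(x,y)\le\tfrac1{2\pi}\log(1/|x-y|)+C_{K'}$ on $K'\times K'$ together with the rearrangement bound $\int_{E}\log(1/|x-y|)\,dy\lesssim|E|\log(1/|E|)$ for $E$ of small measure. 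Since $|S_\epsilon|\lesssim\epsilon^{2-d'}$ with $d'<2$, this tends to $0$. Conditionally on $(A,\Phi_A)$ the field $\Phi^A$ is a GFF in $D\setminus A$, so $\E\big[(\Phi^A,f\chi_\epsilon)^2\mid A,\Phi_A\big]$ is bounded by the same expression with $G_D$ replaced by $G_{D\setminus A}\le G_D$; taking expectations, $(\Phi^A,f\chi_\epsilon)\to0$ in $L^2$ as well. Hence $(\Phi_A,f\chi_\epsilon)\to0$ in $L^2$, so along a subsequence $\epsilon_n\to0$ it converges to $0$ almost surely. Combined with the previous paragraph this gives $(\Phi_A,f)=\int_{D\setminus A}h_Af$ a.s. for the fixed $f$; applying this over a countable family of $f$ dense in $C_c^\infty(D)$ (test-function topology) and using that both sides are continuous in $f$ upgrades it to an a.s. statement valid for all $f$ simultaneously, i.e. $A$ is thin.

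The main obstacle is the quantitative step: one must convert the purely metric hypothesis on $A$ into the energy decay $\iint_{S_\epsilon\times S_\epsilon}G_D\to0$, and — because $\Phi^A$ is only a GFF conditionally on $(A,\Phi_A)$ — run that estimate under the conditional law before integrating; the monotonicity $G_{D\setminus A}\le G_D$ is precisely what makes the conditional bound uniform. A minor technical wrinkle is that the almost-sure convergence of $(\Phi_A,f\chi_\epsilon)$ holds a priori only along an $f$-dependent subsequence, which is why one concludes by density over a countable set of test functions rather than for each $f$ individually.
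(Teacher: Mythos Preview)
The paper does not give its own proof of this proposition: it is quoted verbatim as Proposition~4.3 of \cite{Se} and used as a black box, so there is no ``paper's proof'' to compare against.

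That said, your argument is essentially correct and is the natural direct proof. A couple of minor remarks. In the variance bound you write $\E[(\Phi,f\chi_\epsilon)^2]\le\|f\|_\infty^2\iint_{S_\epsilon\times S_\epsilon}G_D$; strictly speaking, since $f\chi_\epsilon$ may change sign, one should insert the one-line justification that $G_D\ge 0$ pointwise implies $\iint gG_Dg\le\iint|g|G_D|g|$, after which the domination by $\|f\|_\infty\mathbf 1_{S_\epsilon}$ is immediate. Also, the rearrangement step $\int_E\log(1/|x-y|)\,dy\lesssim|E|\log(1/|E|)$ is correct but worth stating as ``maximised when $E$ is the disk of area $|E|$ centered at $x$''. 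Neither point is a gap; the skeleton---cutoff near $A$, harmonicity of $\Phi_A$ away from $A$, and the Green-energy decay $\iint_{S_\epsilon\times S_\epsilon}G_D\lesssim|S_\epsilon|^2\log(1/|S_\epsilon|)\to 0$ driven by the Minkowski bound $|S_\epsilon|\lesssim\epsilon^{2-d'}$---is exactly the mechanism behind the cited result.
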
 		 
		Now, we can define the bounded type local sets.		 
		\begin{defn}[BTLS]\label{BTLSCND}
			Consider  a  closed subset $A$ of $\overline D$  and $\Phi$ a GFF in $D$ defined on the same probability space.
			Let $K>0$, we say that $A$ is a $K$-BTLS for $\Phi$ if the following four conditions are satisfied: 
			\begin {itemize}
			\item $A$ is a thin local set of $\Phi$.
			\item Almost surely, $|h_A| \le K$ in $D \backslash A$.  
			\item Almost surely, each connected component of $A$ that does not intersect $\partial D$ has a neighborhood that does not intersect any other connected component of $A$. 
			\end {itemize}
			If $A$ is a $K$-BTLS for some $K$, we say that it is a BTLS.
			\end {defn}

	\begin{figure}[ht!]	   
		\centering
		\includegraphics[width=5.0in]{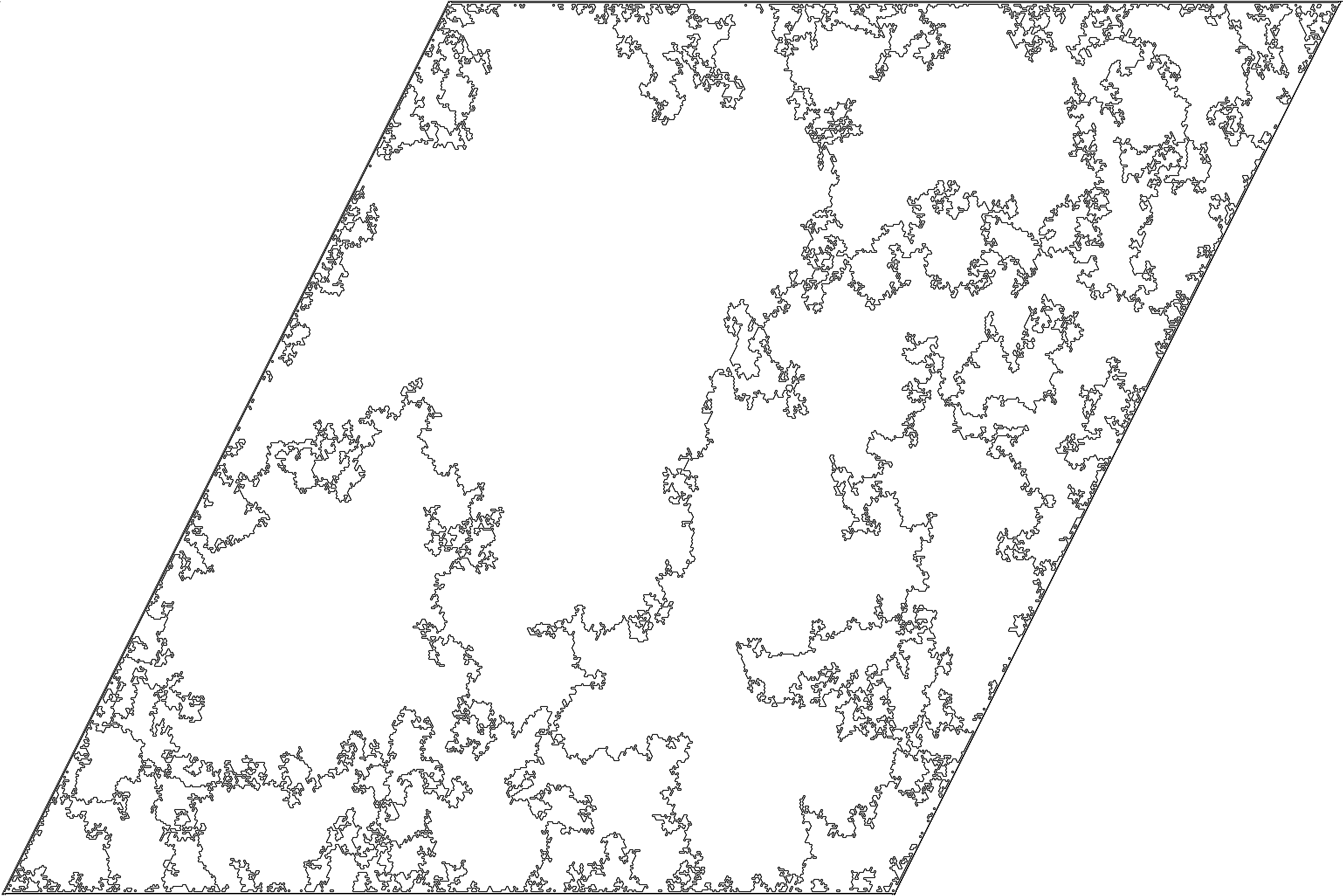}
		\caption{Simulation of $\A_{-\lambda,\lambda}$ done by B. Werness.}\label{SimALE}
	\end{figure}
	
	One family of useful BTLS is that of two-valued local sets. In \cite{ASW}, two-valued local sets of the zero boundary GFF were introduced in the simply connected case. In \cite{ALS1} two-valued local sets were generalized to multiply-connected domains and to more general, piece-wise constant boundary values. 
	
	So let $u$ be a bounded harmonic function whose boundary values are constant in each connected component of the boundary $\partial D$. We denote $u^{-a,b}$ the connected components of $\partial D$ where the values of $u$ are outside of $[-a,b]$.The two-valued set $\A^{u}_{-a,b}$ in n-connected domains is then a BTLS such that in each connected component $O$ of $D\backslash \A_{-a,b}^u$ the bounded harmonic function $h_{\A_{-a,b}^u}$ satisfies the following \hypertarget{tvs} {conditions:} 
	\begin{enumerate}
		\item[(\twonotes)] On every boundary component of $\partial O\backslash u^{-a,b}$ the harmonic function $h_{\A_{-a,b}^u}+u$ takes constant value $a$ or $-b$, and in $\partial O \cap u^{-a,b}$ it takes the value $u$. 
		\item[(\twonotes \twonotes)] Additionally, we require that in every connected component of $\partial O$  either $h_{\A_{-a,b}^u}+u\leq -a$ or $h_{\A_{-a,b}^u}+u\geq b$ holds. 
	\end{enumerate}

	The next theorem gives the existence and uniqueness of the TVS (in fact more general boundary conditions are allowed in \cite{ALS1}, but we state it in a form relevant to this paper).
	\begin{thm}[Theorem 3.21 of \cite{ALS1}]
	\label {t.tvs}
	Consider a bounded harmonic function $u$ equal to a constant on any boundary component of $\partial D$ as above. If $[\min(u),\max(u)] \cap (-a,b) \neq \emptyset$ and  $a+b \geq 2\lambda$, then  it is possible to construct $\A_{-a,b}^u\neq \emptyset$ coupled with a GFF $\Phi$ . Moreover, the sets $\A_{-a,b}^u$ are
	\begin{itemize} 
		\item unique in the sense that if $A'$ is another BTLS coupled with the same $\Phi$,  such that a.s. it satisfies the conditions above,	then $A' = \A^u_{-a, b}$ almost surely;  
		\item measurable functions of the GFF $\Phi$ that they are coupled with;
		\item 	monotone in the following sense: if $[-a,b] \subset [-a', b']$ with $b+a \ge 2\lambda$,  then almost surely, $\A^u_{-a,b} \subset \A^{u}_{-a', b'}$. 
		
	\end{itemize}
	\end {thm}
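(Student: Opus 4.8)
The plan is to establish existence by an explicit construction built from level lines of the GFF --- following \cite{ASW} in the simply connected case and \cite{ALS1} in the multiply connected one --- and then to deduce uniqueness, measurability and monotonicity from properties of that construction. As a first reduction, note that by conformal invariance of the GFF and of the notion of a local set, and because adding a bounded harmonic function to the field leaves the local-set structure intact, it is enough to run the construction inside simply connected (or annular) components of the complement of whatever has already been discovered, where the boundary data is piecewise constant with only finitely many values; the pieces are then assembled using Lemma~\ref{BPLS}(2)--(3).

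\emph{Existence.} I would grow a local set process $(\eta_t)_{t\ge 0}$ with $\eta_0=\partial D$ by concatenating level lines of $\Phi$ --- SLE$_4$-type curves, and their radial or ``generalized'' analogues when a component is multiply connected --- each drawn inside a component of the current complement at the height dictated by that component's boundary data, so that across it the harmonic function $h_{\eta_t}$ jumps by $\pm\lambda$ \cite{WaWu,ASW}. Parametrising the growth by minus the log conformal radius seen from a fixed interior point $z_0$ (in an annular component, by the inverse extremal distance to a boundary component, cf.\ Proposition~\ref{BProcess}), the observable $h_{\eta_t}(z_0)$ plus the boundary-data contribution becomes a time-changed Brownian motion, respectively Brownian bridge. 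One stops the exploration of a complementary component as soon as this observable has exited $(-a,b)$ and the no-return condition (\twonotes\twonotes) holds there, while continuing in the other components, and sets $\A_{-a,b}^u:=\overline{\bigcup_t\eta_t}$. It then remains to check that (i) the construction a.s.\ terminates --- in each component the observable is a Brownian motion which a.s.\ exits $(-a,b)$, so the component is resolved after a.s.\ finitely many level lines, and the resulting set is thin; (ii) $\A_{-a,b}^u$ is a BTLS, since $h_{\A_{-a,b}^u}$ is bounded by $\max(a,b,\|u\|_\infty)$ by construction, the complementary components are cut out by level lines and hence locally separated, and thinness follows from Proposition~\ref{p.thin} because a level line has Minkowski dimension $3/2<2$; and (iii) conditions (\twonotes)--(\twonotes\twonotes) hold, by the stopping rule. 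Non-emptiness follows from $[\min u,\max u]\cap(-a,b)\ne\emptyset$: the target point then lies in a component whose value is in $(-a,b)$, forcing at least one level line to be drawn.

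\emph{Uniqueness, measurability, monotonicity.} Let $A'$ be any BTLS coupled with the same $\Phi$ that a.s.\ satisfies the defining conditions; I would show $A'=\A_{-a,b}^u$ by proving both inclusions. The inclusion $\A_{-a,b}^u\subseteq A'$ rests on level lines being deterministic functions of $\Phi$ and monotone in their boundary data \cite{WaWu,ASW}, together with a restriction property of two-valued sets, so that the level lines appearing in the construction are detected inside any set already carrying the two-valued boundary data; this step is itself delicate. For the reverse inclusion, $\overline{A'\setminus\A_{-a,b}^u}$ is by Lemma~\ref{BPLS}(2) a BTLS of the GFF $\Phi^{\A_{-a,b}^u}$ in $D\setminus\A_{-a,b}^u$ still satisfying the two-valued conditions; but there the boundary data of every component already lies outside $(-a,b)$, and in that situation the only BTLS with the two-valued property is the empty set --- this is the key point, provable from the existence criterion for level lines, no level line of height inside $(-a,b)$ being possible when all boundary values lie on one side of the interval. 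Hence $A'\subseteq\A_{-a,b}^u$, so $A'=\A_{-a,b}^u$ a.s. Measurability is then immediate: each $\eta_t$ is a measurable function of $\Phi$, $\A_{-a,b}^u$ is a countable limit of such, and uniqueness removes any dependence on the choices in the construction. Finally, for monotonicity, if $[-a,b]\subseteq[-a',b']$ with $a+b\ge 2\lambda$, I would first explore $\A_{-a,b}^u$ and then continue the level-line exploration in each component until the conditions for $[-a',b']$ are met; this produces a BTLS containing $\A_{-a,b}^u$ that has the defining properties of $\A_{-a',b'}^u$, whence by uniqueness $\A_{-a,b}^u\subseteq\A_{-a',b'}^u$ a.s.

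\emph{Main obstacle.} I expect the heart of the difficulty to be the uniqueness argument --- in particular the ``no extra set'' step, the restriction property, and the monotonicity and determinism of level lines --- and, in the multiply connected setting, giving a rigorous meaning to the level-line exploration, which must be allowed to wind around the holes and to have pieces joining two distinct boundary components. This is where the bulk of the work in \cite{ASW,ALS1} lies.
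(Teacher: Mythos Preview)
The paper does not prove this theorem: it is quoted verbatim as Theorem~3.21 of \cite{ALS1} and used as a black box throughout. So there is no ``paper's own proof'' to compare against; your proposal is really a sketch of the argument from \cite{ASW,ALS1}.

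On its own merits, your outline of existence is accurate in spirit: one does build $\A_{-a,b}^u$ by iterating generalized level lines and stopping once the boundary values of each complementary component lie in $\{-a,b\}$ subject to (\twonotes\twonotes), and thinness, boundedness, and local separation follow as you say. Measurability and monotonicity are indeed corollaries of uniqueness plus the construction, exactly as you describe.

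Your uniqueness sketch, however, departs from the route actually taken in \cite{ASW,ALS1} and contains a real gap. You propose to show $A'\subseteq\A_{-a,b}^u$ by arguing that $\overline{A'\setminus\A_{-a,b}^u}$ is a BTLS of $\Phi^{\A_{-a,b}^u}$ in a domain whose boundary data already lies in $\{-a,b\}$, and then assert that ``the only BTLS with the two-valued property is the empty set'' in that situation. But this last claim is not obvious and is essentially what you are trying to prove: nothing you have established rules out a nontrivial thin local set inside a component with constant boundary value $-a$ whose harmonic function still takes values in $\{-a,b\}$. The actual proofs in \cite{ASW} (simply connected) and \cite{ALS1} (general) do not argue this way. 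Instead they take two candidate BTLS $A$, $A'$ and study the union $A\cup A'$ via Lemma~\ref{BPLS}(2): one shows that $h_{A\cup A'}$ again takes values in $\{-a,b\}$ on each complementary component, and then compares $h_A$, $h_{A'}$ and $h_{A\cup A'}$ using that a bounded harmonic function with boundary values in $\{-a,b\}$ which is $\leq -a$ (resp.\ $\geq b$) on a component must be constant there. This yields $h_A=h_{A'}$ on $D\setminus(A\cup A')$, from which $A=A'$ follows. Your ``no extra set'' step could perhaps be salvaged, but as written it is circular; you correctly flag it as the main obstacle, and the resolution is the union-and-comparison argument rather than an a~priori emptiness claim.
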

	
	Let us now concentrate on TVS in the annulus. We claim, that in this case TVS is either connected and all the components of its complement are simply-connected, or it has two connected components, one corresponding to each boundary component.
	\begin{cor} \label{c.tvs from a boundary}Consider an annulus $\An_r$ and $a,b$ as in the previous statement. Define $\A_{-a,b}^{u,\partial_o}$ as the connected component of $\A_{-a,b}^u\cup \partial \An_r$ that contains $\partial_o$ on its boundary. 	Then $\A_{-a,b}^{u,\partial_o}$ is a BTLS satisfying condition (\twonotes). Moreover, if $\A_{-a,b}^{u,\partial_o}$ touches the boundary, it is equal to $\A_{-a,b}^u$. A similar claim holds for $\A_{-a,b}^{u,\partial_i}$. 
	\end{cor}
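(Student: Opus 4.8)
\emph{Proof idea.} Write $\A := \A_{-a,b}^u$ and $A' := \A_{-a,b}^{u,\partial_o}$. By Theorem \ref{t.tvs}, $\A$ is a BTLS and a measurable function of $\Phi$; since adjoining the (deterministic) boundary does not alter the associated harmonic function, $\A\cup\partial\An_r$ is again a local set with $\Phi_{\A\cup\partial\An_r}=\Phi_\A$, and $A'$, being one of its connected components, is a measurable function of $\Phi$. The plan is: (1) show that $A'$ is a local set; (2) upgrade this to ``$A'$ is a BTLS satisfying (\twonotes)''; (3) deduce the second assertion, i.e.\ that $A'=\A$ as soon as $A'$ reaches $\partial_i$, from the uniqueness part of Theorem \ref{t.tvs}. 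The claim for $\A_{-a,b}^{u,\partial_i}$ then follows by symmetry (e.g.\ by applying the above to the pushforward of $\A$ under the conformal involution $z\mapsto r/z$ of $\An_r$, which swaps the two boundary components).

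For (1), the cleanest route is to realize $A'$ as the increasing limit of the stages of an exploration of $\A$ carried out from $\partial_o$ only, and then apply Lemma \ref{BPLS}(3). That is, I would produce BTLS $(A_n')$, each connected to $\partial_o$, with $A_n'$ increasing and $\overline{\bigcup_n A_n'}$ equal to the cluster of $\A\cup\partial\An_r$ attached to $\partial_o$, i.e.\ to $A'$; such a boundary exploration is provided by the construction of two-valued sets in \cite{ALS1}. Since each $A_n'$ is a local set connected to the boundary, Lemma \ref{BPLS}(3) then shows that $A'$ is a local set and that $\Phi_{A_n'}\to\Phi_{A'}$. (One could instead argue directly by conditioning on $\A$: reinserting the clusters of $\A$ not belonging to $A'$---any clusters disjoint from $\partial\An_r$, and the inner cluster $\A_{-a,b}^{u,\partial_i}$ when it is disjoint from $A'$---merges some complement components of $\A$ into larger ones $O^{*}$, and one must check that this re-gluing of conditionally independent GFFs, together with the reabsorbed bounded harmonic corrections, still produces a single GFF in each $O^{*}$. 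Keeping track of which components merge is the delicate point, which the exploration argument avoids.)

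Granting (1), part (2) is bookkeeping. The thinness, boundedness and isolation conditions for $A'$ are inherited from $\A$, since $A'\subseteq\A\cup\partial\An_r$ and on each complement component of $A'$ the function $h_{A'}$ is the harmonic extension of boundary data bounded by the BTLS constant of $\A$ together with the admissible values $a,-b,u$ (use Proposition \ref{p.thin} for thinness). For (\twonotes): a complement component of $A'$ that is already a complement component of $\A$ inherits the property; for a strictly larger complement component $O^{*}$, every boundary component of $\partial O^{*}$ lies in $A'\cup\partial_i$ and is therefore (a sub-arc of) a boundary component of some complement component of $\A$---on which $h_\A+u$ equals the constant $a$ or $-b$---or a piece of $\partial_i$, on which the field retains its boundary value; since $\Phi_{A'}$ on $O^{*}$ is by construction the harmonic extension of exactly that trace, $h_{A'}+u$ again takes the value $a$, $-b$, or $u$ there.

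Finally, for (3): if $A'$ reaches $\partial_i$, then, $\partial_i$ being connected and contained in $\A\cup\partial\An_r$, one has $\partial_i\subseteq A'$, so $A'$ contains all of $\partial\An_r$ and is connected. Redoing the case analysis of step (2) shows that on each connected component of the boundary of a complement component of $A'$ the function $h_{A'}+u$ is a single admissible constant, which by the corresponding property of $\A$ lies in $(-\infty,-a]\cup[b,\infty)$; hence $A'$ satisfies (\twonotes \twonotes) as well, and uniqueness in Theorem \ref{t.tvs} forces $A'=\A$. I expect step (1)---and within it the merging of complement components of $\A$---to be the main obstacle; running the exploration from $\partial_o$ as in \cite{ALS1}, rather than conditioning on the whole of $\A$, is what keeps it manageable.
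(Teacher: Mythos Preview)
Your proposal is essentially correct and follows the same route as the paper: both rely on the level-line construction of the TVS from one boundary component, carried out in Theorem~3.21 of \cite{ALS1}. The paper's proof is a one-line citation to that construction; your step~(1) via an exploration from $\partial_o$ and Lemma~\ref{BPLS}(3) is precisely an unpacking of what that construction furnishes, and your step~(2) is the natural bookkeeping.

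One remark on step~(3). Your uniqueness argument is more roundabout than needed and, as written, leans on a fact you do not make explicit: to conclude that every complement component $O^{*}$ of $A'$ is already a complement component of $\A$ (so that $h_{A'}+u$ inherits (\twonotes\twonotes) from $\A$), you need to know there are no stray clusters of $\A$ disjoint from $A'$. This is exactly the structural claim stated just before the corollary, namely that in the annulus $\A$ has at most two connected components, one attached to each boundary circle; it comes directly from the boundary-based construction in \cite{ALS1}. Once you use that, the second assertion is immediate without invoking uniqueness: if $A'$ meets $\partial_i$ then $\A$ cannot have a separate $\partial_i$-component, so $\A$ is connected and $A'=\A$. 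The paper's one-line proof is really pointing to this direct observation rather than to a uniqueness argument.
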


	\begin{proof}
	Both claims follow directly from the construction of TVS in non-simply connected domains in Theorem 3.21 of \cite{ALS1}. 
	\end{proof}
	It is possible to calculate explicitly the extremal distance in the annular component of $\An_r \backslash \A_{-a,b}^{u,\partial_o}$ - i.e. the extremal distance between the non-trivial loop of $\A_{-a,b}^{u,\partial_o}$ and $\partial_i$. This comes from a slightly simplified version of Proposition  4.13 in \cite{ALS1}.
	
	\begin{prop} \label{p.LawELBddTVS}
		Let $a, b$ be positive with $a+b \geq 2\lambda$ , $\An_r$ be an annulus. Let $u_v$ be a bounded harmonic function equal to a constant $v$ on the inner boundary $\partial_i$ and $0$ on the outer boundary $\partial_o$. Let $\widehat{B}_t$ be a Brownian bridge from $0$ to $v$ with length $\ED(\partial_i, \partial_o)$. 
		Then 
		\[\ED(\partial_o, \partial_i) - \ED(\A_{-a,b}^{u_v,\partial_o} , \partial_i)\]
		is equal in law to the first hitting time of $\{-a,b\}$ by 
		$\widehat{B}_t$. Notice that if $\{-a,b\}$ is not hit, this time is equal to $\ED( \partial_o,\partial_i)$.
	\end{prop}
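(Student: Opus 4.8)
The plan is to realize $\A_{-a,b}^{u_v,\partial_o}$ as the terminal set of an exploration from the outer boundary and to read off the driving process from Proposition~\ref{BProcess}; the statement is in fact the specialization of Proposition~4.13 of \cite{ALS1} to $D=\An_r$ and $u=u_v$, so one route is simply to invoke that result. For a self-contained argument, I would first, following the construction of two-valued sets in \cite{ALS1} (and Corollary~\ref{c.tvs from a boundary}), build an increasing, continuous local set process $(\eta_t)_{t\ge 0}$ of the GFF $\Phi+u_v$ with $\eta_0=\partial_o$ whose terminal value is $\A_{-a,b}^{u_v,\partial_o}$, parametrized by the inverse extremal distance to the inner boundary, $t=\ED(\partial_o,\partial_i)-\ED(\eta_t,\partial_i)$; then the terminal time $t^\ast$ is exactly $\ED(\partial_o,\partial_i)-\ED(\A_{-a,b}^{u_v,\partial_o},\partial_i)$, the quantity whose law we want.

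Next I would identify the driving process. Let $h_{\eta_t}$ denote the harmonic function of $\eta_t$ relative to the zero-boundary field $\Phi$, so that the harmonic function relative to $\Phi+u_v$ is $h_{\eta_t}+u_v$. Applying Proposition~\ref{BProcess} with $\B=\partial_i$ shows that $\widehat B_t:=\ED(\partial_i,\eta_t)\int_{\partial_i}\partial_n h_{\eta_t}$ is a Brownian bridge from $0$ to $0$ of length $L:=\ED(\partial_i,\partial_o)$. Writing $u_v=v\bar u$ with $\bar u$ the harmonic function of Theorem~\ref{thmEL} (equal to $1$ on $\partial_i$ and $0$ on $\partial_o$), that theorem gives $\int_{\partial_i}\partial_n u_v=v/L$; since the parametrization yields $\ED(\partial_i,\eta_t)=L-t$, the process
\[
X_t:=v-\ED(\partial_i,\eta_t)\int_{\partial_i}\partial_n\big(h_{\eta_t}+u_v\big)=\frac{vt}{L}-\widehat B_t
\]
is a Brownian bridge from $0$ to $v$ of length $L=\ED(\partial_i,\partial_o)$. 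Informally, $X_t$ is the value that $h_{\eta_t}+u_v$ assumes on the side of $\eta_t$ facing $\partial_i$, i.e.\ the constant boundary value the would-be loop carries if it closed up at time~$t$. (Alternatively, one can first settle the case $v=0$, where $X$ is a bridge from $0$ to $0$ and the statement follows directly from Proposition~\ref{BProcess} plus the fact that the two-valued-set exploration is stopped at the first exit from $(-a,b)$, and then transfer to general $v$ via the Radon--Nikodym derivative for a change of boundary values of Section~\ref{SecRN}.)

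Finally I would show $t^\ast=\inf\{t\ge 0:X_t\in\{-a,b\}\}$, with $t^\ast=L$ when $X$ never reaches $\{-a,b\}$. This is the heart of the matter and the part I expect to be most delicate: one must check that the local set process produced by the construction in \cite{ALS1} is indeed stopped precisely when $X$ first hits $\{-a,b\}$ --- at which instant $\eta_{t^\ast}$ closes up into a loop around the inner boundary carrying the constant boundary value $-a$ or $b$, so that conditions (\twonotes) and (\twonotes\twonotes) hold and $\eta_{t^\ast}=\A_{-a,b}^{u_v,\partial_o}$ --- while the event that $X$ does not reach $\{-a,b\}$ corresponds to $\A_{-a,b}^{u_v,\partial_o}$ meeting $\partial_i$, i.e.\ $t^\ast=L$. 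Some care is needed in the degenerate regime $v\notin(-a,b)$ and in the sign bookkeeping for the normal derivatives, which is what pins the terminal point of the bridge at $v$ rather than $0$ or $-v$. Granting this identification, Step~2 shows that $t^\ast=\ED(\partial_o,\partial_i)-\ED(\A_{-a,b}^{u_v,\partial_o},\partial_i)$ has the law of the first hitting time of $\{-a,b\}$ by a Brownian bridge from $0$ to $v$ of length $\ED(\partial_i,\partial_o)$, with the stated convention. Since all of these delicate points are carried out in the proof of Proposition~4.13 of \cite{ALS1}, in the paper it should suffice to quote that proposition and indicate the specialization $D=\An_r$, $u=u_v$.
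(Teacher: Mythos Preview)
Your proposal is correct and matches the paper: the paper does not give a proof of this proposition at all, but simply states it as ``a slightly simplified version of Proposition~4.13 in \cite{ALS1}'', exactly as you suggest in your final sentence. Your sketch of a self-contained argument via Proposition~\ref{BProcess} is a reasonable outline of how that cited result is proved, and you correctly flag that the identification $t^\ast=\inf\{t:X_t\in\{-a,b\}\}$ is the nontrivial input --- one must check that the averaged boundary value $X_t$ cannot reach $\{-a,b\}$ while the exploration is still tracing level lines (whose boundary values on the $\partial_i$-facing side lie strictly inside $(-a,b)$), and hits $\{-a,b\}$ precisely when the non-contractible loop closes; this is indeed handled in \cite{ALS1}.
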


	In particular, for any $|a+b| \geq 2\lambda$, we see that the event $E$ that $\A_{-a,b}^{u_v,\partial_o}$ does not intersect the inner boundary has positive probability. On this event $\An_r \backslash \A_{-a,b}^{u_v,\partial_o}$ contains an annular component with $\partial_i$ on its boundary.

We will also need a result regarding the intersection of the connected components of $\An_r\backslash \A_{-a,b}^{u,\partial_o}$ with the outer boundary itself - i.e. this corresponds to the case when the extremal distance of the non-contractible loop of $\A_{-a,b}^{u,\partial_o}$ (when it exists) to the outer boundary $\partial_o$ is just $0$. Results of this type were studied in \cite{AS2} in the simply connected case, and the same ideas work also in the non-simply connected domains.
	\begin{prop}\label{p.intersection_boundary}
		Take $a\in \R$ and let $u$ be a bounded harmonic function with constant boundary values $\beta_o$ in $\partial_o$ with $-a\leq \beta_o\leq -a+2\lambda$. Then, for any connected component $O$ of $\An_r\backslash\A_{-a,-a+2\lambda}^{u,\partial_o}$ we have that $\partial O$ intersects the boundary of $\An_r$.
	\end{prop}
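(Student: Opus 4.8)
The plan is to reduce the statement to its simply connected analogue --- established in \cite{AS2} --- the only genuinely new point being the annular topology. One way to organize this is to localize inside a small simply connected subdomain, using Corollary \ref{c.tvs from a boundary} and elementary planar topology.

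\emph{Step 1: reduction to the full two-valued set and to a simply connected pocket.} Assume for contradiction that $O$ is a connected component of $\An_r\setminus A$, where $A:=\A_{-a,-a+2\lambda}^{u,\partial_o}$, with $\overline O\cap\partial\An_r=\emptyset$. Recall the dichotomy stated before Corollary \ref{c.tvs from a boundary}: the full two-valued set $\A_{-a,-a+2\lambda}^{u}$ is either connected with simply connected complement components, or is the disjoint union of the two connected sets $A=\A_{-a,-a+2\lambda}^{u,\partial_o}$ and $\A_{-a,-a+2\lambda}^{u,\partial_i}$. In the latter case, $\A_{-a,-a+2\lambda}^{u,\partial_i}$ is connected, contained in $\An_r\setminus A$, meets $\partial_i\notin\overline O$, and is disjoint from $\partial O\subseteq A$; it follows that it does not meet $O$. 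Hence in all cases $O$ is one of the simply connected complement components of $\A_{-a,-a+2\lambda}^{u}$ (it cannot be the annular complement component of the latter appearing in the second case, since that component touches $\A_{-a,-a+2\lambda}^{u,\partial_i}$). In particular $\partial O$ is connected and does not meet $\partial\An_r$, so property (\twonotes) forces $h_{\A_{-a,-a+2\lambda}^{u}}+u$ to be a single constant $c\in\{-a,-a+2\lambda\}$ on $\partial O$. Replacing $(\Phi,u,a,b)$ by $(-\Phi,-u,b,a)$ --- which preserves $a+b=2\lambda$, the annulus $\An_r$, the distinguished component $\partial_o$ and the hypothesis $-a\le\beta_o\le-a+2\lambda$, and turns a pocket of value $c$ into one of value $-c$ --- we may assume $c=-a$.

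\emph{Step 2: localization and conclusion.} Choose a smooth simply connected domain $D'$ with $\overline O\subseteq D'$ and $\overline{D'}\subseteq\An_r$, small enough that $D'$ is disjoint from $\A_{-a,-a+2\lambda}^{u,\partial_i}$. Conditioning on the GFF in a neighbourhood of $\An_r\setminus D'$ and using the conditional-independence/locality properties of local sets (Lemma \ref{BPLS}) together with the uniqueness and monotonicity of two-valued sets (Theorem \ref{t.tvs}, and the restriction property of \cite{ALS1}), the trace of $\A_{-a,-a+2\lambda}^{u}$ inside $D'$ can be identified with the two-valued set $\A_{-a,-a+2\lambda}^{u'}$ of a GFF in $D'$, for a suitable boundary function $u'$ on $\partial D'$ read off from the already discovered values. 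Then $O$ is a complement component of this two-valued set (which still has $a+b=2\lambda$) in the simply connected domain $D'$, with $\overline O\cap\partial D'=\emptyset$, contradicting the simply connected version of the proposition proved in \cite{AS2}.

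\emph{Main obstacle.} The genuinely new work is the localization in Step 2: identifying the restriction of $\A_{-a,-a+2\lambda}^{u}$ to $D'$ with a two-valued set inside $D'$ carrying the right boundary data. This needs care because $\partial D'$ cuts across complement components of the ambient set, so one must keep track of which arcs of $\partial D'$ carry which value and check that the restricted set still obeys the defining conditions (\twonotes), (\twonotes\twonotes) of a two-valued set in $D'$. The simply connected input from \cite{AS2} itself rests on the fact that, precisely because $a+b=2\lambda$, the exploration producing the two-valued set never closes a loop in the interior --- morally it runs as a single generalized level line, a simple curve --- together with the a.s.\ uniqueness of two-valued sets. (Alternatively one can re-run that argument directly in $\An_r$; either way, this is what ``the same ideas work also in the non-simply connected domains'' refers to.)
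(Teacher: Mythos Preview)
Your Step 1 is fine, but Step 2 has a genuine gap. The localization you propose --- ``the trace of $\A_{-a,-a+2\lambda}^{u}$ inside $D'$ can be identified with the two-valued set $\A_{-a,-a+2\lambda}^{u'}$ of a GFF in $D'$'' --- does not hold in any form you can cite. Two-valued sets do not enjoy a restriction property of this kind: if you condition on the field outside $D'$ (or on the part of the TVS outside $D'$), the induced boundary data on $\partial D'$ is not piecewise constant, so Theorem \ref{t.tvs} does not even apply in $D'$; and nothing in Lemma \ref{BPLS} or \cite{ALS1} gives you that the intersection of a TVS with a subdomain is a TVS of the restricted field. You flag this yourself as the ``main obstacle'', but you do not actually resolve it --- the sentence invoking ``the restriction property of \cite{ALS1}'' is pointing to something that is not there.

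The paper's proof bypasses localization entirely and runs the level-line argument directly in the annulus: when $a+b=2\lambda$, the construction of $\A_{-a,-a+2\lambda}^{u,\partial_o}$ in Section 3.3 of \cite{ALS1} proceeds by launching generalized level lines from boundary points of $\partial_o$, and these are simple curves that terminate on the boundary without closing interior loops (this is exactly the mechanism you describe parenthetically at the end). Each complementary component is therefore bounded by arcs of such curves together with arcs of $\partial\An_r$, hence touches $\partial\An_r$. That argument transports verbatim from the simply connected case in \cite{AS2} because it is purely about the local behaviour of level lines and never uses the global topology; this is what the paper means by ``the same ideas work also in the non-simply connected domains''. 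Your alternative at the very end is the right one --- promote it to the actual proof and drop the localization.
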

	\begin{proof}
		The proof follows directly from the level line construction found in Section 3.3 of \cite{ALS1}. See also Remark 3.2 of \cite{AS2} for the argument in the simply-connected setting.
	\end{proof}
	\subsection{CLE$_4$ as the two-valued local set $\A_{-2\lambda, 2\lambda}$}
	
	Maybe the shortest way to define CLE$_4$ in $\D$ is to define it as the collection of outer boundaries of the outermost clusters of a Brownian loop-soup at the critical intensity \cite{SheffieldWerner2012CLE}. However, for us the important and useful part is the connection of CLE$_4$ to SLE$_4$ process and to the two-valued local sets of the Gaussian free field.
	
	The latter connection was first discovered by Miller \& Sheffield \cite{MS}, based on the work of Schramm and Sheffield \cite{SchSh2}, and it says that CLE$_4$ can be coupled as a local set of the GFF. In \cite{ASW} this was rephrased in the language of two-valued sets - the set $\A_{-2\lambda,2\lambda}$ has the law of a CLE$_4$ carpet. 
	
	\begin{thm}\label{BPCLE}
		Let $\Phi$ be a GFF in $\D$ and $\A_{-2\lambda,2\lambda}$ be its TVS of levels $-2\lambda$ and $2\lambda$. Then $\A_{-2\lambda, 2\lambda}$ has the law of CLE$_4$ carpet. Moreover, it satisfies the following properties:
		\begin{enumerate}
			\item The loops of $\A_{-2\lambda,2\lambda}$ (i.e. the boundaries of the connected components of $\D \backslash \A_{-2\lambda, 2\lambda}$) are continuous simple loops. $\A_{-2\lambda, 2\lambda}$ is the closure of the union of all loops.
			\item The collection of loops of $\A_{-2\lambda,2\lambda}$ is locally finite, i.e. for any $\eps>0$  there are only finitely many loops that have diameter bigger than $\eps$.
			\item Almost surely no two loops of $\A_{-2\lambda,2\lambda}$ intersect, nor does any loop intersect the boundary.
			\item The conditional law of the labels of the loops of $\A_{-2\lambda,2\lambda}$ given $\A_{-2\lambda,2\lambda}$ is that of i.i.d random variables taking values $\pm 2\lambda$ with equal probability.
		\end{enumerate}
	\end{thm}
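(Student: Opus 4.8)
The plan is to identify $\A_{-2\lambda,2\lambda}$ with the CLE$_4$ carpet through the Miller--Sheffield coupling of CLE$_4$ with the GFF together with the uniqueness statement of Theorem~\ref{t.tvs}, and then to read off items (1)--(4) from known properties of CLE$_4$. This is in essence the argument of \cite{ASW} combined with \cite{SheffieldWerner2012CLE}.

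\emph{Step 1 (the Miller--Sheffield coupling).} I would first recall from \cite{MS} (building on \cite{SchSh2}) that a CLE$_4$ in $\D$ can be coupled with a GFF $\Phi$ so that, writing $\Upsilon$ for the closure of the union of the CLE$_4$ loops, $\Upsilon$ is a local set of $\Phi$, the harmonic function $h_\Upsilon$ is constant on each connected component of $\D\setminus\Upsilon$ with value in $\{-2\lambda,2\lambda\}$, and conditionally on $\Upsilon$ these constants are i.i.d.\ uniform on $\{-2\lambda,2\lambda\}$. The value $2\lambda$ enters here precisely because it is the height gap of GFF level lines.

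\emph{Step 2 (recognizing $\Upsilon$ as a two-valued set).} Next I would verify that in this coupling $\Upsilon$ is a BTLS in the sense of Definition~\ref{BTLSCND} satisfying the defining conditions (\twonotes) and (\twonotes\twonotes) of $\A_{-2\lambda,2\lambda}$, with $u\equiv 0$ and $a=b=2\lambda$. Boundedness is immediate since $|h_\Upsilon|\le 2\lambda$. Thinness follows from Proposition~\ref{p.thin}: $h_\Upsilon\in\mathbb L^1$ because it is bounded, and the Minkowski dimension of the CLE$_4$ gasket $\Upsilon$ is strictly less than $2$ (in fact $15/8$). The component-separation condition is essentially vacuous, since the carpet is connected and meets $\partial\D$. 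Condition (\twonotes) is exactly the statement that $h_\Upsilon+u=h_\Upsilon\in\{-2\lambda,2\lambda\}$ on the boundary of each complementary component, and (\twonotes\twonotes) holds trivially because $h_\Upsilon$ is constant there, equal to $-2\lambda=-a$ or $2\lambda=b$. By the uniqueness part of Theorem~\ref{t.tvs}, $\Upsilon=\A_{-2\lambda,2\lambda}$ almost surely; in particular $\A_{-2\lambda,2\lambda}$ has the law of the CLE$_4$ carpet.

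\emph{Step 3 (the listed properties).} With this identification, (1)--(3) are inherited from standard facts about CLE$_4$: the loops are continuous simple loops and the carpet is the closure of their union; the loop family is locally finite; and almost surely no two loops intersect and no loop meets $\partial\D$ --- all proved in \cite{SheffieldWerner2012CLE} (see also \cite{SheffieldCLE09,SSW09CR}). Property (4) is the conditional law of the labels recalled in Step 1; alternatively one can deduce it from the local set property, the conditional independence of the labels, and the symmetry $\Phi\mapsto-\Phi$. The step demanding the most care is Step 2: checking that the CLE$_4$ carpet, as it appears in the Miller--Sheffield coupling, really satisfies every axiom of a BTLS in the precise technical sense (thinness as in the definition, the component-separation condition, and that $\Phi_\Upsilon$ is genuinely harmonic and bounded off $\Upsilon$), so that the uniqueness of two-valued local sets from \cite{ALS1,ASW} can be invoked to pin down the law. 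Everything after that is assembling results already in the literature.
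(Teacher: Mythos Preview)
Your proposal is correct and is essentially the same argument the paper gives (in the Remark following the theorem): it cites Section~4 of \cite{ASW} for the identification of $\A_{-2\lambda,2\lambda}$ with the CLE$_4$ carpet via the Miller--Sheffield coupling and TVS uniqueness, then reads (1)--(3) from \cite{SheffieldWerner2012CLE} and (4) from \cite{ASW}. You have simply spelled out in more detail the BTLS verification that the paper leaves to the cited reference.
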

	
	\begin{rem}
		For the fact that $\A_{-2\lambda,2\lambda}$ has the law of a CLE$_4$, see Section 4 of \cite{ASW}. 
		Then the first three properties stem from the basic properties of CLE$_4$ (see \cite{SheffieldWerner2012CLE}).  For (4) see, for example, the last comment in Section 4.3 of \cite{ASW}.
	\end{rem}

	\subsubsection{Exploring the loops of $\A_{-2\lambda,2\lambda}$ using SLE$_4(-2)$}\label{ss.coupling rgamma}
	In Section 4 of \cite{ASW}, it is explained how to couple a radial SLE$_4(-2)$ in $\D$ targeted at fixed point $z$ with a GFF as a local set process that discovers loops of $\A_{-2\lambda, 2\lambda}$. We will not repeat this construction here and redirect the reader to \cite{ASW}. Instead, we will instead just summarize its trajectorial properties, and the properties of the coupling with the GFF needed in the current paper.
	
	\begin{prop}\label{p.GFFcpl}[Existence and properties of radial SLE$_4(-2)$] 
		Let $\Phi$ be a GFF in $\D$ and take $x\in \partial \D, z \in \D$. Then there is a random continuous (in Hausdroff topology) growth process $\eta: [0,\infty) \to \overline \D$ starting in $x$, coupled with a GFF $\Phi$ and defined up to some random time $0 < T_z < \infty$ such that:
		\begin{itemize}
			\item $z \notin \eta([0,T_z])$ a.s.
			\item For all $t\geq T_z$, $\eta(t_1)=\eta(T_z)$.
			\item For any rational time $q<T_z$, there exists a time $\underline q < q$ such that $\eta_q\backslash \eta_{\underline q}$ is a simple curve in the connected component of $\D\backslash \eta_{\underline q}$ containing the point $z$.
			\item The process $\eta_t$ is a local set process and thus for any fixed $t$, $\eta_t$ is a local set.
		\end{itemize}
		Furthermore, $h_{\eta_t}$ restricted to a connected component $O$ of $\D\backslash \eta_t$ is
		\begin{itemize}
			\item Constant equal to either $\{-2\lambda,0,2\lambda\}$, if $z\notin O$.
			\item Constant equal to either $\{-2\lambda,2\lambda\}$, when $z\in O$ and $t\geq T$.
			\item When $z\in O$ and $t<T_z$, take a rational time $q>t$ such that $\underline q\leq t$. If such a $q$ does not exists, then the harmonic function $h_{\eta_t}$ equals $0$ (then at $t$ the process $\eta_t$ has just finished a loop). If such a $q$ does exist, there are two cases
			\begin{itemize}
				\item The bounded harmonic function is taking constant value equal to $2\lambda$ to the right side of the simple curve $\eta_t\backslash \eta_{\underline q}$ and $0$ elsewhere.
				\item The bounded harmonic function is taking constant value equal to $-2\lambda$ to the left side of the simple curve $\eta_t\backslash \eta_{\underline q}$ and $0$ elsewhere.
			\end{itemize}			
		\end{itemize}
	\end{prop}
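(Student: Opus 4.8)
The plan is to obtain this statement by recalling the construction of the radial SLE$_4(-2)$ exploration of $\A_{-2\lambda,2\lambda}$ carried out in Section~4 of \cite{ASW} and checking that each listed trajectorial and coupling property is among those established there. First I would recall that a radial SLE$_4(-2)$ targeted at $z$ is the radial Loewner chain in $\D$ whose driving point on $\partial\D$ evolves as the diffusion associated with a single force point started at $x$ and transported by the Loewner flow; $\rho=-2$ is the threshold value at which the trace keeps returning to the boundary of the component currently being explored and closes macroscopic loops. Standard properties of radial SLE$_\kappa(\rho)$ processes, together with the level-line coupling, then provide the trace as a continuous curve defined up to the time $T_z$ at which the loop $\ell_z$ of $\A_{-2\lambda,2\lambda}$ around $z$ is completed, as well as the facts that $z$ is never swallowed (so $z\notin\eta([0,T_z])$) and that $\eta$ is constant for $t\geq T_z$. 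For $0<T_z<\infty$ a.s.\ I would note that $z$ lies at positive distance from $\ell_z$ (it is enclosed by it) while $\ell_z\neq\emptyset$, so $0<\crad(z,\D\setminus\ell_z)<\crad(z,\D)$, and that in the capacity parametrization $T_z$ is a constant multiple of $\log\bigl(\crad(z,\D)/\crad(z,\D\setminus\ell_z)\bigr)$; conjugating by an automorphism of $\D$ sending $z$ to $0$, $T_z$ has the law of $-\log\crad(0,\D\setminus\ell)$, which is a.s.\ finite (cf.\ \cite{SSW09CR} and Theorem~\ref{t. main}).

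Next I would invoke the GFF coupling of \cite{ASW}, built on the level-line coupling of \cite{SchSh2}. Between two consecutive loop-closing times $\eta$ traces a simple level-line arc $\eta_t\setminus\eta_{\underline q}$ inside the component currently being explored --- a thin local set across which the harmonic function jumps by $2\lambda$. In the component of $z$ this forces $h_{\eta_t}$ to be the harmonic function with boundary value $2\lambda$ on one side of the arc (or $-2\lambda$ on the other, depending on the orientation of that particular arc) and $0$ elsewhere on that component's boundary, which is exactly the last two sub-cases of the statement; the remaining sub-case $h_{\eta_t}\equiv 0$ corresponds to the loop-closing times, when the next arc has not yet started. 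For a component $O$ of $\D\setminus\eta_t$ disconnected from $z$, the restriction of $h_{\eta_t}$ to $O$ is constant: either $O$ touches $\partial\D$, where $h\equiv 0$, or $O$ is the interior of a completed loop of $\A_{-2\lambda,2\lambda}$, where $h\equiv\pm2\lambda$ by Theorem~\ref{BPCLE}(4); and for $t\geq T_z$ the component of $z$ is the interior of $\ell_z$, again with $h\equiv\pm2\lambda$. That $\eta_t$ is a local set for every fixed $t$ follows because it is an increasing union of the conditionally independent level-line arcs (Lemma~\ref{BPLS}), and the local-set-process property at stopping times of $\F_t$ then follows from the strong Markov property of the driving diffusion together with the restriction/union operations of Lemma~\ref{BPLS}.

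The one point requiring genuine care --- and the one I expect to be the main obstacle --- is the behaviour of $h_{\eta_t}$ as $t$ approaches a loop-closing (``regeneration'') time, and the precise assertion that exactly at such a time (i.e.\ when there is no rational $q>t$ with $\underline q\leq t$) one has $h_{\eta_t}\equiv 0$ on the component of $z$. The mechanism is that the loop closes off a region on whose boundary the harmonic function vanishes, so the region the exploration re-enters carries zero boundary value and the next arc has not yet begun; rigorously, one must show that the harmonic measure seen from $z$ of the ``$2\lambda$-side'' of the current arc tends to $0$ as $t$ increases to the regeneration time, and that between regenerations $t\mapsto\eta_t$ is a Hausdorff-continuous, simple SLE$_4$-type arc heading into the component of $z$ (so that $\underline q$ may be taken to be the last regeneration time before $q$, or the initial time if $q$ precedes the first one). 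Both are part of the regeneration analysis of Section~4 of \cite{ASW}, which I would invoke; the continuity bullet and the remaining assertions then follow from that analysis.
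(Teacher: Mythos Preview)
Your proposal is correct and matches the paper's treatment: the paper does not give a proof of this proposition at all, but explicitly states that it is a summary of the construction and properties established in Section~4 of \cite{ASW}, redirecting the reader there. Your approach of invoking that reference and checking that each bullet is among the properties proved there is exactly what the paper does, only you spell out in more detail which parts of \cite{ASW} (level-line coupling, regeneration analysis, loop-closing behaviour) are responsible for which assertions.
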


It is convenient to separate some further properties into a different proposition.
\begin{prop}\label{p.GFFcpl2}[Radial SLE$_2(-2)$ and the GFF]
	Let $\eta_t$ be as in Proposition \ref{p.GFFcpl}. We have further that $\eta_t \subseteq \A_{-2\lambda,2\lambda}(\Phi)$ for all $t\leq T_z$ and moreover each connected component of the complement of $\eta_{T_z}$ with boundary condition equal to $\pm 2\lambda$ is also a connected component of $\D \backslash \A_{-2\lambda,2\lambda}(\Phi)$. In the other direction, the loop of $\A_{-2\lambda, 2\lambda}(\Phi)$ around $z$ is a subset of $\eta_t$ for all $t \geq T_z$. 
	\end{prop}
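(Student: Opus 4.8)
The plan is to upgrade the single–target exploration of Proposition~\ref{p.GFFcpl} to the full two–valued set $\A_{-2\lambda,2\lambda}(\Phi)$ by growing an independent two–valued set inside each of its still unexplored complementary components, and then to invoke the uniqueness part of Theorem~\ref{t.tvs}. Write $E:=\eta_{T_z}$. By Proposition~\ref{p.GFFcpl}, $E$ is a BTLS, on each connected component $O$ of $\D\backslash E$ the harmonic function $h_{E}$ is constant, equal to $-2\lambda$, $0$ or $2\lambda$, and on the component $O_{z}$ containing $z$ one has $h_{E}|_{O_{z}}\in\{-2\lambda,2\lambda\}$. Conditionally on $(E,\Phi_{E})$, inside every component $O$ with $h_{E}|_{O}=0$ the restriction $\Phi^{E}|_{O}$ is a zero–boundary GFF, so Theorem~\ref{t.tvs} lets us form $\A^{(O)}:=\A_{-2\lambda,2\lambda}(\Phi^{E}|_{O})$ (the hypotheses hold since $0\in(-2\lambda,2\lambda)$ and $2\lambda+2\lambda\geq2\lambda$); I would then set $A':=E\cup\overline{\bigcup_{O:\,h_{E}|_{O}=0}\A^{(O)}}$.

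Next I would verify that $A'$ is a BTLS coupled with $\Phi$. All the pieces are measurable functions of $\Phi$, so they are conditionally independent given $\Phi$; glue $E$ with the $\A^{(O)}$ using Lemma~\ref{BPLS}(2) and pass to the limit over a countable enumeration of the components on which $h_E\equiv 0$ with Lemma~\ref{BPLS}(3). This also identifies $h_{A'}$ as $h_{E}$ on $E$ and as $h_{E}|_{O}+h_{\A^{(O)}}$ on each $O$, and gives thinness via Proposition~\ref{p.thin}. Then on every component of $\D\backslash A'$ the function $h_{A'}$ is constant equal to $\pm2\lambda$: either the component is a component of $\D\backslash E$ with $h_{E}\equiv\pm2\lambda$, where $h_{A'}=h_{E}=\pm2\lambda$, or it is a complementary component of some $\A^{(O)}$ sitting inside a component $O$ with $h_E\equiv 0$, where $h_{A'}=0+h_{\A^{(O)}}=\pm2\lambda$. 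Hence $A'$ satisfies conditions~(\twonotes) and (\twonotes\twonotes) for the levels $-2\lambda,2\lambda$, and by the uniqueness statement in Theorem~\ref{t.tvs} we conclude $A'=\A_{-2\lambda,2\lambda}(\Phi)$ almost surely.

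From here the three assertions follow quickly. Since $\eta_{t}\subseteq\eta_{T_{z}}=E\subseteq A'=\A_{-2\lambda,2\lambda}(\Phi)$ for all $t\leq T_{z}$, we get the first inclusion. Each component $O$ of $\D\backslash E$ with $h_{E}\equiv\pm2\lambda$ is disjoint from every $\A^{(O')}\subseteq O'$ with $O'\neq O$, so $O\cap A'=\emptyset$; being open, connected, and with $\partial O\cap\D\subseteq E\subseteq A'$, it is a connected component of $\D\backslash A'=\D\backslash\A_{-2\lambda,2\lambda}(\Phi)$, which is the second assertion. In particular $O_{z}$ is the connected component of $\D\backslash\A_{-2\lambda,2\lambda}(\Phi)$ containing $z$, so $\partial O_{z}$ is the loop of $\A_{-2\lambda,2\lambda}(\Phi)$ around $z$; by Theorem~\ref{BPCLE}(3) this loop does not meet $\partial\D$, so it lies entirely in $E=\eta_{T_{z}}$; since $\eta_{t}=\eta_{T_{z}}$ for $t\geq T_{z}$ (Proposition~\ref{p.GFFcpl}), it is contained in $\eta_{t}$ for all $t\geq T_{z}$.

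I expect the main obstacle to be the verification in the second paragraph that $A'$ is a bounded–type local set with the stated harmonic function when $\D\backslash E$ has countably infinitely many components on which $h_E\equiv 0$: this requires the same gluing and limiting machinery used to construct two–valued local sets in \cite{ASW,ALS1}, together with some care about the joint local–set property of an infinite union. An alternative route avoiding this is to read the three assertions directly off the iterative construction of $\A_{-2\lambda,2\lambda}$ in Section~4 of \cite{ASW}: run the exploration first towards $z$ — legitimate because the coupling $(\Phi,(\eta_{t})_{t})$ is a measurable function of $\Phi$ — so that $\eta_{T_{z}}$ is the first stage of the construction, later stages only refine the components on which $h_{E}\equiv0$, and the components on which $h_{E}\equiv\pm2\lambda$ (in particular $O_{z}$) are never touched again.
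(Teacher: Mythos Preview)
The paper does not actually prove this proposition: it is stated as a summary of properties from the coupling in Section~4 of \cite{ASW}, to which the reader is redirected. Your strategy---complete $\eta_{T_z}$ by growing $\A_{-2\lambda,2\lambda}$ inside each complementary component with label $0$, then invoke the uniqueness of two-valued local sets (Theorem~\ref{t.tvs}) to identify the result with $\A_{-2\lambda,2\lambda}(\Phi)$---is precisely the argument the paper sketches for the analogous annulus statement in Section~\ref{ss.construction SLE} (leading to Proposition~\ref{p.GFFcpl2ann}), and is also the mechanism underlying the construction in \cite{ASW}. So your approach is correct and matches the paper's.

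One small comment: your invocation of Proposition~\ref{p.thin} for thinness of $A'$ implicitly uses that the Minkowski dimension of SLE$_4$-type curves is strictly below $2$; this is standard but worth noting explicitly. Your acknowledged concern about gluing countably many local sets is real but is handled exactly as in the TVS construction in \cite{ASW,ALS1}, as you note in your alternative route.
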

	
	The evolution of this process when it is tracing a loop can be described using generalized level lines. Let us recall the definition here, and see Section 3.3 in \cite{ALS1} for more details. By conformal invariance the explicit choice of the two-connected domain below plays no role.
	
	\begin{defn}[Generalized level line]\label{deflevell}
		Let $D:=\H$ or $D:=\H\backslash B(x,r)$ for some $x\in \H$, $r<\operatorname{Im}(x)$. Further, let $u$ be a harmonic function in $D$. We say that $\eta(\cdot)$, a curve parametrized by half plane capacity, is the generalized level line for the GFF $\Phi + u$ in $D$ up to a stopping time $\ta$ if for all $t \geq 0$:
		
		\begin{description}
			\item[$(**)$]The set $\eta_{t}:=\eta[0, t \wedge \ta]$ is a BTLS of the GFF $\Phi$, with harmonic function satisfying the following properties: $h_{\eta_{t}} + u$ is a harmonic function in $D \backslash \eta_t$ with boundary values $-\lambda$  on the left-hand side of $\eta_t$, $+ \lambda$ on the right side of $\eta_t$, and with the same boundary values as $u$ on $\partial D$. 
		\end{description}
	\end{defn}
	
	From Proposition \ref{p.GFFcpl}, we obtain the following lemma.	
	\begin{lemma}\label{Rlevelline}
		We work in the context of Proposition \ref{p.GFFcpl}. At each time $t$ such that there is a rational $\underline q$ such that $\eta_t \backslash \eta_{\underline q}$ is a simple curve in $\D \backslash \eta_{\underline q}$ (i.e. at each time $t$ when $\eta_t$ is tracing a loop), $\eta_t$ is tracing a generalized level line.
	\end{lemma}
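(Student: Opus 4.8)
The plan is to read the statement off from the description of $h_{\eta_t}$ in Proposition \ref{p.GFFcpl}, after restricting the field to the connected component that contains $z$ and shifting the boundary data by a constant $\pm\lambda$.

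Fix a time $t$ at which $\eta_t$ is tracing a loop, and let $s\le t$ be the time at which the loop currently being drawn was started, i.e.\ the infimum of the times $\underline q$ for which $\eta_t\setminus\eta_{\underline q}$ is a simple curve in the connected component of $\D\setminus\eta_{\underline q}$ containing $z$ (with $s=0$ if no earlier loop has been completed). Then $\eta_t\setminus\eta_s$ is a simple curve contained in the connected component $D'$ of $\D\setminus\eta_s$ that contains $z$, and $D'$ is simply connected because $\eta_s$ is a connected set meeting $\partial\D$. By Proposition \ref{p.GFFcpl} (the ``just finished a loop'' case) the harmonic function $h_{\eta_s}$ vanishes on $D'$. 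Since $\eta_s$ is a local set, $\Phi^{\eta_s}:=\Phi-\Phi_{\eta_s}$ is a GFF in $\D\setminus\eta_s$, hence a GFF in $D'$; and by Lemma \ref{BPLS}(2)--(3) (applied along a rational approximation of $s$, using that $\eta$ is an increasing local set process) the set $\eta_t\setminus\eta_s$ is a local set of $\Phi^{\eta_s}$ whose associated harmonic function on $D'\setminus\eta_t$ equals $h_{\eta_t}-h_{\eta_s}=h_{\eta_t}$.

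Next I would plug in the explicit form of $h_{\eta_t}$ from Proposition \ref{p.GFFcpl}: on $D'\setminus\eta_t$ it is the bounded harmonic function with boundary values, in the first case, $2\lambda$ on the right side of the slit $\eta_t\setminus\eta_s$ and $0$ on its left side and on $\partial D'$; in the second case, $-2\lambda$ on the left side and $0$ on the right side and on $\partial D'$. Taking the constant $u\equiv-\lambda$ in the first case and $u\equiv+\lambda$ in the second, $\Phi^{\eta_s}+u$ is a GFF in $D'$ with constant boundary value $u$, and the harmonic function $h_{\eta_t}+u$ attached to $\eta_t\setminus\eta_s$ has boundary values $+\lambda$ on the right side of $\eta_t\setminus\eta_s$, $-\lambda$ on its left side, and $u$ on $\partial D'$ --- which is exactly the requirement $(**)$ of Definition \ref{deflevell}. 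It then remains to verify that $\eta_t\setminus\eta_s$ is a BTLS of $\Phi^{\eta_s}$: its harmonic function is bounded by $2\lambda$, it is thin because it is a simple curve (hence locally of Minkowski dimension $<2$) carrying a bounded, in particular $\mathbb L^1$, harmonic function so that Proposition \ref{p.thin} applies, and the separation condition on connected components is vacuous for a single arc attached to the boundary. Finally, mapping $D'$ conformally onto $\H$ and reparametrizing the image of $\eta_t\setminus\eta_s$ by half-plane capacity turns all of the above --- by conformal invariance of the GFF, of local sets, of harmonic functions, and of the constant $\lambda$ --- into the assertion that this curve is a generalized level line in the sense of Definition \ref{deflevell}, valid up to the image of the time at which the loop closes; before that time the traced curve stays simple, so we remain within the scope of that definition.

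This is in essence a bookkeeping translation of Proposition \ref{p.GFFcpl}, and I do not expect a serious difficulty. The points requiring most care are the passage to the component $D'$ --- confirming, via Lemma \ref{BPLS} and $h_{\eta_s}\equiv0$ on $D'$, that the harmonic function attached to the restricted local set really is $h_{\eta_t}$ there --- and the verification that $\eta_t\setminus\eta_s$, viewed inside $D'$, is a thin, hence bounded-type, local set.
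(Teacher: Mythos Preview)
Your proposal is correct and is precisely the unpacking of Proposition~\ref{p.GFFcpl} that the paper has in mind: the paper simply states that the lemma follows from Proposition~\ref{p.GFFcpl} without writing out any argument, and your translation---restricting to the simply connected component $D'$, using $h_{\eta_s}\equiv 0$ there, shifting by $\pm\lambda$, and checking the BTLS conditions---is exactly the intended bookkeeping. The only point worth tightening is that you need not pass to the infimum $s$: working directly with any rational $\underline q$ as in the hypothesis already gives $\eta_{\underline q}$ as a local set (it is a fixed time of a local set process), and on the component containing $z$ one has $h_{\eta_{\underline q}}$ constant equal to $0$ or $\pm 2\lambda$ on the part of $\partial D'$ already traced, which after the same $\pm\lambda$ shift still matches Definition~\ref{deflevell}.
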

	
	The importance of working with generalized level lines comes from the fact that one can control the behaviour of such local set processes when they approach different boundaries. An example of such a result is Lemma 16 of \cite{ASW}, that we restate here and that is useful for us later on.	
	\begin{lemma}[Boundary hitting of generalized level lines, Lemma 16 \cite{ASW}] \label{notouch}
	Let $D$ be as above and $\eta$ be a generalized level line of $\Phi + u$ in $D$. Suppose $u \geq \lambda$ in $J\cap \partial D$, with $J$ some open set of  $D$. Let $\tau$ denote the first time at which $\inf \{ d ( \eta_s, J\cap \partial D), s< t \} = 0$. Then, the probability that $\tau < \infty$ and that $\eta(t)$ accumulates in a point in $J$ as $t \to \tau^-$ is equal to $0$. This also holds if $u \leq -\lambda$ in $J\cap \partial D$ for $J$ an open set of $D$.
	\end{lemma}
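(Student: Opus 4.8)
The plan is to localise the question near a hypothetical accumulation point of $\eta$ on $J$ and then recognise the curve there as an $\mathrm{SLE}_4(\underline\rho)$-type process whose force point on the side facing $J$ has weight at least $\kappa/2-2=0$, so that the classical boundary non-intersection of such processes applies.

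I would argue by contradiction: suppose that with positive probability $\tau<\infty$ and $\eta(t)$ accumulates at some $y\in J$ as $t\to\tau^-$. Since $J$ is open in $\partial D$ and $u$ is continuous up to $\partial D$, I would fix a small neighbourhood $V$ of $y$ with $u\geq\lambda$ on $V\cap\partial D$ (and $\overline V$ disjoint from the inner circle when $D=\H\setminus B(x,r)$). On the event above, let $\sigma<\tau$ be the last time before $\tau$ that $\eta$ leaves $V$. By the Markov property for local sets (Lemma \ref{BPLS}), the future piece $(\eta(t))_{\sigma\leq t<\tau}$, regarded inside the connected component $O$ of $D\setminus\eta_\sigma$ whose boundary contains $y$, is a generalised level line in $O$ with boundary values $-\lambda$ on the part of $\partial O$ coming from the left side of $\eta_\sigma$, $+\lambda$ on the part from the right side, and $u$ (hence $\geq\lambda$ near $y$) on $\partial O\cap\partial D$.

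Mapping $O$ conformally to $\H$ with the current tip sent to $0$, and using that for $\kappa=4$ the winding constant $\chi=2/\sqrt\kappa-\sqrt\kappa/2$ vanishes, the transported curve is a level line in $\H$ whose boundary data is piecewise of the form "$\pm\lambda$ or a bounded harmonic function''. By the characterisation of generalised level lines (Section 3.3 of \cite{ALS1}, see also \cite{MS1}), after a Girsanov change of measure absorbing the bounded harmonic part of $u$ near the image of $y$, this curve is absolutely continuous with respect to an $\mathrm{SLE}_4(\underline\rho)$ whose force point closest to $0$ on the side facing the image of $J$ has weight $\rho=u(y)/\lambda-1\geq 0=\kappa/2-2$. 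Such an $\mathrm{SLE}_4(\rho)$ a.s.\ does not touch the boundary interval carrying that force point — already ordinary $\mathrm{SLE}_4$ is a simple curve avoiding $\R$, and a non-negative force-point weight only pushes the curve further away — so by absolute continuity our curve cannot accumulate at $y$, a contradiction. The case $u\leq-\lambda$ on $J\cap\partial D$ then follows by applying this to the pair $(-\Phi,-u)$, whose generalised level lines are the same curves with the $\pm\lambda$ sides swapped.

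I expect the main difficulty to be making the identification in the previous paragraph fully rigorous: controlling which side of $\eta_\sigma$ actually borders $J$ inside $O$, handling the genuinely delicate configuration in which $\eta$ approaches $J$ "head on'', and dealing with the doubly connected case — in other words, establishing that near a would-be accumulation point the generalised level line is indeed absolutely continuous with respect to (or squeezed between) half-plane $\mathrm{SLE}_4(\rho)$'s with the advertised non-negative weights. Once this local comparison is in place, the conclusion is just the standard $\mathrm{SLE}_\kappa(\rho)$ boundary behaviour (cf.\ \cite{MS1}) together with conformal invariance.
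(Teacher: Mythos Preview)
The paper does not prove this lemma: it is quoted verbatim as Lemma~16 of \cite{ASW} and used as a black box. So there is no ``paper's own proof'' to compare against here.

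That said, your outline is the right strategy and is essentially how the result is established in \cite{ASW} (and, before that, in the simply connected setting in \cite{MS1,WaWu}): one reduces, by a local absolute-continuity argument, to the boundary behaviour of an $\mathrm{SLE}_4(\rho)$ in $\H$ with $\rho\ge \kappa/2-2=0$ on the side facing $J$, and then invokes the known fact that such curves do not touch that boundary arc. Your identification of the weight $\rho=u(y)/\lambda-1\ge 0$ when $u\ge\lambda$ is correct, as is the symmetry reduction for $u\le-\lambda$.

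Two places deserve more care. First, ``let $\sigma$ be the last time before $\tau$ that $\eta$ leaves $V$'' is not a stopping time, and the Markov/local-set property you invoke applies at stopping times; the standard fix is to work instead with the first entrance time into a smaller neighbourhood of $y$ (or a sequence of such), which is a stopping time, and then use monotonicity of the domains. Second, the Girsanov step ``absorbing the bounded harmonic part of $u$'' only gives absolute continuity on compact time intervals before hitting $\partial D$, which is precisely the regime you need, but you should make explicit that you are comparing the curve stopped upon first exiting a compact subannulus of the localised domain; otherwise the Radon--Nikodym derivative may blow up exactly at the moment you care about. These are the genuine technical points your last paragraph flags, and once they are handled the argument goes through as you describe.
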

	
	\subsubsection{A construction of the non-contractible loop of $\A_{-2\lambda,2\lambda}^{\partial_o}$ in an annulus}\label{ss.construction SLE}
	In this section, we provide a construction of the non-contractible loop of $\A^{\partial_o}_{-2\lambda,2\lambda}$ in an annulus $\An$ using an SLE$_4(-2)$ type process starting from a point on $\partial_o$. By conformal invariance the same naturally holds when we replace $\partial_o$ by $\partial_i$.
	
	Now, fix some $\epsilon>0$. Then Corollary 14 in \cite{ASW} (restated in the current article as Proposition \ref{p.change measure boundary}) implies that via a change of measure argument, we can define a local set process $\nu$ starting from
	a point $x \in \partial_o$, stopped at the first time $\tau_\eps$ when it gets $\epsilon$-close to $\partial_i$, that has the same pathwise properties as the process $\eta$ of Proposition \ref{p.GFFcpl} targeted at $0$. 
	
	In particular, the boundary values of this local set process are the same as described by Proposition \ref{p.GFFcpl}. To recap - they are either $\pm 2\lambda$ inside any finished loops, $\pm 2\lambda$ and $0$ on any loop that is in the process of being traced, and zero elsewhere.
	
	We also have the equivalent of Proposition \ref{p.GFFcpl2}. Indeed, following the construction of TVS in Section 3.4 of \cite{ALS1}, having explored $\nu_{\tau_\eps}$, we can further explore level lines in every connected component of $\An \backslash \nu_{\tau_\eps}$ to complete the construction of a local set connected to $\partial_o$ and having boundary conditions equal to $\pm 2\lambda$. By uniqueness of (connected components of TVS), given in Corollary \ref{c.tvs from a boundary}, we can thus construct $\A_{-2\lambda, 2\lambda}^{\partial_o}$ starting from $\nu_{\tau_\eps}$. In particular, this implies that $\nu_{\tau_\eps} \subseteq \A_{-2\lambda, 2\lambda}^{\partial_o}$. Moreover, as we only explore further level lines in connected components of $\nu_{\tau_\eps}$ where the boundary conditions are not constant equal to $\pm 2\lambda$, we see that all loops of $\nu_{\tau_\eps}$ are also loops of $\A_{-2\lambda, 2\lambda}$. In particular, this is true also for a possible non-contractible loop of $\nu_{\tau_\eps}$ that would then be the unique non-contractible loop of $\A_{-2\lambda, 2\lambda}^{\partial_o}$.
		
	Furthermore, when we take $\epsilon\to 0$ there are one of two possibilities that may arise
	\begin{itemize}
		\item $\nu_t$ creates a loop separating $\partial_i$ from $\partial_o$. In this case, this is the only loop of $\A_{-2\lambda,2\lambda}^{\partial_o}$ that separates $\partial_i$ from $\partial_o$.
		\item $\nu_t$ intersects the boundary $\partial_i$. This can only happen if $\A_{-2\lambda, 2\lambda}$ connects $\partial_i$ and $\partial_o$.
	\end{itemize}
	
	Finally, as in this construction for any $\epsilon>0$, $(\nu_t)_{t \leq \tau_\eps}$ is absolutely continuous w.r.t. to the process $(\eta_t)_{t \leq \tau_\eps}$ of Proposition \ref{p.GFFcpl} started from $x \in \partial_o$. In particular, at each time $t$ for which $\ED(\eta_t, \partial_o) > 0$ we have that when $\nu_t$ is tracing a loop, it is tracing a generalized level line.	Let us combine all of the above in a proposition for further reference.	
	\begin{prop}\label{p.GFFcpl2ann} 
		
	In an annulus $\An$, and for any $x \in \partial_o$ there is a local set process $\nu_t$ of a GFF $\Phi$ starting from $x$, defined until a stopping time $\tau$ when it either finishes tracing a loop that separates $\partial_o$ from $\partial_i$ or intersects $\partial_i$. Moreover, we have that 
	
	\begin{itemize}
		\item $\nu_t \subseteq \A_{-2\lambda,2\lambda}^{\partial_o}(\Phi)$ for all $t\leq \tau$ and each connected component of the complement of $\eta_{T_z}$ with boundary condition equal to $\pm 2\lambda$ is also a connected component of $\D \backslash \A_{-2\lambda,2\lambda}(\Phi)$. 
		\item If $\A_{-2\lambda, \lambda}^{\partial_o}(\Phi)$ has a non-contractible loop separating $\partial_i$ and $\partial_o$, then this loop is a subset of $\eta_\tau$ and at $\tau$ the process $\eta$ finishes tracing this loop.
		\item When $\nu_t$ is tracing a loop, it is tracing a generalized level line.
	\end{itemize}
	\end{prop}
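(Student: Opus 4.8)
The plan is to assemble into a single statement the construction sketched in the paragraphs above; most of the ingredients have already been recorded there, so the proof is largely a matter of bookkeeping. First I would invoke Proposition \ref{p.GFFcpl}, which supplies the radial SLE$_4(-2)$ process $\eta$ targeted at $0$ in $\D$, its coupling with a GFF, and the description of the boundary values of $h_{\eta_t}$. Viewing $\An$ inside $\D$ so that the hole surrounds $0$, I would then apply the change-of-measure statement of Corollary 14 of \cite{ASW} (restated here as Proposition \ref{p.change measure boundary}): for any $\eps>0$ there is a local set process $\nu$ of a GFF in $\An$, started from $x\in\partial_o$ and stopped at the first time $\tau_\eps$ that it comes within distance $\eps$ of $\partial_i$, such that $(\nu_t)_{t\le\tau_\eps}$ is absolutely continuous with respect to $(\eta_t)_{t\le\tau_\eps}$ (after conformally placing $\eta$ to start from $x$). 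In particular $\nu$ inherits the pathwise properties of $\eta$, and the boundary values of its harmonic function are $\pm2\lambda$ inside completed loops, $\pm2\lambda$ and $0$ along a loop currently being traced, and $0$ elsewhere.

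Next I would identify the completion of $\nu_{\tau_\eps}$ with $\A_{-2\lambda,2\lambda}^{\partial_o}$. Following the construction of two-valued sets in multiply connected domains in Section 3.4 of \cite{ALS1}, in every connected component of $\An\backslash\nu_{\tau_\eps}$ on whose boundary $h_{\nu_{\tau_\eps}}$ is not already constant equal to $\pm2\lambda$ one explores further generalized level lines until the boundary values become $\pm2\lambda$ everywhere; this produces a BTLS connected to $\partial_o$ with boundary values in $\{-2\lambda,2\lambda\}$, which by the uniqueness part of Corollary \ref{c.tvs from a boundary} must equal $\A_{-2\lambda,2\lambda}^{\partial_o}$. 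Hence $\nu_{\tau_\eps}\subseteq\A_{-2\lambda,2\lambda}^{\partial_o}$, and since the extra exploration takes place only in components where the boundary value is not constant $\pm2\lambda$, every completed loop of $\nu_{\tau_\eps}$ --- in particular a possible non-contractible one --- is a loop of $\A_{-2\lambda,2\lambda}$, and every component of $\An\backslash\nu_{\tau_\eps}$ with constant boundary value $\pm2\lambda$ is a component of $\An\backslash\A_{-2\lambda,2\lambda}$. Combined with Lemma \ref{Rlevelline} and the absolute continuity above, this also yields that whenever $\nu_t$ is tracing a loop it is tracing a generalized level line.

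Finally I would let $\eps\to0$. Since $(\nu_t)_{t\le\tau_\eps}$ is absolutely continuous with respect to $(\eta_t)_{t\le\tau_\eps}$ for every $\eps$, the stopped processes are consistent and glue into a single process $\nu$ with a terminal time $\tau$: as $\eps\to0$ either $\nu$ closes a loop separating $\partial_o$ from $\partial_i$ --- which is then the unique such loop of $\A_{-2\lambda,2\lambda}^{\partial_o}$, hence of $\A_{-2\lambda,2\lambda}$ --- or $\nu$ accumulates on $\partial_i$, which by the previous paragraph can happen only if $\A_{-2\lambda,2\lambda}$ connects the two boundary components. The point requiring care is precisely this $\eps\to0$ limit: one must check that the stopped processes glue consistently and that in the limit $\nu$ genuinely terminates --- by closing a separating loop or by reaching $\partial_i$ --- rather than oscillating near $\partial_i$ without ever accumulating; the boundary-hitting estimate of Lemma \ref{notouch}, applied to the generalized level lines traced by $\nu$ on the relevant sub-annuli, is what excludes this last scenario.
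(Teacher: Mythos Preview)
Your proposal is correct and follows essentially the same approach as the paper: indeed, in the paper this proposition is explicitly stated as a summary (``Let us combine all of the above in a proposition for further reference''), and the proof \emph{is} the preceding discussion in Section~\ref{ss.construction SLE}, which you have recapitulated faithfully --- the change-of-measure via Proposition~\ref{p.change measure boundary}, the completion to $\A_{-2\lambda,2\lambda}^{\partial_o}$ via the level-line construction of \cite{ALS1} and uniqueness from Corollary~\ref{c.tvs from a boundary}, and the dichotomy as $\eps\to 0$. If anything, you are slightly more careful than the paper at the $\eps\to 0$ step by flagging the need to rule out oscillation near $\partial_i$ and pointing to Lemma~\ref{notouch}; the paper simply asserts the dichotomy.
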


	\subsection{First passage sets of the 2D continuum GFF}
	\label{SubsecFPS}
	
	The aim of this section is to recall the definition of first passage sets introduced in \cite{ALS1} of the 2D continuum GFF, and state the properties that will be used in this paper.
	
	The set-up is as follows: $D$ is simply connected or annular domain and $u$ is a bounded harmonic function whose boundary values are constant in each connected component of the boundary.
	\begin{defn}[First passage set]\label{Def ES}
		Let $a\in \R$ and $\Phi$ be a GFF in $D$. We define the first passage set of $\Phi$ of level $-a$ and boundary condition $u$ as the local set of $\Phi$ such that $\partial D \subseteq \A^u_{-a}$, with the following properties:
		\begin{enumerate}
			\item Inside each connected component $O$ of $D\backslash \A_{-a}^u$, the harmonic function $h_{\A_{-a}^u}+u$ is equal to $-a$ on $\partial \A_{-a}^u \backslash \partial D$ and equal to $u$ on $\partial D \backslash \A_{-a}^u$ in such a way that $h_{\A_{-a}^u}+u \leq -a$. 		
			\item $\Phi_{\A^u_{-a}}-h_{\A_{-a}^u}\geq 0$, i.e., for any smooth positive test function $f$ we have 
			$(\Phi_{\A^u_{-a}}-h_{\A_{-a}^u},f) \geq 0$. 
		\end{enumerate}	
	\end{defn}
		
 The key result is the following.
	\begin{thm}[Theorem 4.3 and Proposition 4.5 of \cite{ALS1}]\label{Thm::FPS}For all $a\geq 0$,  the first passage set, $\A_{-a}^u$, of $\Phi$ of level -a and boundary condition $u$ exists and satisfies the following properties:
		\begin{enumerate}
			\item Uniqueness: if $A'$ is another local set coupled with $\Phi$ and satisfying Definition \ref{Def ES}, then a.s. $A'=\A_{-a}^u$.
			\item Measurability: $\A_{-a}^u$ is a measurable function of $\Phi$.
			\item Monotonicity: If $a\leq a'$ and $u\leq u'$, then $\A_{-a}^u\subseteq \A_{-a'}^{u'}$ .
		\end{enumerate}
	\end{thm}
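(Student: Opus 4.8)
The plan is to construct $\A_{-a}^{u}$ as an increasing limit of two-valued local sets and then to verify the defining properties of a first passage set in the limit; this is essentially the route of \cite{ALS1} (Theorem 4.3 and Proposition 4.5). Fix $a\ge 0$ and $u$ as in the statement. For every $n$ large enough that $2a+2n\lambda\ge 2\lambda$ and $[\min u,\max u]\cap(-a,a+2n\lambda)\neq\emptyset$, the two-valued set $\A_{-a,a+2n\lambda}^{u}$ exists by Theorem \ref{t.tvs}, and by the monotonicity statement there $\A_{-a,a+2n\lambda}^{u}\subseteq\A_{-a,a+2(n+1)\lambda}^{u}$. Since $\partial D$ is contained in each of these sets, they are all connected to the boundary, so Lemma \ref{BPLS}(3) applies and $A_{\infty}:=\overline{\bigcup_{n}\A_{-a,a+2n\lambda}^{u}}$ is a local set with $\Phi_{\A_{-a,a+2n\lambda}^{u}}\to\Phi_{A_{\infty}}$.

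Next I would check that $A_{\infty}$ satisfies Definition \ref{Def ES}. For the harmonic function the key is a martingale argument at a fixed interior point $z$: raising the level from $a+2n\lambda$ to $a+2(n+1)\lambda$ leaves untouched the connected components where $h_{\A_{-a,a+2n\lambda}^{u}}+u=-a$, while in a component where this function takes the high value $a+2n\lambda$ the extra exploration is, after subtracting that constant, a two-valued set of a zero-boundary GFF there, whose harmonic function has zero mean at $z$. Hence $n\mapsto h_{\A_{-a,a+2n\lambda}^{u}}(z)+u(z)$ is a martingale with values in $\{-a\}\cup\{a+2n\lambda\}$; being bounded below by $-a$ it converges, and since $a+2n\lambda\to\infty$ it must eventually equal $-a$ (unless $z\in A_{\infty}$). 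Passing to the limit gives $h_{A_{\infty}}+u=-a$ on $\partial A_{\infty}\setminus\partial D$ and $=u$ on $\partial D\setminus A_{\infty}$, so $h_{A_{\infty}}+u\le -a$ throughout $D\setminus A_{\infty}$, which is condition (1). Uniqueness of $A_{\infty}$ as this limit, measurability, and monotonicity are then comparatively soft: the sets $\A_{-a,a+2n\lambda}^{u}$ are unique measurable functions of $\Phi$ by Theorem \ref{t.tvs}, so $A_{\infty}$ is too; and if $a\le a'$, $u\le u'$ one couples the two constructions to the same GFF and uses the corresponding monotonicity of the approximating two-valued sets, then lets $n\to\infty$. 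To obtain the \emph{full} uniqueness statement — that any local set meeting Definition \ref{Def ES} coincides with $A_{\infty}$ — I would additionally establish a comparison principle for local sets carrying the sign constraint $\Phi_{A}-h_{A}\ge 0$, identifying $A_{\infty}$ as the minimal such set.

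The hard part will be condition (2), the positivity $\Phi_{\A_{-a}^{u}}-h_{\A_{-a}^{u}}\ge 0$, together with the fact that $\A_{-a}^{u}$ is in general not thin. Each approximating set is thin, so $\Phi_{\A_{-a,a+2n\lambda}^{u}}$ is merely the measure with density $h_{\A_{-a,a+2n\lambda}^{u}}$; but the convergence $h_{\A_{-a,a+2n\lambda}^{u}}\to h_{A_{\infty}}$ holds only pointwise almost everywhere, whereas the components carrying the high boundary value $a+2n\lambda$ carry an order-one amount of mass concentrating on sets of vanishing Lebesgue measure, so the limiting distribution $\Phi_{A_{\infty}}$ develops a singular part supported on $A_{\infty}$. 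Showing that this singular part is a \emph{nonnegative} measure does not follow from the martingale bookkeeping (in expectation the increments vanish) and requires genuinely new input — in the discrete/metric-graph picture it is the nonnegativity of a local time of the field at level $-a$, and establishing a continuum analogue is the crux of \cite{ALS1}. Pinning this down, together with the comparison principle that makes uniqueness rigorous, is the technical core; the limiting and coupling steps above are routine by comparison.
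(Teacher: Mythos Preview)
The paper does not prove this statement: Theorem~\ref{Thm::FPS} is quoted from \cite{ALS1} (Theorem~4.3 and Proposition~4.5) as a preliminary result, with no proof given here. So there is nothing in the present paper to compare your proposal against.

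That said, your outline is recognisably the construction of \cite{ALS1}: build $\A_{-a}^{u}$ as the increasing limit of $\A_{-a,a+2n\lambda}^{u}$, use Lemma~\ref{BPLS}(3) for the local-set limit, run the martingale argument on $h_{\A_{-a,a+2n\lambda}^{u}}(z)$ to pin down the harmonic function, and inherit measurability and monotonicity from the approximants. You also correctly flag the two genuine difficulties --- positivity of $\Phi_{A_\infty}-h_{A_\infty}$ (the approximants are thin but the limit is not, and mass concentrates on $A_\infty$) and the comparison principle underlying full uniqueness --- and correctly locate the positivity in the metric-graph/local-time machinery rather than in the limiting bookkeeping. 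As a roadmap this is accurate; as a proof it is, as you say, a sketch that defers the technical core to \cite{ALS1}.
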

	
	In this setup, there is also an analogue of Corollary \ref{c.tvs from a boundary}.
		\begin{cor} \label{c.FPS from a boundary} Consider an annulus $\An_r$. Define $\A_{-a}^{u,\partial_o}$ as the connected component of $\A_{-a}^u\cup \partial \An_r$ that contains $\partial_o$, then $\A_{-a}^{u,\partial_0}$ is a local set such that $h_{\A_{-a}^{u,\partial_o}}+u$ is the bounded harmonic function in $\An_r\backslash\A_{-a}^{u,\partial_o}$ with values $-a$ in $\partial \A_{-a}^{u,\partial_o}$ and $u$ in $\partial_i$. Furthermore, $\Phi_{\A_{-a}^{u,\partial_0}}-h_{\A_{-a}^{u,\partial_o}}$ is the positive measure $\Phi_{\A_{-a}^{u}}-h_{\A_{-a}^{u}}$ restricted to $\A_{-a}^{u,\partial_o}$. The same holds when we swap the roles of $\partial_i$ and $\partial_o$ and consider $\A_{-a}^{u,\partial_o}$. 
		\end{cor}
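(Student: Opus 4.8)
The plan is to follow the proof of Corollary \ref{c.tvs from a boundary}, replacing the two-valued set construction of \cite{ALS1} by the first passage set construction of the same paper (the one underlying Theorem \ref{Thm::FPS}). The first, elementary observation is that by Definition \ref{Def ES} we have $\partial\An_r\subseteq\A_{-a}^u$, so that $\A_{-a}^u\cup\partial\An_r=\A_{-a}^u$ and $\A_{-a}^{u,\partial_o}$ is in fact the connected component of $\A_{-a}^u$ itself containing $\partial_o$. In particular it is a closed subset of $\overline{\An_r}$ and, being selected from $\A_{-a}^u$ in a measurable way while $\A_{-a}^u$ is a measurable function of $\Phi$ (Theorem \ref{Thm::FPS}), it is itself a measurable function of $\Phi$.

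The main step is to show that $\A_{-a}^{u,\partial_o}$ is a local set and to identify $h_{\A_{-a}^{u,\partial_o}}$. For this I would invoke the two-stage construction of the FPS in an annulus from \cite{ALS1}: one first runs a first-passage exploration, which is a local set process of $\Phi$ started from a point of $\partial_o$, stopped at the first time it either disconnects $\partial_o$ from $\partial_i$ or reaches $\partial_i$; by the uniqueness statement of Theorem \ref{Thm::FPS} the set it produces coincides a.s.\ with $\A_{-a}^{u,\partial_o}$; and then one completes $\A_{-a}^u$ by sampling, conditionally independently given $\Phi$, an FPS of the restricted field in each remaining simply connected complementary component. Since a local set process stopped at a stopping time is a local set, $\A_{-a}^{u,\partial_o}$ is a local set. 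To read off the harmonic function I would apply Lemma \ref{BPLS}(2) with $A=\A_{-a}^{u,\partial_o}$ and $B=\A_{-a}^u$ (conditionally independent given $\Phi$ since both are $\sigma(\Phi)$-measurable): this gives that $\A_{-a}^u\setminus\A_{-a}^{u,\partial_o}$ is a local set of $\Phi^{\A_{-a}^{u,\partial_o}}$ and that it is contained in the simply connected complementary components of $\A_{-a}^{u,\partial_o}$. In the annular complementary component $O_i$ touching $\partial_i$ nothing is left, so $\Phi$ restricted to $O_i$ is a zero-boundary GFF plus the harmonic extension with data $-a$ on $\partial\A_{-a}^{u,\partial_o}\cap\partial O_i$ and $u$ on $\partial_i$; in each simply connected complementary component $O$ the boundary values of $h_{\A_{-a}^u}+u$ along $\partial O\setminus\partial\An_r$ already equal $-a$ by Definition \ref{Def ES} and the subsequent filling-in inside $O$ only pushes the field below $-a$ without changing these boundary values, so they are also the boundary values of $h_{\A_{-a}^{u,\partial_o}}+u$; and along $\partial\An_r$ the value is $u$ as in Definition \ref{Def ES}. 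This is the asserted description.

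For the positive measure statement I would use the additive decomposition $\Phi_{\A_{-a}^u}=\Phi_{\A_{-a}^{u,\partial_o}}+(\Phi^{\A_{-a}^{u,\partial_o}})_{\A_{-a}^u\setminus\A_{-a}^{u,\partial_o}}$ coming from Lemma \ref{BPLS}(2). Since $\A_{-a}^u\setminus\A_{-a}^{u,\partial_o}$ lies in the simply connected complementary components, the second term's first-passage positive part (the sum of the FPS positive measures sampled in those components) is supported off $\A_{-a}^{u,\partial_o}$, while its harmonic part contributes only to $h_{\A_{-a}^u}-h_{\A_{-a}^{u,\partial_o}}$ there; hence restricting the positive measure $\Phi_{\A_{-a}^u}-h_{\A_{-a}^u}$ of Definition \ref{Def ES} to $\A_{-a}^{u,\partial_o}$ leaves exactly $\Phi_{\A_{-a}^{u,\partial_o}}-h_{\A_{-a}^{u,\partial_o}}$. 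The statement with the roles of $\partial_i$ and $\partial_o$ swapped then follows from the conformal automorphism $z\mapsto r/z$ of $\An_r$.

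\emph{Main obstacle:} the delicate point is the identification in the middle paragraph, namely that the first stage of the \cite{ALS1} annular construction really coincides a.s.\ with the connected component of $\A_{-a}^u$ containing $\partial_o$, and the careful bookkeeping that the subsequent filling-in in the simply connected complementary components changes neither the boundary values of $h$ along $\A_{-a}^{u,\partial_o}$ nor the portion of the first-passage measure charging $\A_{-a}^{u,\partial_o}$. Making this explicit, rather than appealing wholesale to \cite{ALS1} as in the proof of Corollary \ref{c.tvs from a boundary}, is the only real work; an alternative that avoids the internal construction would be to realize $\A_{-a}^{u,\partial_o}$ as the increasing limit of the sets $\A_{-a,b}^{u,\partial_o}$ of Corollary \ref{c.tvs from a boundary} as $b\to\infty$ and to pass to the limit using Lemma \ref{BPLS}(3).
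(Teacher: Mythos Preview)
The paper does not give a proof of this corollary; exactly as with Corollary \ref{c.tvs from a boundary}, it is stated as a direct consequence of the construction of the FPS in $n$-connected domains in \cite{ALS1}. Your sketch is therefore considerably more detailed than what the paper offers, and the overall strategy --- identify $\A_{-a}^{u,\partial_o}$ with the first stage of the \cite{ALS1} construction, then use Lemma \ref{BPLS}(2) to split off the remainder --- is sound. The alternative route you mention at the end, realising $\A_{-a}^{u,\partial_o}$ as the increasing limit of $\A_{-a,b}^{u,\partial_o}$ as $b\to\infty$ and invoking Lemma \ref{BPLS}(3) together with Corollary \ref{c.tvs from a boundary}, is also correct and arguably the cleanest way to avoid re-deriving the \cite{ALS1} construction.

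There is one genuine slip in your bookkeeping. You assert that $\A_{-a}^u\setminus\A_{-a}^{u,\partial_o}$ is contained in the simply connected complementary components of $\A_{-a}^{u,\partial_o}$, and that ``in the annular complementary component $O_i$ touching $\partial_i$ nothing is left''. This is false on the event $\A_{-a}^{u,\partial_o}\cap\partial_i=\emptyset$: since $\partial_i\subset\A_{-a}^u$, the component $\A_{-a}^{u,\partial_i}$ sits inside $O_i$, and your description of the two-stage construction is correspondingly incomplete (one must also sample an FPS from $\partial_i$ in the annular component). Fortunately the conclusion survives: what matters for identifying $h_{\A_{-a}^{u,\partial_o}}$ on $O_i$ is only that $\A_{-a}^{u,\partial_i}$ is at positive distance from $\A_{-a}^{u,\partial_o}$, so by Lemma \ref{BPLS}(2) the boundary values of $h_{\A_{-a}^{u,\partial_o}}+u$ along $\ell=\partial O_i\cap\A_{-a}^{u,\partial_o}$ agree with those of $h_{\A_{-a}^u}+u$, namely $-a$, while on $\partial_i$ the value is $u$. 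Likewise, for the positive-measure statement you only need that $\A_{-a}^u\setminus\A_{-a}^{u,\partial_o}$ is \emph{disjoint} from $\A_{-a}^{u,\partial_o}$ (which is trivial), not that it lies in simply connected components. So the fix is local: drop the erroneous containment claim and argue directly from disjointness and positive distance.
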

	
	The FPS in the annulus has a similar description to that of the TVS - its complement has at most one non-simply connected component that needs to be an annulus. Moreover, one can similarly calculate the extremal distance of the annulus in the complement of $\A_{-a}^{u,\partial_o}$.
			
	\begin{prop} \label{p.LawELBddFPS}
		Let $a> 0$, $\An_r$ be an annulus. Let $u_v$ be a bounded harmonic function equal to $v$ on the inner boundary $\partial_i$ and $0$ on the outer boundary $\partial_o$. Let $\widehat{B}_t$ be a Brownian bridge from $0$ to $v$ with length $\ED(\partial_i, \partial_o)$. 
		Then 
		\[\ED(\partial_o, \partial_i) - \ED(\partial_o \cup\A_{-a}^{u_v,\partial_0} , \partial_i)\]
		is equal in law to the first hitting time of $-a$ by 
		$\widehat{B}_t$. Notice that if the Brownian bridge stays strictly above $-a$, this time is equal to $\ED(\partial_i, \partial_o)$. In this case $\A_{-a}^{u_v}$ connects $\partial_0$ with $\partial_i$, i.e. $\A_{-a}^{u_v,\partial_0} =\A_{-a}^{u_v}$.
	\end{prop}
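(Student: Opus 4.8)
The plan is to follow the proof of Proposition~\ref{p.LawELBddTVS}, replacing the two-valued set exploration by the first-passage set exploration started from the outer boundary. First I would realise $\A_{-a}^{u_v,\partial_o}$ as (the closure of) an increasing, continuous family of local sets $(\eta_t)_{t\ge 0}$ of the GFF $\Phi$, with $\eta_0=\partial_o$, given by the annular analogue of the boundary construction of first-passage sets in \cite{ALS1}. This process is run until the first time $\sigma$ at which $\eta_t$ either closes off a non-contractible loop on which $h_{\eta_t}+u_v\equiv -a$ (thereby separating $\partial_i$ from $\partial_o$), or else reaches $\partial_i$. Let $A_t$ denote the annular component of $\An_r\setminus\eta_t$ having $\partial_i$ on its boundary. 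By Corollary~\ref{c.FPS from a boundary}, in the first case $\eta_\sigma=\A_{-a}^{u_v,\partial_o}$ and the harmonic function $h_{\eta_\sigma}+u_v$ restricted to $A_\sigma$ equals $-a$ on the separating loop and $v$ on $\partial_i$; in the second case $\A_{-a}^{u_v,\partial_o}=\A_{-a}^{u_v}$ connects the two boundary components, so $\ED(\partial_o\cup\A_{-a}^{u_v,\partial_o},\partial_i)=0$.

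Next I would parametrise $(\eta_t)$ by its reduced extremal distance relative to $\partial_i$, namely $t=\ED(\partial_i,\partial_o)-\ED(\partial_i,\partial_o\cup\eta_t)$, and invoke Proposition~\ref{BProcess} with $\B=\partial_i$. Since the field is $\Phi+u_v$ and not the zero-boundary GFF, the flux martingale produced by that proposition is, after an affine normalisation fixing its endpoints, a Brownian bridge of length $\ED(\partial_i,\partial_o)$; more precisely, letting $c_t$ be the harmonic-measure average over the free part $\partial A_t\setminus\partial_i$ of the boundary values of the field discovered by $\eta_t$, Theorem~\ref{thmEL} and the mean value property show the flux martingale to be an affine function of $c_t$. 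One has $c_0=0$, since at time $0$ the frontier is $\partial_o$, where $u_v\equiv 0$, and $c_t\to v$ as $t\uparrow\ED(\partial_i,\partial_o)$, since then $A_t$ collapses onto $\partial_i$; hence $(c_t)$ is a Brownian bridge from $0$ to $v$ of length $\ED(\partial_i,\partial_o)$. The bookkeeping for the additive harmonic function $u_v$ is handled either by the Cameron--Martin change of measure of Theorem~\ref{t. change of measure local sets} or directly, exactly as in the proof of Proposition~\ref{p.LawELBddTVS}.

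It then remains to identify $\sigma$ with a hitting time of this bridge. The defining feature of the first-passage exploration is that it halts --- closing off the separating loop at level $-a$ --- exactly when the free boundary value $c_t$ first reaches $-a$; hence $\sigma$ is the first hitting time of $-a$ by $(c_t)$. If instead $(c_t)$ stays strictly above $-a$ up to its terminal time, then $\eta$ reaches $\partial_i$, so $\A_{-a}^{u_v,\partial_o}=\A_{-a}^{u_v}$ and the displayed quantity equals $\ED(\partial_i,\partial_o)$, matching the convention for the bridge in the statement. Combining with the parametrisation, $\ED(\partial_o,\partial_i)-\ED(\partial_o\cup\A_{-a}^{u_v,\partial_o},\partial_i)=\sigma$, which is the first hitting time of $-a$ by $\widehat{B}_t$.

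I expect the main obstacle to be the rigorous justification of this last identification: one must extract from the construction of first-passage sets in the annulus in \cite{ALS1} that ``closing off a non-contractible loop at level $-a$'' corresponds precisely to $(c_t)$ hitting $-a$, that this time is a genuine stopping time for $(c_t)$, and that no portion of the frontier becomes pinned at level $-a$ before $\sigma$, so that up to $\sigma$ the process $(c_t)$ is an unstopped Brownian bridge. An alternative route would be to deduce the statement from Proposition~\ref{p.LawELBddTVS} by letting $b\to\infty$, using that the first hitting time of $\{-a,b\}$ converges to that of $-a$ and that extremal distance is continuous under monotone limits of the sets involved; this would, however, require identifying the first-passage set as a suitable monotone limit of two-valued sets, cf. \cite{ALS1,ALS2}.
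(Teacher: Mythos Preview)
The paper does not actually supply a proof of this proposition: it is stated in the preliminaries (Section~\ref{SubsecFPS}) as the FPS analogue of Proposition~\ref{p.LawELBddTVS}, which in turn is quoted as Proposition~4.13 of \cite{ALS1}. So there is no ``paper's own proof'' to compare against directly; the result is imported.

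That said, both routes you outline are viable and appear, in spirit, in the paper or its sources. Your primary approach---running a local set process from $\partial_o$, parametrising by extremal distance, and reading off the bridge via Proposition~\ref{BProcess}---is exactly the mechanism behind Proposition~\ref{p.LawELBddTVS} in \cite{ALS1}, adapted to the one-sided FPS exploration. Your honest flag on the ``main obstacle'' is accurate: one has to extract from the level-line construction of the FPS that the frontier value process genuinely does not touch $-a$ before the separating loop forms, and that it equals $-a$ at that instant. This is contained in the construction in \cite{ALS1} but is not a one-liner.

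Your alternative route, deducing the result from Proposition~\ref{p.LawELBddTVS} by sending $b\to\infty$, is in fact the argument the present paper uses later for the closely related Proposition~\ref{mlawfps}: there one observes that on the event the label of $\ell^{u_v}_{-a,-a+2k\lambda}$ is $-a$, the loop coincides with $\ell^{u_v}_{-a}$, and this event has probability tending to $1$ as $k\to\infty$. The same reasoning gives the present statement directly from Proposition~\ref{p.LawELBddTVS}, and avoids the stopping-time subtlety you were worried about. If you want a self-contained proof within this paper's framework, that is the cleaner option.
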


	\subsection{Connection between the critical Brownian loop-soup and the GFF}
\label{SubSec loop soup}
Here we recall how to construct the outermost clusters of a critical Brownian loop-soup out of local sets of a GFF. For details, see \cite{ALS2}.

	First, the measure on Brownian loops is constructed as follows.
	For $z\in\C$ and $t>0$, let $\mathbb{P}^{t}_{z,z}$ denote the bridge probability measure from $z$ to $z$ in time $t$ associated to the standard Brownian motion in $\C$. Following \cite{LW2004BMLoopSoup}, the Brownian loops measure in $\D$ is
	\begin{displaymath}
	\mu^{\D}_{\rm loop}(d\gamma)=
	\int_{\D}\int_{0}^{+\infty}
	\1_{\gamma \text{ stays in } \D}\mathbb{P}^{t}_{z,z}(d\gamma)
	\dfrac{1}{2\pi t^{2}}
	\, dt \, dz.
	\end{displaymath}
	The critical Brownian loop-soup 
	$\mathcal{L}^{\D}_{1/2}$
	is the Poisson point process of loops in $\D$ of intensity 
	$\frac{1}{2}\mu^{\D}_{\rm loop}$. Two loops in
	$\mathcal{L}^{\D}_{1/2}$ are in the same cluster if there is a finite chain of intersecting loops in $\mathcal{L}^{\D}_{1/2}$ joining them. One sees the clusters of $\mathcal{L}^{\D}_{1/2}$ as random subsets of $\D$ obtained by taking the union of ranges of loops in the same cluster. The outer boundaries of outermost clusters (not surrounded by other clusters) are distributed as
	a CLE$_{4}$ loop ensemble \cite{SheffieldWerner2012CLE}.
	More precisely, let $\mathcal C$ be the outermost cluster of
	$\mathcal{L}^{\D}_{1/2}$ surrounding the origin. Let $\ell_1$
	be the outermost boundary of $\mathcal C$ and
	$\ell_2$  the inner boundary of $\mathcal C$
	surrounding the origin. Then $\ell_1$ is distributed as the CLE$_{4}$ loop surrounding the origin. As explained above, this loop and in fact the whole of CLE$_4$ can be also constructed as a two-valued local set of the GFF. In \cite{ALS2} it was further shown that the whole
	$\overline{\mathcal C}$ can be seen as local set of the GFF as follows.

Let $\Phi$ be a zero boundary GFF on $\D$ and
	$\A_{-2\lambda,2\lambda}$ a TVS of $\Phi$.
	Let $O$ be the connected component of
	$\D\backslash \A_{-2\lambda,2\lambda}$ containing $0$. Let
	$\alpha\in\{-2\lambda,2\lambda\}$ denote the random boundary value of the GFF $\Phi$ on $\partial O$.
	$\Phi^{\A_{-2\lambda,2\lambda}}\vert_O$ will denote the restriction of the conditional GFF $\Phi^{\A_{-2\lambda,2\lambda}}$ to
	$O$. We now define a local set $\widecheck{A}_{0}$ of the GFF $\Phi$ as follows. On the event $\alpha=2\lambda$, we set
	\begin{displaymath}
	\widecheck{A}_{0} = \A_{-2\lambda,2\lambda} \cup
	\A_{0}^{2\lambda}(\Phi^{\A_{-2\lambda,2\lambda}}\vert_ O),
	\end{displaymath}
	where $\A_{0}^{2\lambda}(\Phi^{\A_{-2\lambda,2\lambda}}
	\vert_ O) \subset \overline{O}$ is an FPS of 
	$\Phi^{\A_{-2\lambda,2\lambda}}\vert_O$.
	On the event $\alpha= -2\lambda$,
	\begin{displaymath}
	\widecheck{A}_{0} = \A_{-2\lambda,2\lambda} \cup
	\A_{0}^{2\lambda}(-\Phi^{\A_{-2\lambda,2\lambda}}\vert_O),
	\end{displaymath}
	where one flips the sign of the conditional GFF 
	$\Phi^{\A_{-2\lambda,2\lambda}}\vert_O$.

	\begin{prop}[Proposition 5.3 in \cite{ALS2}]
		\label{PropClusterFPS}
		The closed critical Brownian loop-soup cluster $\overline{\mathcal C}$ has same law of as
		$\overline{\widecheck{A}_{0}\backslash \A_{-2\lambda,2\lambda}}$.
	\end{prop}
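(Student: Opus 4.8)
The plan is to deduce the statement from the isomorphism between first passage sets of the GFF and clusters of the critical Brownian loop-soup attached to the boundary, transported to the random domain $O$ by the Markov property of the GFF along the two-valued local set $\A_{-2\lambda,2\lambda}$ and the restriction property of the loop-soup.

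First I would fix a coupling of the GFF $\Phi$, the loop-soup $\mathcal{L}^\D_{1/2}$, and the set $\A_{-2\lambda,2\lambda}$ that is consistent on all three objects. By Theorem \ref{BPCLE} the set $\A_{-2\lambda,2\lambda}$ is a CLE$_4$ carpet, and by \cite{SheffieldWerner2012CLE} it may be realized as the carpet formed by the outer boundaries of the outermost clusters of $\mathcal{L}^\D_{1/2}$; via Le Jan's isomorphism (squared GFF versus occupation field) and the results of \cite{Lupu2015ConvCLE,ALS2}, this realization is compatible with the field, in the sense that $\A_{-2\lambda,2\lambda}(\Phi)$ is exactly that carpet, the component $O$ of $\D\backslash\A_{-2\lambda,2\lambda}$ containing $0$ is the region surrounded by the outer boundary $\ell_1$ of $\mathcal C$, the conditional field $\Phi^{\A_{-2\lambda,2\lambda}}|_O$ is a measurable function of the loops of $\mathcal{L}^\D_{1/2}$ contained in $O$, and (Theorem \ref{BPCLE}(4)) the label $\alpha$ of $O$ is, conditionally on $\A_{-2\lambda,2\lambda}$, an independent uniform sign in $\{-2\lambda,2\lambda\}$.

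Next, conditionally on $(\A_{-2\lambda,2\lambda},\alpha)$, the restriction property of $\mathcal{L}^\D_{1/2}$ says that the loops contained in $O$ form a critical loop-soup in $O$, while the field seen in $O$ is a GFF with constant boundary value $\alpha$ (the local set property applied to $\A_{-2\lambda,2\lambda}$). Since the range of $\mathcal C$ lies in $\overline O$ with outer boundary $\ell_1=\partial O$, one checks that $\overline{\mathcal C}$ is the closure of the union of the clusters of that loop-soup in $O$ attached to $\partial O$. Thus the statement reduces to the following assertion in a simply connected domain $U$: for a zero-boundary GFF $\Psi$ in $U$, the first passage set $\A_0^{2\lambda}(\Psi)$ of Definition \ref{Def ES} coincides, in the GFF/loop-soup coupling, with the closure of the union of the clusters of the critical loop-soup in $U$ touching $\partial U$. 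The sign in $\widecheck{A}_0$ is then harmless, since $\Psi$ and $-\Psi$ have the same law and $\alpha$ is independent, so $\A_0^{2\lambda}(\Psi)$ and $\A_0^{2\lambda}(-\Psi)$ have the same law; applying the assertion with $U=O$ and reassembling yields that $\overline{\mathcal C}$ has the same law as $\overline{\widecheck{A}_0\backslash\A_{-2\lambda,2\lambda}}$.

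To prove the core assertion I would pass through the metric-graph approximation: on a fine metric graph the GFF with boundary value $2\lambda$ has the property (Lupu's metric-graph isomorphism) that the connected component of $\partial U$ in its non-negative set is exactly the union of the discrete loop-soup clusters attached to $\partial U$ together with the corresponding excursions, and this component is the discrete first passage set. Sending the mesh to zero, the discrete loop-soups converge to the continuum critical loop-soup, the discrete first passage sets converge to $\A_0^{2\lambda}(\Psi)$ by the convergence theorem of \cite{ALS2}, and one identifies the limiting union of boundary-touching clusters. I expect the main obstacle to be precisely the control of loops accumulating near a boundary: both in the scaling limit (matching the closure of the limiting boundary-attached clusters with $\A_0^{2\lambda}(\Psi)$) and in the reduction step (loops of $\mathcal{L}^\D_{1/2}$ approaching the CLE$_4$ loop $\ell_1$ from inside $O$), so that the closures $\overline{\mathcal C}$ and $\overline{\widecheck{A}_0\backslash\A_{-2\lambda,2\lambda}}$ genuinely agree; this is where the quantitative near-boundary estimates for loop-soup clusters and first passage sets from \cite{Lupu2015ConvCLE,ALS2} are essential.
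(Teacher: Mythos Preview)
This proposition is not proved in the present paper: it is quoted verbatim as Proposition~5.3 of \cite{ALS2}, with no argument given here. So there is nothing in this paper to compare your proposal against.

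That said, your sketch is broadly in line with how the result is obtained in \cite{ALS2}: the key identity is that in a simply connected domain the first passage set $\A_0^{2\lambda}$ coincides, in the coupling with the critical loop-soup, with the closure of the union of clusters touching the boundary, and this is then transported to the random domain $O$ via the Markov property of the GFF along $\A_{-2\lambda,2\lambda}$ and the restriction property of the loop-soup. One point to tighten: you assert that $\overline{\mathcal C}$ equals the closure of the union of boundary-touching clusters of the loop-soup restricted to $O$, but $\mathcal C$ is by definition a single cluster of the full loop-soup in $\D$, so you need the (non-trivial) input that the outer boundary $\ell_1$ of $\mathcal C$ is exactly $\partial O$ and that the loops of $\mathcal C$ not contained in $O$ contribute nothing to $\overline{\mathcal C}\cap\overline{O}$ beyond $\ell_1$ itself; equivalently, that $\overline{\mathcal C}$ is recovered from the loops inside $O$ together with $\partial O$. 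This is handled in \cite{ALS2} but deserves explicit mention, since a priori a cluster of the ambient loop-soup need not decompose cleanly along a CLE$_4$ loop.
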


\subsection{Stationary distribution of a single CLE$_{4}$ loop surrounding 0}
\label{SubSecStationary}

	In this subsection, we recall the notion of stationary measure on a CLE$_{4}$
	loop in whole $\C$, surrounding $0$, constructed in \cite{KemppainenWerner16NestedCLE}\footnote{The results of this section hold for all CLE$_\kappa$, with $\kappa \in (8/3,4]$ but we present them only in the case $\kappa=4$ for more clarity.}.
	Consider the space of simple loops $\wp$ in $\C$, at positive distance from $0$, surrounding $0$, such that
	\begin{displaymath}
	\crad (0,\C\backslash\wp)=1,
	\end{displaymath}
	where $\crad (0,\C\backslash\wp)$ denotes the conformal radius seen from $0$ of the interior surrounded by $\wp$. On this space consider the following Markov chain $(\wp_{j})_{j\geq 0}$.
	To go from $\wp_{j}$ to $\wp_{j+1}$, one first samples a 
	CLE$_{4}$ loop ensemble in the simply connected domain surrounded
	by $\wp_{j}$. Then one takes $\tilde{\wp}$ the CLE$_{4}$ loop surrounding $0$ and scales it by the factor
	$\crad (0,\C\backslash\tilde{\wp})^{-1}$ to get
	$\wp_{j+1}=\crad (0,\C\backslash\tilde{\wp})^{-1}\tilde{\wp}$.
	By construction, $\crad (0,\C\backslash\wp_{j+1})=1$.
	According to Proposition 2 in \cite{KemppainenWerner16NestedCLE}
	(see also Section 3.2 in \cite{KemppainenWerner16NestedCLE}), 
    there is
	a unique probability measure that is stationary for the Markov chain
	$(\wp_{j})_{j\geq 0}$. We will denote it by
	$\mathbb{P}^{\rm stat}_{\operatorname{CLE}_{4}}$.
	We will further need the following result.

	\begin{thm}[Corollary 2 in \cite{KemppainenWerner16NestedCLE}]
		\label{ThmConvStatCLE}
		Consider the CLE$_{4}$ loop ensemble in the unit disk $\D$. Let $\ell$ denote the loop in CLE$_{4}$ that surrounds $0$. The law of the loop
		\begin{align*}
		\crad(0,\D\backslash\ell)^{-1}\ell
		\end{align*}
		conditionally on the event
		\begin{displaymath}
		\ED(\ell,\partial \D)>L
		\end{displaymath}
		converges as $L\to +\infty$ to the stationary probability measure 
		$\mathbb{P}^{\rm stat}_{\operatorname{CLE}_{4}}$.
	\end{thm}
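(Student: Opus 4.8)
The plan is to show that the conditioned laws converge to the unique invariant law $\mathbb{P}^{\rm stat}_{\operatorname{CLE}_{4}}$ of the Markov chain $(\wp_j)_{j\ge 0}$ of the previous subsection. First recall that iterating the construction of $\wp_{j+1}$ from $\wp_{j}$ amounts to sampling a nested CLE$_4$ around $0$: starting from $\wp_0=\partial\D$ one produces nested loops $\D\supset D_{L_1}\supset D_{L_2}\supset\cdots$ around $0$ with $L_1=\ell$, and by conformal invariance together with the domain Markov property of nested CLE$_4$ the rescaled loops $\wp_k:=\crad(0,\C\setminus L_k)^{-1}L_k$ are exactly the chain $(\wp_j)$ started from $\partial\D$. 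Proposition~2 of \cite{KemppainenWerner16NestedCLE} gives uniqueness of the invariant law, and the coupling argument there in addition yields $\wp_k\Rightarrow\mathbb{P}^{\rm stat}_{\operatorname{CLE}_{4}}$. Since every law in play is carried by the compact metric space of simple loops surrounding $0$ of unit conformal radius, tightness is automatic, and it suffices to prove that every subsequential weak limit $\nu$ of the laws $\mu_L$ of $\crad(0,\D\setminus\ell)^{-1}\ell$ given $\{\ED(\ell,\partial\D)>L\}$ is invariant under the chain and then apply uniqueness.

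To analyse $\mu_L$ we use the GFF coupling. Reveal $\A_{-2\lambda,2\lambda}(\Phi)$ by the radial $\mathrm{SLE}_4(-2)$ exploration $(\eta_t)$ toward $0$ of Section~\ref{ss.coupling rgamma}, parametrised by $t=-(2\pi)^{-1}\log\crad(0,\D\setminus\eta_t)$; by Propositions~\ref{p.GFFcpl}--\ref{p.GFFcpl2} the harmonic value $W_t:=h_{\eta_t}(0)$ is then a Brownian motion which vanishes at the completion times of loops not surrounding $0$ and exits $(-2\lambda,2\lambda)$ exactly when $\ell$ is completed, and $(2\pi\,\ED(\ell,\partial\D),-\log\crad(0,\D\setminus\ell))$ has the law of $(\tau,T)$ of Theorem~\ref{t. main}. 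Because $2\pi\,\ED(\ell,\partial\D)\le-\log\crad(0,\D\setminus\ell)$ (Corollary~\ref{CorCrEDineq}) and these have the law of $(\tau,T)$, whose tails decay at the same exponential rate, conditioning on $\{\ED(\ell,\partial\D)>L\}$ forces the CLE$_4$ configuration between $\partial\D$ and $\ell$ to be a conformal annulus of modulus of order $L$ --- equivalently, it forces a single loop (either $\ell$, or one earlier loop that comes conformally close to surrounding $0$) to be traced over a conf-radius interval of length of order $L$, a long "conformal tube" around $0$. Along such a tube the rescaled component of $0$, $\widehat O(t):=e^{2\pi t}O(t)$ with $O(t)$ the component of $0$ in $\D\setminus\eta_t$, should converge, deep inside the tube, to a stationary law not seeing either mouth of the tube; since $\ell$ is then produced, at the far end of the tube, as the around-$0$ loop of a fresh CLE$_4$ in the then-current component of $0$, this would give convergence of $\mu_L$, and a comparison with the doubly-infinite nested CLE$_4$ (i.e.\ the stationarity of $(\wp_j)$) would identify the limit as $\mathbb{P}^{\rm stat}_{\operatorname{CLE}_{4}}$.

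I expect the technical heart, and the main obstacle, to be this "equilibration along a long conformal tube": the loop surrounding $0$ in a CLE$_4$ is unique, so there is no naive layer-by-layer renewal, and the large extremal distance is carried not by many independent layers but by a single long excursion of $W$; one must therefore extract a renewal structure internal to that excursion --- e.g.\ from the successive conformal pinches of the tube --- and prove that the increments of $-\log\crad$ and the induced shape changes between consecutive pinches are i.i.d.\ with finite mean, and, most delicately, that the resulting stationary shape, read off at the pinches directly followed by the completion of the around-$0$ loop, is exactly $\mathbb{P}^{\rm stat}_{\operatorname{CLE}_{4}}$. Here the annular reversibility of two-valued local sets established in Section~\ref{Ss.Reversibility} is a natural tool, since it lets one explore the relevant piece of the tube from the inside as well. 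One also needs Carathéodory continuity of the map sending a domain $O\ni0$ to the law of $\crad(0,\C\setminus\ell_O)^{-1}\ell_O$, with $\ell_O$ the around-$0$ loop of a CLE$_4$ in $O$, in order to pass from convergence of the rescaled component of $0$ to convergence of the rescaled loop $\ell$.
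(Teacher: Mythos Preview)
The paper does not prove this statement; it is quoted from \cite{KemppainenWerner16NestedCLE} and used only as an input to the last line of Corollary~\ref{CorAnnularExp}. So there is no proof here to compare against, and your proposal must stand on its own. It has two genuine gaps.

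First, the geometric picture in your second paragraph is wrong. In the coupling of Theorem~\ref{t. main} the event $\{\ED(\ell,\partial\D)>L\}$ is $\{\tau>2\pi L\}$, where $\tau$ is the \emph{last} zero of $B$ before exit from $(-\pi,\pi)$. A Brownian motion conditioned to remain in $(-\pi,\pi)$ for time of order $L$ and to have a late zero typically has \emph{many} zeros in $[0,2\pi L]$, each excursion being of order $1$ in length; it does not make one long excursion. In the exploration, each zero of $W$ corresponds to closing a contractible loop and restarting a fresh radial $\mathrm{SLE}_4(-2)$ in the current component of $0$. That is precisely the renewal structure you go looking for in your third paragraph --- but it lives at the zeros of $W$, not inside a single long excursion, and the ``conformal pinches of the tube'' you propose are not the typical mechanism.

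Second, the reduction in your first paragraph is not justified. You assert it suffices to show every subsequential limit $\nu$ of $\mu_L$ is invariant under $(\wp_j)$, but you give no reason this should hold, and there is no obvious one: one step of $(\wp_j)$ samples a fresh CLE$_4$ \emph{inside} the current loop, whereas $\mu_L$ conditions on the extremal distance to the \emph{outer} boundary $\partial\D$, and applying one step of the chain to a sample from $\mu_L$ does not yield a sample from $\mu_{L'}$ for any $L'$. What is actually needed is a Markov chain on rescaled \emph{domains} --- the component of $0$ at successive zeros of $W$, rescaled to unit conformal radius --- together with a proof that it equilibrates and that the CLE$_4$ loop in a stationary domain, rescaled, has law $\mathbb{P}^{\rm stat}_{\operatorname{CLE}_{4}}$. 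This is essentially what \cite{KemppainenWerner16NestedCLE} supplies via a coupling argument, and your sketch does not.
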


\section{Explicit Radon-Nikodym derivatives for FPS and TVS}
\label{SecRN}

In this section, we will study how the laws of the TVS and FPS change when we change boundary conditions of a fixed domain, or when we change the domain itself. 

\subsection{Laws of local sets after a change of measure}
Consider $g\in H_0^1(D)$, and let $\Phi$ be a (zero-boundary) GFF. Define
\begin{equation}\label{e.change measure 1}
d\widetilde \P= \exp\left ((\Phi, g)_\nabla-\frac{1}{2}(g,g)_\nabla\right )d\P.
\end{equation}
Then by the Girsanov theorem,
$\widetilde \Phi: = \Phi - g$ is a (zero-boundary) GFF under 
$\widetilde \P$.
Let now $A$ be a local set of $\Phi$. By $g_A$ we denote the orthogonal projection of $g$ in $H^{1}_{0}(D)$
on the subspace of functions that are harmonic in $D\backslash A$.
Set $\widetilde \Phi_A := \Phi_A - g_A$.
By Theorem \ref{t. change of measure local sets}, under $\widetilde \P$,
$( \widetilde \Phi, A,\widetilde \Phi_A)$ is a local set coupling. Moreover, we can explicitly calculate how the law of these local sets is modified under this change of measure: 
\begin{lemma}\label{lem::RN}
Let $\Q$ be the law of $(A,\Phi_A)$ under $\P$ and $\widetilde \Q$ be the law of $(A, \widetilde \Phi_A+g_A)=(A,\Phi_A)$ 
under $\widetilde \P$. 
Then
\[\dfrac{d\widetilde \Q}{d \Q}= 
\exp\Big((\Phi_A, g)_\nabla-\frac{1}{2}(g_A,g_A)_{\nabla}\Big).\]
\end{lemma}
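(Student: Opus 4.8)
The statement we want is Lemma \ref{lem::RN}: computing the Radon--Nikodym derivative $d\widetilde\Q/d\Q$ where $\Q$ is the law of $(A,\Phi_A)$ under $\P$ and $\widetilde\Q$ the law of $(A,\Phi_A)$ under $\widetilde\P$, with the change of measure $d\widetilde\P=\exp((\Phi,g)_\nabla-\tfrac12(g,g)_\nabla)d\P$. The key conceptual point is that $\Phi_A$ is a measurable function of the pair $(\Phi, A)$ (by uniqueness of the coupling, Lemma \ref{BPLS}(1)), and more importantly that $(A,\Phi_A)$, the data we care about, is measurable with respect to a $\sigma$-algebra on which the density of $\widetilde\P$ w.r.t. $\P$ can be computed by conditioning. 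So the plan is: first condition the density $\exp((\Phi,g)_\nabla-\tfrac12(g,g)_\nabla)$ on the $\sigma$-algebra $\sigma(A,\Phi_A)$, and identify the conditional expectation.

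The first step is to split $(\Phi,g)_\nabla$ using the orthogonal decomposition of $H^1_0(D)$ into functions harmonic off $A$ and their orthogonal complement (functions in $H^1_0(D\backslash A)$, i.e. supported off $A$ in the Dirichlet sense). Writing $g=g_A+(g-g_A)$, we get $(\Phi,g)_\nabla=(\Phi,g_A)_\nabla+(\Phi,g-g_A)_\nabla$. Formally $(\Phi,g_A)_\nabla$ should be thought of as $(\Phi_A,g_A)_\nabla=(\Phi_A,g)_\nabla$ (the last equality because $g-g_A\perp$ harmonic functions), since the part of $\Phi$ seen through harmonic test functions off $A$ is exactly $\Phi_A$; while $(\Phi,g-g_A)_\nabla$ pairs $\Phi$ against a function supported off $A$ and should behave like pairing the conditional GFF $\Phi^A=\Phi-\Phi_A$ against $g-g_A$. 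The second step is then to use the defining property of a local set: conditionally on $(A,\Phi_A)$, the field $\Phi^A$ is a GFF in $D\backslash A$. Hence $(\Phi^A,g-g_A)_\nabla$ is, conditionally on $(A,\Phi_A)$, a centered Gaussian with variance $(g-g_A,g-g_A)_\nabla$, so
\[
\E\big[\exp((\Phi^A,g-g_A)_\nabla)\,\big|\,(A,\Phi_A)\big]=\exp\big(\tfrac12(g-g_A,g-g_A)_\nabla\big).
\]
Taking conditional expectation of the full density and using $(g,g)_\nabla=(g_A,g_A)_\nabla+(g-g_A,g-g_A)_\nabla$ gives
\[
\E\Big[\exp\big((\Phi,g)_\nabla-\tfrac12(g,g)_\nabla\big)\,\Big|\,(A,\Phi_A)\Big]
=\exp\big((\Phi_A,g)_\nabla-\tfrac12(g_A,g_A)_\nabla\big),
\]
which is exactly the claimed density. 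The third step is the routine bookkeeping that this conditional expectation computation indeed yields the Radon--Nikodym derivative between the pushforward laws $\Q$ and $\widetilde\Q$ on the space where $(A,\Phi_A)$ lives: for any bounded measurable $F$, $\E_{\widetilde\P}[F(A,\Phi_A)]=\E_\P[F(A,\Phi_A)\,\frac{d\widetilde\P}{d\P}]=\E_\P[F(A,\Phi_A)\,\E_\P[\frac{d\widetilde\P}{d\P}\mid\sigma(A,\Phi_A)]]$.

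The main obstacle is making rigorous the manipulation $(\Phi,g-g_A)_\nabla=(\Phi^A,g-g_A)_\nabla$ and $(\Phi,g_A)_\nabla=(\Phi_A,g)_\nabla$ at the level of distributions, since $\Phi_A$ and $\Phi^A$ are only defined as generalized functions / harmonic functions off $A$ and $g-g_A$ need not be smooth. This is precisely the content that is "essentially contained in Proposition 13 of \cite{ASW}" (cited for Theorem \ref{t. change of measure local sets}): one approximates $g$ by smooth compactly supported functions, uses that the pairing $(\Phi,\cdot)_\nabla$ extends continuously, and uses that $g_A$ is the limit of the harmonic projections. I would either invoke Theorem \ref{t. change of measure local sets} directly — which already tells us $(\widetilde\Phi,A,\widetilde\Phi_A)=(\Phi-g,A,\Phi_A-g_A)$ is a local set coupling under $\widetilde\P$, so that the pair $(A,\Phi_A)$ under $\widetilde\P$ is the pair $(A,\widetilde\Phi_A+g_A)$ with $(\widetilde\Phi,A,\widetilde\Phi_A)$ a genuine local-set coupling — or reproduce the approximation argument. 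Given that Theorem \ref{t. change of measure local sets} is already available, the cleanest route is: apply it to get the structural statement, then the density computation reduces to the Gaussian conditional-expectation identity above, applied to the coupling $(\Phi,A,\Phi_A)$ under $\P$. The remaining steps are all routine.
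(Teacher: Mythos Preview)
Your proposal is correct and follows essentially the same route as the paper: condition the density $\exp((\Phi,g)_\nabla-\tfrac12(g,g)_\nabla)$ on $(A,\Phi_A)$, decompose $g=g_A+g^A$ orthogonally, split $(\Phi,g)_\nabla$ accordingly, and use that $\Phi^A$ is a GFF in $D\backslash A$ to integrate out the $(\Phi^A,g^A)_\nabla$ piece. The ``obstacle'' you flag is handled more simply in the paper than you suggest: rather than approximating, one just observes that conditionally on $A$ the random variable $(\Phi^A,g_A)_\nabla$ is Gaussian with variance zero (since $g_A\perp H^1_0(D\backslash A)$), hence a.s.\ vanishes, giving $(\Phi,g)_\nabla=(\Phi_A,g)_\nabla+(\Phi^A,g^A)_\nabla$ directly; moreover the paper \emph{defines} $(\Phi_A,g)_\nabla:=\E[(\Phi,g)_\nabla\mid(A,\Phi_A)]$, so no appeal to Theorem~\ref{t. change of measure local sets} or an approximation argument is needed.
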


\begin{rem}
One way to give a rigorous sense to $(\Phi_A, g)_\nabla$ is to set for any $g\in H_0^1(D)$, $(\Phi_A,g)_{\nabla}:= \E\left[(\Phi,g)_{\nabla}\mid (A,\Phi_A) \right]$. 
\end{rem}

\begin{proof}
	Take $F$ a bounded measurable
function from the space of closed subsets of $\overline{D}$ times $H^{-1}(D)$ to $\R$. Then,
	\begin{align*}
	\widetilde \Q\left[F(A,\widetilde \Phi_A+g_A) \right]&= 
	\E\left[F(A,\Phi_A)\exp\Big((\Phi, g)_{\nabla}
	-\frac{1}{2}(g,g)_{\nabla}\Big) \right] \\
	&=\E\left[F(A,\Phi_A)\E\left[ 
	\exp\Big((\Phi, g)_{\nabla}-\frac{1}{2}(g,g)_{\nabla}\Big)
	\Big\vert (A,\Phi_A)\right] \right].
	\end{align*}
	Now, conditional on $A$, the law of $\Phi$ is that of the sum of two independent fields $\Phi_A$ and $\Phi^A$, where the law of $\Phi^A$ is that of a GFF in $D\backslash A$ and $\Phi_A$ is harmonic in $D\backslash A$. We can write a similar decomposition for $g=g_A+g^A$, where $g_A$ is harmonic in $D\backslash A$, and $g^A$ is supported in $D\backslash A$. 
Furthermore, the given decomposition is orthogonal in $H_0^1(D)$, i.e. 
$(g,g)_\nabla = (g_A,g_A)_{\nabla}+ (g^A,g^A)_{\nabla}$. 
Moreover, 
$(\Phi,g)_\nabla = (\Phi_A, g)_\nabla + (\Phi^A, g^{A})_\nabla$
a.s. Indeed, conditional on $A$,
$(\Phi^A, g_A)_\nabla$ is a Gaussian r.v. and its variance is zero since $g_A$ is orthogonal to $H_0^1(D\backslash A)$.

Thus,
	\begin{multline*}
	\E\left[ 
	\exp\Big((\Phi, g)_{\nabla}-\frac{1}{2}(g,g)_{\nabla}\Big)
	\Big\vert (A,\Phi_A)\right]=
	\\
	\exp\Big((\Phi_A, g)_\nabla-\frac{1}{2}(g_A,g_A)_{\nabla}\Big)
	\E\left[\exp(\Phi^A, g^A)_\nabla-\frac{1}{2}(g^A,g^A)_{\nabla}\Big		\vert A \right].
	\end{multline*}
	Finally, 
	\begin{equation*}
	\E\left[\exp\Big((\Phi^A, g^A)_\nabla
	-\frac{1}{2}(g^A,g^A)_{\nabla}\Big)
	\Big\vert A \right]=1,
	\end{equation*}
	and the result follows.
\end{proof}

\subsection{Radon-Nykodim derivative for local set in an annulus}\label{ss.RN for annulus}
Now consider an annulus 
$\An_r$ defined as $\D\backslash r\overline{\D}$, 
with $r\in(0,1)$. We denote $\partial_i= r \partial \D$ and 
$\partial_o=\partial \D$ the inner an outer boundary of $\An_r$ respectively. 
Recall from Section \ref{SubsecED} that $\ED(\partial_o,\partial_i)=
(2\pi)^{-1}\log(r^{-1}).$

For $v\in\R$ let $u_v$ be the harmonic function in $\An_r$ 
that takes value $v$ on $\partial_i$ and zero on $\partial_o$.
Let $A^v =\A^{u_v,\partial_o}_{-a,b}$
 with ($a,b >0, a+b\geq 2\lambda$)
denote the connected component of the 
two-valued local set $\A^{u_v}_{-a,b}$ (in $\An_r$) 
containing the outer boundary $\partial_{o}$.
On the event
$A^{v}\cap\partial_{i}=\emptyset$, 
define $\ell^{u_{v}}_{-a,b}$ 
to be the boundary of the connected component 
$\O_{i}^{u_{v}}$ of $\An_r\backslash A^{v}$ with 
$\partial_{i}\subset \overline{\O_{i}^{u_{v}}}$, i.e. we set 
$\ell^{u_{v}}_{-a,b}:=A^{v}\cap \overline{\O_{i}^{u_{v}}}$. Then
$\ell^{u_{v}}_{-a,b}$ is a simple loop, non-contractible in 
$\overline{\An_r}$, separating $\partial_o$ and $\partial_i$\footnote{Locally, the loop $\ell^{u_{v}}_{-a,b}$  looks like an SLE$_4$ loop, however we will not use this in this paper.}. On the event $A^{v}\cap\partial_{i}\neq\emptyset$, we define
$\ell^{u_{v}}_{-a,b}$ to be $\partial_{i}$.

We will show that on the event 
$\ED(\ell^{u_{v}}_{-a,b},\partial_{i})>0$
(i.e. $A^{v}\cap\partial_{i}=\emptyset$), the conditional law of
$\ED(\ell^{u_{v}}_{-a,b},\partial_{o})$ given
$\ED(\ell^{u_{v}}_{-a,b},\partial_{i})
= \ED(A^{v},\partial_{i})$
and the label $-a$ or $b$ of $\ell^{u_{v}}_{-a,b}$, does not depend on the value
$v$.

To do this, we calculate explicitly the 
Radon-Nikodym derivative of $(A^v, \Phi_{A^v}+u_v)$ with respect to 
$(A^0, \Phi_{A^0})$.

\begin{prop}\label{p.change of measure TVS}
	Let $\Q_{v}$ be the law of $(A^v, \Phi_{A^v}+u_v)$ on the event where $A^{v}$ does not intersect $\partial_{i}$, i.e. for any measurable bounded function $F$,
	\begin{equation*}
	\Q_v[F(A^v, \Phi_{A^v}+u_v)]= \E\left[ F((A^v, \Phi_{A^v}+u_v)) \1_{A^v \cap \partial_i=\emptyset}\right].
	\end{equation*}
	Further, on the event $A^{v}\cap\partial_{i}=\emptyset$, denote by
	$\alpha^{u_{v}} =\alpha^{u_{v}}_{-a,b}$ 
	the constant boundary value in
	$\{-a,b\}$ of $u_{v}+h_{A^{v}}$ on
	$\ell^{u_{v}}_{-a,b}$, seen as the boundary of $\O_{i}^{u_{v}}$ side.
	Then, we have that
	\begin{equation}\label{e.RND1}
	\frac{d \Q_v}{d \Q_0}= 
	\exp\left (
	-\frac{v^{2}}{2}
	(\ED(A^0, \partial_i)^{-1}-\ED(\partial_o, \partial_i)^{-1})
	+\alpha^{0} v \ED(A^0, \partial_i)^{-1}
	\right ).
	\end{equation}
In particular, if $\ED(\ell^{u_{v}}_{-a,b},\partial_{i})>0$, then the law of $\ED(\ell^{u_{v}}_{-a,b},\partial_{o})$ conditional on
$\alpha^{u_{v}}$ and on $\ED(\ell^{u_{v}}_{-a,b},\partial_{i})$ 
only depends on $(\alpha^{u_{v}},\ED(\ell^{u_{v}}_{-a,b},\partial_{i}))$ and not on the inner boundary value $v$.
	\end{prop}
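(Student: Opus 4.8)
The plan is to realise $\Q_v$ as a Cameron--Martin reweighting of $\Q_0$, the shift being (a localisation of) $u_v$. The reason one expects this: writing a two-valued set of a field as a measurable function of that field, $A^v=\A^{u_v,\partial_o}_{-a,b}(\Phi)$ depends on $\Phi$ only through $\Phi+u_v$, so shifting the underlying GFF by $u_v$ should turn $A^v$ into $A^0$. Applying Lemma \ref{lem::RN} to that shift would then produce a density of the form $\exp\big((\Phi_{A^0},u_v)_\nabla-\tfrac12((u_v)_{A^0},(u_v)_{A^0})_\nabla\big)$, where $(u_v)_{A^0}$ is the bounded harmonic function in $\An_r\setminus A^0$ with boundary values $u_v$ on $A^0$ and $0$ on $\partial\An_r$. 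Two things then have to be handled: (i) $u_v\notin H_0^1(\An_r)$, as it has boundary value $v$ on $\partial_i$, so Lemma \ref{lem::RN} does not apply verbatim; (ii) the two inner products must be brought into closed form.

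For (i) I would localise. Up to a null set, $\{A^v\cap\partial_i=\emptyset\}$ is the increasing union over $\delta\downarrow 0$ of $E_\delta:=\{d(A^v,\partial_i)>2\delta\}$, and on $E_\delta$ the set $A^v$, together with its revealed harmonic function away from the inner annular component $\O_{i}^{u_{v}}$, is a function of the restriction of $\Phi+u_v$ to $\{z:d(z,\partial_i)>\delta\}$. I would therefore apply Lemma \ref{lem::RN} with $g=g_\delta\in H_0^1(\An_r)$ equal to $u_v$ on $\{d(\cdot,\partial_i)>\delta\}$ and cut off to $0$ near $\partial_i$; on $E_\delta$ the data computed after the $g_\delta$-shift agrees with the one computed after the (formal) $u_v$-shift. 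The key point is that, given the event, the projection $(g_\delta)_{A^0}$ does not depend on $\delta$ for small $\delta$ --- its boundary data on $A^0$ is $u_v|_{A^0}$ --- so the density is already the $\delta$-free expression above, and letting $\delta\to0$ transfers the identity to $\{A^v\cap\partial_i=\emptyset\}$. One must keep in mind here that the revealed harmonic function near $\partial_i$ genuinely changes with $v$, so the comparison of $\Q_v$ and $\Q_0$ is really a comparison of the laws of $A^v$ and $A^0$ together with their harmonic functions \emph{outside} $\O_{i}^{u_{v}}$, which is all the statement (and its use) needs.

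For (ii) I would compute with Green's identity, using that $u_v$ is harmonic in $\An_r$ and that $h_{A^0}$ is bounded, harmonic in $\An_r\setminus A^0$, constant equal to $\alpha^{0}$ on $\ell^{u_{0}}_{-a,b}$ and equal to $0$ on $\partial\An_r$. On each simply-connected component of $\An_r\setminus A^0$ one has $(u_v)_{A^0}=u_v$, and these components contribute $0$ to the cross term $(\Phi_{A^0},u_v)_\nabla$ (the flux of $u_v$ around each of them vanishes); on the annular component $\O_{i}^{u_{0}}$ one has $(u_v)_{A^0}=u_v-v\widehat u_i$, where $\widehat u_i$ is the harmonic function in $\O_{i}^{u_{0}}$ equal to $0$ on $\ell^{u_{0}}_{-a,b}$ and $1$ on $\partial_i$. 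The remaining boundary integrals localise to $\ell^{u_{0}}_{-a,b}$ and $\partial_i$ and are evaluated through Theorem \ref{thmEL}: $\int_{\O_{i}^{u_{0}}}|\nabla\widehat u_i|^2=\ED(A^0,\partial_i)^{-1}$, the flux of $u_v$ out of $\O_{i}^{u_{0}}$ through $\partial_i$ equals $v\,\ED(\partial_o,\partial_i)^{-1}$, and $(u_v,u_v)_{\nabla,\An_r}=v^2\,\ED(\partial_o,\partial_i)^{-1}$. Substituting yields \eqref{e.RND1}.

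The last assertion is then immediate: the right-hand side of \eqref{e.RND1} is a function of $\alpha^{0}=\alpha^{u_{v}}$ and of $\ED(A^0,\partial_i)=\ED(\ell^{u_{v}}_{-a,b},\partial_i)$ only, besides the deterministic $v$ and $\ED(\partial_o,\partial_i)$; hence conditioning $\Q_v$ on $(\alpha^{u_{v}},\ED(\ell^{u_{v}}_{-a,b},\partial_i))$ makes the density constant, it cancels, and the conditional law of $\ED(\ell^{u_{v}}_{-a,b},\partial_o)$ agrees with the corresponding one under $\Q_0$, so it does not depend on $v$. I expect the main difficulty to be step (i): justifying rigorously that on $E_\delta$ one may replace $u_v$ by the cutoff $g_\delta$ without changing the relevant $\sigma$-algebra, and that the change-of-measure identity survives the passage $\delta\to0$; once this is in place, step (ii) is routine.
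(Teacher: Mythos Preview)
Your proposal is correct and follows essentially the same route as the paper. Both arguments cut off $u_v$ near $\partial_i$ to get a shift in $H_0^1(\An_r)$, apply Lemma~\ref{lem::RN}, show that on the event where $A^0$ stays at distance $\epsilon$ from $\partial_i$ the shifted field has $A^0$ as its $\A^{u_v,\partial_o}_{-a,b}$, and then evaluate the two inner products via Theorem~\ref{thmEL}. The paper chooses the specific \emph{harmonic} cutoff $u_v^\epsilon$ (harmonic off the circle $(r+\epsilon)\partial\D$, equal to $u_v$ on $\An_{r+\epsilon}$), which makes the computation of $(\Phi_{A^0},u_v^\epsilon)_\nabla$ slightly cleaner since $u_v^\epsilon-u_v$ is supported in a thin annulus and constant on $\partial_i$; your general cutoff $g_\delta$ works equally well once you note, as you do implicitly, that both $(\Phi_{A^0},g_\delta)_\nabla$ and $((g_\delta)_{A^0},(g_\delta)_{A^0})_\nabla$ are independent of $\delta$ on $E_\delta$ because $g_\delta-g_{\delta'}$ vanishes on $A^0\cup\partial\An_r$ and $h_{A^0}$ is harmonic on its support. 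The step you flag as the main difficulty is exactly what the paper isolates as Claim~\ref{c.new measure}, proved via the level-line construction of the TVS from \cite{ALS1}: one runs the construction of $\A_{-a,b}^{u_v,\partial_o}$ stopped at distance $\epsilon$ from $\partial_i$, observes it is simultaneously a BTLS of both $\Phi$ and $\widetilde\Phi$ with matching boundary data, and invokes uniqueness.
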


\begin{proof}
	
Define $u_{v}^\epsilon: \overline{\An_r}\rightarrow\R$ as the unique function continuous on $\overline{\An_r}$ and harmonic
in $\An_r\backslash (r+\epsilon)\partial \D$, with boundary values
	\begin{equation}
	u_v^\epsilon= \left\{\begin{array}{l l}
	u_v & \text{ on } (r+\epsilon)\partial \D,\\
	0 & \text{ on } \partial \An_r.
	\end{array} \right. 
	\end{equation}
	Note that $u_v^\epsilon$ is equal to $u_v$ in $\An_{r+\epsilon}$.
		
Let $\P$ denote the law of $\Phi$, a (zero-boundary) GFF in $\An_r$. 
Define $\widetilde \Phi:= \Phi - u^\epsilon_v$ and
	\begin{equation}\label{e.change measure 2}
	\frac{d\widetilde \P}{d\P}= 
	\exp\left ((\Phi, u_v^\epsilon)_\nabla
	-\frac{1}{2}(u_v^\epsilon,u_v^\epsilon)_\nabla\right ),
	\end{equation}
so that $\widetilde \Phi$ is a GFF under $\widetilde \P$.
	
	Let us first prove the following claim.
	\begin{claim}\label{c.new measure}
		The event $\{d(A^0(\Phi),\partial^i)\geq \epsilon \}$ is a.s. equal to the event $\{d(A^v(\widetilde \Phi),\partial^i)\geq \epsilon \}$. Furthermore, on this event a.s. $A^0(\Phi)=A^v(\widetilde \Phi)$ and the boundary values of $h_{A^0}$ and $h^{A^v}+u_v$ coincide everywhere except on $\partial_i$.
	\end{claim}
	\begin{proof}[Proof of the Claim]
We construct $\A_{-a,b}^{u_{v},\partial_o}$ (of the GFF $\Phi$) using the  level line construction given in the proof of Proposition 3.9 of \cite{ALS1}.  Let $A^{v}_{\eps}(\Phi)$ be the local set obtained when one stops this construction the first time a level line gets $\eps$ close to the inner boundary. Let us verify that, under $\widetilde \P$, 
$A^0_\eps(\Phi)$ is equal to $A^v_\eps(\widetilde \Phi)$. First, Theorem \ref{t. change of measure local sets} ensures that $A^0_\epsilon(\Phi)$ is a BTLS of 
$\widetilde\Phi$. Moreover, on the event $A^0$ is at distance $\epsilon$ from $\partial_i$, on the boundary of $A^0$  we have that 
$\widetilde\Phi_{A^0_\eps(\Phi)}+u_v$ takes values in $\{-a,b\}$; and on the complement of this event, again on the boundary of $A^0$, the function
$\widetilde\Phi_{A^0_\eps(\Phi)}+u_v$ takes values in $[-a,b]$, changing only finitely many times on the boundary of each connected component. 

Thus, following the cited construction of \cite{ALS1} on the event that $A^0$ does get $\eps$ close, one can complete $A^0_\eps(\Phi)$ to build a local set 
$\widehat A$ of $\widetilde \Phi$ that comes at distance $\eps$ from $\partial_i$ and such that 
$\widetilde\Phi_{\widehat A}+u_v$ takes values in $\{-a,b\}$ on the boundary of $\widehat A$. By uniqueness of the TVS, 
$\widehat A = A^v(\widetilde \Phi)$. Note that from the construction it follows that on the event when $A^0(\Phi)$ remains at distance $\eps$ from the boundary a.s. $A^0(\Phi)=A^v(\widetilde \Phi)$. Furthermore, if $d(A^0(\Phi), \partial_i)\leq \epsilon$, we have that also $d(A^v(\tilde \Phi),\partial_i)\leq \epsilon$.
	\end{proof}
	
The claim implies that the measure $\Q_v$ restricted to the event that $A^v$ is at distance $\epsilon$ from $\partial_i$ is absolutely continuous with respect to the measure $\Q_0$ 		restricted to the event that $A^0$ is at distance $\epsilon$ from $\partial_i$. Furthermore, the Radon-Nykodim derivative is given by the conditional expectation of
		\eqref{e.change measure 2} given $(A^{0}, \Phi_{A^{0}})$. Thus, it remains to compute the conditional expectation of
\eqref{e.change measure 2} to obtain the result. 
Let $\Q_0$, respectively $\widetilde\Q_0$ be the laws of
$(A^{0},\Phi_{A^{0}})$ under
$\P$, respectively $\widetilde\P$, restricted to the event that $A^0$ stays at a positive distance from $\partial_i$.
By Lemma \ref{lem::RN} we have that 
	\begin{equation}\label{e.RND2}
	\frac{d \widetilde \Q_0}{d \Q_0}
	= \exp\left ((\Phi_{A^0}, 
	u_v^\epsilon)_\nabla
	-\frac{1}{2}((u_v^\epsilon)_{A^0},
	(u_v^\epsilon)_{A^0})_{\nabla}\right ),
	\end{equation}
where $(u_v^\epsilon)_{A^0}$ is the orthogonal projection
of $u_v^\eps$ in $H^{1}_{0}(\An_r)$ on the subspace of functions that are harmonic in $\An_r\backslash A^{0}$.
On the event $d(A^{0},\partial_{i})>\eps$, we can explicitly calculate the terms inside the exponential. 
Indeed, denote by 
$I_{A^0}:\overline{\An_r}\rightarrow\R$ 
the function that is continuous on $\overline{\An_r}$, harmonic in $\An_r \backslash A^0$, and takes value $0$ on 
$A^0\cup \partial_o$, and value $1$ on $\partial_i$.
	
	Observe that on the event $d(A^{0},\partial_{i})>\eps$,
$(u_v^\eps)_{A^0} + vI_{A^0} = u_v$ on the whole of $\An_r$. Moreover, observe that 
$((u_v^\eps)_{A^0}, u_v)_\nabla = 0$ as $(u_v^\eps)_{A^0}$ is zero on $\partial\An_r$ 
and $u_v$ is harmonic in $\An_r$. 
	Thus, 
	\begin{align*}
	(vI_{A^0}, vI_{A^0})_\nabla 
	= (u_v - (u_v^\eps)_{A^0}, 
	u_v - (u_v^\eps)_{A^0})_\nabla 
	= (u_v, u_v)_\nabla + 
	((u_v^\eps)_{A^0},
	(u_v^\eps)_{A^0})_{\nabla}.
	\end{align*}
	From Theorem \ref{thmEL}, it then follows that 
	\begin{align*}
	((u_v^\eps)_{A^0},
	(u_v^\eps)_{A^0})_{\nabla} 
	= -v^2\ED(\partial_o, \partial_i)^{-1}+v^2\ED(A^0, \partial_i)^{-1}.
	\end{align*}
	
Let us now compute 
$(\Phi_{A^{0}},u_v^\eps)_\nabla$. By definition, and recalling that $u_v$ is harmonic in $\An_r$,
\begin{align}\label{e.phi A_0}
(\Phi_{A^{0}},u_v^\eps)_\nabla = (\Phi_{A^{0}},-\Delta u_v^\eps)=(\Phi_{A^{0}},-\Delta (u_v^\eps-u_v)).
\end{align}
Now, note that $u_v^\eps-u_v$ is supported in $\An_r\cap (r+\epsilon) \D$, and furthermore that $\Phi_{A^0}$ is harmonic in $\An_r\cap (r+\epsilon) \D$. Using integration by parts we obtain that on the event $d(A^{0},\partial_{i})>\eps$ \eqref{e.phi A_0} is equal to 
\begin{align*}
-\int_{O_i^0} h_{A^0} \Delta (u_v^\eps-u_v)&= \int_{O_i^0} \nabla h_{A^0} \nabla (u_v^\eps-u_v)=-v\alpha^0 \int_{\partial_i} \partial_n(1-I_{A^0})=v\alpha^0 \ED(A^0,\partial_i)^{-1}.
\end{align*}
Here we used that when restricted to $O_i^0$, $h_{A^0}=\alpha^0(1-I_{A^0})$ and Theorem \ref{thmEL}.

We conclude the proof of the identity \ref{e.RND1} by taking 
$\eps\to 0$.

Finally, notice that the non-dependence on $v$ of the conditional law 
of $\ED(\ell^{u_{v}}_{-a,b},\partial_{o})$ simply comes from the fact that the Radon-Nykodim derivative in \eqref{e.RND1} is measurable with respect to
$(\alpha^{0},\ED(\ell^{0}_{-a,b},\partial_{i}))$.
\end{proof}

A similar proof gives us the following result for an FPS. 
Let $a>0$. Consider the FPS $\A^{u_v}_{-a}$ in
$\An_r$ and let $\A^{u_v,\partial_{o}}_{-a}$ denote the connected component of
$\A^{u_v}_{-a}$ containing $\partial_{o}$.
On the event that 
$\A^{u_v,\partial_{o}}_{-a}\cap\partial_{i}=\emptyset$, let
$\ell^{u_{v}}_{-a}$ be the only loop delimited by 
$\A^{u_v,\partial_{o}}_{-a}$ which is not contractible in
$\overline{\An_r}$. In other words,
$\ell^{u_{v}}_{-a}\cup\partial_{i}$ is the boundary of the unique connected component of $\An_r\backslash\A^{u_v,\partial_{o}}_{-a}$
which is topologically an annulus. 
We have that 
$\ED(\A_{-a}^{u_{v},\partial_{o}},\partial_{i})=
\ED(\ell^{u_{v}}_{-a},\partial_{i})$.
On the event
$\A^{u_v,\partial_{o}}_{-a}\cap\partial_{i}\neq\emptyset$, we set
$\ell^{u_{v}}_{-a}=\partial_{i}$.

\begin{prop}\label{p.change of measure FPS}
	Let $\Q_{v}$ be the law 
of $\A^{u_v,\partial_{o}}_{-a}$ on the event where $\A^{u_v,\partial_{o}}_{-a}$ does not intersect $\partial_{i}$. Then, we have that
	\begin{equation}\label{e.RND3}
	\frac{d \Q_v}{d \Q_0}= 
	\exp\left (	-\frac{v^{2}}{2}
	(\ED(\A^{u_v,\partial_{o}}_{-a}, \partial_{i})^{-1}
	-\ED(\partial_{o}, \partial_{i})^{-1})
	-av \ED(\A^{u_v,\partial_{o}}_{-a}, \partial_{i})^{-1}\right).
	\end{equation}
	In particular, if $\ED(\ell^{u_{v}}_{-a},\partial_{i})>0$, the law of 
$\ED(\ell^{u_{v}}_{-a},\partial_{o})$, conditionally on
$\ED(\ell^{u_{v}}_{-a},\partial_{i})$ only depends on 
$\ED(\ell^{u_{v}}_{-a},\partial_{i})$ and not on $v$.
\end{prop}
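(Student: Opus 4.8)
The plan is to transcribe the proof of Proposition~\ref{p.change of measure TVS} essentially word for word, the only structural novelty being that a first passage set carries, besides its harmonic function, the nonnegative measure $\Phi_{\A^{u_v}_{-a}}-h_{\A^{u_v}_{-a}}$ of Definition~\ref{Def ES}(2), which has to be tracked through the Cameron--Martin shift.

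First I would use the same auxiliary function $u_v^{\epsilon}$: continuous on $\overline{\An_r}$, harmonic in $\An_r\backslash(r+\epsilon)\partial\D$, equal to $u_v$ on $(r+\epsilon)\partial\D$ and to $0$ on $\partial\An_r$, so that $u_v^{\epsilon}=u_v$ on $\An_{r+\epsilon}$; tilting by $d\widetilde\P/d\P=\exp\big((\Phi,u_v^{\epsilon})_\nabla-\tfrac12(u_v^{\epsilon},u_v^{\epsilon})_\nabla\big)$ makes $\widetilde\Phi:=\Phi-u_v^{\epsilon}$ a zero-boundary GFF under $\widetilde\P$. The central step is the exact analogue of Claim~\ref{c.new measure}: on the event $\{d(\A^{0,\partial_o}_{-a}(\Phi),\partial_i)\geq\epsilon\}$, which coincides a.s.\ with $\{d(\A^{v,\partial_o}_{-a}(\widetilde\Phi),\partial_i)\geq\epsilon\}$, one has $\A^{0,\partial_o}_{-a}(\Phi)=\A^{v,\partial_o}_{-a}(\widetilde\Phi)$, the boundary values of the two harmonic functions agree off $\partial_i$, and the two nonnegative measures coincide. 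I would obtain this by running the level-line construction of the FPS from \cite{ALS1} for $\Phi$ with inner value $0$, stopped the first time a level line gets $\epsilon$-close to $\partial_i$: by Theorem~\ref{t. change of measure local sets} the stopped set $A^0_\epsilon(\Phi)$ is a local set of $\widetilde\Phi$, and on the event in question its harmonic function, shifted by $u_v$, has exactly the boundary behaviour needed to continue the FPS construction for $\widetilde\Phi$ with inner value $v$. Crucially, condition (2) of Definition~\ref{Def ES} survives: on this event the Cameron--Martin shift, equivalently its harmonic projection $(u_v^{\epsilon})_{\A^{0}}$, is harmonic in $\An_r\backslash\A^{0,\partial_o}_{-a}$ and in particular near $\A^{0,\partial_o}_{-a}$, so it only alters the harmonic part and the singular measure $\Phi_{\A^0}-h_{\A^0}$ is literally unchanged, hence still $\geq 0$. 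Uniqueness of the FPS (Theorem~\ref{Thm::FPS}) and Corollary~\ref{c.FPS from a boundary} then identify the completed set with $\A^{v,\partial_o}_{-a}(\widetilde\Phi)$.

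Granting the claim, $\Q_v$ restricted to $\{d(\A^{v,\partial_o}_{-a},\partial_i)\geq\epsilon\}$ is absolutely continuous with respect to $\Q_0$ restricted to $\{d(\A^{0,\partial_o}_{-a},\partial_i)\geq\epsilon\}$, the density being the conditional expectation of $d\widetilde\P/d\P$ given $(\A^{0,\partial_o}_{-a},\Phi_{\A^{0,\partial_o}_{-a}})$. Writing $A^0:=\A^{0,\partial_o}_{-a}$, Lemma~\ref{lem::RN} turns this density into $\exp\big((\Phi_{A^0},u_v^{\epsilon})_\nabla-\tfrac12((u_v^{\epsilon})_{A^0},(u_v^{\epsilon})_{A^0})_\nabla\big)$, and on $\{d(A^0,\partial_i)>\epsilon\}$ both exponents are computed exactly as in Proposition~\ref{p.change of measure TVS}: the identity $(u_v^{\epsilon})_{A^0}+vI_{A^0}=u_v$ together with Theorem~\ref{thmEL} gives $((u_v^{\epsilon})_{A^0},(u_v^{\epsilon})_{A^0})_\nabla=-v^2\ED(\partial_o,\partial_i)^{-1}+v^2\ED(A^0,\partial_i)^{-1}$, while integration by parts combined with the fact that on the annular component touching $\partial_i$ one has $h_{A^0}=-a(1-I_{A^0})$ --- the label of the interface is deterministically $-a$, so nothing plays the role of the random sign $\alpha^0$ of the TVS case --- gives $(\Phi_{A^0},u_v^{\epsilon})_\nabla=-av\,\ED(A^0,\partial_i)^{-1}$. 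Assembling these, letting $\epsilon\to 0$, and using that $\ED(A^0,\partial_i)=\ED(\A^{u_v,\partial_o}_{-a},\partial_i)$ on the relevant event, yields \eqref{e.RND3}. The ``in particular'' statement is then immediate, since the right-hand side of \eqref{e.RND3} is a measurable function of $\ED(\ell^{u_v}_{-a},\partial_i)=\ED(\A^{u_v,\partial_o}_{-a},\partial_i)$ alone.

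The hard part will be the claim, i.e.\ faithfully re-running the $\epsilon$-stopped level-line construction in the FPS setting and checking that all of its defining properties (harmonic values off $\partial_i$, and the nonnegative measure on the set) are transferred by the tilt. The only ingredient genuinely new compared with the two-valued case is the behaviour of this measure, and this turns out to be the easy point: on $\{d(A^0,\partial_i)\geq\epsilon\}$ the shift is harmonic near $A^0$, so the singular part does not move. The remaining computations are a routine copy of the argument for $\A^{u_v,\partial_o}_{-a,b}$.
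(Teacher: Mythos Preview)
Your proposal is correct and follows exactly the approach intended by the paper, which simply says ``A similar proof gives us the following result for an FPS'' and omits the details. You have in fact supplied more than the paper does: you correctly isolate the one point where the FPS differs from the TVS case---the presence of the nonnegative measure $\mu=\Phi_{A^0}-h_{A^0}$ supported on $A^0$---and you correctly observe both that (i) this measure is unaffected by the Cameron--Martin shift (since $(u_v^\epsilon)_{A^0}$ is harmonic near $A^0$), so Definition~\ref{Def ES}(2) is preserved and the uniqueness of the FPS applies, and that (ii) it does not contribute to $(\Phi_{A^0},u_v^\epsilon)_\nabla$, since on the event $d(A^0,\partial_i)>\epsilon$ the distribution $-\Delta u_v^\epsilon$ is supported on $(r+\epsilon)\partial\D$, disjoint from $\operatorname{supp}\mu\subset A^0$. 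The substitution $\alpha^0=-a$ in the final integration-by-parts step is exactly right.
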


Finally, let us extend this proposition also to the local set that gives us the law of outermost clusters of the Brownian loop soup, as explained in Subsection \ref{SubSec loop soup}. We will need to work in a slightly more general setting than described in that subsection, so let us describe the construction of this local set $\widecheck{A}^{u_{v}}_{0}$ again in detail. 

First we take
$\A^{u_{v},\partial_{o}}_{-2\lambda,2\lambda}$.
If 
$\A^{u_{v},\partial_{o}}_{-2\lambda,2\lambda}\cap\partial_{i}
\neq\emptyset$, then we set
$\widecheck{A}^{u_{v}}_{0}:=\A^{u_{v},\partial_{o}}_{-2\lambda,2\lambda}$. 

Otherwise, let $O^{u_{v}}_{i}$ denote the connected component of
$\An_{r}\backslash \A^{u_{v},\partial_{o}}_{-2\lambda,2\lambda}$ such that $\partial_{i}\subset\overline{O^{u_{v}}_{i}}$. 
$O^{u_{v}}_{i}$ is an annular domain. 
Let 
$\ell^{u_v}=\ell^{u_{v}}_{-2\lambda,2\lambda}:= 
\A^{u_{v},\partial_{o}}_{-2\lambda,2\lambda}
\cap \overline{O^{u_{v}}_{i}}$ 
denote the outer boundary of $O^{u_{v}}_{i}$. Let
$\alpha^{u_{v}}=\alpha^{u_{v}}_{-2\lambda, 2\lambda}
\in\{-2\lambda,2\lambda\}$ be the constant boundary value of $u_{v}+h_{\A^{u_{v},\partial_{o}}_{-2\lambda,2\lambda}}$ on
$\ell^{u_{v}}$ from the inner side.
On the event 
$\alpha^{u_{v}}=2\lambda$, we consider the local set
$\A^{u_{v}+h_{\A^{u_{v},\partial_{o}}_{-2\lambda,2\lambda}},
\ell^{u_{v}}}_{0}
(\Phi^{\A^{u_{v},\partial_{o}}_{-2\lambda,2\lambda}}\vert_ {O^{u_{v}}_{i}})$, which is the connected component containing
$\ell^{u_{v}}$ of the first passage set of level $0$ of the conditional GFF inside $O^{u_{v}}_{i}$ with boundary values $v$ on $\partial_{i}$ and $\alpha^{u_{v}}$ on
$\ell^{u_{v}}$. 

We finally set
\begin{displaymath}
\widecheck{A}^{u_{v}}_{0}:=
\A^{u_{v},\partial_{o}}_{-2\lambda,2\lambda}\cup
\A^{u_{v}+h_{\A^{u_{v},\partial_{o}}_{-2\lambda,2\lambda}},
\ell^{u_{v}}}_{0}
(\Phi^{\A^{u_{v},\partial_{o}}_{-2\lambda,2\lambda}}
\vert_{ O^{u_{v}}_{i}}).
\end{displaymath}
On the event $\alpha^{u_{v}}= -2\lambda$, we set
\begin{displaymath}
\widecheck{A}^{u_{v}}_{0}:=
\A^{u_{v},\partial_{o}}_{-2\lambda,2\lambda}\cup
\A^{-u_{v}-h_{\A^{u_{v},\partial_{o}}_{-2\lambda,2\lambda}},
\ell^{u_{v}}}_{0}
(-\Phi^{\A^{u_{v},\partial_{o}}_{-2\lambda,2\lambda}}
\vert _{O^{u_{v}}_{i}}),
\end{displaymath}
where we take the opposite of the conditional GFF
$\Phi^{\A^{u_{v},\partial_{o}}_{-2\lambda,2\lambda}}
\vert_{O^{u_{v}}_{i}}$ because $\alpha^{u_{v}}<0$. Constructed that way,
$\widecheck{A}^{u_{v}}_{0}$ is a local set of $\Phi$ and as mentioned, it is related to a cluster of a Brownian loop-soup. We now prove the analogue of Propositions \ref{p.change of measure TVS} and \ref{p.change of measure FPS} for this local set. As the proof is similar, it is omitted. 

\begin{prop}\label{p.change of measure cluster}
	Let $\Q_{v}$ be the law of 
$(\widecheck{A}^{u_{v}}_{0}, \Phi_{\widecheck{A}^{u_{v}}_{0}}+u_v)$ on the event where 
$\widecheck{A}^{u_{v}}_{0}$ does not intersect $\partial_{i}$, i.e. for any measurable bounded function $F$,
	\begin{equation*}
	\Q_v[F(\widecheck{A}^{u_{v}}_{0}, 
	\Phi_{\widecheck{A}^{u_{v}}_{0}}+u_v)]= 
	\E\left[ F(\widecheck{A}^{u_{v}}_{0}, 
	\Phi_{\widecheck{A}^{u_{v}}_{0}}+u_v) 
	\1_{\widecheck{A}^{u_{v}}_{0} \cap \partial_i=\emptyset}\right].
	\end{equation*}
	Further, on the event
	$\widecheck{A}^{u_{v}}_{0}\cap\partial_{i}=\emptyset$ denote by 
	$\check{\ell}^{u_{v}}_{0}$
	the outer boundary of the annular connected component of 
	$\An_r\backslash \widecheck{A}^{u_{v}}_{0}$ whose inner boundary is
	$\partial_{i}$.
	Then, we have that
	\begin{equation}\label{e.RNDcluster}
	\frac{d \Q_v}{d \Q_0}= 
	\exp\left (	-\frac{v^{2}}{2}
	(\ED(\widecheck{A}^{0}_{0}, \partial_{i})^{-1}
	-\ED(\partial_{o}, \partial_{i})^{-1})
	\right).
	\end{equation}
In particular, if $\ED(\check{\ell}^{u_{v}}_{0},\partial_{i})>0$, 
then the law of 
$\ED(\check{\ell}^{u_{v}}_{0},\partial_{o})$ conditional on
$\ED(\check{\ell}^{u_{v}}_{0},\partial_{i})$ 
only depends on $\ED(\check{\ell}^{u_{v}}_{0},\partial_{i})$ 
and not on the inner boundary value $v$.
	\end{prop}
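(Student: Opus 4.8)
The plan is to run the argument of Propositions~\ref{p.change of measure TVS} and~\ref{p.change of measure FPS}, exploiting the two-layer structure of the construction of $\widecheck{A}^{u_v}_0$. For $\eps>0$ let $u_v^\eps$ be continuous on $\overline{\An_r}$, harmonic in $\An_r\backslash(r+\eps)\partial\D$, equal to $u_v$ on $(r+\eps)\partial\D$ and to $0$ on $\partial\An_r$; it agrees with $u_v$ on $\An_{r+\eps}$ and $u_v^\eps-u_v$ is supported in $\{r<|z|<r+\eps\}$. Set $d\widetilde\P/d\P=\exp\big((\Phi,u_v^\eps)_\nabla-\tfrac12(u_v^\eps,u_v^\eps)_\nabla\big)$, so that $\widetilde\Phi:=\Phi-u_v^\eps$ is a GFF under $\widetilde\P$. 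The crux is to show that the events $\{d(\widecheck{A}^0_0(\Phi),\partial_i)\geq\eps\}$ and $\{d(\widecheck{A}^{u_v}_0(\widetilde\Phi),\partial_i)\geq\eps\}$ coincide a.s., that on this event $\widecheck{A}^0_0(\Phi)=\widecheck{A}^{u_v}_0(\widetilde\Phi)$, and that the harmonic functions $h_{\widecheck{A}^0_0}$ and $h_{\widecheck{A}^{u_v}_0}$ (w.r.t.\ $\widetilde\Phi$) plus $u_v$ agree off $\partial_i$. Granting this, $\Q_v$ restricted to $\{d(\widecheck{A}^{u_v}_0,\partial_i)\geq\eps\}$ is absolutely continuous w.r.t.\ $\Q_0$ restricted to $\{d(\widecheck{A}^0_0,\partial_i)\geq\eps\}$ with Radon--Nikodym derivative the conditional expectation of the Girsanov weight given $(\widecheck{A}^0_0,\Phi_{\widecheck{A}^0_0})$, which by Lemma~\ref{lem::RN} equals $\exp\big((\Phi_{\widecheck{A}^0_0},u_v^\eps)_\nabla-\tfrac12((u_v^\eps)_{\widecheck{A}^0_0},(u_v^\eps)_{\widecheck{A}^0_0})_\nabla\big)$; one then evaluates both terms and lets $\eps\to 0$.

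The identification of the two local sets is done layer by layer. For the outermost layer $\A^{\cdot,\partial_o}_{-2\lambda,2\lambda}$ this is exactly the argument of Claim~\ref{c.new measure}: running the level-line construction from the proof of Proposition~3.9 of~\cite{ALS1}, stopped the first time a line gets $\eps$-close to $\partial_i$, Theorem~\ref{t. change of measure local sets} turns $\A^{0,\partial_o}_{-2\lambda,2\lambda}(\Phi)$ into a BTLS of $\widetilde\Phi$; on the event $u_v^\eps$ coincides with $u_v$ on $\An_{r+\eps}$, which contains the set, so $h_{\A^{0,\partial_o}_{-2\lambda,2\lambda}}$ plus $u_v$ (w.r.t.\ $\widetilde\Phi$) still takes the values $\pm2\lambda$ along the interfaces and $0$ on $\partial_o$, and uniqueness of the two-valued set (Corollary~\ref{c.tvs from a boundary}) forces $\A^{0,\partial_o}_{-2\lambda,2\lambda}(\Phi)=\A^{u_v,\partial_o}_{-2\lambda,2\lambda}(\widetilde\Phi)$; in particular $\alpha^{u_v}=\alpha^0$, so the two branches $\{\alpha=\pm2\lambda\}$ of the construction coincide. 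For the first-passage layer one argues as in Proposition~\ref{p.change of measure FPS} inside the annular component $O^{0}_{i}$ between $\ell^0$ and $\partial_i$: the shift carries the conditional GFF $\Phi^{\A^{0,\partial_o}_{-2\lambda,2\lambda}}|_{O^{0}_{i}}$ equipped with the boundary datum $w^0$ (harmonic in $O^{0}_{i}$, equal to $\alpha^0$ on $\ell^0$ and $0$ on $\partial_i$) to the corresponding conditional GFF of $\widetilde\Phi$ equipped with $w^{u_v}$ (equal to $\alpha^{u_v}$ on $\ell^{u_v}$ and $v$ on $\partial_i$), the difference $w^{u_v}-w^0$ being the harmonic function vanishing on the interface and equal to $v$ on $\partial_i$; since the shift is a smooth harmonic function, the positivity condition in Definition~\ref{Def ES} is preserved, and uniqueness of the first passage set (Theorem~\ref{Thm::FPS}) yields the coincidence of the inner layers.

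It remains to compute the density. As in the proof of Proposition~\ref{p.change of measure TVS}, on the event $(u_v^\eps)_{\widecheck{A}^0_0}+v\,I_{\widecheck{A}^0_0}=u_v$ on $\An_r$ (with $I_{\widecheck{A}^0_0}$ harmonic in $\An_r\backslash\widecheck{A}^0_0$, $0$ on $\widecheck{A}^0_0\cup\partial_o$ and $1$ on $\partial_i$), $((u_v^\eps)_{\widecheck{A}^0_0},u_v)_\nabla=0$, whence by Theorem~\ref{thmEL}
\[
((u_v^\eps)_{\widecheck{A}^0_0},(u_v^\eps)_{\widecheck{A}^0_0})_\nabla
= v^2\,\ED(\widecheck{A}^0_0,\partial_i)^{-1}-v^2\,\ED(\partial_o,\partial_i)^{-1}.
\]
The key difference with Propositions~\ref{p.change of measure TVS} and~\ref{p.change of measure FPS} is that the linear term vanishes: in the annular component of $\An_r\backslash\widecheck{A}^0_0$ bounded by $\check{\ell}^{0}_{0}$ and $\partial_i$ the harmonic function $h_{\widecheck{A}^0_0}$ is identically $0$, since the two-valued layer contributes the constant $\alpha^0\in\{-2\lambda,2\lambda\}$ there and the level-$0$ first-passage layer contributes $-\alpha^0$, these cancelling (this is the local-set incarnation of the fact that a critical Brownian loop-soup cluster has boundary values $0$ on the side towards the origin). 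As $u_v^\eps-u_v$ is supported in $\{r<|z|<r+\eps\}$, which on the event lies in that annular component, integration by parts gives
\[
(\Phi_{\widecheck{A}^0_0},u_v^\eps)_\nabla=(\Phi_{\widecheck{A}^0_0},-\Delta(u_v^\eps-u_v))=-\int h_{\widecheck{A}^0_0}\,\Delta(u_v^\eps-u_v)=0.
\]
Feeding these into Lemma~\ref{lem::RN} and letting $\eps\to 0$ yields~\eqref{e.RNDcluster}; since this density depends only on $\ED(\check{\ell}^{0}_{0},\partial_i)=\ED(\widecheck{A}^0_0,\partial_i)$, the conditional law of $\ED(\check{\ell}^{u_v}_0,\partial_o)$ given $\ED(\check{\ell}^{u_v}_0,\partial_i)$ does not depend on $v$.

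The main obstacle is the layer-by-layer identification $\widecheck{A}^0_0(\Phi)=\widecheck{A}^{u_v}_0(\widetilde\Phi)$ on the $\eps$-event: one must check that the $\eps$-stopped two-stage construction of Sections~3.3--3.4 of~\cite{ALS1} transforms coherently under the Cameron--Martin shift, tracking in particular how the shift acts on the conditional field in $O^{0}_{i}$ together with the sign flip in the first-passage layer. Once this is in place, the rest is a verbatim repetition of the computations in the proofs of Propositions~\ref{p.change of measure TVS} and~\ref{p.change of measure FPS}.
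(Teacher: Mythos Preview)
Your proposal is correct and follows exactly the route the paper indicates: the paper omits the proof, stating only that it is similar to those of Propositions~\ref{p.change of measure TVS} and~\ref{p.change of measure FPS}, and your write-up fills in precisely those details. In particular you correctly isolate the one new feature, namely that the linear term $(\Phi_{\widecheck{A}^0_0},u_v^\eps)_\nabla$ vanishes because $h_{\widecheck{A}^0_0}\equiv 0$ in the annular component between $\check{\ell}^0_0$ and $\partial_i$ (the level-$0$ FPS restores the boundary value to $0$), which accounts for the absence of the $\alpha^0 v\,\ED(\cdot)^{-1}$ term present in~\eqref{e.RND1}.
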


\subsection{Radon-Nykodim derivative for local set for different domains}
Let $\widehat  D\subseteq D$ be two domains and $\widehat \Phi$ and 
$\Phi$ be two GFFs in $\widehat D$ and $D$ respectively. Define $C$ to be the closure of $D\backslash\widehat D$, and note that because of the domain Markov property, $\Phi= \Phi_C+\Phi^C$, where both terms are independent and $\Phi^C$ has the same law as $\widehat \Phi$. The harmonic part of $\Phi_C$ is denoted as usual $h_C$.

From Corollary 14 in \cite{ASW}, we know that if $A$ is a local set of  $\Phi$ and $A\subseteq \widehat D$ a.s., then $A$ is also a local set of $\Phi^C$, and $(\Phi^C)_A= \Phi_{A\cup C}-\Phi_C $. We restate it carefully here.

\begin{prop}\label{p.change measure boundary}
	Take $(\Phi,A,\Phi_A)$ a local set coupling such that a.s. $d(A,C)\geq \epsilon>0$.
Let $C_\eps$ be some neighbourhood of $C$ in $D$, whose boundary in $D$ is given by a continuous curve, such that $C_\eps \subseteq (C+\eps\overline{\D})\cap D$. Define $h_C^\epsilon : \widehat D\rightarrow\R$ 
as the unique function harmonic in 
$\widehat D\backslash \partial C_\eps$ with boundary value $0$ on $\partial \widehat D$ and $h_C$ on 
$\partial C_\epsilon\cap \widehat D$. 
Consider the change of measure
	\begin{equation}\label{e.change of measure domain}
	d\widehat \P_C^\eps
	=\exp\left (-(\Phi^C,h_C^\epsilon)_{\nabla}
	-\frac{1}{2}(h_C^\epsilon,h_C^\eps)_{\nabla}\right ) 
	d\widehat \P, 
	\end{equation}
where $\widehat\P$ is the law of $\widehat\Phi=\Phi^C$,
and $(\cdot,\cdot)_{\nabla}$ denotes here an integral over
$\widehat{D}$.	
We have that under $\widehat \P^{\eps}_C$, 
	$\widehat \Phi^{\eps}:= \Phi^C +h_C^\epsilon$ is a GFF in $\widehat D$ and  
$(\widehat \Phi^{\eps}, A, 
\widehat {\Phi}^{\eps}_A:=(\Phi^C)_A+(h_C^\epsilon)_A)$ is a local set coupling.
\end{prop}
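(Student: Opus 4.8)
The plan is to obtain the statement as a direct application of the change-of-measure theorem for local sets, Theorem~\ref{t. change of measure local sets}, to the field $\Phi^C$ viewed as a GFF in $\widehat D$, together with a Cameron--Martin shift by $-h_C^\epsilon$. The only preliminary step is to check that $A$ is a local set of $\Phi^C$, so that Theorem~\ref{t. change of measure local sets} is applicable in the domain $\widehat D$; after that the work is merely to match the Radon--Nikodym factor in \eqref{e.change of measure domain} with the one appearing in Theorem~\ref{t. change of measure local sets}.

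First I would record that $A$ is a local set of the GFF $\Phi^C$ in $\widehat D$. The set $C$ is deterministic and $\Phi_C$ is a measurable function of $\Phi$, so the domain Markov decomposition $\Phi=\Phi_C+\Phi^C$ exhibits $C$ as a (trivial) local set of $\Phi$ whose complementary field $\Phi^C$ is a GFF in $\widehat D = D\setminus C$. Since $C$ is deterministic, $(C,\Phi_C)$ and $(A,\Phi_A)$ are conditionally independent given $\Phi$, and the hypothesis $d(A,C)\ge \epsilon>0$ forces $A\cap C=\emptyset$; hence Lemma~\ref{BPLS}(2), applied with the two local sets taken to be $C$ and $A$, shows that $A=A\setminus C$ is a local set of $\Phi^C$ with $(\Phi^C)_A=\Phi_{A\cup C}-\Phi_C$. (This is exactly the content of Corollary~14 of \cite{ASW}, reproved here in one line.) Next I would note that $h_C^\epsilon\in H_0^1(\widehat D)$: it is bounded (its boundary data is $0$ on $\partial\widehat D$ and the bounded restriction of $h_C$ to the continuous curve $\partial C_\epsilon\cap\widehat D$), harmonic off that curve, hence of finite Dirichlet energy, and it vanishes on $\partial\widehat D$; if necessary one shrinks $C_\epsilon$ slightly so that $\partial C_\epsilon$ is a rectifiable curve sitting at positive distance from $\partial D$ away from $\partial C$.

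With these in hand, I would apply Theorem~\ref{t. change of measure local sets} to the local set coupling $(\Phi^C,A,(\Phi^C)_A)$ in $\widehat D$ with Cameron--Martin function $g:=-h_C^\epsilon$, working conditionally on $\Phi_C$ (on which $h_C^\epsilon$ depends, and from which $\Phi^C$ is independent, so that conditionally $g$ is deterministic and $\Phi^C$ is still a GFF). For this choice, the tilted measure in that theorem is
\[
\exp\Big((\Phi^C,-h_C^\epsilon)_\nabla-\tfrac12(-h_C^\epsilon,-h_C^\epsilon)_\nabla\Big)\,d\widehat\P=\exp\Big(-(\Phi^C,h_C^\epsilon)_\nabla-\tfrac12(h_C^\epsilon,h_C^\epsilon)_\nabla\Big)\,d\widehat\P,
\]
which is precisely $d\widehat\P_C^\epsilon$ from \eqref{e.change of measure domain}. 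Theorem~\ref{t. change of measure local sets} then yields that, under $\widehat\P_C^\epsilon$, the field $\Phi^C-g=\Phi^C+h_C^\epsilon=\widehat\Phi^\epsilon$ is a GFF in $\widehat D$ and $(\widehat\Phi^\epsilon,A,(\Phi^C)_A-g_A)=(\widehat\Phi^\epsilon,A,(\Phi^C)_A+(h_C^\epsilon)_A)$ is a local set coupling, where $(h_C^\epsilon)_A$ is the orthogonal projection of $h_C^\epsilon$ in $H_0^1(\widehat D)$ onto the functions harmonic in $\widehat D\setminus A$ (using $g_A=-(h_C^\epsilon)_A$ by linearity of the projection). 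This is exactly the assertion. The main obstacle here is not conceptual but bookkeeping: keeping the sign conventions straight in the identification of the Radon--Nikodym factor, making rigorous sense of the pairing $(\Phi^C,h_C^\epsilon)_\nabla$ exactly as in the remark following Theorem~\ref{t. change of measure local sets}, and handling the conditioning on $\Phi_C$ cleanly — all of the genuine probabilistic content already sits inside Theorem~\ref{t. change of measure local sets}, which we are free to use.
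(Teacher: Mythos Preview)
Your proposal is correct and follows essentially the same route as the paper: apply Theorem~\ref{t. change of measure local sets} to the GFF $\Phi^C$ in $\widehat D$ with the shift $g=-h_C^\epsilon$, after noting that $A$ is a local set of $\Phi^C$ (which the paper states just before the proposition, citing Corollary~14 of \cite{ASW}). The paper's proof is a one-liner; your version spells out the preliminaries (local set property of $A$ for $\Phi^C$, $h_C^\epsilon\in H_0^1(\widehat D)$, conditioning on $\Phi_C$ to make $g$ deterministic) and the sign bookkeeping, but the content is the same. The additional identity $(h_C)_A=(h_C^\epsilon)_A$ that the paper mentions is there to support Remark~\ref{r.change measure boundary} rather than the proposition itself.
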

\begin{proof}
	This follows directly from Theorem \ref{t. change of measure local sets} and the fact that due to the condition
$d(A,C)\geq \epsilon$, $(h_C)_A=(h_C^\epsilon)_A$, where $(f)_A$ 
denotes the function that coincides with $f$ on $A$, is zero on
$\partial \widehat D$, and is extended harmonically to
$\widehat D\backslash A$.
\end{proof}

\begin{rem}\label{r.change measure boundary}
	Note that in Proposition \ref{p.change measure boundary} the boundary values of
the harmonic part of $\widehat{\Phi}^{\eps}_A$ coincide
on $A$ with that of 
	$\Phi_A$,  and are $0$ on $\partial\widehat D$. 
\end{rem}

Let $\Phi$ denote the zero boundary GFF in the disk
$\D$ and $\Phi^r$ the zero boundary GFF in the annulus
$\An_r$.
Let $\A_{-a,b}^{\partial_o}(\Phi^r)$
be the connected component, containing $\partial_o$, of the two-valued set of a GFF $\Phi^r$. We now prove a proposition stating that there is a convergence in total variation of 
$\A_{-a,b}^{\partial_o}(\Phi^r)$ to 
$\A_{-a,b}(\Phi)$, as $r\to 0$.

\begin{prop}\label{prop:coupling}
	For every $\delta>0$, there exists $r_0$ such that for all $r<r_0$ there exists a coupling between 
$\Phi$ and $\Phi^r$, such that
	\[\P\left(\A_{-a,b}(\Phi)=
	\A_{-a,b}^{\partial_o}(\Phi^r) \right)\geq 1-\delta. \]
	Furthermore, the analogue is true for the FPS $\A_{-a}$.
\end{prop}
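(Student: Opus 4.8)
The plan is to transfer $\A_{-a,b}(\Phi)$ from the disk to the annulus: observe that it avoids a neighbourhood of the origin, reinterpret it as a local set of a GFF in $\An_r$, correct the boundary values via Proposition \ref{p.change measure boundary}, and then remove a Radon--Nikodym tilt that becomes trivial as $r\to 0$. Fix $\delta>0$. Since the origin a.s.\ lies in an open connected component of $\D\setminus\A_{-a,b}(\Phi)$ (by Theorem \ref{t.tvs}; likewise of $\D\setminus\A_{-a}(\Phi)$ by Theorem \ref{Thm::FPS}), we have $d(0,\A_{-a,b}(\Phi))>0$ almost surely, so we may fix $\epsilon>0$ with $\P\big(d(0,\A_{-a,b}(\Phi))>2\epsilon\big)>1-\delta/3$; call this event $E$. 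Take $r<\epsilon$ and put $C=r\overline{\D}$, so that on $E$ one has $d(\A_{-a,b}(\Phi),C)\geq\epsilon$.

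Couple $\Phi$ with a GFF in $\An_r$ through the Markov decomposition $\Phi=\Phi_C+\Phi^C$, with $\Phi^C$ a zero-boundary GFF in $\An_r$ independent of $\Phi_C$ and $h_C$ the harmonic part of $\Phi_C$. By Lemma \ref{BPLS}(2), on $E$ the set $A:=\A_{-a,b}(\Phi)$ is a local set of $\Phi^C$; but its harmonic function relative to $\Phi^C$ takes the values $h_A-h_C$ on $A$, which need not lie in $\{-a,b\}$. Apply Proposition \ref{p.change measure boundary} with $\widehat D=\An_r$ and this $C$ (after the routine modification of $A$ on $E^c$, which changes nothing on $E$): under the tilted measure $\widehat\P^\epsilon_C$ the field $\widehat\Phi^\epsilon:=\Phi^C+h^\epsilon_C$ is a GFF in $\An_r$, $A$ remains a local set of it, and by Remark \ref{r.change measure boundary} the boundary values of its harmonic function agree on $A$ with those of $\Phi_A$ and vanish on $\partial\An_r$. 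Hence, on $E$ and under $\widehat\P^\epsilon_C$, $A$ is a bounded-type local set of the GFF $\widehat\Phi^\epsilon$ whose harmonic function equals $-a$ or $b$ on each of its loops, is $0$ on $\partial\An_r$, and inherits from $\A_{-a,b}(\Phi)$ the one-sided property; by the construction and uniqueness of two-valued sets (Theorem \ref{t.tvs}, Corollary \ref{c.tvs from a boundary}) this forces $A=\A^{\partial_o}_{-a,b}(\widehat\Phi^\epsilon)$. The FPS case is identical, with Corollary \ref{c.FPS from a boundary} and Theorem \ref{Thm::FPS} replacing the TVS statements and the positivity condition preserved since the correction is harmonic.

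It remains to remove the tilt. Write $M:=\exp\big(-(\Phi^C,h^\epsilon_C)_\nabla-\tfrac12(h^\epsilon_C,h^\epsilon_C)_\nabla\big)=d\widehat\P^\epsilon_C/d\widehat\P$, so that $\E[M]=1$. If $M\to 1$ in probability as $r\to 0$, then Scheff\'e's lemma gives $M\to 1$ in $L^1$, so the law of $A$ under $\P$ and its law under $\widehat\P^\epsilon_C$, both restricted to $E$, differ by $o(1)$ in total variation; since the latter coincides with the law of $\A^{\partial_o}_{-a,b}(\Phi^r)$ under $\widehat\P$ restricted to the analogous event, we conclude that $\A_{-a,b}(\Phi)$ and $\A^{\partial_o}_{-a,b}(\Phi^r)$ have laws that are $o(1)$-close in total variation (and that the analogous event has probability $1-o(1)$). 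To see $M\to 1$ in probability, observe that $h^\epsilon_C$ is the harmonic extension in $\An_r$ of the restriction of $h_C$ to a circle of radius of order $\epsilon$ about the origin; as the Green's function of $\An_r$ converges to that of $\D$ on compact subsets of $\D\setminus\{0\}$, this restriction---and hence $\|h^\epsilon_C\|^2_\nabla$---tends to $0$ in $L^2$. Finally, choosing $r_0$ so that the above total-variation distance is $<\delta/3$ for $r<r_0$, couple $\A_{-a,b}(\Phi)$ (under $\P$) with $\A^{\partial_o}_{-a,b}(\Phi^r)$ (under $\widehat\P$) to coincide outside an event of probability $<2\delta/3$, and then complete $\Phi$ and $\Phi^r$ independently from their conditional laws given the common set; as $\A_{-a,b}$ and $\A^{\partial_o}_{-a,b}$ are measurable functions of the respective fields, this is the required coupling, and the FPS statement follows in the same way.

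The main obstacle is the vanishing of the Radon--Nikodym correction, namely the estimate $\|h^\epsilon_C\|_{H^1_0(\An_r)}\to 0$ in probability, which makes precise that a shrinking hole at the origin is asymptotically invisible to a GFF away from $0$; once this is established, Scheff\'e's lemma obviates any uniform-integrability argument. The other point requiring care, though conceptually routine given the quoted uniqueness statements, is checking that $\A_{-a,b}(\Phi)$, regarded as a local set of the tilted GFF in $\An_r$, satisfies exactly the conditions characterizing $\A^{\partial_o}_{-a,b}$ rather than the full two-valued set---this is where it matters that $\A^{\partial_o}_{-a,b}$ is by definition the connected component touching $\partial_o$.
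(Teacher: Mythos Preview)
Your proposal is correct and follows essentially the same route as the paper: Markov-decompose $\Phi$ across $C=r\overline{\D}$, apply the change-of-measure Proposition~\ref{p.change measure boundary} to identify $\A_{-a,b}(\Phi)$ with $\A_{-a,b}^{\partial_o}$ of a GFF in $\An_r$, and show the Radon--Nikodym tilt vanishes by proving $\|h_C^\epsilon\|_{H^1_0}\to 0$ via the variance estimate on a fixed circle. The one technical difference worth noting is how the hypothesis ``$d(A,C)\geq\epsilon$ a.s.'' of Proposition~\ref{p.change measure boundary} is arranged: the paper defines $A$ as the level-line construction of $\A_{-a,b}(\Phi)$ \emph{stopped} upon entering $\epsilon\D$, which is automatically a local set at distance $\geq\epsilon$ and which, on the good event, equals both $\A_{-a,b}(\Phi)$ and $\A_{-a,b}^{\partial_o}(\widehat\Phi^\epsilon)$ by the argument of Claim~\ref{c.new measure}; you instead take $A=\A_{-a,b}(\Phi)$ and invoke a ``routine modification on $E^c$'' together with uniqueness. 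This works, but your appeal to uniqueness is slightly loose: Theorem~\ref{t.tvs} gives uniqueness for the full $\A_{-a,b}^u$, not directly for its $\partial_o$-component, so one still needs to complete $A$ to a two-valued set in $\An_r$ and then read off the component---which is exactly what the paper's stopped-construction argument does.
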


\begin{proof}
	We prove the result for the TVS $\A_{-a,b}$ as the proof is the same for FPS $\A_{-a}$. 
Let us work in the context of Proposition \ref{p.change measure boundary} with $C=r\overline \D$. We show that there exists a local set $A$, an event $\widetilde \Omega$ and $\epsilon>0$ such that $\P(\widetilde \Omega)=1-\delta/2$, a.s. on $\widetilde \Omega$ we have that  
$\A_{-a,b}(\Phi)=\A_{-a,b}^{\partial_o}(\Phi^C+h^\epsilon_{C})=A$ and for all $\omega \in \tilde \Omega$
	\begin{align}\label{e.rnd to 1}
	\left |\frac{d\widehat \P^\epsilon_r}{d\P}(\omega)-1\right |\leq \delta/2,
	\end{align}
	 where
	  \begin{align}\label{e.RND change of measure r}
	  d\widehat \P^\epsilon_r=\exp\left (-\left (\Phi^{C},h_{C}^\epsilon\right )_{\nabla}
	  -\frac{1}{2}\left (h_{C}^\epsilon,h_{C}^\eps\right )_{\nabla}\right ) d\P.
	  \end{align}
	 
	We start as in Claim \ref{c.new measure}, by defining $A$ as the set obtained from the construction of 
$\A_{-a,b}(\Phi)$ when stopped the first time it hits 
$\epsilon\D$. We choose $\epsilon>0$ such that
	\[\P\left (\widetilde \Omega_1:=\{d(\A_{-a,b}(\Phi),0)> \epsilon\}\right )>1-\delta/2.\]
	We then have the analogue of Claim \ref{c.new measure}
	\begin{claim}\label{c.newmeasurecd} 
	On the event $d(A,0)> \epsilon$, a.s. 
	$A=\A_{-a,b}(\Phi)
	=\A_{-a,b}^{\partial_o}(\Phi^C+h^\epsilon_{C})$.		
	\end{claim}
	The proof is basically the same as that of Claim \ref{c.new measure} and is thus omitted.
		
	Now, let us see that there is an $r>0$ small enough such that \eqref{e.rnd to 1} is true. From \eqref{e.RND change of measure r}, we just have to show that $h_{C}^\epsilon$ converges to $0$ in $H^1_0(\D)$, as $r\to 0$. As $h^\epsilon$ is bounded and harmonic in $\D\backslash (\epsilon \partial \D \cup r \D)$ and taking value $0$ in $\partial \D$ and $r\partial \D$ and $h_{C}$ in $\epsilon \partial \D$, the convergence follows from the following claim.
	\begin{claim}
		Define $\hat h_r$ as the restriction of $h_{C}$ to $(\epsilon/2)\partial \D$ and see it as a continuous function from the circle to $\R$. Then as $r\to 0$,  $\hat h_r$ converges in probability for the topology of $\mathbb{L}^2$ to $0$.
	\end{claim}
	\begin{proof}
		Note that when restricted to $\D\backslash r\D$, $\Phi= \Phi^{C}+ h_{C}$, where both terms of the sum are independent. This implies that $h_{C}$ is a centred Gaussian process with covariance
		\begin{align*}
		\E\left[ h_{C}(x)h_{C}(y)\right]=G_{\D}(x,y)-G_{\D\backslash r\D}(x,y).
		\end{align*}
		Note that this extends continuously when $x\to y$, to $g_r(x,x)$. Here, $g_r(x,\cdot)$ is the unique bounded harmonic function in $\D\backslash r\D$ with values $0$ in $\partial \D$ and $-(2\pi)^{-1}\log(\|x-\cdot\|)$ in $r\partial \D$.
		
		Thus,
		\begin{align*}
		\E\left[\int_{(\epsilon/2)\partial \D} (\hat h_r(x))^2 dx \right] = \int_{(\epsilon/2)\partial \D} g_r(x,x) dx\leq \frac{\log(1/\epsilon)}{\log(1/r)}\frac{\log(1/(\epsilon + r))}{2\pi}\to 0, \ \ \text{as $r\to 0$.}
		\end{align*}
		Here, the inequality just follows from the estimate of the probability that a Brownian motion started at $x$ hits $r\partial \D$ before
		 $\partial \D$.
	\end{proof}	 
	\end{proof}

\section{The marginal laws in an annulus} \label{S.Marginal laws}

Consider an annulus $\An$ with outer and inner boundaries denoted by $\partial_o$ and $\partial_i$, and $\A_{-2\lambda,2\lambda}$ the two-valued set of a zero-boundary GFF on the annulus $\An$. 

Recall from Corollary \ref{c.tvs from a boundary} the definition of $\A_{-2\lambda,2\lambda}^{\partial_o}$ as the connected component of $\A_{-2\lambda, 2\lambda}$ connected to $\partial_o$. We saw that either $\A_{-2\lambda, 2\lambda}$ intersects $\partial_i$, or $\An \backslash \A_{-2\lambda,2\lambda}^{\partial_o}$ has a connected component that is annular and has $\partial_i$ as one of its boundaries. We set $\ell$ to be the other boundary of this annular component, i.e. the non-trivial loop of $\A_{-2\lambda,2\lambda}^{\partial_o}$. In the case when $\A_{-2\lambda,2\lambda}^{\partial_o}$ intersects $\partial_i$, we set $\ell = \partial_i$. We are interested in the marginal law of the extremal distances $\ED(\ell, \partial_o)$ and $\ED(\ell, \partial_i)$ (see Figure \ref{f.EDinAnnulus}).
\begin{figure}[h!]
	\includegraphics[height=0.28\textwidth]{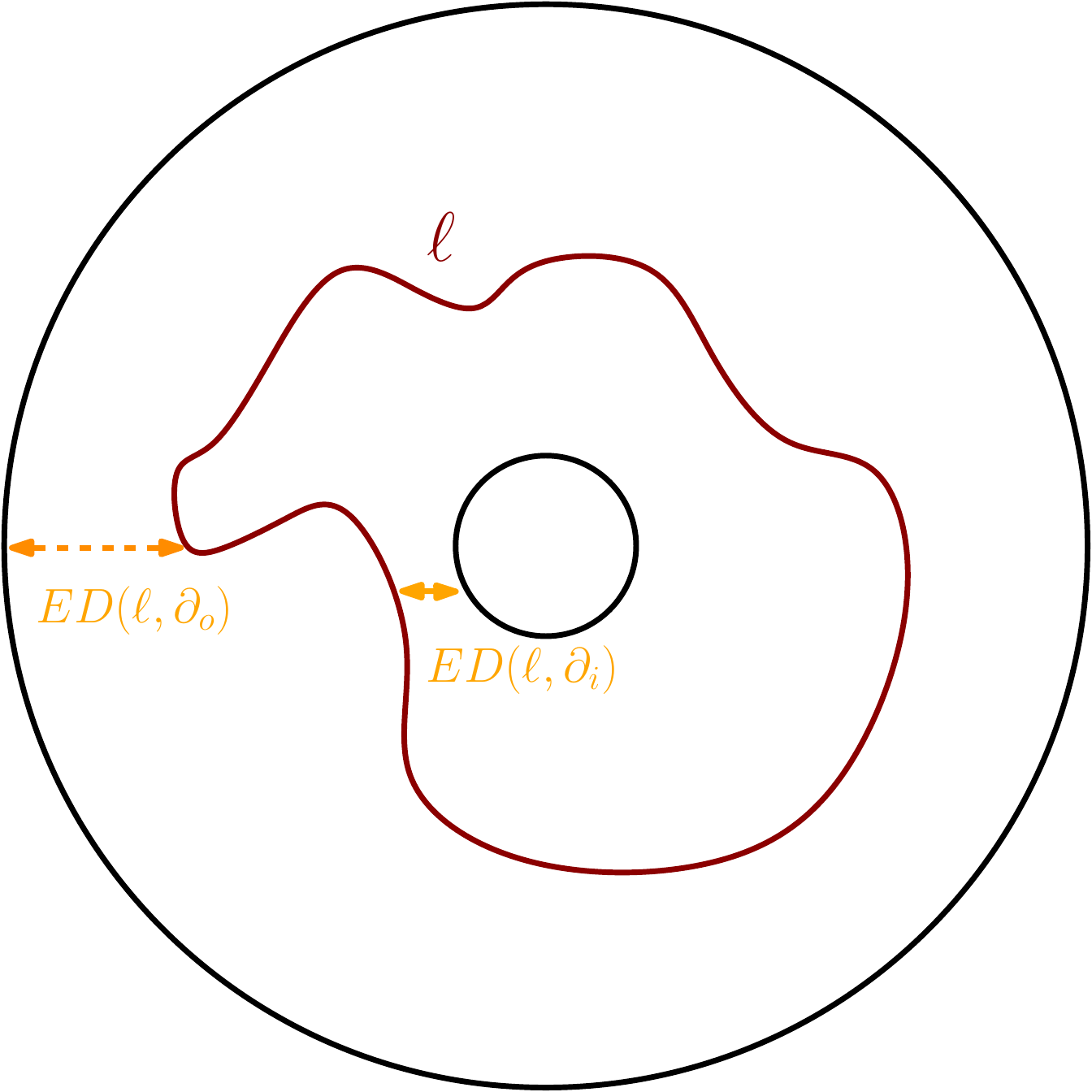}
	\caption{In the figure you can see the non-trivial loop $\ell$ together with the two-random variables we are interested in.}
	\label{f.EDinAnnulus}
\end{figure} 

In fact, Proposition \ref{p.LawELBddTVS} already gives us the law of $\ED(\ell, \partial_i)$. The main result of this section is the computation of the law of $\ED(\ell, \partial_o)$. To describe the resulting law, let $\widehat B$ be a standard Brownian bridge of time-duration $L$, that denotes the extremal distance of $\An$. Define
\begin{align}
\nonumber&\widehat T_{-2\lambda,2\lambda}:=\inf\{s\geq 0: |\widehat B_s|=2\lambda \} \wedge L,\\
\label{e.hattau}&\widehat \tau_{-2\lambda,2\lambda}:=\left\{\begin{array}{l l}
\sup\{0\leq s \leq \widehat T_{-2\lambda,2\lambda}:\widehat B_s=0\} & \text{ if } \widehat T_{-2\lambda,2\lambda}<L,\\
L & \text{ if not.}
\end{array}\right.
\end{align}

\begin{prop}\label{mlaw0}
The law of $\ED(\ell, \partial_o)$ equals that of $\widehat \tau_{-2\lambda,2\lambda}$.
\end{prop}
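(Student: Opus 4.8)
The plan is to exploit the reversibility of the interface $\ell$ in the annulus, which is stated as the key ingredient of Subsection \ref{Ss.Reversibility}: the non-contractible loop $\ell$ of $\A_{-2\lambda,2\lambda}^{\partial_o}$ agrees almost surely with the non-contractible loop of $\A_{-2\lambda,2\lambda}^{\partial_i}$ (on the event that neither intersects the opposite boundary, which is the only event of interest here). Granting this, $\ED(\ell,\partial_o)$ can be read off from an exploration started \emph{from the inner boundary} $\partial_i$ rather than from $\partial_o$. The advantage is that Proposition \ref{p.LawELBddTVS}, applied with the roles of $\partial_i$ and $\partial_o$ swapped, directly computes $\ED(\partial_o,\partial_i)-\ED(\ell,\partial_o)$ as the first hitting time of $\{-2\lambda,2\lambda\}$ by a Brownian bridge of length $L=\ED(\partial_o,\partial_i)$ (here from $0$ to $0$, since both boundary values of the GFF are zero). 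So far this is just the analogue of what is already known for $\ED(\ell,\partial_i)$, and it does not yet give the claimed law $\widehat\tau_{-2\lambda,2\lambda}$.

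The extra idea needed is that the exploration from $\partial_i$ does not merely hit $\{-2\lambda,2\lambda\}$ at one time; rather, I want to track \emph{the harmonic function seen from the center} (equivalently, from the inner hole) along the whole exploration, and identify the time at which the interface $\ell$ is actually completed. Concretely: using the SLE$_4(-2)$-type construction of Subsection \ref{ss.construction SLE} (Proposition \ref{p.GFFcpl2ann}), run the local set process $\nu$ from a point of $\partial_i$. Parametrize it by (the increment of) extremal distance to $\partial_i$, so that by Proposition \ref{BProcess} the associated martingale $\widehat B_s$ is a Brownian bridge from $0$ to $0$ of length $L$. The process $\nu$ traces a sequence of contractible loops of $\A_{-2\lambda,2\lambda}$, each of which corresponds to an excursion of $\widehat B$ away from $0$ that returns to $0$ without reaching $\{-2\lambda,2\lambda\}$ (while inside such a loop the boundary value is $0$ until the loop closes, matching the structure described in Proposition \ref{p.GFFcpl}). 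The loop $\ell$ separating $\partial_i$ from $\partial_o$ is exactly the one whose tracing is \emph{interrupted}, i.e. the loop being drawn when $\widehat B$ first reaches $\{-2\lambda,2\lambda\}$; at that moment the boundary value on the traced portion becomes $\pm 2\lambda$ and that portion, together with its continuation, is the non-contractible loop. Its extremal distance to $\partial_i$ is therefore the extremal-distance-time at which the \emph{current} loop started, which is the last time before $\widehat T_{-2\lambda,2\lambda}$ that $\widehat B$ was at $0$ — that is, $\widehat\tau_{-2\lambda,2\lambda}$. Combining with $\ED(\ell,\partial_o)=L-\ED(\ell,\partial_i)$ from reversibility, and noting $L-(L-\widehat\tau_{-2\lambda,2\lambda})=\widehat\tau_{-2\lambda,2\lambda}$... wait — more carefully, from the $\partial_i$-exploration we directly get $\ED(\ell,\partial_i)=\widehat\tau_{-2\lambda,2\lambda}$, and then reversibility is not even needed for \emph{that}; reversibility is what lets us instead conclude $\ED(\ell,\partial_o)$ has this law by running the symmetric exploration from $\partial_o$ and reading off the corresponding "last visit to $0$" time. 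I would present it in whichever of the two symmetric ways is cleanest: run the exploration from $\partial_o$ parametrized by extremal distance to $\partial_o$, get a Brownian bridge $\widehat B$ from $0$ to $0$ of length $L$, and identify $\ED(\ell,\partial_o)$ with the starting time of the loop being traced at $\widehat T_{-2\lambda,2\lambda}$, which is $\widehat\tau_{-2\lambda,2\lambda}$.

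The main obstacle is the careful justification of the step "the extremal-distance-time at which the interrupted loop started equals $\widehat\tau_{-2\lambda,2\lambda}$." This requires: (i) controlling the behaviour of the generalized level line as it approaches $\partial_i$ — one must rule out that $\nu$ accumulates on $\partial_i$ before closing the separating loop (for this, Lemma \ref{notouch} on boundary hitting of generalized level lines, applied after the change of measure of Proposition \ref{p.change measure boundary}, is the right tool, since the relevant boundary value has the correct sign); (ii) matching up the "loop structure" of the SLE$_4(-2)$ exploration with the excursion decomposition of $\widehat B$ — i.e. that each completed contractible loop corresponds to an excursion of $\widehat B$ from $0$ back to $0$ staying in $(-2\lambda,2\lambda)$, and that the process does not, e.g., close the separating loop exactly at a time when $\widehat B=0$ (a boundary case that must be shown to have probability zero, or handled by the $\wedge L$ and the "if not" branch in \eqref{e.hattau}); and (iii) the $\epsilon\to 0$ limit, passing from the $\epsilon$-stopped process $\nu_{\tau_\epsilon}$ to the full interface, using the dichotomy stated before Proposition \ref{p.GFFcpl2ann} and the total-variation convergence of Proposition \ref{prop:coupling}-type arguments, to ensure $\ell$ is genuinely the non-contractible loop of $\A_{-2\lambda,2\lambda}^{\partial_o}$ and that the stopping time passes to the limit. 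Once these points are in place, the identification of the law is immediate from Proposition \ref{BProcess} and the definition \eqref{e.hattau}.
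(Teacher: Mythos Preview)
There is a genuine gap. Your opening reversibility statement is too strong: the non-contractible loop of $\A_{-2\lambda,2\lambda}^{\partial_o}$ is \emph{not} in general the same as the non-contractible loop of $\A_{-2\lambda,2\lambda}^{\partial_i}$. What reversibility actually says (Theorem \ref{thmRV}) concerns the \emph{iterated} two-valued sets: one iterates $\A_{-2\lambda,2\lambda}^{\partial_o}$ to produce a sequence of non-contractible loops $\ell_1,\dots,\ell_n$, does the same from $\partial_i$ to get $\overleftarrow\ell_1,\dots,\overleftarrow\ell_n$, and the two sequences coincide in reverse, so that $\ell=\ell_1=\overleftarrow\ell_n$ (not $\overleftarrow\ell_1$). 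With positive probability $n\geq 2$, and then $\A_{-2\lambda,2\lambda}^{\partial_i}$ has non-contractible loop $\overleftarrow\ell_1=\ell_n\neq\ell$. Relatedly, the aside ``$\ED(\ell,\partial_o)=L-\ED(\ell,\partial_i)$ from reversibility'' is simply false: by strict superadditivity (Theorem \ref{ThmSuperAddED}) one has $\ED(\ell,\partial_o)+\ED(\ell,\partial_i)<L$ unless $\ell$ is a concentric circle.

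The second, more fundamental, problem is the attempted identification via the SLE$_4(-2)$ process. In Proposition \ref{BProcess} the time parameter is $t=L-\ED(\partial_i,\nu_t\cup\partial_o)$, i.e.\ it records the \emph{drop in extremal distance to the target boundary} $\partial_i$, not the extremal distance of anything to $\partial_o$. Even granting your excursion picture (each contractible loop an excursion of $\widehat B$ from $0$ back to $0$, the separating loop the excursion reaching $\pm2\lambda$), the time $t_0=\widehat\tau$ at which the separating loop starts satisfies $t_0=L-\ED(\partial_i,\gamma_{t_0})$, where $\gamma_{t_0}$ is the outer boundary of the annular complementary component of $\nu_{t_0}$; there is no reason this equals $\ED(\ell,\partial_o)$, and indeed it depends on the starting point of the exploration while $\ED(\ell,\partial_o)$ does not. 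The paper instead combines the iterated picture with Lemma \ref{lembasic}: the stopping times $\widehat T^{(j)}$ of the bridge record $L-\ED(\ell_j,\partial_i)$, and since $\alpha_n=0$ the last one $\widehat T^{(n)}$ is exactly the last zero (in the level-crossing sequence) before the bridge ends. Time-reversal of the $0$-to-$0$ bridge then gives $\ED(\ell_n,\partial_i)=L-\widehat T^{(n)}\stackrel{d}{=}\widehat\tau_{-2\lambda,2\lambda}$ (Corollary \ref{corlst}), and the swap of boundaries (Corollary \ref{correv}) turns $\ED(\ell_n,\partial_i)$ into $\ED(\ell_1,\partial_o)=\ED(\ell,\partial_o)$.
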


The proof proceeds by providing a way of constructing $\ell$ via a local set exploration from the inner boundary, and then basically using a correspondence with the Brownian motion, similarly to Proposition \ref{p.LawELBddTVS}. The main input is a certain reversibility statement, saying that non-contractible loops generated by iterating two-valued sets starting from the outer boundary, agree in inverse order with the non-contractible loops generated by iterating two-valued sets from the inner boundary. 
\subsection{A reversibility statement and Proposition \ref{mlaw0}}\label{ssrv}

Consider, a zero boundary GFF $\Phi$ on an annulus $\An$ with outer and inner boundaries denoted by $\partial_o$ and $\partial_i$. We now describe an exploration from the outer boundary $\ell_0 := \partial_o$ towards the inner boundary, where each step consists of sampling the connected component of $\A_{-2\lambda, 2\lambda}$ connected to the outer boundary, denoted $\A^{\partial_o}_{-2\lambda, 2\lambda}$. 

\begin{enumerate}
	\item First, construct the component $A_1 := \A^{\partial_o}_{-2\lambda, 2\lambda}$. If $A_1$ touches the inner boundary, we stop. Otherwise one of the connected components of $\An \backslash A_1$ is an annulus $\An^1$, still containing $\partial_i$ as one of its boundaries. We denote by $\ell_1$ its other boundary, i.e. $\ell_1 = A_1 \cap \partial \An^1$. 
	\item As $(\Phi,A_1,h_{A_1})$ is a local set, conditionally on $(A_1,h_{A_1})$ the GFF, $\Phi$, restricted to $\An^1$ is equal to $\Phi^{A_1} + h_{A_1}$, where the boundary conditions of $h_{A_1}$ on $\ell_1$ (towards $\partial_i$) are equal to either $-2\lambda$ or $2\lambda$ (denote this value by $\alpha_1$) and zero on $\partial_i$. We can now construct the component $A_2$ of the two valued-set $\A_{-2\lambda +\alpha_1, 2\lambda+ \alpha_1}^{h_{A_1},\ell_1}$ of the GFF $\Phi^{A_1}$ restricted to $\An^1$, and with boundary condition $h_{A_1}$. In this case, by Proposition \ref{p.LawELBddTVS}, $A_2$ necessarily cuts out another annulus $\An^2$ (see Figure \ref{f.iteration}), whose outer boundary we denote by $\ell_2$. 
	\item It is now clear how to iterate, to obtain the \textit{iterated} $\A^{\partial_o}_{-2\lambda, 2\lambda}$, and the sequence of contours $\ell_0 = \partial_o, \ell_1, \ell_2, \dots, \ell_n, \ell_{n+1} = \partial_i$.
	\item We call $A_\infty$ the closure of the union of all sets $A_j$ explored.
\end{enumerate}

\begin{figure}[h!]
	\includegraphics[width=0.5\textwidth]{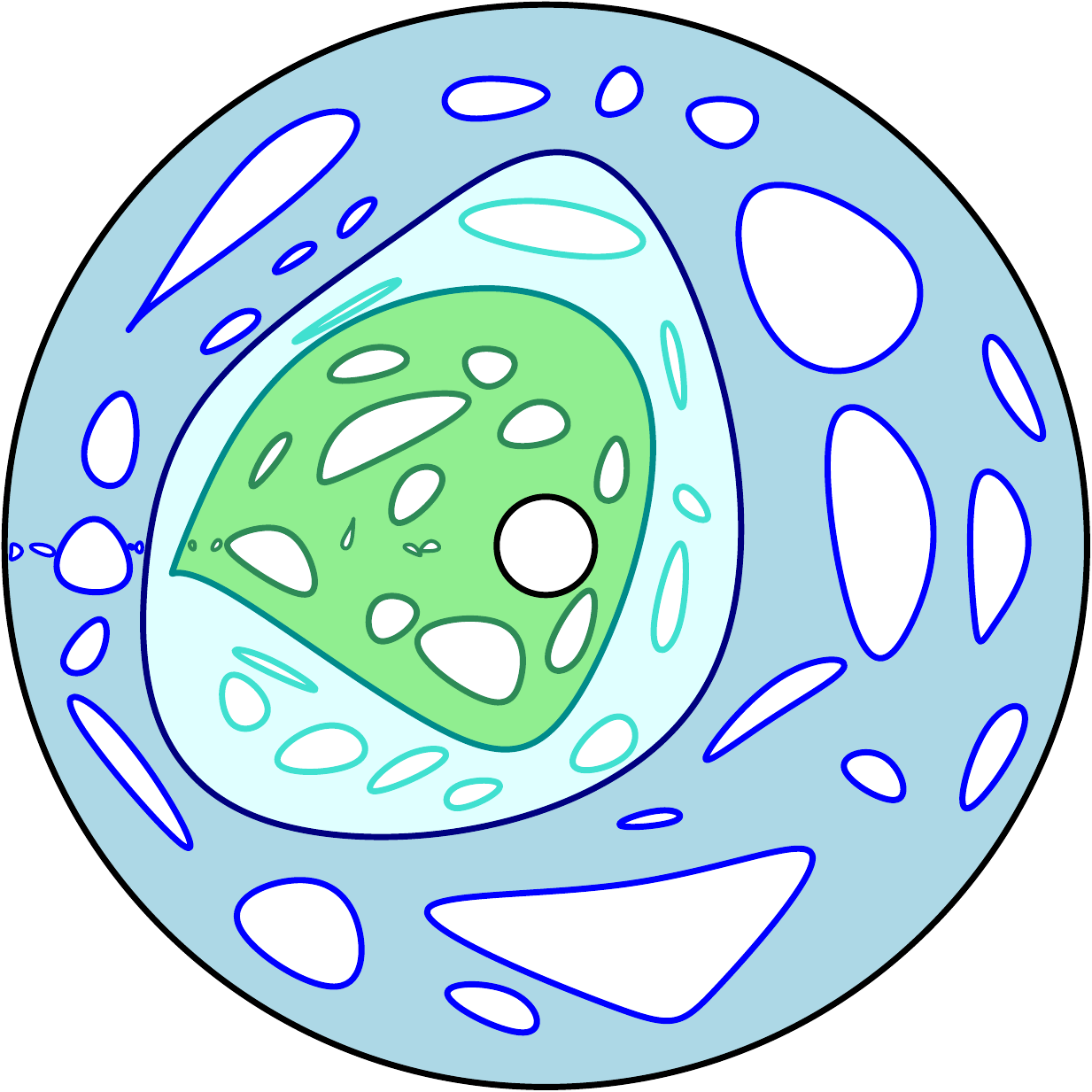}
	\caption{A representation of the iteration produced. In this case $n=2$ and $\ell_1$ is the curve separating blue from cyan, and $\ell_2$ is the curve separating cyan from green }
	\label{f.iteration}
\end{figure}

This construction would work exactly in the same way, if instead of starting with $0$-boundary condition, we would start with boundary conditions given by $u_N$, a bounded harmonic function with value $0$ in $\partial_o$ and $2\lambda N$ in $\partial_i$, with $N \in \Z$. In other words, we could take $A_1^N:=\A^{u_N,\partial_o}_{-2\lambda, 2\lambda}$ and then iterate starting from step (2). We denote the loops obtained this way by $\ell^N_0 = \partial_o, \ell^N_1, \ell^N_2, \dots, \ell^N_n, \ell^N_{n+1} = \partial_i$.

We have the following basic claim about the sequence of loops and their boundary values:

\begin{lemma}\label{lembasic}
Take $N\in \Z$. The sequence $\ell^N_0 = \partial_o, \ell^N_1, \ell^N_2, \dots, \ell^N_n, \ell^N_{n+1} = \partial_i$ is almost surely finite and the boundary condition on the last boundary $\ell_n^N$ before $\partial_i$ is equal to $2\lambda N$. 
\end{lemma}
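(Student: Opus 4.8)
The plan is to prove finiteness and the boundary-value claim together by analyzing the sequence of boundary labels $\alpha_j^N \in \{-2\lambda, 2\lambda\}$ attached to the loops $\ell_j^N$, and by controlling the shrinking of the extremal distance at each step. First I would set up notation: write $L_j$ for the extremal distance $\ED(\ell_j^N, \partial_i)$ of the remaining annular domain after $j$ steps (so $L_0 = \ED(\partial_o,\partial_i)$, the extremal distance of $\An$), and write $u_j$ for the harmonic function on the $j$-th annular domain $\An^j$ which is the harmonic extension of the boundary data, taking constant value $\beta_j := \sum_{i \leq j}\alpha_i^N$ (shifted appropriately) on $\ell_j^N$ and $2\lambda N$ on $\partial_i$. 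The key structural fact, coming from Corollary~\ref{c.tvs from a boundary} together with Theorem~\ref{t.tvs}, is that at step $j$ the set $A_{j+1}$ is the connected component of a two-valued set $\A_{-2\lambda, 2\lambda}$ relative to the boundary data $u_j$ (in the shifted coordinates the levels are $-2\lambda$ and $2\lambda$ around the label on $\ell_j^N$), and by construction of TVS the label $\alpha_{j+1}^N$ on $\ell_{j+1}^N$ is again $\pm 2\lambda$ away from $\beta_j$, so the running sum $\beta_j$ performs a $\pm 2\lambda$ walk.

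Next I would invoke Proposition~\ref{p.LawELBddTVS}: conditionally on the data explored so far, $L_j - L_{j+1}$ (i.e. the amount of extremal distance consumed at step $j+1$) has the law of the first hitting time of $\{-2\lambda, 2\lambda\}$ by a Brownian bridge from $0$ to $v_j$ of length $L_j$, where $v_j$ is the height difference between $\partial_i$ and $\ell_j^N$, namely $v_j = 2\lambda N - \beta_j$ in the appropriate normalization. Crucially, on the event that the exploration does \emph{not} yet touch $\partial_i$, this hitting time is strictly less than $L_j$, meaning $L_{j+1} > 0$ and the exploration genuinely continues; whereas the exploration stops (and $\ell_{j+1}^N = \partial_i$) precisely on the event that the bridge reaches $\{-2\lambda, 2\lambda\}$ — wait, more carefully: the loop $\ell_{j+1}^N$ is glued to $\partial_i$ exactly when the bridge fails to hit $\{-2\lambda,2\lambda\}$, i.e. stays within the open interval, which forces $v_j \in (-2\lambda, 2\lambda)$, i.e. $2\lambda N - \beta_j \in (-2\lambda, 2\lambda)$. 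Since $\beta_j$ is an integer multiple of $2\lambda$ (it is a sum of $\pm 2\lambda$ steps, and $2\lambda N$ is too), this happens exactly when $\beta_j = 2\lambda N$. Thus the process terminates precisely when the running label equals the target value $2\lambda N$ on $\partial_i$, which immediately gives the claimed boundary-value statement: the last loop $\ell_n^N$ before $\partial_i$ carries label $2\lambda N$.

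For almost-sure finiteness I would argue as follows. Consider the sequence of labels $(\beta_j)$; it is a discrete-time process on $2\lambda\Z$ that moves by $\pm 2\lambda$ at each step and is absorbed at $2\lambda N$ (at which point we stop). I would show it is, conditionally, a genuine (possibly biased) random walk with both increments occurring with probability bounded away from $0$ and $1$ uniformly — this uses that the probability a Brownian bridge from $0$ to $v_j$ of length $L_j$ first hits $2\lambda$ rather than $-2\lambda$ is, for the relevant values, bounded away from $0$ and $1$; one must be a little careful because $L_j$ shrinks, but since we only need a \emph{non-degenerate} step distribution and the structure of TVS forces a definite label change, this should be routine (alternatively: by Theorem~\ref{t.tvs} monotonicity/non-emptiness, $A_{j+1}$ is a.s. nonempty and a.s. produces a strict label change to one of the two neighbours). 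Then a standard random-walk argument gives that $2\lambda N$ is hit in finite time a.s. The genuinely delicate point — and the one I expect to be the main obstacle — is ruling out the degenerate scenario in which the $L_j$ shrink to a positive limit $L_\infty > 0$ while $n = \infty$: I would need to show the consumed extremal distances $L_j - L_{j+1}$ do not become summably small too fast, equivalently that the bridge hitting times stay bounded below in a suitable conditional sense. I would handle this via a second-moment / Borel–Cantelli estimate on the bridge first-passage times combined with the fact that $L_j$ is bounded above by $L_0$, so the hitting-time law is stochastically bounded below by that of a bridge of length $L_0$; since infinitely many such independent-ish positive increments would consume infinite extremal distance, contradicting $L_j \geq 0$, we conclude $n < \infty$ a.s. This last quantitative step is where the real work lies; the label-tracking and termination-characterization parts are essentially bookkeeping on top of Propositions~\ref{p.LawELBddTVS} and Theorem~\ref{t.tvs}.
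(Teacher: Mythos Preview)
Your approach has the right ingredients but misses the key simplification that the paper uses, and your finiteness argument has a real gap as written.

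The paper's proof couples the \emph{entire} iterated exploration to a \emph{single} Brownian bridge $\widehat B$ from $0$ to $2N\lambda$ of length $L=\ED(\partial_o,\partial_i)$: by the Markov property of the bridge, iterating Proposition~\ref{p.LawELBddTVS} gives that the labels $\alpha_j^N$ (your $\beta_j$) are exactly the values $\widehat B_{\widehat T^{(j)}}$ at the successive hitting times $\widehat T^{(j)}$ of the levels $\widehat B_{\widehat T^{(j-1)}}\pm 2\lambda$. Finiteness is then immediate: the $\widehat T^{(j)}$ are increasing and bounded by $L$, so if there were infinitely many, $\widehat B_{\widehat T^{(j)}}$ would converge by continuity of $\widehat B$---impossible for a sequence that changes by $\pm 2\lambda$ at each step. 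The boundary-value claim follows because the bridge ends at $2N\lambda$ and a.s.\ visits $2N\lambda$ after its last visit to $2N\lambda\pm 2\lambda$.

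You instead treat each step as a \emph{fresh} bridge of shrinking length $L_j$ and try to control the increments $L_j-L_{j+1}$ directly. The problem with your proposed lower bound (``stochastically bounded below by that of a bridge of length $L_0$'') is that it goes the wrong way: for a bridge from $0$ to $v_j$ of length $L_j$, when $\vert v_j\vert$ is large relative to $L_j$ the first hitting time of $\pm 2\lambda$ is \emph{small} (the drift $v_j/L_j$ pushes you across quickly), so there is no uniform positive lower bound unless you first control $\vert v_j\vert$. But $v_j=2\lambda N-\beta_j$ with $\beta_j$ a $\pm 2\lambda$ walk, so a priori $\vert v_j\vert$ is unbounded. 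Proving $\vert v_j\vert$ stays bounded is essentially equivalent to proving $\beta_j=\widehat B_{\widehat T^{(j)}}$ for the single bridge---i.e.\ you would be reconstructing the paper's coupling by hand. (Your termination characterization also slightly overstates: ``the process terminates precisely when $\beta_j=2\lambda N$'' should read ``only when''---even at $\beta_j=2\lambda N$ the exploration continues with positive probability---but this does not affect the conclusion $\beta_n=2\lambda N$.)
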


\begin{proof}
They key input for this lemma is a correspondence of this sequence of non-contractible loops with the stopping times of a Brownian bridge, stemming from Proposition \ref{p.LawELBddTVS}.

Indeed, let $\alpha_j^N$, denote the labels of the contours $\ell_j^N$, i.e. the boundary values of $\Phi_{A_j^N}+u_N$ on $\ell_j^N$ inside $\An^{j,N}$, the connected component of $\An\backslash A_j^N$ that has $\partial_i$ in its boundary. Consider also a Brownian bridge $\widehat B_t$  of time-length $L := \ED(\partial_i,\partial_o)$ going from $0$ to $2N\lambda$ and stopping times $\widehat T^{(j)}$ defined recursively by $\widehat T^{(0)} = 0$, $$\widehat T^{(j)} = \inf \{t\geq \widehat T^{(j-1)}:\widehat B_t = \widehat B_{\widehat T^{(j)}} \pm 2\lambda\} \wedge L.$$ 

Now, by Proposition  \ref{p.LawELBddTVS}, the labels $\alpha_j^N$ are exactly equal to the values of this Brownian bridge at the stopping times $\widehat T^{(j)}$. Thus, as the Brownian bridge has a.s. only finitely many jumps of size $2\lambda$, we deduce that $n$ is almost surely finite. Moreover, notice that the last jump is to $2\lambda N$, because the Brownian bridge ends at $2\lambda N$ and almost surely visits this endpoint after the last visit to $2\lambda (N\pm1)$
\end{proof}
As a simple corollary of the proof of this lemma, we see that
\begin{cor}\label{corlst}
The extremal distance between $\ell_n$ and $\partial_i$ has the same law as the random time $\widehat \tau_{-2\lambda, 2\lambda}$ described in \eqref{e.hattau}.
\end{cor}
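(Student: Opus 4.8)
The plan is to read off the law of $\ED(\ell_n,\partial_i)$ from the Brownian bridge that already appears in the proof of Lemma \ref{lembasic}, and then to exploit its time-reversal symmetry. Recall that, for zero boundary conditions, chaining the applications of Proposition \ref{p.LawELBddTVS} along the iteration of Subsection \ref{ssrv} produces a coupling of the non-contractible loops $\ell_0=\partial_o,\ell_1,\dots,\ell_n,\ell_{n+1}=\partial_i$ with a Brownian bridge $\widehat B$ from $0$ to $0$ of length $L=\ED(\partial_o,\partial_i)$, together with its jump times $0=\widehat T^{(0)}<\widehat T^{(1)}<\dots<\widehat T^{(n)}<\widehat T^{(n+1)}=L$, such that almost surely $\ED(\ell_j,\partial_i)=L-\widehat T^{(j)}$, the label $\alpha_j$ on $\ell_j$ equals $\widehat B_{\widehat T^{(j)}}$, and the iteration terminates (i.e. $\ell_{n+1}=\partial_i$) exactly when $\widehat B$ makes no further $\pm 2\lambda$-jump after $\widehat T^{(n)}$. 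Thus $\widehat T^{(n)}$ is the last $\pm 2\lambda$-jump time of $\widehat B$ strictly before $L$ (with the convention $\widehat T^{(n)}=0$ when $\widehat B$ never leaves $(-2\lambda,2\lambda)$), and $\ED(\ell_n,\partial_i)$ has the law of $L-\widehat T^{(n)}$.

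Next I would give a purely pathwise description of $\widehat T^{(n)}$. Set $\Sigma:=\sup\{s\in[0,L]:|\widehat B_s|=2\lambda\}$, with $\Sigma:=0$ if $\widehat B$ stays in $(-2\lambda,2\lambda)$, and claim that $\widehat T^{(n)}=\inf\{t\ge\Sigma:\widehat B_t=0\}$. Indeed, since $\widehat B_L=0$, the argument used in the proof of Lemma \ref{lembasic} forces the last anchor $\alpha_n$ to equal $0$, so $\widehat B_{\widehat T^{(n)}}=0$ and $\widehat B$ remains in $(-2\lambda,2\lambda)$ on $(\widehat T^{(n)},L]$; hence $\Sigma<\widehat T^{(n)}$ and $\widehat T^{(n)}$ is a zero at or after $\Sigma$. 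Conversely, when $n\ge 1$ the preceding anchor $\alpha_{n-1}=\alpha_n\mp 2\lambda$ equals $\pm 2\lambda$, so $\widehat B$ attains $\pm 2\lambda$ at $\widehat T^{(n-1)}\le\Sigma<\widehat T^{(n)}$, while on $(\widehat T^{(n-1)},\widehat T^{(n)})$ the bridge lies in $(\alpha_{n-1}-2\lambda,\alpha_{n-1}+2\lambda)$ and is in particular nonzero there; therefore there is no zero in $[\Sigma,\widehat T^{(n)})$, and the claimed identity follows (the case $n=0$ being immediate). This pathwise identification — in particular the a.s. statement that the last anchor is $0$, together with the no-exit edge case — is the one point that needs a little care.

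Finally, I would use that $(\widehat B_{L-t})_{0\le t\le L}$ has the same law as $(\widehat B_t)_{0\le t\le L}$. By the previous step, $\widehat T^{(n)}$ is a measurable functional of the path $\widehat B$; evaluating the same functional on the reversed path $t\mapsto\widehat B_{L-t}$ gives $L-\sup\{t\le\widehat T_{-2\lambda,2\lambda}:\widehat B_t=0\}=L-\widehat\tau_{-2\lambda,2\lambda}$, with $\widehat T_{-2\lambda,2\lambda}$ and $\widehat\tau_{-2\lambda,2\lambda}$ as in \eqref{e.hattau}, because under $t\mapsto L-t$ the last exit time of $(-2\lambda,2\lambda)$ becomes the first one (and the degenerate no-exit case is preserved, both expressions then reducing to $0$). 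By reversibility, $\widehat T^{(n)}$ therefore has the same law as $L-\widehat\tau_{-2\lambda,2\lambda}$, so $L-\widehat T^{(n)}$, and hence $\ED(\ell_n,\partial_i)$, has the same law as $\widehat\tau_{-2\lambda,2\lambda}$.
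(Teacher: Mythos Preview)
Your argument is correct and is essentially the approach the paper has in mind: the paper states the corollary as a direct consequence of the Brownian-bridge coupling established in Lemma \ref{lembasic} without spelling out details, and your pathwise identification of $\widehat T^{(n)}$ as the first zero after the last visit to $\pm 2\lambda$, followed by the time-reversal symmetry of the $0$-to-$0$ bridge, is exactly the natural way to unpack that remark. Your handling of the edge case $n=0$ (bridge never exits $(-2\lambda,2\lambda)$) is also correct.
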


Now, one can also explore similar contours starting from the inner boundary of a GFF with boundary values given by $u_N' := u_N-2N\lambda$, i.e. setting the values to $0$ on $\partial_i$ and to $-2N\lambda$ on $\partial_o$. Indeed, we could start with $A_1^N:=\A^{u_N-2N\lambda,\partial_i}_{-2\lambda, 2\lambda}$ and then iterate as above, with the only difference that the exploration goes from interior towards the outer boundary. Let us denote by $\overleftarrow\ell^N_0 = \partial_i, \overleftarrow\ell^N_1, \overleftarrow\ell^N_2, \dots, \overleftarrow\ell^N_n, \overleftarrow\ell^N_{n+1} = \partial_o$ the non-contractible loops obtained this way, and let $\overleftarrow{\alpha_i}$ denote the boundary values of $\Phi_{\overleftarrow A_j}$ on $\overleftarrow\ell_j^N$ towards $\partial_o$. 

The following theorem is the central result of this section and states that the contours described above coincide in the reverse order. This theorem is proved in the next subsection.

\begin{thm}[inside $\to$ outside = outside $\to$ inside]\label{thmRV}
	Let $\Phi$ be a GFF on $\An$ with zero boundary conditions. 
	Then we have that almost surely $$(\ell_n^N, \ell_{n-1}^N, \dots, \ell_1^N) = (\overleftarrow\ell_1^N, \overleftarrow\ell_2^N, \dots, \overleftarrow \ell_n^N).$$ 
\end{thm}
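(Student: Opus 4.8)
The plan is to prove the reversibility statement by exhibiting a single local set that can be built ``from both sides'', and then invoking the uniqueness of two-valued local sets. The key object is the \emph{iterated} set $A_\infty$ (equivalently $A_\infty^N$): I claim that $A_\infty^N$, as a local set of $\Phi$, is symmetric under the conformal automorphism of $\An$ swapping the boundaries in the following precise sense — it is simultaneously obtained by the outward iteration from $\partial_o$ with boundary data $u_N$ and by the inward iteration from $\partial_i$ with boundary data $u_N' = u_N - 2N\lambda$. Granting this, the two nested sequences of non-contractible loops are just the sequence of \emph{non-contractible} connected components of $A_\infty^N \cup \partial\An$ that one encounters as one moves from one boundary to the other, and a sequence of annular regions read off in one direction is literally the same sequence read in the other; this gives the asserted identity $(\ell_n^N,\dots,\ell_1^N) = (\overleftarrow\ell_1^N,\dots,\overleftarrow\ell_n^N)$.

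To make this rigorous I would proceed in the following steps. First, characterize $A_\infty^N$ intrinsically: it is a thin local set, connected to both boundary components (by Lemma \ref{lembasic} the iteration terminates, so $A_\infty^N$ reaches $\partial_i$), whose complement consists of finitely many annular pieces stacked between $\partial_o$ and $\partial_i$, and in each annular complementary component $O$ the harmonic function $h_{A_\infty^N} + u_N$ takes on its two boundary loops two constant values differing by $2\lambda$, with the sequence of values along the chain being a ``$\pm 2\lambda$-staircase'' from $0$ to $2N\lambda$; moreover between consecutive non-contractible loops one has a genuine TVS of the appropriate conditional field. Second, show that these properties \emph{determine} $A_\infty^N$ given $\Phi$: this is where I would lean on the uniqueness part of Theorem \ref{t.tvs} / Corollary \ref{c.tvs from a boundary}, applied componentwise and inductively — conditionally on the first non-contractible loop $\ell_1^N$ and the field inside, the rest of $A_\infty^N$ is the corresponding iterated TVS of the conditional GFF, which is a measurable function of that conditional field, hence of $\Phi$. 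Third, observe that the intrinsic characterization is manifestly symmetric under swapping $(\partial_o,u_N)\leftrightarrow(\partial_i,u_N-2N\lambda)$, because subtracting the constant $2N\lambda$ turns the $0\to 2N\lambda$ staircase into a $-2N\lambda\to 0$ staircase and reverses its order; applying the conformal automorphism $z\mapsto r/z$ and using conformal covariance of the GFF and of TVS, the inward iteration produces a set with exactly the characterizing properties, so by uniqueness it equals $A_\infty^N$. Finally, read off the loops: the non-contractible loops of $A_\infty^N$ are intrinsic to the set (they are the boundaries between consecutive annular complementary components), so listing them from $\partial_o$ inward versus from $\partial_i$ outward yields the two sequences in opposite orders, which is the claim.

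The main obstacle I anticipate is the second step — proving that the iterated construction genuinely yields a set characterized by local, loop-by-loop data, so that the uniqueness theorems apply. One has to be careful that the conditional field inside each annular component $\An^j$ really is a GFF with the stated piecewise-constant boundary data (this is the local set property, Lemma \ref{BPLS}(2), plus the fact that $h_{A_j}$ is constant $\pm 2\lambda$ on the relevant loop), that the resulting increasing union $A_\infty^N$ is itself a local set (Lemma \ref{BPLS}(3), using that each $A_j$ is connected to the boundary of its annulus and hence, via the chain, to $\partial\An$), and that termination really holds uniformly so there is no accumulation of loops near $\partial_i$ (Lemma \ref{lembasic} via the finiteness of $2\lambda$-crossings of a Brownian bridge). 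A secondary subtlety is bookkeeping the labels $\alpha_j^N$, $\overleftarrow{\alpha_j^N}$ and the boundary-value shift by $2N\lambda$ so that the TVS levels match up on both sides; the Brownian-bridge picture in the proof of Lemma \ref{lembasic} is the right device to keep this consistent, since reversing a Brownian bridge from $0$ to $2N\lambda$ and subtracting $2N\lambda$ gives (in law, and here we want it pathwise via the GFF coupling) a Brownian bridge from $-2N\lambda$ to $0$ traversed in reverse.
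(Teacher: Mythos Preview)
Your high-level idea --- characterize $A_\infty^N$ intrinsically, observe the characterization is symmetric, conclude by uniqueness --- is appealing, but Step 2 (uniqueness) has a genuine gap that your inductive sketch does not close.

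The difficulty is this. Suppose you have two candidates $A$ and $A'$ for $A_\infty^N$, each a thin local set with a staircase of annular complementary pieces. To run your induction you want to peel off the outermost layer of each and apply TVS uniqueness. But the outermost layer of $A$ is a local set whose boundary values on its non-contractible loop are $\pm 2\lambda$, so by Theorem \ref{t.tvs} it equals $\A_{-2\lambda,2\lambda}^{u_N,\partial_o}$; fine. Now look at $A' = \overleftarrow A_\infty^N$. Its outermost layer (closest to $\partial_o$) is, by construction, the \emph{last} iterate from inside, i.e.\ a TVS with levels $\{\alpha_{n-1}-2\lambda,\alpha_{n-1}+2\lambda\}$ sampled from the $\overleftarrow\ell_{n-1}^N$ side. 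There is no reason a priori that this set satisfies the $(-2\lambda,2\lambda)$ TVS characterization from the $\partial_o$ side: the levels are shifted, and the construction is from the wrong boundary. So you cannot invoke Theorem \ref{t.tvs} to identify it with $\A_{-2\lambda,2\lambda}^{u_N,\partial_o}$ without further work. The ``further work'' is exactly the content of the paper's key Lemma (Lemma \ref{l.comm2}): when two TVS with levels differing by $2\lambda$ are launched from opposite boundaries and happen to meet, they share their non-contractible loop. This is proved via a delicate analysis of the SLE$_4(-2)$ exploration (Claim \ref{c.intersection_equality}), and it is not a formal consequence of TVS uniqueness.

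In short, your symmetry-plus-uniqueness scheme implicitly assumes that a TVS of levels $\{\alpha-2\lambda,\alpha+2\lambda\}$ built from one side of an annulus with boundary values $(\alpha,\alpha\pm 2\lambda)$ coincides with the TVS of levels $\{\alpha\pm 2\lambda-2\lambda,\alpha\pm 2\lambda+2\lambda\}$ built from the other side. That identity is precisely what the paper isolates and proves as its technical core; without it your induction does not start. The paper then packages the full reversibility as a commutativity statement (Proposition \ref{p.comm1}) and inducts on the number of layers.
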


Let us remark that for us the relevant part of the theorem is the equality in law.
\begin{cor}[Reversibility in law]\label{correv}
Let $\varphi$ be a conformal automorphism of $\An$ that swaps the two boundary components. Then, the sequence $(\varphi(\ell^N_n), \varphi(\ell^N_{n-1}), \dots, \varphi(\ell_1^N))$ has the same law as $(\ell_1^N, \ell_2^N, \dots, \ell_n^N)$. 
\end{cor}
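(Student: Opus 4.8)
The plan is to deduce Corollary \ref{correv} from the pathwise reversibility of Theorem \ref{thmRV} combined with two elementary symmetries of the Gaussian free field: conformal invariance in law, and invariance under the sign flip $\Phi \mapsto -\Phi$.

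First I would recall that $\overleftarrow\ell^N_j$ denotes the $j$-th non-contractible loop produced by the iterated two-valued set exploration started from the inner boundary $\partial_i$, for a field with boundary data $u_N' = u_N - 2N\lambda$ (equal to $0$ on $\partial_i$ and $-2N\lambda$ on $\partial_o$). Theorem \ref{thmRV} gives the almost sure identity $(\ell_n^N, \ell_{n-1}^N, \dots, \ell_1^N) = (\overleftarrow\ell_1^N, \dots, \overleftarrow\ell_n^N)$, so in particular the (random) length $n$ is the same for the two explorations. Applying $\varphi$ to both sides, the corollary reduces to showing that $(\varphi(\overleftarrow\ell_1^N), \dots, \varphi(\overleftarrow\ell_n^N))$ has the same law as $(\ell_1^N, \dots, \ell_n^N)$.

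The second step would use conformal invariance. The entire iterated exploration --- at each step a two-valued set of an explicit measurable function of the current GFF (Theorem \ref{t.tvs}), together with its non-contractible loop --- is a measurable and conformally covariant functional of the underlying zero-boundary GFF and the prescribed boundary data. Pushing $\Phi$ forward by $\varphi$ yields again a zero-boundary GFF on $\An$; since $\varphi$ interchanges $\partial_i$ and $\partial_o$, the boundary function $u_N'$ is carried to the harmonic function that is $0$ on $\partial_o$ and $-2N\lambda$ on $\partial_i$, which is exactly $u_{-N}$, and the exploration started from $\partial_i$ becomes one started from $\partial_o$. Hence $(\varphi(\overleftarrow\ell_1^N), \dots, \varphi(\overleftarrow\ell_n^N))$ would have the same law as the outer iteration with boundary data $u_{-N}$, namely $(\ell_1^{-N}, \dots, \ell_n^{-N})$.

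The final step would remove the sign of $N$ via $\Phi \mapsto -\Phi$. From the defining conditions of the two-valued set and its uniqueness (Theorem \ref{t.tvs}) one has $\A^{w}_{-2\lambda,2\lambda}(\Phi) = \A^{-w}_{-2\lambda,2\lambda}(-\Phi)$ as sets --- the associated harmonic functions and boundary labels merely change sign --- and the same holds for the components $\A^{w,\partial_o}_{-2\lambda,2\lambda}$. Since at every step of the iteration the boundary data of the next two-valued set is precisely the previously discovered harmonic function, this sign relation propagates through the whole construction: the outer iteration with data $u_{-N}$ run on $\Phi$ produces exactly the same sequence of (unlabelled) loops as the outer iteration with data $-u_{-N} = u_N$ run on $-\Phi$. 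As $-\Phi \overset{d}{=} \Phi$, one concludes $(\ell_1^{-N}, \dots, \ell_n^{-N}) \overset{d}{=} (\ell_1^{N}, \dots, \ell_n^{N})$, and combining with the previous paragraph finishes the proof; for $N = 0$ this last step is vacuous, so the argument collapses to plain conformal invariance. The main delicate point will be the bookkeeping of boundary values through the iteration --- keeping track of the constant $2N\lambda$ separating $u_N'$ from $u_N$ and checking that conformal covariance and the sign flip together restore the data $u_N$ on the outer boundary --- everything else being a routine appeal to invariance and the measurability already recorded above.
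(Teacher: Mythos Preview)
Your proposal is correct and follows essentially the same approach as the paper's proof: both combine Theorem \ref{thmRV} with conformal invariance of the GFF under $\varphi$ (turning the inner exploration with data $u_N'$ into an outer exploration with data $u_{-N}$) and then use the sign symmetry $-\Phi \overset{d}{=} \Phi$ to pass from $-N$ to $N$. Your write-up is simply a more detailed version of the paper's terse argument, with the bookkeeping of boundary values spelled out explicitly.
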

\begin{proof}
	Due to the fact that a GFF $\Phi$ is conformally invariant, we have that $\widehat \Phi:=\Phi\circ \varphi$ is a GFF in $\An$ where the roles of the boundary components have been swapped. Thus, by Theorem \ref{thmRV} $(\varphi(\overleftarrow\ell_1^N), \varphi(\overleftarrow\ell_2^N), \dots, \varphi(\overleftarrow \ell_n^N))$ corresponds to $(\hat \ell_1^{-N}, \hat \ell_2^{-N},..., \hat \ell_n^{-N})$. The result follows from the fact that this last $n$-tuple has the same law as $( \ell_1^{N}, \ell_2^{N},..., \ell_n^{N})$ since $-\Phi$ has the same law as $\Phi$.
\end{proof}

In fact, Proposition \ref{mlaw0} follows directly from this corollary.

\begin{proof}[Proof of Proposition \ref{mlaw0}]
By Corollary \ref{correv} the law of $\ED(\ell, \partial_o)=\ED(\ell_1, \partial_o)$ equals the law of $\ED(\overleftarrow \ell_n, \partial_o)$. But this in turn equals the law of $\ED(\varphi(\ell_n), \partial_i)$, which by conformal invariance of the GFF and Corollary \ref{corlst} equals the law of 
$\widehat \tau_{-2\lambda, 2\lambda}$.
\end{proof}

It remains to prove Theorem \ref{thmRV}, which is the content of the next subsection.

\subsection{The proof of reversibility}\label{Ss.Reversibility}
In this subsection, we prove a commutativity statement about the iterated TVS exploration described at the beginning of Section \ref{ssrv}. Theorem \ref{thmRV} is an immediate consequence of this commutativity result. 

We follow the same convention as in Section \ref{ssrv}. That is to say, consider an annulus $\An$ and a GFF with boundary conditions $0$ on the outer boundary and $2\lambda N$ on the inner boundary. Let $A_j^N$ denote the $n$ times iterated $\A_{-2\lambda, 2\lambda}^{u_N,\partial_o}$ and let $\overleftarrow A_k^N$ denote the $k$ times iterated $\A_{-2\lambda, 2\lambda}^{u_N-2N\lambda,\partial_i}$. By convention $A_0 = \partial_o$ and $\overleftarrow A_0 = \partial_i$. Furthermore, let $A_{j,k}^{N}$ be the union of $A_j^N$ and $\overleftarrow A_k^N$. 

Let us note that by definition $A_{j,k}^N$ is increasing in the sense that $A_{j,k}^N \subseteq A_{j',k'}^N$ whenever $j \leq j'$ and $k \leq k'$. Additionally if $A_{j,k}^N$ does not connect $\partial_o$ with $\partial_i$, there is exactly one connected component of $\An_r \backslash A_{j,k}^N$ that has the topology of an annulus. 

The main proposition of this section states that there is a unique way of connecting $\partial_i$ and $\partial_o$ using $A_{j,k}$. 
\begin{prop}[Commutativity]\label{p.comm1}
Suppose that $n \in \N$ is the smallest number so that $A_{n+1}^N$ is connected to $\partial_i$. Then $A_{j,k}^N = A_{n+1,0}^N$ for all $j, k\geq 0$ such that $j + k = n+1$. Moreover, in this case \begin{equation}\label{e.intersectionA}
\overleftarrow A_{j}^N \cap A_{k}^N = \ell_{j}^N = \overleftarrow\ell_{k}^N.\end{equation}
\end{prop}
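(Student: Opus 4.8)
The plan is to induct on $n$, the smallest index for which $A_{n+1}^N$ reaches $\partial_i$, and to reduce everything to a single ``one-step'' commutativity statement: exploring the outer component $\A_{-2\lambda,2\lambda}^{\partial_o}$ and then the inner component $\A_{-2\lambda,2\lambda}^{\partial_i}$ of (what remains of) the two-valued set yields the same set as doing it in the opposite order, namely $A_{j,k}^N$ depends only on $j+k$ as long as $j+k\le n+1$, and moreover once $j+k=n+1$ the set already connects $\partial_o$ to $\partial_i$. First I would set up the conditional picture: after having explored $A_{j,k}^N$ for some $j+k\le n$, the set has not yet connected the two boundaries, so there is a unique annular complementary component $\An^{j,k}$ bounded by $\overleftarrow\ell_k^N$ (towards $\partial_i$) and $\ell_j^N$ (towards $\partial_o$), and conditionally on $(A_{j,k}^N,h_{A_{j,k}^N})$ the field restricted to $\An^{j,k}$ is $\Phi^{A_{j,k}^N}+h_{A_{j,k}^N}$, a GFF in $\An^{j,k}$ with piecewise-constant boundary values $\alpha_j^N$ on $\ell_j^N$ and $\overleftarrow\alpha_k^N$ on $\overleftarrow\ell_k^N$.

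The heart of the argument is then the one-step commutativity inside an annulus: given a GFF on an annulus with constant boundary values $c_o$ on the outer and $c_i$ on the inner boundary (with $c_o-c_i\in 2\lambda\Z$ in our situation), if one first builds $A^{\partial_o}:=\A_{-2\lambda+?,2\lambda+?}^{\partial_o}$ and then, inside the remaining annulus, builds $A^{\partial_i}$ of the appropriate shifted levels, the union is the same as first building $A^{\partial_i}$ and then $A^{\partial_o}$. The natural way to see this is via the conditional-independence / union property of local sets, Lemma \ref{BPLS}(2)--(3): both $A^{\partial_o}$ and $A^{\partial_i}$ are measurable functions of the GFF (Theorem \ref{t.tvs}), hence can be taken conditionally independent given $\Phi$, so $A^{\partial_o}\cup A^{\partial_i}$ is a local set and the boundary values of $h_{A^{\partial_o}\cup A^{\partial_i}}$ agree with those of the individual pieces away from their intersection. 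Then I would invoke the \emph{uniqueness} part of Theorem \ref{t.tvs} (really its annular version, Corollary \ref{c.tvs from a boundary}): the set $A^{\partial_o}\cup A^{\partial_i}$ is a BTLS whose harmonic function takes values in $\{-2\lambda,\dots\}$ on every boundary of every complementary component and satisfies the one-sided condition (\twonotes\twonotes), so it must equal the two-valued set explored in either order — the characterization does not remember which boundary we started from. Concretely, one checks that after exploring $A^{\partial_o}$, the further exploration from $\partial_i$ inside $\An^1$ produces exactly the component of $A^{\partial_i}$ that would have been produced directly (its complement meets $\partial_i$ side-by-side), and symmetrically; this is where the argument of Claim \ref{c.new measure}-type bookkeeping with boundary values is used, together with the observation that $A^{\partial_i}$, when restricted to the sub-annulus cut out by $A^{\partial_o}$, satisfies the defining properties of the TVS there.

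Having the one-step statement, the induction is routine: $A_{j,k}^N$ with $j+k=m\le n+1$ is obtained from $A_{j-1,k}^N$ (or $A_{j,k-1}^N$) by one outer (resp. inner) step in $\An^{j-1,k}$ (resp. $\An^{j,k-1}$), and the one-step commutativity lets one swap adjacent steps, so by the standard ``adjacent transpositions generate $S_m$'' argument all orders give the same set; in particular $A_{j,k}^N=A_{m,0}^N$. For the claim that $A_{n+1,0}^N$ already connects the boundaries: by definition $n+1$ is the first index at which $A^N_{\cdot}$ explored from the outside hits $\partial_i$, so $A_{n+1,0}^N$ connects $\partial_o$ to $\partial_i$, hence so does every $A_{j,k}^N$ with $j+k=n+1$; and when the connection happens, the annular component has just been consumed, so its two former boundaries $\ell_j^N$ and $\overleftarrow\ell_k^N$ coincide with the single interface along which the connection occurs, giving \eqref{e.intersectionA}. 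Theorem \ref{thmRV} then drops out by reading off $\overleftarrow\ell_k^N = A_{n+1-k}^N\cap \overleftarrow A_k^N = \ell_{n+1-k}^N$.

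The main obstacle I anticipate is the rigorous justification of the one-step commutativity at the level of \emph{sets} (not just laws): one must be careful that when exploring from $\partial_i$ after $\partial_o$, the local set one gets really is a connected component of the TVS of the conditional field, with the correct shifted levels and the correct side of each loop carrying the correct label, and that stitching it with $A^{\partial_o}$ yields a BTLS satisfying both (\twonotes) and (\twonotes\twonotes) so that the uniqueness theorem applies. The conditional-independence hypothesis in Lemma \ref{BPLS}(2) also needs a word — it is fine because both sets are measurable functions of $\Phi$, so the coupling where they are built from the same $\Phi$ automatically has them conditionally independent (indeed deterministic) given $\Phi$. Everything else is bookkeeping of boundary values, entirely parallel to the level-line arguments of \cite{ALS1} already cited.
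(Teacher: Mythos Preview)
Your inductive skeleton matches the paper's, but the ``one-step commutativity'' you intend to prove via uniqueness of TVS has a genuine gap, and it is precisely the step the paper has to work hardest for.

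The problem is this. In the situation that actually matters --- boundary values $0$ on $\partial_o$ and $\pm 2\lambda$ on $\partial_i$ --- the outer exploration is $A^{\partial_o}=\A_{-2\lambda,2\lambda}^{u,\partial_o}$ while the inner one is $\overleftarrow A^{\partial_i}=\A_{0,4\lambda}^{u,\partial_i}$ (or $\A_{-4\lambda,0}^{u,\partial_i}$). These are TVS with \emph{different} level pairs, so their union is not a TVS in the sense of Theorem~\ref{t.tvs}, and no uniqueness theorem in the paper characterises it. Your sentence ``$A^{\partial_o}\cup A^{\partial_i}$ is a BTLS whose harmonic function takes values in $\{-2\lambda,\dots\}$ on every boundary \dots so it must equal the two-valued set explored in either order'' is exactly the step that does not follow: there is no ``two-valued set explored in either order'' to invoke uniqueness against. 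Moreover, Lemma~\ref{BPLS}(2) only controls the harmonic function at boundary points at \emph{positive distance} from the other set, so near the intersection you have no a priori control on $h_{A^{\partial_o}\cup A^{\partial_i}}$; checking (\twonotes) and (\twonotes\twonotes) there is not bookkeeping.

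What the paper actually proves, and what you are missing, is the geometric statement that when $A_1^1$ and $\overleftarrow A_1^1$ intersect at all, they intersect \emph{exactly along the non-contractible loop}: $\ell_1^1=\overleftarrow\ell_1^1$ (Claim~\ref{c.intersection_equality}). This is established not via BTLS uniqueness but via the SLE$_4(-2)$-type construction of Section~\ref{ss.construction SLE}: one traces each non-contractible loop as a generalized level line and uses the boundary-hitting rules of Lemma~\ref{notouch} to forbid the level line from entering the ``wrong'' annulus. Only once this claim is in hand can one check that $\overleftarrow A_1^1$, viewed inside $\An^{1,1}$, is a thin local set satisfying (\twonotes), and then apply uniqueness. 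The paper's Remark after Lemma~\ref{l.comm2} underlines that this step is genuinely delicate --- it fails for general $a+b\neq 4\lambda$ --- so it cannot be absorbed into a soft local-set argument.
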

\begin{rem}
	Notice that Theorem \ref{thmRV} follows directly from the equality in \eqref{e.intersectionA}.
\end{rem}

The proof of this proposition is based on two commutativity lemmas for TVS. These lemmas state some conditions under which we can construct $\overleftarrow A_1^N$ in a different way: by first sampling $A_1^N$ and then constructing a particular two-valued local set of $\Phi^{A_1^N}$.

The first of the two commutativity lemmas deals with all types of TVS starting from different connected components of the boundaries. Let $a,a',b,b'$ such that $a+b,a'+b'\geq 2\lambda$ and let $u$ be a harmonic function with constant boundary values $v_o$ and $v_i$ in $\partial_o$ and $\partial_i$ respectively. Assume furthermore that $-a<v_o < b$ and $-a'<v_i< b'$. We will construct a local set $A'$ connected to $\partial_i$ and state sufficient conditions for it to be equal to $\A_{-a',b'}^{u, \partial_i}$:
\begin{itemize}
	\item Start by sampling $A=\A_{-a,b}^{u,\partial_o}$. If $A$ does not intersect $\partial_i$, denote by $O$ the unique connected component of $\An\backslash O$ that has the topology of an annulus.
	\item 	Recall that $\Phi+u$ restricted to $O$ has the law of $\Phi^A\mid_O+u'$, where $\Phi^A\mid_O$ is a GFF in $O$ and  $u'$ is the bounded harmonic function in $O$ with boundary values $v_i$ in $\partial_i$ and constant either $-a$ or $b$ in $A$.
	\item Now explore $A'=\A_{-a',b'}^{u',\partial_i}(\Phi^A\mid_O)$, the connected component of the local set $\A_{-a',b'}^{u'}(\Phi^A\mid_O)$ containing $\partial_i$. We set $A'=\emptyset$ if $O=\emptyset$.
\end{itemize}

The first commutativity lemma roughly says that when $A'$ does not touch $A=\A_{-a,b}^{u,\partial_o}$, then it indeed equals $\A_{-a',b'}^{u, \partial_i}$.
\begin{lemma}\label{l.comm0} Up to a null-set
	\[\{\A_{-a,b}^{u,\partial_o}\cap \A_{-a',b'}^{u,\partial_i}=\emptyset \neq A'\}=\{\A_{-a,b}^{u,\partial_o} \cap A' =\emptyset\neq A' \}. \]
	Furthermore, on the event $\{\A_{-a,b}^{u,\partial_o} \cap A' =\emptyset\neq A' \}$ we have that $A'=\A_{-a',b'}^{u,\partial_i}$.
\end{lemma}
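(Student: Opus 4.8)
The plan is to prove the two inclusions between the events, and then the identification of $A'$ on the relevant event, using the uniqueness and measurability of two-valued local sets (Theorem \ref{t.tvs}) together with the basic local set manipulations of Lemma \ref{BPLS} — in particular that a conditionally independent union of local sets is again a local set and that $B \backslash A$ is a local set of $\Phi^A$ with the expected harmonic function. The key conceptual point is that when the freshly-explored set $A'$, living in the annular component $O$ cut out by $A = \A_{-a,b}^{u,\partial_o}$, does \emph{not} touch $A$, then the union $A \cup A'$ already has the two characterizing properties (\twonotes) and (\twonotes\twonotes) of $\A_{-a',b'}^{u,\partial_i}$'s connected-to-$\partial_i$ component, relative to the original field $\Phi$ (not the conditional one): on each boundary component of a complementary component, $h_{A\cup A'}+u$ is constant equal to $-a'$ or $b'$ (on pieces coming from $A'$) or to $-a$/$b$ (on pieces coming from $A$, but these are ruled out near $\partial_i$ precisely because $A' \cap A = \emptyset$, so the annular piece touching $\partial_i$ is bounded only by $A'$).

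First I would fix boundary values and recall that, conditionally on $(A, h_A)$, the restriction $\Phi^A\mid_O + u'$ is a GFF in $O$ with the stated piecewise-constant boundary data, so that $A'$ is well-defined and, by Theorem \ref{t.tvs} applied in $O$ (which is an annulus with admissible boundary data since $a'+b'\ge 2\lambda$), $A' = \A_{-a',b'}^{u',\partial_i}(\Phi^A\mid_O)$ is a BTLS, a measurable function of $\Phi^A\mid_O$, hence $(A, A')$ can be taken conditionally independent of nothing — more precisely $A \cup A'$ is a local set of $\Phi$ by Lemma \ref{BPLS}(2), with $h_{A\cup A'}$ agreeing with $h_{A'}$-data away from $A$ and with $h_A$-data away from $A'$.

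For the inclusion $\{\A_{-a,b}^{u,\partial_o}\cap A' = \emptyset \neq A'\} \subseteq \{A'=\A_{-a',b'}^{u,\partial_i}\}$: on this event, let $O'$ be the annular component of $\An \backslash (A\cup A')$ adjacent to $\partial_i$. Because $A' \cap A = \emptyset$, the boundary $\partial O' \backslash \partial_i$ lies entirely in $A'$, and by the defining property of $\A_{-a',b'}^{u',\partial_i}$ inside $O$ we get that $h_{A\cup A'}+u$ takes constant value $-a'$ or $b'$ on each such component, and the one-sided inequality ($\le -a'$ or $\ge b'$) holds there; on $\partial_i$ it equals $u$, and $-a' < v_i < b'$ by hypothesis. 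Since $A \cup A'$ is a BTLS (thinness and boundedness are inherited from $A$ and $A'$; the isolated-component condition likewise), its connected-to-$\partial_i$ component is a BTLS satisfying (\twonotes) and (\twonotes\twonotes) relative to $\Phi$ and $u$, so by the uniqueness in Theorem \ref{t.tvs} (in the form of Corollary \ref{c.tvs from a boundary}) it equals $\A_{-a',b'}^{u,\partial_i}$; tracing through, this forces $A' = \A_{-a',b'}^{u,\partial_i} \cap \overline{O}$, and since on this event $\A_{-a',b'}^{u,\partial_i}$ does not reach $A = \A_{-a,b}^{u,\partial_o}$ either, it actually equals $A'$, which also gives the inclusion $\{A\cap A' = \emptyset \neq A'\} \subseteq \{A \cap \A_{-a',b'}^{u,\partial_i} = \emptyset \neq \A_{-a',b'}^{u,\partial_i}\}$ of the first displayed equality. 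For the reverse inclusion, start instead from $\A_{-a,b}^{u,\partial_o}$ and $\A_{-a',b'}^{u,\partial_i}$ directly: on $\{\A_{-a,b}^{u,\partial_o}\cap \A_{-a',b'}^{u,\partial_i}=\emptyset\}$, the set $\A_{-a',b'}^{u,\partial_i}$ is contained in $O$, is a local set of $\Phi$ lying in $O$, hence (Lemma \ref{BPLS}(2) / Proposition \ref{p.change measure boundary}-type argument restricted to a subdomain) a local set of $\Phi^A\mid_O$ with harmonic function having boundary values $-a'$/$b'$ on $\partial \A_{-a',b'}^{u,\partial_i}\backslash\partial_i$ and $u' = u$ on $\partial_i$, and satisfying the one-sided inequalities; by uniqueness of $\A_{-a',b'}^{u',\partial_i}(\Phi^A\mid_O)$ inside $O$ it must coincide with $A'$, so $A' \ne \emptyset$ and $A \cap A' = \emptyset$.

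The main obstacle I anticipate is the careful bookkeeping of boundary values at points of $\partial(A\cup A')$ that are limits of both $A$ and $A'$ — the footnote-level subtlety in Lemma \ref{BPLS}(2) about where $h_{A\cup A'}$ agrees with $h_A$ versus $h_{A'}$ — and ensuring that the ``$A' \cap A = \emptyset$'' hypothesis genuinely upgrades the \emph{conditional} characterization of $A'$ (satisfied by construction in $O$ w.r.t.\ $\Phi^A\mid_O$) to the \emph{unconditional} characterization w.r.t.\ $\Phi$ needed to invoke uniqueness of $\A_{-a',b'}^{u,\partial_i}$. One must also check that $A\cup A'$ genuinely satisfies condition (\twonotes\twonotes) on \emph{every} connected component of the complement, not just the annular one adjacent to $\partial_i$; for components not touching $\partial_i$ this follows because they are either components of $\An\backslash A$ (where (\twonotes\twonotes) holds for $\A_{-a,b}^{u,\partial_o}$) or components of $O \backslash A'$ (where it holds for $\A_{-a',b'}^{u',\partial_i}$, and $-a\le$ value $\le b$ keeps us consistent since $[-a',b']$ need not contain $[-a,b]$ — here one uses $-a' < v_i < b'$ and that the label on $A$ seen from $O$ is $-a$ or $b$, which I would need to confirm lies outside $(-a',b')$ is \emph{not} required, only the per-component one-sided inequality, which is automatic from the TVS construction in $O$).
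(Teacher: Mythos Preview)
Your approach via the uniqueness characterization of TVS is different from the paper's, and it has a genuine gap precisely at the point you yourself flag as ``the main obstacle''.

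For the forward inclusion you need that $A'$ --- which is by construction only a BTLS of the conditional field $\Phi^A\mid_O$ --- is also a BTLS of the original field $\Phi$ in $\An$, so that you can invoke uniqueness of $\A_{-a',b'}^{u,\partial_i}$ (Theorem~\ref{t.tvs}) and conclude $A'=\A_{-a',b'}^{u,\partial_i}$. But Lemma~\ref{BPLS}(2) only gives you that $A\cup A'$ is a local set of $\Phi$, not $A'$ alone; and in general a set constructed from $\Phi^A$ in one component of $\An\backslash A$ need \emph{not} be a local set of $\Phi$ (conditioning on $A'$ does not recover $A$, so the law of $\Phi$ in $\An\backslash A'$ is not that of a GFF). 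Your sketch does not provide a mechanism to ``upgrade'' the conditional local-set property to an unconditional one, and I do not see a soft way to do so using only the abstract characterization. (A smaller slip: your $O'$, the ``annular component of $\An\backslash(A\cup A')$ adjacent to $\partial_i$'', does not exist, since $\partial_i\subset A'$; the annular component sits between $A$ and $A'$.) Your reverse inclusion is on firmer ground, since there you start from $\A_{-a',b'}^{u,\partial_i}$, which \emph{is} a local set of $\Phi$, and restrict it to $O$ via Lemma~\ref{BPLS}(2); but you would still need a uniqueness statement for the $\partial_i$-connected component of a TVS, which the paper does not state separately (Corollary~\ref{c.tvs from a boundary} gives existence, not uniqueness), so even that direction would require completing the candidate to a full TVS in $O$ first.

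The paper sidesteps all of this by a constructive argument: it runs the explicit level-line construction of $\A_{-a',b'}^{u,\partial_i}$ from $\partial_i$, stopped at distance $\eps$ from $A$. The crucial observation is that, away from $A$, the fields $\Phi+u$ and $\Phi^A+u+h_A$ agree, so these stopped level lines are \emph{simultaneously} level lines of both fields; hence the partial construction is at once a subset of $\A_{-a',b'}^{u,\partial_i}(\Phi)$ and of $A'=\A_{-a',b'}^{u',\partial_i}(\Phi^A\mid_O)$. If the construction never gets within $\eps$ of $A$, it is already complete for both, giving equality; one then lets $\eps\to 0$. This handles both inclusions symmetrically and never requires lifting a local set from $\Phi^A$ to $\Phi$.
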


\begin{proof}

	The proof is similar in spirit to that of Claim \ref{c.new measure}, so we will be brief here.
	
	Indeed, we first construct the local set $A = A_{-a,b}^{u,\partial_o}$. We only need to work on the event that it does not touch $\partial_i$. On this event, we define $\widehat \An$ as the only connected component of $\An\backslash A$ that is not simply connected. 
	
	Now, consider the level line construction of $A_{-a',b'}^{u, \partial_i}$, as described in Section 3.4 of \cite{ALS1}. Similarly to the proof of Claim \ref{c.new measure}, we can stop this construction once it gets to distance $\eps > 0$ of $A$. Note that by Lemma \ref{BPLS} (2), these are level lines of both $\Phi + u$ and of $\Phi^A +u + h_A$ on $\widehat \An$ and thus we obtain a local set $A_\eps$ for both fields. By construction, we have that $A \subseteq A_\eps \subseteq A \cup A_{-a',b'}^{u, \partial_i}$. Further, as these level lines are also exactly the level lines used in the construction of $\A_{-a',b'}^{u+h_A, \partial_i}(\Phi^A\mid_{ \widehat \An})$ for the field $\Phi^A +u + h_A$ (in Section 3.4 of \cite{ALS1}), we also have that $A_\epsilon \backslash A\subseteq \A_{-a',b'}^{u+h_A, \partial_i}(\Phi^A\mid_{ \widehat \An})$. Finally, if $A_\epsilon\backslash A$ is at distance strictly bigger than $\epsilon$ from $A$ we have that, by construction, $ \A_{-a',b'}^{u, \partial_i}(\Phi)=A_\epsilon \backslash A = \A_{-a',b'}^{u+h_A, \partial_i}(\Phi^A\mid_{ \widehat \An})$. We conclude by the fact that the above holds for all $\epsilon>0$.

\end{proof}

 When $A'$ and $\A_{-a,b}^{u,\partial_o}$ do intersect, it is not generally true that the local sets $A'\cup \A_{-a,b}^{u,\partial_o}$ and $\A_{-a',b'}^{u,\partial_i} \cup \A_{-a,b}^{u,\partial_o}$ are equal. The second lemma of commutativity gives a condition under which this commutativity does hold.

\begin{lemma}\label{l.comm2}
We work in the same context as Lemma \ref{l.comm0}. Take $v_o=0$ and $v_i \in 2\lambda \Z$. Furthermore, let $a=b=2\lambda$, $a'=-2\lambda+v_i$ and $b'=2\lambda+v_i$. We have that a.s. \[A'\cup \A_{-2\lambda,2\lambda}^{u,\partial_o}=  \A_{-a',b'}^{u,\partial_i} \cup \A_{-2\lambda,2\lambda}^{u,\partial_o}.\] 
Furthermore, if $A'\neq \emptyset$ and $\tilde \ell=\A_{-a',b'}^{u,\partial_i}\cap \A_{-2\lambda,2\lambda}^{u,\partial_o}\neq \emptyset$, we have that $\tilde \ell$ is equal to $\ell^N_1$, the outer boundary of the annular connected component of $D\backslash \A_{-2\lambda,2\lambda}^{u,\partial_o}$. This can only happen when $v_i=\pm 2\lambda$.
\end{lemma}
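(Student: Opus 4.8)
The plan is to follow the same $\eps$-stopped level-line construction used in Lemma \ref{l.comm0}, but to track more carefully what happens when the set $A'=\A_{-a',b'}^{u',\partial_i}(\Phi^A\mid_O)$ approaches $A=\A_{-2\lambda,2\lambda}^{u,\partial_o}$. First I would treat the easy event $\{A'\cap A=\emptyset\}$: there Lemma \ref{l.comm0} already gives $A'=\A_{-a',b'}^{u,\partial_i}$, so $A'\cup A=\A_{-a',b'}^{u,\partial_i}\cup A$ on that event, and there is nothing to prove. So one is left with the event where $A'$ touches $A$, and the goal is to show that even then $A'\cup A = \A_{-a',b'}^{u,\partial_i}\cup A$ (noting that with $a'=-2\lambda+v_i$, $b'=2\lambda+v_i$ one has $a'+b'=4\lambda\geq 2\lambda$, so the target TVS is well-defined), and moreover to identify the contact set $\tilde\ell$.

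The key point is the choice of levels: on $\ell^N_1$ (the non-contractible loop of $A$) the harmonic function $h_A+u$ takes a constant value in $\{-2\lambda,2\lambda\}$, call it $\alpha_1$; so the relevant conditional field in $O$ is $\Phi^A\mid_O + u'$ with $u'$ equal to $v_i$ on $\partial_i$ and $\alpha_1$ on $\ell^N_1$. Since $v_i\in 2\lambda\Z$ and $\alpha_1=\pm 2\lambda$, the level $\alpha_1$ is an admissible boundary value for the TVS $\A_{-a',b'}$ with $a'=-2\lambda+v_i$, $b'=2\lambda+v_i$ precisely when $\alpha_1\in\{-a',b'\}$, i.e. when $v_i\in\{0,-4\lambda\}$ forces $\alpha_1=-a'$ in one case and... concretely, $-a'=-2\lambda+v_i$ and $b'=2\lambda+v_i$, so $\alpha_1=2\lambda$ means $\alpha_1=b'$ iff $v_i=0$ and $\alpha_1=-a'$ iff $v_i=4\lambda$, etc. The upshot is that on the contact event, $\alpha_1$ must be one of the two admissible TVS-values $\{-a',b'\}$, and by the definition (\twonotes) of TVS this can only be consistent with $v_i=\pm2\lambda$; this is exactly the mechanism behind the final sentence of the lemma. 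I would make this precise by invoking Proposition \ref{p.LawELBddTVS} (the label of $\ell^N_1$ corresponds to the value of a Brownian bridge from $0$ to $v_i$ at its first hitting time of $\{-2\lambda,2\lambda\}$, and a contact $\tilde\ell\neq\emptyset$ of $A'$ with $A$ forces this hitting to happen at the endpoint, hence $v_i=\pm2\lambda$ and $\alpha_1$ equal to the corresponding admissible level).

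Given the matching of levels, the commutativity itself follows from the monotonicity and uniqueness of TVS (Theorem \ref{t.tvs}) together with Lemma \ref{BPLS}(2), exactly as in Lemma \ref{l.comm0}: the $\eps$-stopped level lines of $\A_{-a',b'}^{u,\partial_i}$ for the field $\Phi+u$ are also level lines for $\Phi^A\mid_O+u+h_A$ on $O$ whenever they stay at distance $\ge\eps$ from $A$, so $A_\eps\setminus A\subseteq \A_{-a',b'}^{u+h_A,\partial_i}(\Phi^A\mid_O) = A'$; conversely on the event that $A'$ reaches $A$, running the level-line construction of $A'$ gives a BTLS of $\Phi$ connected to $\partial_i$ whose harmonic function $+u$ takes values in $\{-a',b'\}$ off $A$ and matches the $\A_{-2\lambda,2\lambda}^{u,\partial_o}$ boundary data on $A$ (here we use that $\alpha_1\in\{-a',b'\}$, which is the content of the previous paragraph), so by uniqueness of TVS $A'\cup A$ equals $\A_{-a',b'}^{u,\partial_i}\cup A$, and $\tilde\ell = A'\cap A$ is forced to be the non-contractible loop $\ell^N_1$ of $A$. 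Letting $\eps\to0$ finishes the argument.

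The main obstacle I expect is the bookkeeping at the interface $\tilde\ell$: one must argue that when $A'$ touches $A$, it can only touch it along the whole non-contractible loop $\ell^N_1$ and nowhere else, and that the combined harmonic function then genuinely satisfies both conditions (\twonotes) and (\twonotes\twonotes) of the TVS definition on every complementary component — in particular that no complementary component is "left over" with boundary value in the open interval $(-a',b')$. This is where the special relation $a'=-2\lambda+v_i$, $b'=2\lambda+v_i$ with $v_i=\pm2\lambda$, i.e. one of the two TVS-levels coinciding with $\alpha_1$, is essential, and it is worth stating explicitly (as the lemma does) that a contact can occur only in these two cases, since otherwise the boundary value $\alpha_1$ on $\tilde\ell$ would be a forbidden value strictly inside $(-a',b')$, contradicting condition (\twonotes\twonotes).
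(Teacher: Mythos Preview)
Your proposal has the right overall architecture (reduce to Lemma \ref{l.comm0} when $A'\cap A=\emptyset$, then on the contact event verify TVS axioms and invoke uniqueness), but there is a genuine gap at exactly the point you flag as the ``main obstacle'', and your heuristic for ruling out the other values of $v_i$ is incorrect.

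First, the boundary-value bookkeeping you propose does not yield $v_i=\pm 2\lambda$. With $-a'=v_i-2\lambda$ and $b'=v_i+2\lambda$, the condition $\alpha_1\in\{-a',b'\}$ gives $v_i\in\{0,\pm 4\lambda\}$, not $\pm 2\lambda$; so your mechanism for isolating the contact case is wrong. In the paper the case splitting is handled differently: for $|K|\geq 2$ one uses Proposition \ref{p.LawELBddTVS} (a Brownian-bridge hitting argument) to show directly that $\A_{-2\lambda,2\lambda}^{u,\partial_o}\cap\A_{-a',b'}^{u,\partial_i}=\emptyset$ a.s.; for $K=0$ the two sets are components of the same $\A_{-2\lambda,2\lambda}$, so if they meet then $A'=\emptyset$; only $|K|=1$ remains as the nontrivial contact case.

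Second, and more seriously, the case $|K|=1$ cannot be closed by level-line bookkeeping plus TVS uniqueness alone. The heart of the paper's proof is Claim \ref{c.intersection_equality}: if $A_1^1\cap\overleftarrow A_1^1\neq\emptyset$ then the two non-contractible loops coincide, $\ell_1^1=\overleftarrow\ell_1^1$. This is \emph{not} a formal consequence of the TVS axioms; it is proved using the SLE$_4(-2)$-type local set process of Section \ref{ss.construction SLE} (Proposition \ref{p.GFFcpl2ann}), analysing what generalized level lines can and cannot do when they approach $\ell_1^1$ (via Lemma \ref{notouch} and the argument around Figures \ref{f.nli}--\ref{f.nli2}). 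Your proposal does not supply any substitute for this step: you would need to rule out, for instance, that $\overleftarrow\ell_1^1$ touches $A_1^1$ only at isolated points, or enters the annulus $\An\setminus\An_1^1$ and closes up there. Until that geometric control is established, the uniqueness argument you sketch cannot be applied, because one does not yet know that $A'\cup A$ has constant boundary values in $\{-a',b'\}$ on every complementary component.
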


\begin{rem}
Lemma \ref{l.comm2} is the main reason why in our commutativity results the maximum generality we can hope is to have $a,b$ with $a+b=2K\lambda$. This lemma is in fact not true, when $a+b\neq 4\lambda$.
\end{rem}
\begin{proof}[Proof of Lemma \ref{l.comm2}:]
By Lemma \ref{l.comm0} we just need to study the cases where $\{\A_{-2\lambda,2\lambda}^{u,\partial_o}\cap \A_{-a',b'}^{u,\partial_i}=\emptyset\}$ or $\{A'=\emptyset\}$. We start by separating into cases according to the value of \avelio{$v_i=2K\lambda$}. 

\textbf{Case $|K|\geq 2$}: In this case, Proposition \ref{p.LawELBddTVS} implies that a.s. $\A_{-2\lambda,2\lambda}^{u,\partial_o}\cap \A_{-a',b'}^{u,\partial_i}=\emptyset$ and thus the result follows from Lemma \ref{l.comm0}.

\textbf{Case $K=0$}: Note that in this case we have that $a'=b'=2\lambda$ and thus, if $\A_{-2\lambda,2\lambda}^{u,\partial_o}\cap \A_{-2\lambda,2\lambda}^{u,\partial_i}\neq \emptyset$ we have that $\A_{-2\lambda,2\lambda}^{u,\partial_o}\cap \A_{-a',b'}^{u,\partial_i}=\A_{-2\lambda,2\lambda}$, which implies that the result is true. Furthermore, let us note that in this case $A'=\emptyset$.

\textbf{Case $|K|=1$}: This is the core of the lemma. WLOG we can consider $K=1$, i.e. an annulus with boundary conditions $0$ on the outer boundary and $2\lambda$ on the inner boundary. Thus, $\A_{-2\lambda,2\lambda}^{u,\partial_o}=A_1^1$ and $\A_{-a',b'}^{u,\partial_i}=\A_{0,4\lambda}^{u,\partial_i}=\overleftarrow A^1_1$. Furthermore, when it is necessary we can assume that we are on the event $\A_{-a',b'}^{u,\partial_i}\cap \A_{-2\lambda,2\lambda}^{u,\partial_o}\neq \emptyset$.

Observe first that by Proposition \ref{p.LawELBddTVS}, $A_1^1 \cap \partial_i = \emptyset$ and $\overleftarrow A_1^1 \cap \partial_o = \emptyset$. Thus both $\An \backslash A_1^1$ and $\An \backslash \overleftarrow A_1^1$ have one annular component, denoted by $\An^{1,1}$ and $\overleftarrow\An^{1,1}$, respectively. In particular $A'\neq \emptyset$. Recall that we denote the outer boundary of $\An^{1,1}$ by $\ell_1^1$ and the inner boundary of $\overleftarrow\An^{1,1}$ by $\overleftarrow\ell_1^1$.

The rest of the lemma is easy, once we establish that $A_1^1$ and $\overleftarrow A_1^1$ share their only non-contractible loop.

\begin{claim} \label{c.intersection_equality}
If $A_1^1\cap \overleftarrow A_1^1\neq \emptyset$, then $\ell_1^1 =\overleftarrow\ell_1^1$. 
\end{claim}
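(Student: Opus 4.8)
The plan is to reduce the claim to a topological observation, then to identify $\overleftarrow A_1^1$ inside the annular region cut out by $A_1^1$, and finally to exploit the boundary-value constraints defining two-valued sets. Throughout one works on the event $\{A_1^1\cap\overleftarrow A_1^1\neq\emptyset\}$, and writes $\alpha_1\in\{-2\lambda,2\lambda\}$ for the value of $h_{A_1^1}+u$ on $\ell_1^1$ seen from the annular component $\An^{1,1}$.

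\textbf{Step 1 (topological reduction).} By Proposition \ref{p.LawELBddTVS} with $v=2\lambda$, $a=b=2\lambda$, the set $A_1^1$ a.s.\ does not meet $\partial_i$ (the governing Brownian bridge ends at $2\lambda$ and so hits $\{-2\lambda,2\lambda\}$ strictly before its lifetime), so by Corollary \ref{c.tvs from a boundary} its only non-contractible loop is $\ell_1^1=A_1^1\cap\partial\An^{1,1}$; the same applies to $\overleftarrow A_1^1$ and $\overleftarrow\ell_1^1$. Since $\ell_1^1$ separates $A_1^1\setminus\ell_1^1$ from $\partial_i$ in $\An$, while $\overleftarrow A_1^1$ is connected and contains $\partial_i$, any intersection of $\overleftarrow A_1^1$ with $A_1^1$ forces $\overleftarrow A_1^1\cap\ell_1^1\neq\emptyset$; symmetrically $A_1^1\cap\overleftarrow\ell_1^1\neq\emptyset$.

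\textbf{Step 2 (identification inside $\An^{1,1}$).} Conditionally on $(A_1^1,h_{A_1^1})$, the field on $\An^{1,1}$ is $\Phi^{A_1^1}+\phi$, with $\phi$ harmonic, equal to $\alpha_1$ on $\ell_1^1$ and to $2\lambda$ on $\partial_i$. Running the level-line construction of $\overleftarrow A_1^1$ and stopping it $\varepsilon$ before reaching $\ell_1^1$, exactly as in the proof of Claim \ref{c.new measure} and Lemma \ref{l.comm0}, shows that inside $\An^{1,1}$ the set $\overleftarrow A_1^1$ agrees with the two-valued set $\A^{\phi-2\lambda,\partial_i}_{-2\lambda,2\lambda}$ of the zero-boundary GFF $\Phi^{A_1^1}|_{\An^{1,1}}$ (this is the set $A'$ of Lemma \ref{l.comm0}, after the shift identifying $\A_{0,4\lambda}^{u,\partial_i}$ with $\A_{-2\lambda,2\lambda}^{u-2\lambda,\partial_i}$). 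Applying Proposition \ref{p.LawELBddTVS} inside $\An^{1,1}$ and exchanging the two boundary components by the automorphism $z\mapsto r/z$, the governing Brownian bridge starts at $0$ (value on $\partial_i$) and ends at $\alpha_1-2\lambda\in\{-4\lambda,0\}$ (value on $\ell_1^1$). If $\alpha_1=-2\lambda$ this bridge runs from $0$ to $-4\lambda$, hence crosses $-2\lambda$ and a.s.\ hits $\{-2\lambda,2\lambda\}$ before its lifetime, so $\overleftarrow A_1^1$ a.s.\ stays at positive distance from $\ell_1^1$ — contradicting Step 1. Therefore $\alpha_1=2\lambda$ on our event; then $\phi\equiv 2\lambda$, so $\overleftarrow A_1^1\cap\overline{\An^{1,1}}=\A^{0,\partial_i}_{-2\lambda,2\lambda}(\Phi^{A_1^1}|_{\An^{1,1}})$, and for the bridge from $0$ to $0$ this set reaches $\ell_1^1$ precisely when the bridge stays in $(-2\lambda,2\lambda)$, an event of positive probability.

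\textbf{Step 3 (conclusion).} On our event $\overleftarrow A_1^1$ meets $\ell_1^1$, so by Corollary \ref{c.tvs from a boundary} the set $\A^{0,\partial_i}_{-2\lambda,2\lambda}(\Phi^{A_1^1}|_{\An^{1,1}})$ equals the full two-valued set $\A^{0}_{-2\lambda,2\lambda}$ of the annulus $\An^{1,1}$. Since the conditional field has boundary value $0$ on both components of $\partial\An^{1,1}$ and $0\in(-2\lambda,2\lambda)$, condition (\twonotes) forbids any complementary component of $\A^{0}_{-2\lambda,2\lambda}$ from touching $\ell_1^1$ (it would carry the value $0\notin\{-2\lambda,2\lambda\}$ on part of its boundary), hence $\ell_1^1\subseteq\A^{0}_{-2\lambda,2\lambda}\subseteq\overleftarrow A_1^1$ and moreover the complement of $\overleftarrow A_1^1$ inside $\An^{1,1}$ is entirely simply connected. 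Finally, $\overleftarrow A_1^1$ is a single (non-iterated) two-valued component attached to $\partial_i$, so by Proposition \ref{p.LawELBddTVS} and Corollary \ref{c.tvs from a boundary} its complement in $\An$ has exactly one annular component $\overleftarrow\An^{1,1}$; since that component does not meet $\An^{1,1}$ it lies on the $\partial_o$-side of $\ell_1^1$, and combining this with $\ell_1^1\subseteq\overleftarrow A_1^1$ and the uniqueness of $\overleftarrow\ell_1^1$ one gets $\ell_1^1=\overleftarrow\ell_1^1$, which is the assertion of the claim.

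\textbf{Main obstacle.} The delicate point is the very last implication — upgrading ``$\overleftarrow A_1^1$ touches $\ell_1^1$'' to ``$\ell_1^1$ is \emph{the} non-contractible loop of $\overleftarrow A_1^1$'' — which amounts to showing that $\overleftarrow A_1^1$ does not extend past $\ell_1^1$ into the $\partial_o$-side, or at least does not produce there a second non-contractible loop. This is where one must combine the fact that $\overleftarrow A_1^1$ is a single two-valued component (so has at most one annular complementary component) with the boundary-hitting behaviour of generalized level lines (Lemma \ref{notouch}) applied on the $\partial_o$-side of $\ell_1^1$, to see that the construction of $\overleftarrow A_1^1$ effectively terminates once it has traced $\ell_1^1$. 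A secondary technical point, handled as in Claim \ref{c.new measure}, is justifying that the identification in Step 2 of $\overleftarrow A_1^1$ with a two-valued set of the conditional field persists past the first time a level line reaches $\ell_1^1$, which again follows from uniqueness of two-valued sets once the relevant boundary values are checked to agree.
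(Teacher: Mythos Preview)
Your approach is genuinely different from the paper's, and your Steps 1--2 are essentially correct: the topological reduction is fine, and the case $\alpha_1=-2\lambda$ is correctly eliminated via Lemma~\ref{l.comm0} and Proposition~\ref{p.LawELBddTVS}, since in that case $A'$ stays at positive distance from $\ell_1^1$ and the $\varepsilon$-stopping argument gives $\overleftarrow A_1^1=A'$ outright.

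The gap you flag in your ``Main obstacle'' paragraph is, however, real and is the heart of the matter. In Step~3 you write $\ell_1^1\subseteq \A^0_{-2\lambda,2\lambda}\subseteq \overleftarrow A_1^1$; the first inclusion is fine once $A'$ is the full TVS in $\An^{1,1}$, but the second inclusion is never established. The $\varepsilon$-stopping identification from Lemma~\ref{l.comm0} breaks down precisely when $A'$ touches $\ell_1^1$: after that moment the level-line constructions of $A'$ (in $\An^{1,1}$) and of $\overleftarrow A_1^1$ (in $\An$) may diverge, and nothing you have said prevents $\overleftarrow A_1^1$ from crossing $\ell_1^1$ and producing its non-contractible loop strictly on the $\partial_o$-side. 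Your suggestion to invoke Lemma~\ref{notouch} on the $\partial_o$-side is in the right spirit, but you have not carried it out, and it requires tracking the actual level-line boundary values there --- which is exactly where the work lies.

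The paper's proof takes a rather different route and confronts this difficulty head-on. Instead of trying to identify $\overleftarrow A_1^1$ as a conditional TVS inside $\An^{1,1}$, it works with the SLE$_4(-2)$-type exploration processes $\nu_t$ and $\overleftarrow\nu_t$ of Proposition~\ref{p.GFFcpl2ann} that trace $\ell_1^1$ and $\overleftarrow\ell_1^1$. Using Lemma~\ref{l.comm0} iteratively at the successive times when $\nu$ closes a contractible loop, one first shows that $\overleftarrow A_1^1$ stays at positive distance from $\nu_t$ for all $t$ before $\nu$ starts its non-contractible loop. The key step is then a trajectorial analysis: the generalized-level-line property (Lemma~\ref{Rlevelline}) together with the boundary-hitting rule (Lemma~\ref{notouch}) and Claim~17 of \cite{ASW} forbids $\overleftarrow\nu$, while tracing $\overleftarrow\ell_1^1$, from entering the interior of either side of $\ell_1^1$ except along $\ell_1^1$ itself. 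This two-sided exclusion forces $\overleftarrow\ell_1^1=\ell_1^1$. In other words, the paper does exactly the dynamical level-line analysis on both sides of $\ell_1^1$ that your Step~3 gestures at but does not perform.
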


Let us first show how the lemma follows from this claim. Indeed, given the claim, Lemma \ref{BPLS} property (2) implies that $\overleftarrow A_1^1$ is a local set in $ \An^{1,1}$ of the GFF $\Phi^{\A_{-2\lambda,2\lambda}^{u,\partial_o}}$ restricted to $ \An^{1,1}$. We need to prove that $\overleftarrow A_1^1$ is equal to $\overleftarrow A_1' := \A_{0,4\lambda}^{u',\partial_i}(\Phi^{\A_{-2\lambda,2\lambda}^{u,\partial_o}}\mid_{\An^{1,1}})$, where $u'=\Phi_{\A_{-2\lambda,2\lambda}^{u,\partial_o}} + u$ restricted to $\An^{1,1}$. We know that this is the case when $\overleftarrow A_1^1\cap A_1^1= \emptyset$, so we can assume $\overleftarrow A_1^1\cap A_1^1\neq \emptyset$. Note that this can only happen when $u'\equiv2\lambda$. 

Now, the Minkowski dimension of $\overleftarrow A_1^1$ is strictly smaller than $2$ and thus Proposition \ref{p.thin} implies it is thin. 
So by Theorem \ref{t.tvs} it remains to prove that $\overleftarrow A_1^1$ satisfies the condition \hyperlink{tvs}{(\twonotes)} for $\Phi^{\A_{-2\lambda,2\lambda}^{u,\partial_o}}\mid_{\An^{1,1}}$. 

First, note that by Claim \ref{c.intersection_equality}, the connected components of $\An_1^1 \backslash \overleftarrow A_1^1$ are just all the simply connected components of $\An \backslash \overleftarrow A_1^1$. Now, we have that $\Phi_{\overleftarrow A_1^1}$ restricted to any such component $O$ is constant equal to $0$ or $4\lambda$. From the construction of TVS in \cite{ALS1}, we know that the boundary of each component $O$ is disjoint from $\overleftarrow \ell_1^1 = \ell_1^1$. Thus by part (2) of Lemma \ref{BPLS} we have that $\Phi_{A_1^1\cup \overleftarrow A_1^1} = \Phi_{\overleftarrow A_1^1}$ for all such connected components $O$ and we conclude that $\overleftarrow A_1^1$ satisfies condition \hyperlink{tvs}{(\twonotes)} for $\Phi^{\A_{-2\lambda,2\lambda}^{u,\partial_o}}\mid_{\An^{1,1}}$.

Let us now turn to the proof of the claim.
\begin{proof}[Proof of Claim \ref{c.intersection_equality}:]
We use the SLE$_4(-2)$ based construction of $\ell_1^1$ and $\overleftarrow \ell_1^1$ detailed in Section \ref{ss.construction SLE}. Let these SLE$_4(-2)$ type of processes constructing $A_1^1$, resp. $\overleftarrow A_1^1$ be denoted by $\nu_t$, resp. 
$\overleftarrow \nu_t$. 

 Take $\tau$ any stopping time of $\nu$ where $\nu$ closes a contractible loop and where $\nu_\tau$ does not hit $\partial_i$ and let $\An_\tau$ denote the  only connected component of $\An\backslash \nu_\tau$ that is not simply connected. Let us now recall that $\nu_\tau$ is a local set where $h_{\nu_\tau}+u$ restricted to $\An_\tau$ is the unique bounded harmonic function  with boundary conditions $0$ on $\partial_o \cup \nu_\tau$ and $2\lambda$ on $\partial_i$. Thus by Proposition \ref{p.LawELBddTVS}, we see that the local set $\A_{0, 4\lambda}^{\partial_i}$ sampled in $\An \backslash \nu_t$ stays at a positive distance from $\nu_t$ almost surely. Moreover, exactly as in the proof of Claim \ref{c.new measure}, we can argue that in fact this local set is equal to $\overleftarrow A_1^1$. As a consequence, we obtain that a.s. $\overleftarrow A_1^1$ stays at positive distance of $\nu_t$ for any $t$ which corresponds to finishing a contractible loop. Then $\overleftarrow A_1^1$ is at positive distance of $\nu_t$ for any $t < T$, where $T$ is the time where $\nu$ starts tracing the non-trivial loop $\ell_1^1$. Let $T_\ell$ be the time when $\nu$ finishes tracing $\ell_1^1$  
 
 By exactly the same argument, if we denote $S$ to be the time when $\overleftarrow \nu$ starts tracing the non-trivial loop $\overleftarrow \ell_1^1$, then $A_1^1$ stays at positive distance of $\overleftarrow \nu_s$ for any $s < S$. 

First, let us argue that $\overleftarrow \nu$ cannot enter in the interior of the annulus $\An \backslash \An_1^1$. To do this, first notice that no non-trivial loop of $\overleftarrow \nu$ is inside $\An \backslash \An_1^1$ - indeed, any such loop is finished before the time $S$. Thus $\overleftarrow \nu$ can only enter $\An \backslash \An_1^1$ when it is tracing the non-contractible loop $\overleftarrow \ell_1^1$. 

Now take $s$ a rational time such that with positive probability $\nu(s)$ is inside the interior of the annulus $\An\backslash \An_1^1$. By the argument just above, it then has to enter some connected component $O$ of $\An \backslash \nu_{T_\ell}$ contained in the annulus $\An\backslash \An_1^1$. By Proposition \ref{p.GFFcpl2ann} $(\nu_t)_{t\geq s}$ is in the process of tracing a generalized level line with boundary values $2\lambda$ and $0$ or $2\lambda$ and $4\lambda$. From Claim 17 of \cite{ASW} we thus see that if $\ell^1$ enters any such connected component $O$ with positive probability, then it only intersects its boundary at one point corresponding both to its starting and endpoint. But now recall that $\overleftarrow \ell_1^1$ does not intersect $\nu_t$ for any $t<T$. Thus it means that the only possibility for $\ell^1$ to form a non-contractible loop in $\An \backslash \An_1^1$ is depicted in Figure \ref{f.nli}, as for any other $O$ the closure does not disconnect $\partial_i$ and $\partial_o$.

 \begin{figure}[h!]
 	\includegraphics[width=0.3\textwidth]{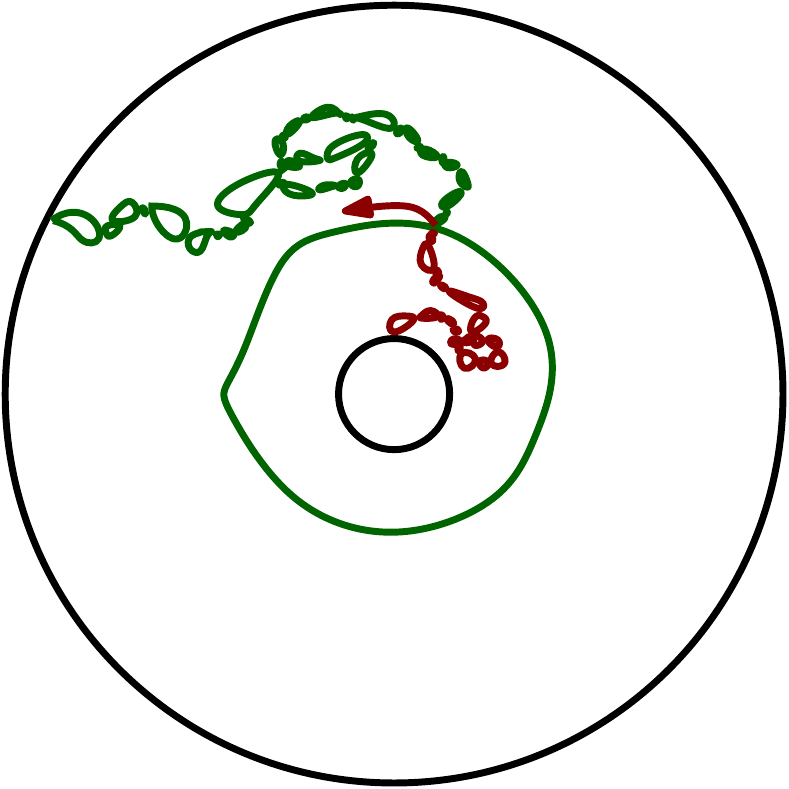}
 	\caption{The green set represents $A_1^1$ and the brown set represents $ \protect\overleftarrow \nu_s$}
 	\label{f.nli}
 \end{figure}
 Now, we note that the generalized level line $\overleftarrow \nu_t$ enters the interior of a component $O$ by one prime end of the boundary. Thus by the proof of Lemma 10 in \cite{ASW} \footnote{In that statement `point' should read as a `prime end'} the boundary conditions near the other prime end will be $0$ at any rational time when $\nu_t$ is inside $\An \backslash \An_1^1$. But then by Lemma \ref{notouch}, the generalized level line cannot exit through this other prime end.

 The above discussion implied that $\overleftarrow\ell_1^1$ is contained in the closure $\An_1^1$. We still need to prove that $\overleftarrow \ell_1^1$ does not intersect the interior of $\An_1^1$. However, this can be proved by exactly the same argument. Indeed, by hypothesis $\overleftarrow \ell_1^1$ needs to intersects $A_1^1$. Now recall that $\overleftarrow \nu_t$ is disjoint from $\An_1^1$ for all $t < T$. Moreover, when $\overleftarrow \nu_T$ is also disjoint from $\An_1^1$, then as between the times $T$ and $T_\ell$, the process $\overleftarrow \nu_t$ is tracing a generalized level, it cannot hit $\ell_1^1$ that has boundary values $\pm 2\lambda$. Thus the only possibility for $\overleftarrow \ell_1^1$ to intersect $A_1^1$ is depicted by Figure \ref{f.nli2}. But now we can conclude that this cannot happen by arguing exactly as in the paragraph above. 
 \begin{figure}[h!]
 	\includegraphics[width=0.3\textwidth]{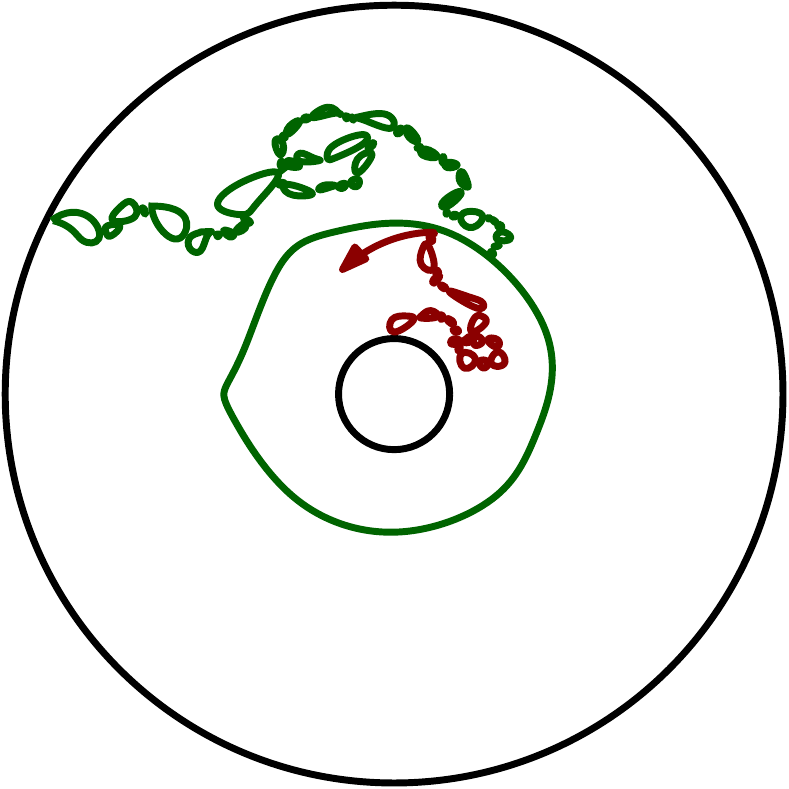}
 	\caption{The green set represents $A_1^1$ and the brown set represents $\nu_s$}
 	\label{f.nli2}
 \end{figure}
 
\end{proof}

\end{proof}

The next corollary is a certain iteration of the result.
\begin{cor}\label{cor.iterate}
Suppose that $A_{j,k}^N$ does not connect the inner and outer boundary and consider the annular connected component $\widetilde \An$ of $\An \backslash A^N_{j,k}$. Let $K_{j,k}$ be equal to $(2\lambda)^{-1}(\alpha_j - \overleftarrow{\alpha_k})$ \footnote{Recall that $\alpha_j$, resp. $\overleftarrow \alpha_k$ is the boundary value on $\ell_j^N$, resp. $\overleftarrow\ell_k^N$, of $h_{A_j^N}+u_N$, resp. $h_{\overleftarrow A_k^N}+u_N$ restricted to $\An_j^N$, resp. $\overleftarrow\An_k^N$.}. Then $A_{j+1,k}^N \backslash A_{j,k}^N$ is equal to $A_{1}^{K_{j,k}}$ of the GFF $\Phi^{A^N_{j,k}}$ in $\widetilde \An$.   Similarly, we have that $A_{j,k+1}^N\backslash A_{j,k}^N$ is equal to $\overleftarrow A_1^{K_{j,k}}$ of the GFF $\Phi^{A_{j,k}}$ in $\widetilde \An$. 
\end{cor}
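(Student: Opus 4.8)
The plan is to reduce the iteration to the single-step commutativity lemmas (Lemma \ref{l.comm0} and Lemma \ref{l.comm2}) by conditioning on the already-explored set $A_{j,k}^N$ and working in the annular component $\widetilde \An$ of $\An \backslash A_{j,k}^N$. First I would recall, from part (2) of Lemma \ref{BPLS}, that conditionally on $(A_{j,k}^N, h_{A_{j,k}^N})$ the field $\Phi$ restricted to $\widetilde\An$ has the law of $\Phi^{A_{j,k}^N}\mid_{\widetilde\An} + h_{A_{j,k}^N}\mid_{\widetilde\An}$, and that the latter harmonic function is constant on each of the two boundary components of $\widetilde\An$: it equals $\alpha_j$ on $\ell_j^N$ (the piece of $\partial\widetilde\An$ coming from $A_j^N$) and $\overleftarrow\alpha_k$ on $\overleftarrow\ell_k^N$ (the piece coming from $\overleftarrow A_k^N$). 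After subtracting the constant $\alpha_j$, this is exactly the setup of Section \ref{ssrv} in the domain $\widetilde\An$ with inner boundary value $\overleftarrow\alpha_k - \alpha_j = -2\lambda K_{j,k}$, up to a conformal map sending $\widetilde\An$ to a standard annulus and swapping inner/outer as needed. By definition of the iterated exploration, $A_{j+1,k}^N \backslash A_{j,k}^N$ is precisely the connected component containing $\ell_j^N$ of the two-valued set $\A_{-2\lambda,2\lambda}^{h_{A_{j,k}^N},\ell_j^N}$ of $\Phi^{A_{j,k}^N}\mid_{\widetilde\An}$, which after the shift is $A_1^{K_{j,k}}$ of the GFF $\Phi^{A_{j,k}^N}$ in $\widetilde\An$; this is the first claimed identity, essentially by unwinding definitions. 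The symmetric identity for $A_{j,k+1}^N \backslash A_{j,k}^N$ follows by the same argument with the roles of the two boundary components exchanged.

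The substantive point, and the main obstacle, is that the three sets $A_{j,k}^N$, $A_{j+1,k}^N$ and $A_{j,k+1}^N$ must be handled \emph{simultaneously} and consistently: one has to know that $A_{j+1,k}^N$ — which by construction is built by exploring from $\partial_o$ using the field $\Phi$ itself — agrees, once restricted to $\widetilde\An$, with the set obtained by exploring from $\ell_j^N$ using the conditional field $\Phi^{A_{j,k}^N}\mid_{\widetilde\An}$. For the step $A_{j,k}^N \to A_{j+1,k}^N$ this is immediate from the very definition of the iterated $\A^{\partial_o}_{-2\lambda,2\lambda}$ exploration (each step already samples the relevant component inside the current annular component), so no commutativity is needed \emph{there}. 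The place where commutativity enters is in identifying $A_{j,k+1}^N\backslash A_{j,k}^N$, i.e. the next step of the \emph{outer} iteration as seen after having also done $k$ steps of the \emph{inner} iteration: here one must invoke Lemma \ref{l.comm0} (when the new set stays at positive distance from $A_{j,k}^N$) together with Lemma \ref{l.comm2} and Proposition \ref{p.comm1} to guarantee that $\overleftarrow A_{k+1}^N \backslash A_{j,k}^N$ coincides with $\overleftarrow A_1^{K_{j,k}}$ of $\Phi^{A_{j,k}^N}$ in $\widetilde\An$ regardless of whether the new annular boundary touches the old one. I would spell out that the hypothesis ``$A_{j,k}^N$ does not connect the two boundaries'' guarantees $\widetilde\An$ is a genuine annular domain and that $K_{j,k}\in\Z$, so that Lemmas \ref{l.comm0}--\ref{l.comm2} apply verbatim after conformal mapping.

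Finally I would record that the constant $K_{j,k} = (2\lambda)^{-1}(\alpha_j^N - \overleftarrow\alpha_k^N)$ is indeed an integer: by Proposition \ref{p.LawELBddTVS} (and the Brownian-bridge correspondence used in the proof of Lemma \ref{lembasic}) all the labels $\alpha_j^N$ and $\overleftarrow\alpha_k^N$ lie in $2\lambda\Z$, being values of a Brownian bridge with $2\lambda\Z$ endpoints sampled at the successive $\pm 2\lambda$-hitting times. With $K_{j,k}\in\Z$ in hand, the two identities of the corollary are exactly the statements of the single-step constructions of Section \ref{ssrv} applied inside $\widetilde\An$, conformally mapped to a standard annulus and with the boundary value on the inner boundary shifted by $2\lambda K_{j,k}$, which is the content of Proposition \ref{p.comm1} and Lemmas \ref{l.comm0} and \ref{l.comm2}. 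This completes the proof.
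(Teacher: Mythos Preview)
Your proposal has two genuine gaps.

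First, the asymmetry you claim between the two directions is illusory. You write that for $A_{j+1,k}^N\backslash A_{j,k}^N$ ``no commutativity is needed'' because the iterated exploration from $\partial_o$ already samples inside the current annular component. But the current annular component for the $(j{+}1)$-st outer step is $\An_j^N$, the annulus between $\ell_j^N$ and $\partial_i$, \emph{not} $\widetilde\An$, which lies between $\ell_j^N$ and $\overleftarrow\ell_k^N$. Thus $A_{j+1}^N\backslash A_j^N$ is a TVS component in $\An_j^N$ with inner boundary value $2N\lambda$, whereas $A_1^{K_{j,k}}$ in $\widetilde\An$ has inner boundary value $\overleftarrow\alpha_k$. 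Identifying the two (after removing the part swallowed by $\overleftarrow A_k^N$) is precisely a commutativity statement of the type in Lemmas \ref{l.comm0}--\ref{l.comm2}, applied with the roles of inner and outer swapped. Both directions are completely symmetric and both require work.

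Second, you invoke Proposition \ref{p.comm1} to justify the step. That is circular: in the paper Proposition \ref{p.comm1} is proved \emph{using} Corollary \ref{cor.iterate}, so you cannot appeal to it here.

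The paper's actual argument avoids both issues by induction on $j+k$. The base cases $j+k\leq 1$ follow from Lemma \ref{l.comm2}. For the inductive step (WLOG $j\geq 1$) one peels off the first outer layer $A_{1,0}^N$: an iterated application of Lemma \ref{l.comm0} shows that $A_{j,k}^N\backslash A_{1,0}^N$ coincides with $A_{j-1,k}^{N'}$ of the conditional GFF in the annular component of $\An\backslash A_{1,0}^N$, where $N'=N\pm 1$ according to the label of $\ell_1^N$. The induction hypothesis applied to $A_{j-1,k}^{N'}$ then gives the claim. This inductive peeling is the organizing idea you are missing.
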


\begin{proof}[Proof of Corollary \ref{cor.iterate}:]
We will prove this by induction on $j+k\geq 0$. The claim is true for $j = k = 0$ by definitions. The claim for $j=1, k = 0$ (and then similarly for $j = 0$ and $k=1$) follows from Lemma \ref{l.comm2}: indeed, the lemma directly implies that $A_{1,1}^N \backslash A_{1,0}^N$ equals $A_{0,1}^0 = \overleftarrow A_1'$ of the relevant GFF. Moreover, $A_{2,0}^N \backslash A_{1,0}^N$ equals $A_{1,0}^0 = A_1'$ just by construction of the iterated TVS.

Now suppose that the claim holds for all $j+k\leq m$, with $m > 1$ and let us consider $j+k = m+1$. WLOG we can assume $j \geq 1$. As $A_{j,k}^N$ is assumed not to connect inner and outer boundaries, by (an iterated version of) Lemma \ref{l.comm0}, it follows that $A_{j,k}^N\backslash A_{1,0}^N$ equals in law with $A_{j-1,k}^{N'}$ of the GFF in the non-simply-connected component of $\An \backslash A_{1,0}^N$, where $N' = N \pm 1$, depending on the label of $A_{1,0}$ towards the inner boundary. As by construction $A_{j+1,k}^N\backslash A_{1,0}^N$ has the law of $A_{j,k}^{N'}$, we can now use the induction hypothesis for $A_{j-1,k}^{N'}$ to conclude. 
\end{proof}

We are, finally, ready to prove the proposition.

\begin{proof}[Proof of Proposition \ref{p.comm1}:]

We will prove the proposition again by induction on $n$. The base case is $n=0$: by Proposition \ref{p.LawELBddTVS}, $A_{1,0}^N = \A_{-2\lambda, 2\lambda}^{u_N, \partial_o}$ can only hit the inner boundary if $N = 0$. In this case $A_{1,0}^0 = \A_{-2\lambda,2\lambda} =  A_{0,1}^0$ by Corollary \ref{c.tvs from a boundary} and $\overleftarrow A_1^0 \cap A_0^0 = \partial_o$ and $\overleftarrow A_0^0 \cap A_1^0 = \partial_i$.

So suppose now that the statement is true for $n \leq m$ and we want to prove it for $n= m+1$. We will show that $A_{m+1, 0}^N = A_{m,1}^N$, and by a similar argument it then follows that $A_{j, m+1-l}^N = A_{j-1, m+2-j}^N$. 

As by hypothesis $A^N_{m,0}$ does not connect the two boundaries, by Corollary \ref{cor.iterate} we see that $A_{m+1,0}^N\backslash A_{m,0}^N$ equals $A_{1,0}^{K_{m,0}}$ of the relevant GFF $\Phi^{A_{m,0}^N}$ in $\An	 \backslash A_{m,0}^N$. Because $A_{m+1,0}^N$ connects the two boundaries, Proposition \ref{p.LawELBddTVS} implies that $K_{m,0}$ has to be equal to $0$. Similarly $A_{m,1}^N\backslash A_{m,0}^N$ equals $\A_{0,1}^{0}$ in $\An_r \backslash A_{k,0}^N$ for the same GFF as above. But by the induction base $A_{1,0}^0 = A_{0,1}^0$ a.s. and we conclude that $A_{m+1,0}^N = A_{m,1}^N$. As moreover $A_m^N = A_{m,0}^N$ and $\overleftarrow A_1^N = A_{0,1}^0$, the claim on the boundaries also follows from the base case $n=1$. 
\end{proof}

\subsection{Extensions to general boundary conditions and to first passage sets}\label{Ss.Extensions to non-zero}

In this subsection we will first generalise Proposition \ref{mlaw0} to boundary conditions differing on the inner and outer boundaries. This allows us prove the result also for general two-valued sets with $a + b \in 2\lambda \N$. We finally deduce the result for first passage sets.

\subsubsection{General boundary conditions for the TVS $\A_{-2\lambda, 2\lambda}$}

Consider again an annulus $\An$ with outer and inner boundaries denoted by $\partial_o$ and $\partial_i$.
Denote by $\Phi$ a zero-boundary GFF on $\An$ and for $v_i,v_o \in \R$, let $u_{v_i,v_o}$ denote the bounded harmonic function that is equal to $v_0\in (-2\lambda,2\lambda)$ on $\partial_o$ and $v_i$ on $\partial_i$. Consider now the two-valued set $\A^{u_{v_o,v_i}}_{-2\lambda,2\lambda}$ of $\Phi + u_v$ on the annulus $\An$. 

Let $\ell^{v_o,v_i}$ be the boundary of the connected component of $\A_{-2\lambda, 2\lambda}^{u_{v_o,v_i}}$ connected to $\partial_o$ (in case there is no non-trivial loop surrounding $\partial_i$ we again set $\ell^{v_o,v_i} = \partial_i$). We want to calculate $\ED(\ell^{v_o,v_i}, \partial_o)$ as before. 

To do this, let 
$\widehat{B}^{v_0,v_i}$ 
be a Brownian bridge from 
$v_0$ to $v_i$, whose time-duration $L$ is given by the extremal distance of $\An$. Define
	\begin{equation*}
	\widehat{T}^{v_o, v_i}_{-2\lambda,2\lambda}
	:=\inf\{s\geq 0: |\widehat B^{v_o,v_i}_s|=2\lambda\} \wedge L
	\end{equation*}
	If 
	$\widehat{T}^{v_o, v_i}_{-2\lambda,2\lambda}= L$, 
	we set 
	$\widehat{\tau}^{v_o,v_i}_{-2\lambda,2\lambda} = L$. If, however 
	$\widehat{T}^{v_o, v_i}_{-2\lambda,2\lambda}<L$, 
	we define
	\begin{equation*}
	\widehat{\tau}^{v_o,v_i}_{-2\lambda,2\lambda}:=
	\sup\{0\leq s \leq 
	\widehat{T}^{v_o, v_i}_{-2\lambda,2\lambda}:
	\widehat B^{v_o,v_i}_s=0\} \vee 0.
	\end{equation*}
We then generalize Proposition \ref{mlaw0} as follows.
\begin{prop}\label{mlawv}
	The law of $\ED(\ell^{v_o,v_i}, \partial_o)$ equals that of $\widehat{\tau}^{v_o,v_i}_{-2\lambda,2\lambda}$.
\end{prop}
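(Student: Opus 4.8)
The plan is to reduce the general-boundary-condition statement to the already-established zero-outer-boundary case (Proposition \ref{mlaw0}, or more precisely to the computations behind it) by exploiting the change-of-measure formula for two-valued sets in an annulus proved in Proposition \ref{p.change of measure TVS}. The key observation is the following: Proposition \ref{p.change of measure TVS} tells us that if we change the inner boundary value by $v$, the Radon--Nikodym derivative of the law of $(A^v,\Phi_{A^v}+u_v)$ with respect to the law of $(A^0,\Phi_{A^0})$ depends only on $\ED(\ell,\partial_i)$ and on the label $\alpha$ of $\ell$; hence the \emph{conditional} law of $\ED(\ell,\partial_o)$ given $(\ED(\ell,\partial_i),\alpha)$ is exactly the same for all inner boundary values $v$. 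On the Brownian side, the analogous statement holds: if $\widehat B$ is a Brownian bridge of length $L$ from $0$ to $v$ and $\widehat B'$ is a Brownian bridge of length $L$ from $0$ to $0$, then the conditional law of $\widehat\tau$ given the value of $\widehat B$ at the hitting time $\widehat T$ of $\{-2\lambda,2\lambda\}$ and given $\widehat T$ itself is the same for both bridges (this is just the absolute continuity of Brownian bridges with different endpoints, restricted to an initial segment, together with the strong Markov property at $\widehat T$). So it suffices to match up the conditional laws, which by Proposition \ref{mlaw0}/Corollary \ref{corlst} and Proposition \ref{p.LawELBddTVS} are known in the $v=0$ case.

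Concretely, I would proceed as follows. First, treat the outer boundary value $v_o$: by applying Proposition \ref{p.change measure boundary} and Theorem \ref{t. change of measure local sets} (a Cameron--Martin shift that adds a harmonic function with boundary value $v_o$ on $\partial_o$ and $0$ on $\partial_i$ to the field), reduce to the case $v_o = 0$, i.e. to studying $\A^{u_{0,v_i}}_{-2\lambda,2\lambda}$; note that under this shift the relevant Brownian bridge's starting point moves from $0$ to $v_0$, matching the statement, and the event structure (whether $\ell$ hits $\partial_i$) is preserved because the shift is bounded and harmonic. Second, for $v_o=0$ and general $v_i = v$, use Proposition \ref{p.change of measure TVS}: on the event $\{\ED(\ell,\partial_i)>0\}$, the joint law of $(\ED(\ell,\partial_o),\ED(\ell,\partial_i),\alpha)$ under $\Q_v$ is absolutely continuous w.r.t. that under $\Q_0$ with density $\exp(-\tfrac{v^2}{2}(\ED(A^0,\partial_i)^{-1}-L^{-1}) + \alpha^0 v\,\ED(A^0,\partial_i)^{-1})$, a function of $(\alpha^0,\ED(\ell^0,\partial_i))$ alone; and by Proposition \ref{p.LawELBddTVS} the law of $\ED(\ell,\partial_i)$ (together with the label $\alpha$) under $\Q_v$ is precisely that of the first hitting time of $\{-2\lambda,2\lambda\}$ by $\widehat B^{0,v}$ (together with which of the two values is hit). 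Now match these against the Brownian picture: write the Brownian bridge $\widehat B^{0,v}$ of length $L$, observe that its law on $[0,\widehat T]$ conditioned on $(\widehat T, \widehat B_{\widehat T})$ is the same as that of $\widehat B^{0,0}$ conditioned on the same data (a bridge is a bridge, and conditioning on the position at time $\widehat T$ and on the whole path up to $\widehat T$ decouples from the endpoint); and $\widehat\tau$ is a measurable function of the path on $[0,\widehat T]$. Therefore the conditional law of $\widehat\tau^{0,v}$ given $(\widehat T, \widehat B_{\widehat T})$ equals that of $\widehat\tau^{0,0}$, which by Proposition \ref{mlaw0} (the $v=0$, $v_o=0$ case) equals the conditional law of $\ED(\ell^{0,0},\partial_o)$ given $(\ED(\ell^{0,0},\partial_i),\alpha)$. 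Combining, the law of $\ED(\ell^{0,v},\partial_o)$ equals that of $\widehat\tau^{0,v}_{-2\lambda,2\lambda}$, and then re-inserting the $v_o$-shift gives the full statement.

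A couple of points need care. On the event $\{\ell = \partial_i\}$ (i.e. $\A^{\partial_o}_{-2\lambda,2\lambda}$ reaches the inner boundary) we have $\ED(\ell,\partial_o) = L$, and this must be matched with the event $\{\widehat T^{v_o,v_i}_{-2\lambda,2\lambda} = L\}$, on which $\widehat\tau^{v_o,v_i}_{-2\lambda,2\lambda} = L$ by definition; one must check these two events have equal probability, which again follows from Proposition \ref{p.LawELBddTVS} applied to $\A^{\partial_o}_{-2\lambda,2\lambda}$ (it connects the two boundaries iff the bridge never hits $\{-2\lambda,2\lambda\}$). Also one should verify that the absolute continuity in Proposition \ref{p.change of measure TVS} together with the matching of the $\ED(\ell,\partial_i)$-marginals forces the matching of the full joint law of $(\ED(\ell,\partial_o),\ED(\ell,\partial_i))$: this is a clean disintegration argument — two pairs of random variables with the same law of the second coordinate, the same conditional law of the first given the second, and related to a reference measure by densities depending only on the second coordinate, must have the same joint law. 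I expect the main obstacle to be purely bookkeeping: keeping straight the two separate shifts (the $v_o$-shift via Proposition \ref{p.change measure boundary}, versus the $v_i$-change via Proposition \ref{p.change of measure TVS}), and making sure the Brownian bridge that appears in Proposition \ref{p.change of measure TVS}, Proposition \ref{p.LawELBddTVS}, and the definition of $\widehat\tau^{v_o,v_i}$ is consistently the same bridge of length $L = \ED(\partial_i,\partial_o)$ with the correct endpoints. No genuinely new probabilistic input beyond what is already in the excerpt seems necessary.
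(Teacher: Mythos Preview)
Your approach has a genuine gap at the crucial step. You write that ``by Proposition \ref{mlaw0} \ldots\ the conditional law of $\widehat\tau^{0,0}$ equals the conditional law of $\ED(\ell^{0,0},\partial_o)$ given $(\ED(\ell^{0,0},\partial_i),\alpha)$.'' But Proposition \ref{mlaw0} only gives the \emph{marginal} law of $\ED(\ell^{0,0},\partial_o)$; it says nothing about the conditional law given $(\ED(\ell,\partial_i),\alpha)$, or equivalently about the joint law of the pair. Knowing that two random pairs $(X,Y)$ and $(X',Y')$ have the same $X$-marginal and the same $Y$-marginal does not determine either conditional law, so you cannot conclude that the conditional laws match at $v=0$, and hence the invariance of the conditionals under $v$ (from Proposition \ref{p.change of measure TVS}) gives you nothing to propagate. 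In fact the joint law you need is exactly what is established only later, in Proposition \ref{p. law of TVS}, and the characterization used there (Proposition \ref{p. joint law TVS}) requires as input condition (2): the marginal law of $\ED(\ell^{u_v},\partial_o)$ for \emph{every} $v$. That is precisely Proposition \ref{mlawv}. So your argument is circular.

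The paper's proof is genuinely different: it does not use change of measure at all. Instead it first explores a ``smaller'' two-valued set $\A_{0,2\lambda}^{u_{v_o,v_i},\partial_o}$ (which brings the outer boundary value to an element of $\{0,2\lambda\}$ by Proposition \ref{p.intersection_boundary}), and then invokes the reversibility statement Proposition \ref{rev:bg} (the general-boundary extension of Theorem \ref{thmRV}). Reversibility identifies the first non-contractible loop seen from outside with the last one seen from inside, and the inside-out exploration is directly coupled to the Brownian bridge via Proposition \ref{p.LawELBddTVS}, which yields the law of $\widehat\tau$ by a last-exit argument (Lemma \ref{lembasic}). The change-of-measure machinery you are trying to use is reserved for Section \ref{SecLaws}, where it is combined with the already-proved marginal laws (for all $v$) to pin down the joint law; it cannot by itself replace the reversibility input.
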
	

We start again from a result on reversibility. For simplification, in this stage we will take $v_o=0$, $v_i=v\in (2\lambda(N-1), 2N\lambda]$ and $u_v=u_{0,v}$. As before, we denote the iterations of $\A_{-2\lambda,2\lambda}^{u_v,\partial_o}$ from the outer boundary by $A_1^{u_v}, A_2^{u_v}, \dots, A_{n}^{u_v}$, where $A_n^{u_v}$ is the last iteration that does not touch $\partial_i$. We also define $\ell_0^{u_v}=\partial_o,\ell_1^{u_v},..,\ell_n^{^{u_v}},\ell_{n+1}^{u_v}=\partial_i$ as the non-contractible contours.

To construct the iterations from the interior boundary, let us first define $\overleftarrow A^{u_v}_0 := \A_{2\lambda(N-1), 2\lambda N}^{u_v,\partial_i}$. Now, if $\overleftarrow A^{u_v}_0$ intersects $\partial_o$ we finish the construction. If $\overleftarrow A^{u_v}_0\cap \partial_o\neq \emptyset$, define $\overleftarrow\An^{u_v}_0$ as the connected component of $\An\backslash \overleftarrow A^{u_v}_0$ that has the topology of the annulus, furthermore define $\overleftarrow\ell_0^{u_v}$ as the inner boundary of $\overleftarrow\An_0^{u_v}$. As a consequence of Proposition \ref{p.intersection_boundary}, $\overleftarrow \ell_0^{u_v}$ always intersects $\partial_i$. Now, let us work in $\overleftarrow \An_n^0$. Like in the Subsection \ref{ssrv}, we define $\overleftarrow \ell^{u_v}_1, \dots, \overleftarrow \ell^{u_v}_{n+1} = \partial_o$, as $ \overleftarrow \ell^{\tilde N}_1, \dots, \overleftarrow \ell^{\tilde N}_{n+1} = \partial_o$ of the GFF $\Phi^{\overleftarrow A^{u_v}_0}$, where $\tilde N\in \{N-1, N\}$ is $(2\lambda)^{-1}$ times the boundary value on $\ell_0^{u_v}$ of $h_{\overleftarrow A^{u_v}_0}+u_v$.

\begin{prop}\label{rev:bg}
If $\overleftarrow A^{u_v}_0$ intersects $\partial_o$ then $A_1^{u_v}$ intersects $\partial_i$. Furthermore, we have that almost surely $$(\ell_n^{u_v}, \ell_{n-1}^{u_v}, \dots, \ell_1^{u_v}) = (\overleftarrow\ell_1^{u_v}, \overleftarrow\ell_2^{u_v}, \dots, \overleftarrow \ell_n^{u_v}).$$ 
\end{prop}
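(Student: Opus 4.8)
The plan is to reduce this statement to the already-established reversibility Theorem \ref{thmRV} (equivalently Proposition \ref{p.comm1}), by peeling off the first exploration step from the inner boundary. First I would treat the dichotomy: if $\overleftarrow A^{u_v}_0 = \A_{2\lambda(N-1),2\lambda N}^{u_v,\partial_i}$ intersects $\partial_o$, then by the construction of the iterated TVS and Proposition \ref{p.LawELBddTVS} applied to the conformal image swapping the boundaries, the boundary value $v$ is forced to be an integer multiple of $2\lambda$ (since the Brownian bridge from $v$ to $0$ of length $L$ must hit $\{2\lambda(N-1)-v,2\lambda N-v\}=\{-2\lambda,0\}$ only by reaching its endpoint), i.e. $v=2\lambda N$; in that boundary-value-$2\lambda N$ case, the situation from the outer boundary is exactly the one analysed in Lemma \ref{lembasic}, and one checks that $A_1^{u_v}=\A^{u_{2\lambda N},\partial_o}_{-2\lambda,2\lambda}$ touches $\partial_i$ precisely when the corresponding Brownian bridge from $0$ to $2\lambda N$ never leaves $(-2\lambda,2\lambda)$, which (via a time-reversal of the bridge) is the same event as the one making $\overleftarrow A^{u_v}_0$ reach $\partial_o$. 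This gives the first assertion.

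For the main assertion, I would condition on $\overleftarrow A^{u_v}_0$ and work inside the annular component $\overleftarrow\An^{u_v}_0$ of $\An\setminus\overleftarrow A^{u_v}_0$. By Lemma \ref{BPLS}(2), the conditional field $\Phi^{\overleftarrow A^{u_v}_0}$ restricted to $\overleftarrow\An^{u_v}_0$ is a GFF, and the harmonic function $h_{\overleftarrow A^{u_v}_0}+u_v$ has constant boundary value $2\lambda\tilde N$ on $\overleftarrow\ell_0^{u_v}$ (with $\tilde N\in\{N-1,N\}$) and $0$ on $\partial_o$. Thus in $\overleftarrow\An^{u_v}_0$, after subtracting a harmonic function and applying conformal invariance, we are exactly in the zero-outer/$2\lambda\tilde N$-inner setting of Subsection \ref{ssrv}. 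The sequences $\overleftarrow\ell_1^{u_v},\dots,\overleftarrow\ell_{n+1}^{u_v}=\partial_o$ are by definition the inner-to-outer contours $\overleftarrow\ell_1^{\tilde N},\dots$ of this rescaled GFF, while the outer-to-inner contours $\ell_1^{u_v},\ell_2^{u_v},\dots$ are, by (an iterated application of) Lemma \ref{l.comm0} together with Corollary \ref{cor.iterate}, exactly the outer-to-inner contours $\ell_1^{\tilde N},\ell_2^{\tilde N},\dots$ of the same rescaled GFF in $\overleftarrow\An^{u_v}_0$: indeed $A_1^{u_v}=\A^{u_v,\partial_o}_{-2\lambda,2\lambda}$ does not touch $\overleftarrow A^{u_v}_0$ on the event under consideration (again by Proposition \ref{p.LawELBddTVS}, since its non-trivial loop lies strictly inside), so $A_1^{u_v}$, and then inductively each $A_j^{u_v}$, agrees with the corresponding iteration performed inside $\overleftarrow\An^{u_v}_0$. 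Once both families are identified with the two families of contours of a single zero-versus-$2\lambda\tilde N$ GFF on an annulus, Theorem \ref{thmRV} gives $(\ell_n^{u_v},\dots,\ell_1^{u_v})=(\overleftarrow\ell_1^{u_v},\dots,\overleftarrow\ell_n^{u_v})$ almost surely, with the shift from $N$ to $\tilde N$ in the labelling being absorbed precisely because we stripped off the first inner step.

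The step I expect to be the main obstacle is making rigorous the claim that $A_1^{u_v}$ (and its iterates) really do stay at positive distance from $\overleftarrow A^{u_v}_0$ and therefore can be recovered as a TVS exploration of the conditional GFF inside $\overleftarrow\An^{u_v}_0$: this requires an argument of the type of Claim \ref{c.new measure} / Lemma \ref{l.comm0}, carried out in the presence of the non-integer boundary value $v\in(2\lambda(N-1),2\lambda N]$, where one must be careful that the \emph{only} way $\overleftarrow A^{u_v}_0$ can reach $\partial_o$ is by the ``full bridge'' scenario, and conversely on the complementary event that $\overleftarrow A^{u_v}_0$ genuinely cuts out a non-trivial annulus with constant boundary data a multiple of $2\lambda$ on the new loop. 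The remaining technical point is to check the base case of the induction, i.e. that the first inner step $\overleftarrow A^{u_v}_0$ and the matching last outer loop $\ell_n^{u_v}$ are consistent with the convention $v_i\in(2\lambda(N-1),2\lambda N]$; this is exactly the content of Lemma \ref{lembasic} and Corollary \ref{corlst} applied with the endpoint $v$ in place of $2\lambda N$, and I would phrase it as such.
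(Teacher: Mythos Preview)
Your overall strategy for the second assertion---condition on $\overleftarrow A^{u_v}_0$, work inside $\overleftarrow\An^{u_v}_0$, and reduce to Theorem \ref{thmRV}---matches the paper's approach. You also correctly identify the main technical point: showing that the iterates $A_k^{u_v}$ from the outer boundary do not meet $\overleftarrow A^{u_v}_0$ unless they already meet $\partial_i$. The paper handles this by resampling from the inner side: if $A_k^{u_v}\cap\partial_i=\emptyset$, one forms $A'=\A_{2\lambda(N-1),2\lambda N}^{u',\partial_i}$ for the conditional GFF $\Phi^{A_k^{u_v}}$ in $\An_k^{u_v}$; since $\alpha_k^{u_v}\in 2\lambda\Z$, Proposition \ref{p.LawELBddTVS} gives $A'\cap\ell_k^{u_v}=\emptyset$, and Lemma \ref{l.comm0} then yields $\overleftarrow A^{u_v}_0\cap A_k^{u_v}=\emptyset$. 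Your sketch (``its non-trivial loop lies strictly inside'') does not capture this mechanism and would need to be replaced by that argument.

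Your treatment of the first assertion, however, is wrong. You claim that if $\overleftarrow A^{u_v}_0$ reaches $\partial_o$ then $v$ is forced to equal $2\lambda N$; in particular you write $\{2\lambda(N-1)-v,\,2\lambda N-v\}=\{-2\lambda,0\}$, which presupposes the conclusion. In fact the Brownian-bridge analysis only forces $N\in\{0,1\}$, i.e.\ $v\in(-2\lambda,2\lambda)$: for any such $v$ the bridge from $v$ to $0$ stays inside $(2\lambda(N-1),2\lambda N)$ with positive probability, and then $\overleftarrow A^{u_v}_0$ reaches $\partial_o$. The paper's argument is both simpler and correct: once $N\in\{0,1\}$, one has $\overleftarrow A^{u_v}_0=\A_{-2\lambda,0}^{u_v}$ or $\A_{0,2\lambda}^{u_v}$, which by monotonicity of two-valued sets is contained in $\A_{-2\lambda,2\lambda}^{u_v}$. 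Since $\overleftarrow A^{u_v}_0$ connects $\partial_i$ to $\partial_o$, so does $\A_{-2\lambda,2\lambda}^{u_v}$, hence $A_1^{u_v}=\A_{-2\lambda,2\lambda}^{u_v,\partial_o}$ intersects $\partial_i$. No time-reversal of the bridge is needed.
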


\begin{proof}
For the first part, let us note that if $\overleftarrow A_0^{u_v}$ intersects $\partial_o$ then $v\in(-2\lambda,2\lambda)$, and thus $\overleftarrow A_0^{u_v}$ is equal to either $\A_{-2\lambda,0}^{u_v}$ and $\A_{0,2\lambda}^{u_v}$. As in both cases $\overleftarrow A_0^{u_v}\subseteq\A_{-2\lambda,2\lambda}^{u_v}=A_1^{u_v}$, we conclude.

We work now in the case where $\overleftarrow A_0^{u_v}$ does not touch $\partial_o$. In this case, recall that $\Phi + u_v$ restricted to $\overleftarrow \An_0^{u_v}$ is equal to $\Phi^{\overleftarrow A_0^{u_v}}\mid_{\overleftarrow \An_0^{u_v}}+u' $ where $u'$ is the unique bounded harmonic function in $\overleftarrow A_0^{u_v}$ with boundary values $0\in \partial_0$ and $\tilde N$ in $\overleftarrow \ell_0^{u_v}$. For simplification, we write $\Phi^{\overleftarrow A_0^{u_v}}=\Phi^{\overleftarrow A_0^{u_v}}\mid_{\overleftarrow \An_0^{u_v}}$ the GFF in $\overleftarrow \An_0^{u_v}$. We now apply the Proposition \ref{p.comm1} to the iterated TVS of the GFF $\Phi^{\overleftarrow A_0^{u_v}}$. Now, as long as the iterated $\A_{-2\lambda, 2\lambda}^{u_v,\partial_o}$ from the outer boundary does not touch $\overleftarrow A_0^{u_v}$, we can deduce from (an iterated version of) Lemma \ref{l.comm0}, that the iterated $\A_{-2\lambda, 2\lambda}^{u_v,\partial_o}$ of the initial GFF $\Phi$ agrees with that of $\Phi^{\overleftarrow \An_0}$. And in particular, that the non-trivial loops coincide in reverse order.

It remains to argue that the first iteration where $\A_{-2\lambda, 2\lambda}^{u_v,\partial_o}$ touches $\overleftarrow A_0^{u_v}$ is also the first iteration when it touches $\partial_i$. To show this, it suffices to show that if the iterated $\A_{-2\lambda, 2\lambda}^{\partial_o}$ does not touch the boundary, then it does not touch $\overleftarrow A_0^{u_v}$. To see this, assume that $A_k^{u_v}$ does not intersect $\partial_i$ and, as above Lemma \ref{l.comm0}, define $A'$ to be the $\A_{-2\lambda(N-1), 2\lambda N}^{u',\partial_o}$ of the GFF $ \Phi^{A_k^{u_v}}$ restricted to $\An_k^{u_v}$. Here $u'$ is the unique bounded harmonic function with value $\alpha_k^{u_v}$ in $\ell_k^{u_v}$ and $v$ in $\partial_i$. From (an iterated version of) Lemma \ref{l.comm0} we know that if $A'$ does not intersect $\ell_k^{u_v}$, then neither does $\overleftarrow A_0^{u_v}$. But we have that $\alpha_k^{u_v}\in 2\lambda \Z$ and thus by Proposition \ref{p.LawELBddTVS} $A'$ does not touch $\ell_k^{u_v}$ and we conclude. 
\end{proof}

We will now prove Proposition \ref{mlawv}.
\begin{proof}[Proof of Proposition \ref{mlawv}]
Notice that by symmetry we may assume that $0<v_0<2\lambda$.

	As in the last section, whereas the loop of the TVS is constructed using an outside to inside exploration, the Brownian motion stems from an inside to outside exploration via Proposition \ref{p.LawELBddTVS}. Again, the two are joined by a reversibility argument.
		
	Our aim is reduce to boundary conditions of the form $2\lambda N$ on both boundaries. To do this, explore $A_o:=\A_{0,2\lambda }^{u_{v_o,v_i},\partial_o}$, the connected component of $\A_{0,2\lambda}^{u_{v_o,v_i}}$ connected to $\partial_o$. Notice that by monotonicity of TVS, we have that $A_o \subseteq \A_{-2\lambda, 2\lambda}^{u_{v_o,v_i},\partial_o}$.
	
	We divide the study in three cases.
	
	\textbf{Case 1:} All connected components of $\An\backslash A_o$ are simply connected. Then so are those of $\A_{-2\lambda, 2\lambda}^{u_{v_o,v_i}}$. Thus $\ell^{u_{v_o,v_i}}=\partial_i$. 
	
	Now, we are going to present the two other cases. As $A_o$ does not touch $\partial_i$ in these cases, there is unique a non-simply-connected component in $\An \backslash A_0$, which we denote by $\An_0$. Proposition \ref{p.intersection_boundary} implies that $\partial \An_0 \backslash \partial_i$ touches $\partial_o$.
	Notice that the boundary condition of $h_{A_0}+u_{v_0,v_1}$ restricted to $\An_i$ on the boundary $\partial \An_0 \backslash \partial_i$ is equal to either $0$ or $2\lambda$, which correspond to two further cases we are going to study.
	
	\textbf{Case 2:} The boundary condition is $2\lambda$. In this case, we can further explore $\A_{-2\lambda, 2\lambda}$ of $\Phi^{A_0}$ in all simply-connected components of $\An \backslash A_0$ with boundary conditions $0$. As a result we obtain a local set that is connected to $\partial_o$ and has boundary conditions $\pm 2\lambda$ on all its boundaries. Using uniqueness of TVS, it follows that this local set is in fact the connected component of $\A_{-2\lambda, 2\lambda}^{u_{v_o,v_i},\partial_o}$ and thus $\partial \An_0 \backslash \partial_i$ equals $\ell^{u_{v_o,v_i}}$.
	
	\textbf{Case 3:} The boundary condition is $0$. In this case, we have not yet discovered $\ell^{u_{v_o,v_i}}$ and again by uniqueness of the TVS, $\ell^{u_{v_o,v_i}}$ is given by the first non-contractible loop $\ell_1$ of the iterated $\A_{-2\lambda,2\lambda}$ of the GFF $\Phi^A_0$ restricted to $\An_0$. \\
	
	We have now identified the loop $\ell^{u_{v_o,v_i}}$ in the three cases. Let us show that they correspond respectively to the cases where 
$\widehat{\tau}^{v_o,v_i}_{-2\lambda,2\lambda} = L$, where 
$\widehat{\tau}^{v_o,v_i}_{-2\lambda,2\lambda} = 0$ and where 
$\widehat{\tau}^{v_o,v_i}_{-2\lambda,2\lambda}$ is non-trivial. 
As in the last section, the Brownian motion comes from the exploration of $\A_{-2\lambda, 2\lambda}^{u_{v_o,v_i}}$ from the inner boundary, via Proposition \ref{p.LawELBddTVS}.
	
	Pick $N \in \Z$ such that $v_i \in (2\lambda (N-1), 2\lambda N]$ and explore $\overleftarrow A_0:=\A_{2\lambda(N-1),2\lambda N}^{u_{v_o,v_i},\partial_i}$, the connected component of $\A_{2\lambda(N-1),2\lambda N}^{u_{v_o,v_i}}$ connected to $\partial_i$. If $\overleftarrow A_0$ intersects $\partial_0$, then one can see that we are in the case $1$ and by Proposition \ref{p.LawELBddTVS} indeed 
$\widehat{\tau}^{v_o,v_i}_{-2\lambda,2\lambda}=L$. 
	
	Otherwise, there is a non-trivial annulus $\overleftarrow \An_0$ in 
$\An \backslash \overleftarrow A_0$. We can now repeat the construction of $\overleftarrow{A}_1^{u_{v_o,v_i}},...,\overleftarrow{A}_n^{u_{v_o,v_i}}$ as in Proposition \ref{rev:bg} above. The proof of Proposition \ref{rev:bg} then implies that $\overleftarrow{A}_n^{u_{v_o,v_i}}$ does not intersect $A_0$ and that the boundary value of $h_{\overleftarrow{A}_n^{u_{v_o,v_i}}}+u_{v_0,v_i}$ on $\overleftarrow\ell_n^{u_{v_o,v_i}}$ is equal to that of $h_{A_0}+u_{u_{v_o,v_i}}$ on $\ell_0$. We can now conclude:
\begin{itemize}
	\item In the case $2$, we have that $\widehat{\tau}^{v_o,v_i}_{-2\lambda,2\lambda} = 0$. Indeed, we know that the label corresponding to the last loop from the interior, $\overleftarrow{\ell}_n^{v_0,v_1}$, is equal to $2\lambda$. Hence the Brownian bridge describing the (reverse) labels by Lemma \ref{lembasic} does not attain $0$ before hitting $v_0$.
	\item In the case $3$, by Proposition \ref{rev:bg} we can identify $\ell^{u_{v_o,v_i}}$ with $\overleftarrow\ell_n^{u_{v_o,v_i}}$. Moreover, we know that its label is equal to $0$. Thus we deduce the law of $\widehat{\tau}^{v_o,v_i}_{-2\lambda,2\lambda}$ from Lemma \ref{lembasic}.
\end{itemize}
\end{proof}

\subsubsection{The case of two-valued sets with $a+b=2N\lambda$}

We now prove the generalisation of Proposition \ref{mlaw0} to the case where $a+b\in 2\lambda\N$. 

To do that, let $\widehat{B}^{v_o,v_i}$ be a Brownian bridge from 
$v_o$ to $v_i$, whose time-duration $L$ is given by the extremal distance of $\An$. Define
\begin{equation*}
\widehat T^{v_o, v_i}_{-a,b}
:=\inf\{s\geq 0: \widehat B^{v_o,v_i}_s \in \{-a, b\}\} \wedge L
\end{equation*}
If $\widehat{T}^{v_o, v_i}_{-a,b}=L$, we set 
$\widehat{\tau}^{v_o,v_i}_{-a,b}=L$. If, however 
$\widehat{T}^{v_o, v_i}_{-a,b}<L$, we define
\begin{equation*}
\widehat{\tau}^{v_o,v_i}_{-a,b}:=
\sup\{0\leq s \leq \widehat{T}^{v_o, v_i}_{-a,b}
:|\widehat B^{v_o,v_i}_s - \widehat B^{v_o,v_i}
_{\widehat{T}^{v_o, v_i}_{-a,b}}|=2\lambda\}\vee 0
\end{equation*}

As before, let $\ell^{u_{v_o,v_i}}_{-a,b}$ be the non-trivial loop of $\A_{-a,b}^{u_{v_o,v_i}, \partial_o}$ when it exists, and if it there is no non-trivial loop let $\ell^{u_{v_o,v_i}}_{k,l}$ be $\partial_i$. We can relate the law of the extremal distance between $\ell^{u_{v_o,v_i}}_{k,l}$ to $\partial_o$ to a Brownian motion as follows.
\begin{prop}\label{prop:generalTVS}
	When $a+b\in 2\lambda \N$, the law of $\ED(\ell^{u_{v_o,v_i}}_{-a,b}, \partial_o)$ equals that of 
	$\widehat{\tau}^{v_o,v_i}_{-a,b}$.
\end{prop}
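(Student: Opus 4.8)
The strategy is to reduce to Proposition \ref{mlawv} (the case $a=b=2\lambda$) by an iteration of two-valued sets, then to read off the law from Proposition \ref{p.LawELBddTVS} through a reversibility argument, exactly as Proposition \ref{mlaw0} was obtained from Theorem \ref{thmRV}. First, some reductions. Since $\A^{u_{v_o,v_i}}_{-a,b}(\Phi)=\A^{u_{v_o+c,v_i+c}}_{-a+c,\,b+c}(\Phi)$ for every $-b<c<a$, and since the hypothesis $a+b\in 2\lambda\N$ forces $-a$ and $b$ to lie on a common coset of $2\lambda\Z$, we may translate so that both endpoints of the interval belong to $2\lambda\Z$; using in addition the symmetry $\Phi\mapsto-\Phi$ (which exchanges $\A^{u_{v_o,v_i}}_{-a,b}$ with $\A^{u_{-v_o,-v_i}}_{-b,a}$ and $\widehat\tau^{v_o,v_i}_{-a,b}$ with $\widehat\tau^{-v_o,-v_i}_{-b,a}$) we may further assume $v_o\in[0,2\lambda)$. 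The case $a+b=2\lambda$ is immediate: then $\A_{-a,b}^{u_{v_o,v_i},\partial_o}$ is a generalized level line whose loops all touch the boundary (Proposition \ref{p.intersection_boundary}), so $\ED(\ell^{u_{v_o,v_i}}_{-a,b},\partial_o)=0$ whenever $\ell^{u_{v_o,v_i}}_{-a,b}\neq\partial_i$, while the defining bridge cannot sit at distance $2\lambda$ from $\{-a,b\}$ before hitting it, so $\widehat\tau^{v_o,v_i}_{-a,b}=0$ on the same event; and $\{\ell^{u_{v_o,v_i}}_{-a,b}=\partial_i\}=\{\widehat T^{v_o,v_i}_{-a,b}=L\}=\{\widehat\tau^{v_o,v_i}_{-a,b}=L\}$ by Proposition \ref{p.LawELBddTVS}. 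So from now on assume $a+b\geq 4\lambda$, and, exactly as in Case 1 of the proof of Proposition \ref{mlawv}, work on the event $\{\A_{-a,b}^{u_{v_o,v_i},\partial_o}\cap\partial_i=\emptyset\}$ (the complementary event again matching $\{\widehat T^{v_o,v_i}_{-a,b}=L\}$).

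Next, the iteration. As in Subsection \ref{ssrv}, explore from $\partial_o$ by first sampling the width-$2\lambda$ set $\A^{u_{v_o,v_i},\partial_o}_{0,2\lambda}$ — whose non-contractible loop carries a label in $\{0,2\lambda\}$ — and then repeatedly sampling a shifted copy of $\A_{-2\lambda,2\lambda}$, centred at the label of the currently innermost non-contractible loop, each step moving the label along $2\lambda\Z$ by $\pm2\lambda$; continue all the way to $\partial_i$. By monotonicity and uniqueness of two-valued sets (Theorem \ref{t.tvs}), $\A^{u_{v_o,v_i},\partial_o}_{-a,b}$ is exactly the union of the sets explored up to and including the first step whose loop has label in $\{-a,b\}$, and $\ell^{u_{v_o,v_i}}_{-a,b}$ is that loop. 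Run the same exploration from $\partial_i$.

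Now the reversibility. Precisely as in the proofs of Theorem \ref{thmRV} and Proposition \ref{rev:bg}, the sequence of non-contractible loops produced from $\partial_o$ coincides pathwise, in reverse order, with the one produced from $\partial_i$. Indeed every step of either exploration is either a width-$2\lambda$ two-valued set — to which Lemma \ref{l.comm0} applies — or a shifted $\A_{-2\lambda,2\lambda}$ — to which, after subtracting the constant label of the innermost loop, Lemma \ref{l.comm2} and Corollary \ref{cor.iterate} apply — and the labels of all intermediate loops lie on $2\lambda\Z$; hence Proposition \ref{p.comm1} goes through verbatim step by step and gives the claimed pathwise identity of the two loop sequences (together with conformal invariance under the boundary-swapping automorphism of $\An$, this gives the identity in law that we use).

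Finally, the Brownian-bridge identification. By Proposition \ref{p.LawELBddTVS} applied to each step and glued together by the Markov property, the labels of the loops produced from $\partial_i$ are the values, at the successive times of $\pm2\lambda$-displacement, of one Brownian bridge of time-length $L=\ED(\partial_i,\partial_o)$ going from $v_i$ to $v_o$, and the extremal distance from $\partial_o$ to each such loop equals $L$ minus the corresponding displacement time. Combining this with the reversibility of the loop sequences, one identifies $\ED(\ell^{u_{v_o,v_i}}_{-a,b},\partial_o)$ with a functional of the time-reversed bridge, which is a bridge from $v_o$ to $v_i$; tracking which functional it is gives exactly the last time, before the first hitting of $\{-a,b\}$, at which the bridge lies at distance $2\lambda$ from the eventual hitting point, i.e. $\widehat\tau^{v_o,v_i}_{-a,b}$. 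Undoing the reductions completes the proof. The main obstacle is precisely this last step: one must track carefully, through the reversibility and the change of time-parametrisation between the two explorations (recall that extremal distance is only superadditive, so the two parametrisations are not simply related by $t\mapsto L-t$), which hitting/last-exit functional of the bridge is being computed, and in particular that it is $\widehat\tau^{v_o,v_i}_{-a,b}$ rather than the first-hitting time $\widehat T^{v_o,v_i}_{-a,b}$, which instead equals $L-\ED(\ell^{u_{v_o,v_i}}_{-a,b},\partial_i)$. This is the same bookkeeping carried out for $a=b=2\lambda$ in Propositions \ref{mlaw0} and \ref{mlawv}, now with one extra width-$2\lambda$ initial step and a translated interval, together with the boundary cases $v_o\in 2\lambda\Z$ (no initial step needed) and $v_i\in 2\lambda\Z$, handled exactly as the corresponding cases in Proposition \ref{mlawv}.
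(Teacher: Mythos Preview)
Your proposal is correct and follows essentially the same route as the paper: reduce by translation and sign-flip to $v_o\in[0,2\lambda)$ with $-a,b\in 2\lambda\Z$, dispose of the width-$2\lambda$ case via Proposition \ref{p.intersection_boundary}, identify $\ell^{u_{v_o,v_i}}_{-a,b}$ as the first loop of the iterated $\A_{-2\lambda,2\lambda}$ exploration whose label lands in $\{-a,b\}$, and then read off the extremal distance to $\partial_o$ by reversibility and Proposition \ref{p.LawELBddTVS}. The paper packages the identification step as a separate lemma (Lemma \ref{l.find_ell}), proved via uniqueness of TVS exactly as you sketch, and then points back to the case analysis of Proposition \ref{mlawv} rather than redoing the bookkeeping inline; your version unrolls this but the argument is the same.
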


First, in the case $a +b = 2\lambda$, it follows from Proposition \ref{p.intersection_boundary} that $\ell^{v_o,v_i}$ touches the inner boundary and thus the extremal distance is equal to zero. Notice that indeed in this case $\widehat{\tau}^{v_o,v_i}_{-a,b}$ is also zero so the proposition holds. So we can now assume that $a + b \geq 4\lambda$.

Let now $v_o\in [0,2\lambda)$ and recall the sequence $\ell_0^{u_{v_o,v_i}}, \ell_1^{u_{v_o,v_i}},.., \ell_n^{u_{v_o,v_i}},\ell_{n+1}^{u_{v_o,v_i}}$ introduced in the proof of Proposition \ref{mlawv}, where $\ell_0^{u_{v_o,v_i}}$ is the non-contractible loop associated to $\A_{0,2\lambda}^{u_{v_o,v_i},\partial_o}$, and \[\ell_1^{u_{v_o,v_i}},...,\ell_1^{u_{v_o,v_i}},.., \ell_n^{u_{v_o,v_i}},\ell_{n+1}^{u_{v_o,v_i}}\]
 are the non-contractible loops associated to the iterations of $\A_{-2\lambda,2\lambda}^{\partial_o}$ of $\Phi^{A_0^{u_{v_o,v_i}}}$ restricted to $\An_0^{u_{v_o,v_i}}$ as in Proposition \ref{rev:bg}. 
\begin{lemma}\label{l.find_ell}
	Take $k,l\in \N$ and $v_o \in[0,2\lambda)$, then the non-trivial loop $\ell=\ell_{-2k\lambda,2l\lambda}^{u_{v_o,v_i}}$ is equal to $\ell_{\mathbf {j}}^{u_{v_o,v_i}}$ where $\mathbf{j}$ is the first $j$ so that $\alpha_j^{v_o,v_i}$ is equal to either $-2k\lambda$ or $2l\lambda$. If such a $j$ does not exist, then take $\mathbf{j}=n+1$ i.e. $\ell=\partial_i$ . Recall that here $\alpha_j^{u_{v_o,v_i}}$ is the boundary value in $\ell_j^{u_{v_o,v_i}}$ of $h_{A_{j}^{v_o,v_i}}+u_{v_0,v_i}$ restricted to $\An_j^{v_o,v_i}$.
\end{lemma}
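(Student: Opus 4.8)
The plan is to use the monotonicity of two-valued sets together with the commutativity/iteration machinery already developed (Corollary \ref{cor.iterate} and Proposition \ref{p.comm1}), to show that the process of iterated TVS from the outer boundary with levels $-2\lambda,2\lambda$ actually \emph{discovers} the set $\A_{-2k\lambda,2l\lambda}^{u_{v_o,v_i},\partial_o}$ as soon as a contour with label $-2k\lambda$ or $2l\lambda$ appears. First I would recall that, by the monotonicity part of Theorem \ref{t.tvs}, since $[-2\lambda,2\lambda]\subseteq[-2k\lambda,2l\lambda]$ whenever $k,l\geq 1$, each of the discovered sets $A_j^{u_{v_o,v_i}}$ is contained in $\A_{-2k\lambda,2l\lambda}^{u_{v_o,v_i}}$; hence the loops $\ell_j^{u_{v_o,v_i}}$ with $j<\mathbf j$ all lie strictly inside the annular region cut out by $\A_{-2k\lambda,2l\lambda}^{u_{v_o,v_i},\partial_o}$, because on each such $\ell_j^{u_{v_o,v_i}}$ the boundary value $\alpha_j^{u_{v_o,v_i}}$ still lies in the open interval $(-2k\lambda,2l\lambda)$, and so condition (\twonotes\twonotes) forbids $\A_{-2k\lambda,2l\lambda}^{u_{v_o,v_i}}$ from having a component with such a boundary value.

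Next I would argue the reverse inclusion at the stopping index $\mathbf j$. On the event that $\mathbf j\le n$, the label $\alpha_{\mathbf j}^{u_{v_o,v_i}}$ equals exactly $-2k\lambda$ or $2l\lambda$. Consider the local set $A_{\mathbf j}^{u_{v_o,v_i}}$ together with, in each simply connected component of its complement where the boundary value is not in $\{-2k\lambda,2l\lambda\}$, a further exploration of $\A_{-2k\lambda,2l\lambda}$ of the conditional field; by uniqueness of the TVS (Theorem \ref{t.tvs}) and the same "stopping the level-line construction $\eps$-close" argument used in Claim \ref{c.new measure} and Lemma \ref{l.comm0}, this completes to the connected component of $\A_{-2k\lambda,2l\lambda}^{u_{v_o,v_i}}$ attached to $\partial_o$, while leaving the non-contractible loop $\ell_{\mathbf j}^{u_{v_o,v_i}}$ untouched (since its boundary values are already in $\{-2k\lambda,2l\lambda\}$, no further level lines are explored across it). Therefore the non-trivial loop of $\A_{-2k\lambda,2l\lambda}^{u_{v_o,v_i},\partial_o}$ is precisely $\ell_{\mathbf j}^{u_{v_o,v_i}}$. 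In the remaining case, when no $j\le n$ has $\alpha_j^{u_{v_o,v_i}}\in\{-2k\lambda,2l\lambda\}$, the same reasoning (now applied after the last loop $\ell_n^{u_{v_o,v_i}}$, whose label still lies strictly between $-2k\lambda$ and $2l\lambda$) shows that the completed set $\A_{-2k\lambda,2l\lambda}^{u_{v_o,v_i}}$ reaches $\partial_i$, i.e.\ there is no non-trivial loop and $\ell=\partial_i$, matching $\mathbf j=n+1$.

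To make the completion step rigorous, the key point is that the successive conditional fields one explores are, by Corollary \ref{cor.iterate}, exactly TVS $\A_{-2\lambda,2\lambda}$ of GFFs with piecewise-constant boundary data, so the level-line construction of Section 3.4 of \cite{ALS1} applies verbatim, and the labels $\alpha_j^{u_{v_o,v_i}}$ evolve by $\pm2\lambda$ steps exactly as the Brownian bridge $\widehat B^{v_o,v_i}$ at the times $\widehat T^{(j)}$ does (Proposition \ref{p.LawELBddTVS} and Lemma \ref{lembasic}). The main obstacle I anticipate is the bookkeeping at the interface $\ell_{\mathbf j}^{u_{v_o,v_i}}$ itself: one must verify carefully that when we pass from the $(-2\lambda,2\lambda)$-iteration to the target set $\A_{-2k\lambda,2l\lambda}^{u_{v_o,v_i},\partial_o}$, the loop $\ell_{\mathbf j}$ is genuinely a \emph{boundary} loop of the bigger set and not swallowed into its interior — this uses that on one side its label is in $(-2k\lambda,2l\lambda)$ (so condition (\twonotes\twonotes) is not yet satisfied there, forcing the TVS to stop) while on the other side it is exactly $\pm$ one of the extreme values. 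This is precisely the kind of argument given in the $|K|=1$ case of Lemma \ref{l.comm2}, and I would cite that structure rather than redo it in full.
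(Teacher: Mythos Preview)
Your proposal is correct and follows essentially the same approach as the paper: complete $A_{\mathbf j}^{u_{v_o,v_i}}$ to the full two-valued set by sampling $\A_{-2k\lambda,2l\lambda}$ of the conditional field in each component of the complement, invoke uniqueness of the TVS (Theorem~\ref{t.tvs}), and observe that in the annular component the boundary value on $\ell_{\mathbf j}$ is already extremal so nothing is added across it. The paper's version is considerably leaner, however: your monotonicity step and the appeals to Corollary~\ref{cor.iterate}, Proposition~\ref{p.comm1}, and Lemma~\ref{l.comm2} are not needed---uniqueness alone carries the argument, and the ``forward direction'' (that $\ell_j$ for $j<\mathbf j$ is not the non-trivial loop) comes for free once you have identified the completed set with $\A_{-2k\lambda,2l\lambda}^{u_{v_o,v_i}}$.
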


\begin{proof}
Observe that starting from $A_{\mathbf j}^{u_{v_o,v_i}}$ we can construct $\A_{-2k\lambda,2j\lambda}^{u_{v_o,v_i}}$ by sampling $\A_{-2k\lambda,2j\lambda}^{u'}$ of $\Phi^{A_{\mathbf{j}}^{u_{v_o,v_i}}}$ inside each connected component $O$ of $D\backslash A_{\mathbf{j}}^{u_{v_o,v_i}}$, where $u'$ restricted to $O$ is equal to $h_{A_{\mathbf{j}}^{u_{v_o,v_i}}}+u_{v_o,v_i}$. The resulting set satisfies the characterisation properties of the TVS $\A_{-a,b}^{u_{v_o,v_i}}$ and thus by Theorem \ref{t.tvs} it is equal to it. Furthermore, if $\mathbf{j} \leq n$, as $\alpha_{\mathbf{j}}^{u_{v_o,v_i}} \in \{-2k\lambda, 2l\lambda\}$, we  have that  $\A_{-2k\lambda,2l\lambda}^{u'}$ of $\Phi^{A_{\mathbf{j}}^{u_{v_o,v_i}}}$ inside $\An_{-2k\lambda,2l\lambda}^{u_{v_o,v_i}}$ does not intersect $\ell_{\mathbf j}^{u_{v_o,v_i}}$. By the same construction, if $\mathbf{j}=n+1$, it is clear that $\A_{-2k\lambda,2l\lambda}^{u_{v_o,v_i}}$ connects $\partial_o$ with $\partial_i$.
\end{proof}

Proposition \ref{prop:generalTVS} now follows.
\begin{proof}[Proof of Proposition \ref{prop:generalTVS}]
	First, we reduce the proposition to the case where $v_0\in[0,2\lambda)$ and $a=-2k\lambda$, $b=2j\lambda$. To do this, we notice the following
	\begin{itemize}
		\item For $a, b$ with $a+b = n2\lambda$ with $n \geq 2$, we can always find some $c \in \R$ such that $\A_{-a,b}^{u}=\A_{-2k\lambda,2j\lambda}^{u-c}$. Thus it suffices to treat the case of $a, b \in 2k \Z$. 
		\item By symmetry we can assume $v_o$ is positive, and again by a constant shift taking values in $2\lambda \Z$, we can assume $v_o \in [0,2\lambda)$.
	\end{itemize}
	
	The proof now follows as in Proposition \ref{mlawv}, by separating in the same cases as in that proposition and noting what these cases correspond for the respective Brownian motion. The only difference is that thanks to Lemma \ref{l.find_ell}, this time we are looking at the 
$\mathbf j$-th iteration instead of the first one.
\end{proof}

\subsubsection{The case of first passage sets}
Finally, let us consider the case of the first passage sets. On an annulus $\An$ with outer and inner boundaries denoted by $\partial_o$ and $\partial_i$, consider $\Phi$ a zero-boundary GFF. For $v \in \R$, again we let $u_v$ denote the bounded harmonic function that is equal to $0$ on $\partial_o$ and equal to a constant $v$ on $\partial_i$. Consider now, $\A^{u_v,\partial_o}_{-2\lambda}$, the component of the first passage set $\A^{u_v}_{-a}$ of $\Phi + u_v$ connected to $\partial_o$ and let $ \ell^{u_v}_{-a}$ be the outer boundary of the non-simple connected component of $\An\backslash\A^{u_v,\partial_o}_{-a}$  (in case there is no non-trivial loop surrounding $\partial_i$ we again set $\ell^{u_v}_{-a} = \partial_i$). We want to calculate $\ED(\ell^{u_v}_{-a}, \partial_o)$ as before. 

To do this, let $\widehat B^v$ be a Brownian bridge from $0$ to $v$, whose time-duration $L$ is given by the extremal distance of $\An$. Define
\begin{equation*}
\widehat T^v_{-a}:=\inf\{s\geq 0: \widehat B^v_s=-a\} \wedge L.
\end{equation*}
If $\widehat{T}^{v}_{-a}=L$, we also set 
$\widehat{\tau}^{v}_{-a} = L$. If, however 
$\widehat{T}^{v}_{-a}<L$, we define
\begin{equation*}
\widehat{\tau}^{v}_{-a}:= \sup\{0\leq s \leq \widehat{T}^{v}_{-a}:\widehat B^v_s=-a+2\lambda\}\vee 0.
\end{equation*}
The analogue of Proposition \ref{mlaw0} to first-passage sets is then the following.
\begin{prop}\label{mlawfps}
	For all $a > 0$, the law of $\ED(\ell^{u_v}_{-a}, \partial_o)$ equals that of $\widehat{\tau}^{v}_{-a}$.
\end{prop}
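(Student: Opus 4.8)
The plan is to follow the exact same strategy used for the TVS case, i.e. produce the loop $\ell^{u_v}_{-a}$ via an \emph{outside-to-inside} exploration, produce the Brownian bridge via an \emph{inside-to-outside} exploration through Proposition \ref{p.LawELBddFPS}, and link the two by a reversibility statement for iterated FPS explorations. First I would set up the iteration: starting from $\partial_o$ with boundary values $u_v$, construct $A_1 := \A^{u_v,\partial_o}_{-a}$; if it touches $\partial_i$ we stop, otherwise by Corollary \ref{c.FPS from a boundary} there is a unique annular component $\An^1$ of $\An\setminus A_1$ with $\partial_i$ on its boundary, with outer boundary $\ell_1$ carrying boundary value $-a$ of $h_{A_1}+u_v$ (the FPS has only the value $-a$ on its inner loop, unlike the TVS which can take $-a$ or $b$). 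In the conditional GFF $\Phi^{A_1}|_{\An^1}$ we then sample $\A^{u',\partial_i}_{-a}$ with $u'$ the harmonic function taking $-a$ on $\ell_1$ and $v$ on $\partial_i$, and iterate. The key simplification compared with the TVS case is that \emph{all inner loops carry the same label $-a$}, so the corresponding Brownian bridge from $0$ to $v$ simply records successive times the bridge hits $-a$ then returns up to $-a+2\lambda$ — this matches the definition of $\widehat\tau^v_{-a}$ directly, via Proposition \ref{p.LawELBddFPS} exactly as Lemma \ref{lembasic} and Corollary \ref{corlst} handle the TVS.

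Next I would prove the reversibility statement: the non-contractible loops obtained by iterating FPS from $\partial_o$ coincide, in reverse order, with those obtained by iterating FPS from $\partial_i$ (started with the appropriately shifted boundary values). This requires FPS analogues of the two commutativity lemmas. Lemma \ref{l.comm0} goes through verbatim: whenever the inner exploration $A'=\A^{u',\partial_i}_{-a}$ stays at positive distance from $A=\A^{u,\partial_o}_{-a}$, the level-line / iterative construction of FPS from Section 3.3–3.4 of \cite{ALS1} can be run for both $\Phi$ and $\Phi^A$, so $A'=\A^{u,\partial_i}_{-a}$; the stopping-at-distance-$\epsilon$ argument of Claim \ref{c.new measure} applies unchanged. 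For the touching case one needs the analogue of Lemma \ref{l.comm2}: here the point is that when the two FPS components meet, they must share their unique non-contractible loop, and then the measurability/uniqueness characterisation of FPS (Theorem \ref{Thm::FPS}, together with Corollary \ref{c.FPS from a boundary}) forces the union to be the same whether we first explore from outside or from inside. The SLE-type hitting arguments (Lemma \ref{notouch} and the generalized level line facts in Section \ref{ss.construction SLE}) are used the same way — here the relevant boundary values along a traced level line are $-a\pm\lambda$, and the ``no exit through the other prime end'' argument applies because on the relevant side the boundary value is $\leq -\lambda$ relative to the level. Given these lemmas, the commutativity proposition (the FPS analogue of Proposition \ref{p.comm1}) follows by the same induction on the number of iterations, and hence so does reversibility.

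Finally, combining reversibility in law (the FPS analogue of Corollary \ref{correv}, obtained via a conformal automorphism $\varphi$ of $\An$ swapping the two boundaries, using conformal invariance of the GFF and the $\Phi \leftrightarrow -\Phi$ type symmetry appropriate to FPS boundary-value shifts) with Proposition \ref{p.LawELBddFPS} applied to the inside-to-outside exploration gives that $\ED(\ell^{u_v}_{-a},\partial_o)$ has the law of the last iteration's extremal distance to $\partial_i$, which is precisely $\widehat\tau^v_{-a}$. One should separately treat, as in the proof of Proposition \ref{mlawv}, the degenerate cases where the first FPS component already touches a boundary (corresponding to $\widehat\tau^v_{-a}=L$) — but note that for FPS no ``boundary value $0$ vs $2\lambda$'' case-split is needed, since the FPS only produces the single value $-a$, so the argument is actually shorter than for the TVS. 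The main obstacle I expect is establishing the touching-case commutativity lemma: one must rule out, using the generalized level line hitting estimates, that the inside exploration sneaks into a simply connected pocket left by the outside exploration and forms its non-contractible loop there; this is the FPS counterpart of Claim \ref{c.intersection_equality} and its somewhat delicate prime-end analysis, and it is where all the SLE$_4(-2)$ machinery is really needed.
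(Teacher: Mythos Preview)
Your proposal takes a very different route from the paper, and in fact the route you describe does not work as written. The paper's proof is three lines: observe that on the event where the non-contractible loop of the TVS $\A^{u_v}_{-a,-a+2k\lambda}$ carries label $-a$, that loop coincides with $\ell^{u_v}_{-a}$; by Proposition \ref{p.LawELBddTVS} the probability of this event tends to $1$ as $k\to\infty$; hence Proposition \ref{prop:generalTVS} (already proved) gives the result in the limit. No new reversibility or commutativity is needed --- the FPS case is obtained as a limit of the TVS case.

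The concrete gap in your scheme is the iteration itself. After one step of $\A^{u_v,\partial_o}_{-a}$ the loop $\ell_1$ carries boundary value $-a$. Running another FPS of level $-a$ from $\ell_1$ is trivial (the set is just $\ell_1$), so your ``iterate'' does not advance. You write that the next step is $\A^{u',\partial_i}_{-a}$, but that is a set grown from the \emph{inner} boundary, not a continuation of the outward exploration; and in any case a sequence of level-$-a$ first passage sets cannot encode the time $\widehat\tau^v_{-a}$, which by definition involves the level $-a+2\lambda$ as well as $-a$. There is no one-parameter FPS iteration whose successive loops correspond to the bridge bouncing between $-a+2\lambda$ and $-a$; the object that does this is precisely the TVS $\A_{-a,-a+2\lambda}$ (or more generally $\A_{-a,-a+2k\lambda}$), which is why the paper reduces to that case. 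Relatedly, the reversibility and commutativity machinery (Lemma \ref{l.comm2}, Claim \ref{c.intersection_equality}) is built around the symmetric $\A_{-2\lambda,2\lambda}$ local set and its SLE$_4(-2)$ construction; the FPS has no $\Phi\leftrightarrow -\Phi$ symmetry and no analogous two-sided level-line exploration, so porting those arguments directly would require substantial new input that you have not supplied.
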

\begin{proof}
Consider a TVS $\A_{-a,-a+2k\lambda}^{u_v}$. Then, on the event where the label of $\ell^{u_v}_{-a,-a+2k\lambda}$ towards $\partial_i$ is equals $-a$, we have that $\ell^{u_v}_{-a}$ is equal to $\ell^{u_v}_{-a,-a+2k\lambda}$ . Moreover, from Proposition \ref{p.LawELBddTVS} we know that the probability of this event converges to $1$ as $k \to \infty$. Thus the proposition follows from Proposition \ref{prop:generalTVS} by taking 
$k \to \infty$.
\end{proof}

\subsection{The conditioned GFF and annulus CLE}
\label{SubSec cond GFF}

Consider $\A_{-2\lambda, 2\lambda}^{u_v,\partial_o}$ in an annulus $\An$ of a zero boundary GFF $\Phi$. As $\A_{-2\lambda, 2\lambda}^{u_v,\partial_o}$ is a local set, conditionally on the set $\A_{-2\lambda,2\lambda}^{u_v,\partial_o}$ and the harmonic function $h_{\A_{-2\lambda,2\lambda}^{u_v,\partial_o}}$, we can explicitly describe the law of the GFF $\Phi$ restricted to a connected component $O$ of $D\backslash \A_{-2\lambda,2\lambda}^{u_v,\partial_o}$ as a GFF with boundary conditions. 

In the following proposition we extend this in the case where $O$ is the outer boundary of the only annular connected component of  $\An_1 = \An\backslash \A_{-2\lambda,2\lambda}^{u_v, \partial_o}$. First, a simple extension is to determine the conditional law of the GFF in $O$ when conditioning only on the non-contractible loop $\ell^{u_v}$ on the boundary of $\An_1$; second, we also determine the conditional law in the interior of $O^c = \An \backslash \An_1$.

\begin{prop}
	\label{Prop cond GFF}
	On the event
	$\A^{u_{v},\partial_o}_{-2\lambda,2\lambda}\cap\partial_i=\emptyset$, let $\ell^{u_v}$ denote the non-contractible loop of $\A^{u_{v},\partial_o}_{-2\lambda,2\lambda}\cap\partial_i=\emptyset$ and $\alpha^{u_v}$ its label towards $\partial_i$. Let $\An_1^{u_v}$ denote the annulus between $\ell^{u_v}$ and $\partial_i$, and let $\overleftarrow \An_n^{u_v}$ denote the annulus between $\ell^{u_v}$ and $\partial_o$. Conditionally on $(\ell^{u_v},\alpha^{u_v})$ and the event 
	$\A^{u_{v},\partial_o}\cap\partial_i=\emptyset$, the fields
	$\Phi\vert_{ \An_{1}^{u_v}}$ and
	$\Phi\vert_{ \overleftarrow \An_n^{u_v}}$ are independent. Moreover,
	\begin{enumerate}
		\item The conditional distribution of 
	$(\Phi+u_{v})\vert_{ \An_{1}}$ is that of $\Phi'+u'$ where $\Phi'$ and $u'$ are independent, $\Phi'$ is a GFF in $\An_1$ and $u'$ is the bounded harmonic function with boundary values
	$\alpha^{u_v}$ on $\ell^{u_v}$ and $v$ on $\partial_i$; 
	\item The conditional distribution of the field
	$((\Phi+u_{v})\vert _{\overleftarrow \An_n^{u_v}})$ is that of a GFF
	in $\overleftarrow \An_n^{u_v}$ conditioned on the event that its TVS $\A_{-2\lambda,2\lambda}$ is connected.
	\end{enumerate}
\end{prop}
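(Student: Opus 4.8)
The plan is to prove all three assertions first conditionally on the pair $(A,h_A)$, where $A:=\A_{-2\lambda,2\lambda}^{u_v,\partial_o}$, and then transfer them to the coarser conditioning on $(\ell^{u_v},\alpha^{u_v})$ and the event $E:=\{A\cap\partial_i=\emptyset\}$ (which has positive probability) by a tower/disintegration argument. Write $\mathcal G$ for the $\sigma$-algebra generated by $(\ell^{u_v},\alpha^{u_v})$ and $E$; since $\ell^{u_v},\alpha^{u_v},\An_1^{u_v},\overleftarrow\An_n^{u_v}$ and $E$ are all functions of $A$, we have $\mathcal G\subseteq\sigma(A,h_A)$, and we work throughout on $E$, where $A\subseteq\overline{\overleftarrow\An_n^{u_v}}$, $A\cap\An_1^{u_v}=\emptyset$, and the remaining connected components of $\An\setminus A$ (the ``bubbles'') all lie in $\overleftarrow\An_n^{u_v}$. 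By the local set property of $A$, conditionally on $(A,h_A)$ the field $\Phi^A=\Phi-\Phi_A$ is a GFF in $\An\setminus A$, independent across connected components; in particular $(\Phi+u_v)|_{\An_1^{u_v}}$ is a GFF with boundary values $\alpha^{u_v}$ on $\ell^{u_v}$ and $v$ on $\partial_i$, and it is independent, given $(A,h_A)$, of $\Phi|_{\overleftarrow\An_n^{u_v}}$, which is a measurable function of $(A,h_A)$ together with the GFFs on the bubbles. Since the conditional law of $(\Phi+u_v)|_{\An_1^{u_v}}$ given $(A,h_A)$ depends only on $(\ell^{u_v},\alpha^{u_v},v)$, it is $\mathcal G$-measurable, which gives part (1) by the tower property; and for bounded measurable $f,g$,
\[
\E[fg\mid\mathcal G]=\E\big[\E[f\mid\sigma(A,h_A)]\,\E[g\mid\sigma(A,h_A)]\mid\mathcal G\big]=\E\big[\E[f\mid\mathcal G]\,\E[g\mid\sigma(A,h_A)]\mid\mathcal G\big]=\E[f\mid\mathcal G]\,\E[g\mid\mathcal G],
\]
which gives the independence statement.

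For part (2), observe that on $E$ the conditional law of $\Phi|_{\overleftarrow\An_n^{u_v}}$ given $(A,h_A)$ is explicit: it is the harmonic function $h_A+u_v$ (equal to $0$ on $\partial_o$, to $\alpha^{u_v}$ on $\ell^{u_v}$, and to $\pm2\lambda$ on each bubble) plus independent zero-boundary GFFs on the bubbles. By the local set property of two-valued sets, this is precisely the conditional law, given that the pair (two-valued set, harmonic function) takes the corresponding value, of a GFF $\Psi$ on $\overleftarrow\An_n^{u_v}$ with boundary value $\tilde u$ equal to $0$ on $\partial_o$ and $\alpha^{u_v}$ on $\ell^{u_v}$ --- here one uses that, on $E$, $A$ is connected and meets both boundary components of $\overleftarrow\An_n^{u_v}$, so it is a legitimate value of a connected two-valued set $\A_{-2\lambda,2\lambda}^{\tilde u}(\Psi)$. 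Disintegrating over $(A,h_A)$ given $\mathcal G$, part (2) thus reduces to the single identity: the conditional law of $(A,h_A)$ given $(\ell^{u_v},\alpha^{u_v})$ and $E$ equals the law of $\big(\A_{-2\lambda,2\lambda}^{\tilde u}(\Psi),h_{\A_{-2\lambda,2\lambda}^{\tilde u}(\Psi)}^{\Psi}\big)$ conditioned on $\A_{-2\lambda,2\lambda}^{\tilde u}(\Psi)$ being connected.

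To establish this last identity, I would show that, conditionally on $\ell^{u_v}$, the restricted field $(\Phi+u_v)|_{\overleftarrow\An_n^{u_v}}$ is a GFF in $\overleftarrow\An_n^{u_v}$ with boundary data $\tilde u$, and that $A$, seen inside $\overline{\overleftarrow\An_n^{u_v}}$, is a local set of it satisfying the two-valued-set characterisation $(\twonotes)$, $(\twonotes\twonotes)$; by uniqueness (Theorem~\ref{t.tvs}) this forces $A=\A_{-2\lambda,2\lambda}^{\tilde u}\big((\Phi+u_v)|_{\overleftarrow\An_n^{u_v}}\big)$, and then $E$ becomes exactly the event that this two-valued set is connected (were it disconnected, $A$ would split off a non-contractible loop strictly between $\partial_o$ and $\ell^{u_v}$, contradicting that $\ell^{u_v}$ is the innermost non-contractible loop of $A$). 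I expect this step to be the main obstacle: making rigorous sense of the field restricted to the \emph{random} annulus $\overleftarrow\An_n^{u_v}$ conditioned on $\ell^{u_v}$ is delicate precisely because $\overleftarrow\An_n^{u_v}$ is itself determined by the very local set $A$ being described. This circularity is resolved exactly as in Claim~\ref{c.new measure} and the proof of Proposition~\ref{p.change of measure TVS}: stop the (level-line / SLE$_4(-2)$-type) construction of $A$ the first time it comes within distance $\eps$ of $\partial_i$, obtaining a genuine local set $A_\eps$ contained in a fixed subannulus, to which the converse direction of the restriction property of local sets (Proposition~\ref{p.change measure boundary}) applies cleanly; on the event $\{d(A,\partial_i)>\eps\}$ one has $A=A_\eps$ and $A$ is a local set of $(\Phi+u_v)$ restricted to that subannulus, and one concludes by letting $\eps\to0$.
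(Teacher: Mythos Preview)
Your treatment of part (1) and of the conditional independence is fine and matches the paper's argument: since the conditional law of $\Phi\vert_{\An_1^{u_v}}$ given $(A,h_A)$ depends only on $(\ell^{u_v},\alpha^{u_v})$, the tower property gives both claims.

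For part (2), however, there is a genuine gap. You assert that \emph{conditionally on $\ell^{u_v}$, the field $(\Phi+u_v)\vert_{\overleftarrow\An_n^{u_v}}$ is a zero-boundary GFF with boundary data $\tilde u$} and that $A$ is its two-valued set. This cannot be correct. If it were, then since on $E$ the set $A$ touches both $\partial_o$ and $\ell^{u_v}$, the TVS of that GFF would be connected almost surely; but for a GFF in an annulus with boundary values $0$ and $\pm 2\lambda$ the TVS $\A_{-2\lambda,2\lambda}$ is \emph{not} always connected (Proposition~\ref{p.LawELBddTVS}). In other words, conditioning on $\ell^{u_v}$ already forces the connectedness constraint, so the conditional law of the field in $\overleftarrow\An_n^{u_v}$ is the \emph{conditioned} GFF of part (2), not an unconditioned one. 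Your $\eps$-stopping argument via Proposition~\ref{p.change measure boundary} only shows that $A_\eps$ is a local set of the field restricted to a \emph{fixed} subannulus avoiding $\partial_i$; it does not identify the law of the field in the \emph{random} annulus $\overleftarrow\An_n^{u_v}$ given $\ell^{u_v}$, which is the actual difficulty.

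The paper resolves this via reversibility (Theorem~\ref{thmRV} / Proposition~\ref{p.comm1}, and Proposition~\ref{rev:bg} for general $v$): one identifies $\ell^{u_v}=\ell_1^N$ with $\overleftarrow\ell_n^N$, the \emph{last} non-contractible loop of the iterated $\A_{-2\lambda,2\lambda}^{\partial_i}$ explored from $\partial_i$. For each \emph{fixed} $m$, the set $\overleftarrow A_m^N$ is a genuine local set, so conditionally on it the field in the annulus between $\partial_o$ and $\overleftarrow\ell_m^N$ \emph{is} an unconditioned GFF with the appropriate boundary values. Conditioning further on the event $\{m=n\}$ is then precisely the event that the next iteration $\overleftarrow A_{m+1}^N\setminus\overleftarrow A_m^N$ (i.e.\ the TVS in that annulus) is connected, which gives part (2). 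The point is that $\ell^{u_v}$ is a ``last-exit'' type object for the exploration from $\partial_o$, but a ``stopping-time'' type object (up to the random index $n$) for the exploration from $\partial_i$; reversibility is exactly what lets you switch viewpoints. Your argument does not invoke the inner exploration at all, and without it I do not see how to access the law of the field outside $\ell^{u_v}$.
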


\begin{proof}
Let us start with (1). This is just a slight extension of the local set property. Indeed, if we condition on $\A_{-2\lambda,2\lambda}^{u_v, \partial_o}$ and $h_{\A_{-2\lambda,2\lambda}^{{u_v},\partial_o}}$, then the law of $(\Phi+u_{v})\vert_{ \An_{1}}$ only depends on $\ell^{{u_v}}$ and $\alpha^{u_v}$ and it is exactly the one described in (1), so we conclude.

The conditional independence of the fields
$\Phi\vert_{ \An_{1}^{u_v}}$ and
$\Phi\vert_{ \overleftarrow \An_n^{u_v}}$ follows similarly. Indeed, if we condition on $(\Phi+u_{v})\vert_{ \overleftarrow \An_n^{u_v}}$, $\A_{-2\lambda,2\lambda}^{u_v,\partial_o}$ and $h_{\A_{-2\lambda,2\lambda}^{u_v, \partial_o}}$ the law of $(\Phi+u_{v})\vert_{ \An_{1}^{u_v}}$ still only depends on $\ell^{u_v}$ and $\alpha^{u_v}$, so conditionally on $\ell^{u_v}$ and $\alpha^{u_v}$, it is independent of $(\Phi+u_{v})\vert_{ \overleftarrow \An_n^{u_v}})$.

We now prove (2). For simplicity, let us first consider the case when $u_v$ has boundary conditions $2N\lambda$ on the outer boundary. Using the notation of Proposition \ref{p.comm1} Let $n$ be the smallest number $k$ such that $A_{k+1}^N$ (the $k-$ times iterated $\A_{-2\lambda, 2\lambda}^{\partial_o}$) is connected to $\partial_i$. By Proposition \ref{p.comm1}, we see that $A_{n+1}^N = \overleftarrow A_{n+1}^N$, i.e. the $n$-times iterated $\A_{-2\lambda, 2\lambda}^{\partial_i}$ from the inner boundary. In particular, we know that $\ell^{u_v} = \ell_1^N = \overleftarrow \ell_{n}^N$. But now, for any fixed $m$, the iterated TVS, $\overleftarrow A_{m}^N$ is a local set and thus the field between $\partial_o$ and $\overleftarrow \ell_m^N$ is given by a GFF with relevant boundary conditions. Further, conditioning on the event  that $m = n$, amounts to conditioning that $\overleftarrow A_{m+1}^N \backslash \overleftarrow A_{m}^N$ is connected in the annulus between $\overleftarrow \ell_m^N$ and $\partial_o$. Moreover, by Lemma \ref{lembasic} the loop $\overleftarrow \ell_n^N$ corresponds to the last loop $\overleftarrow \ell$ discovered and its label $\overleftarrow \alpha_0^N$ equals $0$. The claim now follows.

The case of general boundary conditions on the inner boundary follows similarly by using Proposition \ref{rev:bg} instead of Proposition \ref{p.comm1}.

\end{proof}

\begin{rem}
In fact, it is also true that conditionally on only $\ell^{u_v}$, the law of $\Phi\vert_{ A_1^{u_v}}$ is independent of that of $\Phi\vert_{ \overleftarrow A_m^{u_v}}$. This is due to the fact that one that the law described in (2) does not depend on $\alpha^{u_v}$.
\end{rem}

To prove Theorem \ref{t.loop soup} we will need to generalize this proposition to several interfaces stemming from an iterated $\A_{-2\lambda, 2\lambda}$. In this respect, consider as in Subsection \ref{Ss.Extensions to non-zero} the exploration of the GFF
$\Phi+u_v$ by sampling successive local sets in the non-contractible connected component of the complementary of the preceding one, starting with 
$\A_{-2\lambda, 2\lambda}^{u_{v},\partial_o}$.
Let $\ell_{0}^{u_{v}}, \ell_{1}^{u_{v}},
 \dots, 
\ell_{n+1}^{u_{v}}$ be the sequence of non-contractible loops, with $\ell_{0}^{u_{v}}=\partial_o$ and $\ell_{n+1}^{u_v}=\partial_i$. Let
$\alpha_{j}^{u_{v}}\in 2\lambda\mathbb{Z}$ be the label of the loop 
$\ell_{j}^{u_{v}}$ towards $\partial_i$.
By construction, $\alpha_{0}^{u_{v}}=0$,
$\alpha_{1}^{u_{v}}\in\{-2\lambda,2\lambda\}$ and
$\vert \alpha_{j+1}^{u_{v}} - \alpha_{j}^{u_{v}}\vert = 2\lambda$ 
for every $j\in\{1,2,\dots,n-1\}$. Further, $\mathbf{A}^{j,u_v}$ will denote the annular domain delimited by
$\ell_j^{u_{v}}$ and $\partial_i$ and
$ \An_{j-1,j}^{u_{v}}$ will denote the annular domain delimited by
$\ell_j^{u_{v}}$ and $\ell_{j-1}^{u_{v}}$. The generalized version that follows by standard arguments is then as follows.

\begin{cor}
	\label{Cor cond GFF several}
	Let $J$ be a stopping time
	(with values in $\mathbb{N}$) for the filtration of
	$(\ell_{j}^{u_v},\alpha_{j}^{u_v})_{j\geq 1}$. Conditionally on
	$\ell_{J}^{u_v}\cap\partial_i=\emptyset$ and on
	$(\ell_{j}^{u_v},\alpha_{j}^{u_v})_{1\leq j\leq J}$, the fields
	$(\Phi\vert_{ \An_{j-1,j}^{u_{v}}})_{1\leq j\leq J}$ are independent.
	The conditional law of
	$(\Phi+u_{v}-\alpha_{j-1}^{u_v})
	\vert_{ \An_{j-1,j}^{u_{v}}}
	$ is that of the GFF conditioned on the event that
	its TVS $\A_{-2\lambda,2\lambda}$
	is connected.
\end{cor}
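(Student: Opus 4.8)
The plan is to obtain the statement by iterating Proposition~\ref{Prop cond GFF} along the exploration, via an induction on the discrete time parameter, and then to pass from a deterministic time to the stopping time $J$ in the standard way. Throughout we use that the iterated exploration is a sequence of conditionally independent local sets, so that the domain Markov / local set property of Lemma~\ref{BPLS} lets us describe the conditional law of $\Phi$ on each newly-exposed annular region.

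First I would treat a deterministic value $J=m$, by induction on $m\ge 1$, strengthening the statement so that it also records that, conditionally on $\ell_m^{u_v}\cap\partial_i=\emptyset$ and on $(\ell_j^{u_v},\alpha_j^{u_v})_{1\le j\le m}$, the field $\Phi\vert_{\mathbf{A}^{m,u_v}}$ on the annulus between $\ell_m^{u_v}$ and $\partial_i$ is a GFF with boundary value $\alpha_m^{u_v}$ on $\ell_m^{u_v}$ and $v$ on $\partial_i$, independent of the $m$ strips $(\Phi\vert_{\An_{j-1,j}^{u_v}})_{1\le j\le m}$. For $m=1$ this is exactly part~(2) of Proposition~\ref{Prop cond GFF} together with its conditional-independence clause (after the trivial shift by $\alpha_0^{u_v}=0$). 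For the inductive step, the $(m+1)$-st step of the exploration is precisely the sampling of $\A_{-2\lambda,2\lambda}$ from the outer boundary $\ell_m^{u_v}$ of the GFF $\Phi\vert_{\mathbf{A}^{m,u_v}}$ supplied by the (strengthened) hypothesis; applying Proposition~\ref{Prop cond GFF} to that field, shifted by the constant $\alpha_m^{u_v}$ so that its outer boundary value becomes $0$ and the general-boundary version of the proposition applies, simultaneously splits off the strip $\An_{m,m+1}^{u_v}$, identifies the conditional law of $(\Phi+u_v-\alpha_m^{u_v})\vert_{\An_{m,m+1}^{u_v}}$ given $(\ell_{m+1}^{u_v},\alpha_{m+1}^{u_v})$ with that of a GFF conditioned on $\A_{-2\lambda,2\lambda}$ being connected, and shows this strip and $\Phi\vert_{\mathbf{A}^{m+1,u_v}}$ are conditionally independent. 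Combining with the induction hypothesis gives the strengthened claim for $m+1$. The case of a general boundary value $v$ on $\partial_i$ is dealt with exactly as in Proposition~\ref{Prop cond GFF}, using Proposition~\ref{rev:bg} in place of Proposition~\ref{p.comm1}.

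Finally, to pass to the stopping time $J$: for each $m\in\N$ the event $\{J=m\}$ is measurable with respect to $(\ell_j^{u_v},\alpha_j^{u_v})_{1\le j\le m}$, and $\ell_J^{u_v}\cap\partial_i=\emptyset$ forces $\ell_j^{u_v}\cap\partial_i=\emptyset$ for all $j\le m$; hence on $\{J=m\}\cap\{\ell_m^{u_v}\cap\partial_i=\emptyset\}$ the conditional law of $(\Phi\vert_{\An_{j-1,j}^{u_v}})_{1\le j\le m}$ given $(\ell_j^{u_v},\alpha_j^{u_v})_{1\le j\le m}$ is the one established in the deterministic case, and summing over $m$ yields the corollary.

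The step I expect to be the main obstacle is the bookkeeping around conditioning on only the loop $\ell_j^{u_v}$ and its label $\alpha_j^{u_v}$ rather than on the full local set $A_j^{u_v}$ and $h_{A_j^{u_v}}$: as in the proof of Proposition~\ref{Prop cond GFF}, one must verify that this weaker conditioning still leaves $\Phi\vert_{\mathbf{A}^{j,u_v}}$ a genuine GFF with the stated piecewise-constant boundary data, independent of everything explored so far, and that this is consistent with the constant shifts $\alpha_{j-1}^{u_v}$ entering the normalization at each step. Once this is in place, the induction and the passage to the stopping time are routine, which is why the proof is only sketched here.
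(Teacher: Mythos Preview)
Your proposal is correct and is precisely the ``standard argument'' the paper has in mind: the paper does not give a separate proof of this corollary, stating only that it follows from Proposition~\ref{Prop cond GFF} by standard arguments, and your induction on the deterministic time together with the routine passage to a stopping time is exactly that.
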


To finish this section, let us state a corollary that is not used in this paper, but is of independent interest.
\begin{cor}
Consider a zero boundary GFF $\Phi$ in $\D$ and $\A_{-2\lambda, 2\lambda}$. Then conditioned on the loop $\ell$ of $\A_{-2\lambda, 2\lambda}$ surrounding the origin, the restriction of $\Phi$ to the annulus between $\ell$ and $\partial_o$ has the law of a zero boundary GFF conditioned on the even that its corresponding two-valued set $\A_{-2\lambda, 2\lambda}$ is connected. 
\end{cor}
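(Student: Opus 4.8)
The plan is to deduce this statement as a direct specialization of the annular results already established, transferring from the annulus $\An_r$ to the disk $\D$ via the total-variation coupling of Proposition \ref{prop:coupling}. First I would set up the comparison: fix a small $r>0$ and work in $\An_r$ with a zero-boundary GFF $\Phi^r$, together with its two-valued set $\A_{-2\lambda,2\lambda}(\Phi^r)$ and the connected component $\A_{-2\lambda,2\lambda}^{\partial_o}(\Phi^r)$ containing $\partial_o$. By Proposition \ref{prop:coupling}, for any $\delta>0$ one can choose $r$ small enough and couple $\Phi$ and $\Phi^r$ so that with probability at least $1-\delta$ we have $\A_{-2\lambda,2\lambda}(\Phi)=\A_{-2\lambda,2\lambda}^{\partial_o}(\Phi^r)$; on this event the loop $\ell$ of $\A_{-2\lambda,2\lambda}(\Phi)$ surrounding $0$ coincides with the non-contractible loop $\ell^{u_0}=\ell^{0,0}$ of $\A_{-2\lambda,2\lambda}^{\partial_o}(\Phi^r)$ in $\An_r$, and the annulus between $\ell$ and $\partial_o=\partial\D$ coincides with the annulus $\overleftarrow\An_n^{u_0}$ appearing in Proposition \ref{Prop cond GFF}.

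Next I would apply Proposition \ref{Prop cond GFF}(2) directly in $\An_r$: conditionally on $(\ell^{u_0},\alpha^{u_0})$ (and on the event $\A_{-2\lambda,2\lambda}^{\partial_o}(\Phi^r)\cap\partial_i=\emptyset$, which has probability tending to $1$ as $r\to 0$ on the relevant event), the field $\Phi^r\vert_{\overleftarrow\An_n^{u_0}}$ is a GFF in $\overleftarrow\An_n^{u_0}$ conditioned on the event that its TVS $\A_{-2\lambda,2\lambda}$ is connected. Moreover by the Remark following that proposition, the conditional law does not depend on $\alpha^{u_0}$, so in fact conditioning only on $\ell^{u_0}$ already yields this law. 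Transporting through the coupling, we obtain that with probability at least $1-\delta'$ (with $\delta'\to 0$ as $r\to 0$), conditionally on $\ell$, the restriction of $\Phi$ to the annulus between $\ell$ and $\partial\D$ is a GFF in that annulus conditioned on the event that its TVS $\A_{-2\lambda,2\lambda}$ is connected — an event whose meaning is conformally invariant and hence unambiguous.

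Finally I would remove the $r$-dependence by a limiting argument. The statement to be proved concerns only the pair $(\ell,\Phi\vert_{\D\setminus(\text{interior of }\ell)})$, which is an intrinsic object of the GFF $\Phi$ in $\D$ and does not reference $r$; the $r$-dependent coupling is only a tool. Since for every $\delta>0$ we can realize the desired conditional law on an event of probability $\ge 1-\delta$, and the target conditional law (GFF conditioned on connectedness of its TVS) is itself independent of $r$, letting $\delta\to 0$ (equivalently $r\to 0$) forces the identity of conditional laws exactly. Concretely, one checks that for any bounded continuous test functional $F$ of $(\ell, \Phi\vert_{\text{annulus}})$ and any bounded continuous $G$ of $\ell$, the quantity $\E[F\,G(\ell)]$ equals $\E[\Psi(\ell)\,G(\ell)]$ where $\Psi(\ell)$ is the expectation of $F$ under the conditioned-GFF law; the error from the coupling is $O(\delta\|F\|_\infty\|G\|_\infty)$ and vanishes.

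The main obstacle I anticipate is purely bookkeeping rather than conceptual: one must make sure that ``the annulus between $\ell$ and $\partial\D$'' is described in a way compatible across the two domains, i.e. that the coupling of Proposition \ref{prop:coupling} matches not just the set $\A_{-2\lambda,2\lambda}$ but also the relevant restriction of the field (which it does, since the coupling is via a Radon–Nikodym change of measure whose density tends to $1$, hence the fields agree on the region of interest up to the harmonic correction $h_C^\epsilon$ which is supported near $r\partial\D$, away from $\ell$). One should also be slightly careful that the conditioning event in Proposition \ref{Prop cond GFF}(2) — that $\A_{-2\lambda,2\lambda}^{\partial_o}(\Phi^r)$ does not touch $\partial_i$ — has conditional probability tending to $1$, which follows from Proposition \ref{p.LawELBddTVS} since the Brownian bridge of length $L=\ED(\partial_i,\partial_o)\to\infty$ hits $\{-2\lambda,2\lambda\}$ with probability tending to $1$. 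Beyond these routine checks, the corollary is an immediate consequence of the machinery already developed.
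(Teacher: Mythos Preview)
Your proposal is correct and follows essentially the same route as the paper: invoke the total-variation coupling of Proposition \ref{prop:coupling} to identify $\A_{-2\lambda,2\lambda}(\Phi)$ with $\A_{-2\lambda,2\lambda}^{\partial_o}(\Phi^r)$ on an event of probability $1-o(1)$, then read off the conditional law from Proposition \ref{Prop cond GFF}(2) and let $r\to 0$. One minor slip: the discrepancy between the fields on the outer annulus is $h_C$ (harmonic in all of $\An_r$), not $h_C^\epsilon$; it is not ``supported near $r\partial\D$'' but it does tend to $0$ on compacts away from $0$ as $r\to 0$, which is what you actually need.
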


\begin{proof}
Proposition \ref{prop:coupling} implies that we can couple the local set $\A_{-2\lambda, 2\lambda}^{\partial_o}$ for the GFF in the annulus $\D \backslash r\D$ and the TVS $\A_{-2\lambda, 2\lambda}$ for the GFF in $\D$ such that as $r\to 0$ the probability that $\A_{-2\lambda, 2\lambda}^{\partial_o}$ is equal to $\A_{-2\lambda, 2\lambda}$ converges to $1$. Thus the corollary follows from Proposition \ref{Prop cond GFF}.
\end{proof}

Thus, as CLE$_4$ has the law of $\A_{-2\lambda, 2\lambda}$, we can use Prop 3.5 in \cite{HSW} to deduce that the annulus CLE defined in that paper (Section 3.1) agrees with the conditional law of $\A_{-2\lambda, 2\lambda}$, conditioned on the event that it is connected. Using the definition of the annulus CLE via BTLS, we further deduce the following connection between the critical Brownian loop soup and the TVS $\A_{-2\lambda, 2\lambda}$ in the annulus.

\begin{cor}
Take a critical Brownian loop soup in an annulus $\An$ and let $E$ be the event on which this loop soup has no cluster that disconnects from $\partial_i$ from $\partial_o$. Then conditionally on $E$, the law of the outermost boundaries of its outermost clusters is the same as that of a TVS $\A_{-2\lambda, 2\lambda}$ in $\An$, conditioned to be connected.
\end{cor}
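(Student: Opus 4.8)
The plan is to combine the identification of the annulus CLE$_4$ of \cite{HSW} with the conditional law of $\A_{-2\lambda,2\lambda}$ given that it is connected (obtained just above from Proposition~3.5 of \cite{HSW} and the previous corollary) with the fact that this same annulus CLE$_4$ is realised by the outermost boundaries of the outermost clusters of a critical Brownian loop soup in $\An$ on the event $E$.

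For the loop-soup side, I would first recall that by \cite{SheffieldWerner2012CLE} and Theorem~\ref{BPCLE}, in $\D$ the outermost boundaries of the outermost clusters of a critical Brownian loop soup $\mathcal{L}^{\D}_{1/2}$ form a CLE$_4$, equal to $\A_{-2\lambda,2\lambda}$ of a zero-boundary GFF $\Phi$. Let $\ell$ be the loop of this collection around $0$ and let $\An_\ell$ be the annular region between $\ell$ and $\partial\D$. Using the restriction (spatial Markov) property of the Poissonian loop soup together with the characterisation of its clusters, one shows that, conditionally on $\ell$, the loops of $\mathcal{L}^{\D}_{1/2}$ contained in $\An_\ell$ form a critical Brownian loop soup in $\An_\ell$ conditioned on $E$: a cluster disconnecting $\ell$ from $\partial\D$ would surround $0$ and hence produce an outermost loop around $0$ strictly outside $\ell$, contradicting the choice of $\ell$; and on this event the outermost clusters of this restricted loop soup are exactly the outermost clusters of $\mathcal{L}^{\D}_{1/2}$ that avoid $0$. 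Hence the outermost cluster boundaries of the restricted loop soup coincide with the CLE$_4$ loops lying in $\An_\ell$.

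On the GFF side, the previous corollary (via Proposition~\ref{Prop cond GFF} and Proposition~\ref{prop:coupling}) says that, conditionally on $\ell$, the restriction of $\Phi$ to $\An_\ell$ has the law of a zero-boundary GFF in $\An_\ell$ conditioned on its two-valued set $\A_{-2\lambda,2\lambda}$ being connected; hence the CLE$_4$ loops contained in $\An_\ell$ have the law of the loops of $\A_{-2\lambda,2\lambda}$ in $\An_\ell$ conditioned to be connected. Matching the two descriptions of the CLE$_4$ loops in $\An_\ell$ and invoking conformal invariance of both the loop soup and $\A_{-2\lambda,2\lambda}$, one deduces the stated equality in a standard annulus $\An_\rho$ for Lebesgue-a.e.\ modulus $\rho$; since the modulus of $\An_\ell$ equals $e^{-2\pi\ED(\ell,\partial\D)}$, which has full support in $(0,1)$ (for instance by Theorem~\ref{t. main}), and both laws depend weakly continuously on $\rho$, the equality extends to every $\An_r$.

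The step I expect to be the main obstacle is the loop-soup bookkeeping: justifying carefully, via the restriction property of the Poisson point process of loops, that conditioning on $\ell$ leaves an independent critical loop soup in $\An_\ell$ conditioned on $E$, and that its outermost clusters are precisely the outermost clusters of $\mathcal{L}^{\D}_{1/2}$ not surrounding $0$; this is in the spirit of \cite{SheffieldWerner2012CLE} but requires care with loops that touch $\ell$. The remaining point, upgrading the a.e.-$\rho$ identity to all $\rho$, is routine given the GFF description of $\A_{-2\lambda,2\lambda}$.
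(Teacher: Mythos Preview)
Your overall route is the same as the paper's: identify the annulus CLE$_4$ of \cite{HSW} with the law of $\A_{-2\lambda,2\lambda}$ conditioned to be connected (via the preceding corollary and Proposition~3.5 of \cite{HSW}), and then match it with the loop-soup picture. The paper, however, does not attempt any loop-soup bookkeeping at all. In \cite{HSW} the annulus CLE is \emph{defined} (in Section~3.1 of that paper) through the critical Brownian loop soup in the annulus on the event $E$; so once one knows that this annulus CLE equals the conditioned $\A_{-2\lambda,2\lambda}$, the corollary is immediate by definition, for every annulus.

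Your attempt to re-derive the loop-soup side by conditioning $\mathcal L^{\D}_{1/2}$ on $\ell$ and invoking the Poisson restriction property is precisely the nontrivial content that \cite{HSW} (building on \cite{SheffieldWerner2012CLE}) already supplies. The obstacle you flag is real: $\ell$ is a functional of the whole soup, so the naive ``restrict to loops in $\An_\ell$'' step does not automatically yield an independent loop soup conditioned on $E$; one needs the Markovian exploration of clusters developed in those papers. There is no reason to redo this here. Likewise, the a.e.-modulus to all-moduli extension is unnecessary: since the \cite{HSW} definition and Proposition~3.5 hold for every annulus, the identification is obtained directly for each fixed $\An_r$.
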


\section{The joint laws}
\label{SecLaws}
In this section we will calculate the joint laws of extremal distances and conformal radii. This is done in two steps.

\begin{itemize}
	\item First, we obtain a characterization of certain joint laws for the Brownian motion and the Brownian bridge in terms of conditional laws. These characterizations could be of independent interest, and are based purely on Brownian bridge arguments.
	\item Second, we argue that the joint laws of extremal distances and conformal radii satisfy these characterizations. 
\end{itemize}

This is also the rough outline of this chapter, although we will start instead with more general statements of our main theorems, and finish with the proof of Theorem 1.2 in a separate subsection.

\subsection{Statements}
\label{SubsecLawsStat}

To describe the relevant laws of conformal radii and extremal distances, we need to introduce the following random times of a Brownian motion $B_t$. 
\begin{align*}
&T_{-a,b}:=\inf\{t\geq 0: B_t=-a \text{ or } B_t=b\},\\
&\tau_{-a,b}:=\sup\{0\leq t \leq T_{-a,b}:|B_t-B_{T_{-a,b}}|=2\lambda\}\vee 0,
\end{align*} 

Theorem \ref{t. main} is a consequence of the following generalization.
\begin{thm}\label{t. joint law TVS}
	Let $a,b>0$ with $a+b = 2k\lambda$, 
$k\in\mathbb{N}\backslash\{0\}$, and $\ell$ be the loop of a two-valued set of level $-a$ and $b$, $\A_{-a,b}$, in the unit disk surrounding the origin. Consider the extremal distance $\ED(\ell, \partial \D)$ between $\ell$ and $\partial \D$, and 
$\crad(0,\D\backslash \ell)$ the conformal radius of the origin on the bounded domain surrounded by $\ell$. Then, 
$(\ED(\ell,\partial \D), -(2\pi)^{-1}\log\crad(0,\D\backslash \ell) )$ is equal in law to $(\tau_{-a,b},T_{-a,b})$.
	\end{thm}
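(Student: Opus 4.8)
The plan is to pass to an annulus, establish a joint-law identity there for every inner boundary value, and then let the annulus degenerate. Fix $a,b>0$ with $a+b=2k\lambda$, $k\ge1$, and work in $\An_r=\D\setminus r\overline\D$ with boundary data $u_v$ equal to $v$ on $\partial_i$ and $0$ on $\partial_o$; put $L=\ED(\partial_o,\partial_i)=(2\pi)^{-1}\log(1/r)$. Let $\ell^{u_v}_{-a,b}$ be the non-contractible loop of $\A^{u_v,\partial_o}_{-a,b}$ (equal to $\partial_i$ if there is none) with label $\alpha^{u_v}_{-a,b}\in\{-a,b\}$ towards $\partial_i$, and let $\widehat B^{0,v}$ be a Brownian bridge from $0$ to $v$ of length $L$, with the times $\widehat T^{0,v}_{-a,b}$, $\widehat\tau^{0,v}_{-a,b}$. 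The core statement is
\[
\big(\ED(\ell^{u_v}_{-a,b},\partial_o),\,\ED(\ell^{u_v}_{-a,b},\partial_i),\,\alpha^{u_v}_{-a,b}\big)\ \overset{d}{=}\ \big(\widehat\tau^{0,v}_{-a,b},\,L-\widehat T^{0,v}_{-a,b},\,\widehat B^{0,v}_{\widehat T^{0,v}_{-a,b}}\big),\qquad v\in\R .
\]
Granting this, Theorem~\ref{t. joint law TVS} follows by taking $v=0$ and $r\to0$: by Proposition~\ref{prop:coupling} one couples so that $\ell^{u_0}_{-a,b}\to\ell$, the loop of $\A_{-a,b}$ around $0$ in $\D$, hence $\ED(\ell^{u_0}_{-a,b},\partial_o)\to\ED(\ell,\partial\D)$, while Lemma~\ref{LemCREDlim} gives $L-\ED(\ell^{u_0}_{-a,b},\partial_i)\to-(2\pi)^{-1}\log\crad(0,\D\setminus\ell)$; on the right, since $L\to\infty$, the bridge from $0$ to $0$ killed at $\widehat T^{0,0}_{-a,b}$ converges to a Brownian motion killed at $T_{-a,b}$, so $(\widehat\tau^{0,0}_{-a,b},\widehat T^{0,0}_{-a,b})\to(\tau_{-a,b},T_{-a,b})$. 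Theorem~\ref{t. main} is the case $a=b=2\lambda$ after rescaling the first coordinate by $2\pi$ and using $\lambda^2=\pi/8$ from \eqref{eq::lambda}.

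To prove the displayed identity I would combine three ingredients. \textbf{(A)} For every $v$ the pair $(\ED(\ell^{u_v}_{-a,b},\partial_i),\alpha^{u_v}_{-a,b})$ has the same law $\pi_v$ as $(L-\widehat T^{0,v}_{-a,b},\widehat B^{0,v}_{\widehat T^{0,v}_{-a,b}})$: the extremal-distance marginal is Proposition~\ref{p.LawELBddTVS}, and the joint with the label is read off the Brownian bridge of Proposition~\ref{BProcess} encoding the conditional harmonic function at $\partial_i$ (whose value at the relevant hitting time is the label), using the inside--out iteration and Lemmas~\ref{lembasic} and~\ref{l.find_ell} for $v\in2\lambda\Z$ after a constant shift reducing general $a,b$ to lattice-valued data; general $v$ then follows from (B). \textbf{(B)} For each $v$ the law of the triple on the left is absolutely continuous w.r.t.\ its value at $v=0$, with Radon--Nikodym derivative an explicit function $w_v$ of $(\ED(\ell,\partial_i),\alpha)$ only --- this is Proposition~\ref{p.change of measure TVS} (and Proposition~\ref{p.change of measure FPS} in the FPS variant) --- and the analogous statement holds for the Brownian triple by the Markov property of the bridge at time $\widehat T^{0,v}_{-a,b}$ (as $\widehat\tau^{0,v}_{-a,b}\le\widehat T^{0,v}_{-a,b}$ is $\mathcal F_{\widehat T^{0,v}_{-a,b}}$-measurable); a direct Gaussian computation shows both derivatives are the \emph{same} $w_v$ once (A) holds. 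Thus in each family the conditional law of the first coordinate given the last two is a $v$-independent kernel, $K^{\mathrm{geom}}$ resp.\ $K^{\mathrm{Br}}$, and the family equals $K^{(\cdot)}\otimes\pi_v$.

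\textbf{(C)} It then suffices to prove $K^{\mathrm{geom}}=K^{\mathrm{Br}}$. By Proposition~\ref{prop:generalTVS}, $\ED(\ell^{u_v}_{-a,b},\partial_o)\overset{d}{=}\widehat\tau^{0,v}_{-a,b}$ for every $v$, so the first marginals of $K^{\mathrm{geom}}\otimes\pi_v$ and $K^{\mathrm{Br}}\otimes\pi_v$ agree; writing $g_\phi(t,\alpha)=\int\phi\,d(K^{\mathrm{geom}}-K^{\mathrm{Br}})(\cdot\mid t,\alpha)$, this reads $\int g_\phi\,d\pi_v=0$ for all bounded $\phi$ and all $v$. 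The Brownian-bridge characterisation announced in the introduction is precisely the \emph{totality} of $\{\pi_v\}_{v\in\R}$: since $\alpha\in\{-a,b\}$ and, by (B), $d\pi_v/d\pi_0$ is a constant multiple of $\exp(\alpha v/t-v^2/(2t))$ in the first coordinate $t$, the reweightings $\{d\pi_v/d\pi_0\}_v$ form a total family in $L^\infty(\pi_0)$, forcing $g_\phi\equiv0$, hence $K^{\mathrm{geom}}=K^{\mathrm{Br}}$ and the identity. I expect the main obstacles to be the bookkeeping in (A) --- matching the label with the bridge endpoint, and the reduction to lattice-valued boundary data --- and making the totality statement in (C) rigorous, where one must control $\pi_0$ near the degenerate values $t=0$ (loop touching $\partial_i$) and $t=L$; by contrast the conceptual core, that both sides are $(\ED_{\mathrm{inner}},\alpha)$-exponential tilts of a single common law, is handed to us by Propositions~\ref{p.LawELBddTVS} and~\ref{p.change of measure TVS}.
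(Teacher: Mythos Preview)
Your proposal is correct and follows essentially the same route as the paper: prove the annulus identity (paper's Proposition~\ref{p. law of TVS}) by combining the two marginal laws (Propositions~\ref{p.LawELBddTVS} and~\ref{prop:generalTVS}) with the $v$-independence of the conditional law (Proposition~\ref{p.change of measure TVS}), then pass to the disk via Proposition~\ref{prop:coupling} and Lemma~\ref{LemCREDlim}. Two minor remarks: in (A) the joint law of $(\ED(\ell,\partial_i),\alpha)$ for \emph{all} $v$ is already immediate from the local-set-process Brownian bridge of Proposition~\ref{BProcess}/\ref{p.LawELBddTVS} (the label is the value at the hitting time), so you do not need to bootstrap general $v$ through (B); and your totality argument in (C) is exactly the content of the paper's Proposition~\ref{p. joint law TVS}, which phrases it as injectivity of the Laplace transform in the variable $u=\tfrac12(v+a)^2$ after restricting to $v\le -a$.
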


	The fact that we can obtain the joint laws for a large family of two-valued sets motivates the study of another important set related to the continuum GFF - the first passage set of the GFF introduced in \cite{ALS1}. The geometry of the loop of this set is related to the following times of a Brownian motion $B$,
	\begin{align*}
	&T_{-a}:= \inf\{t\geq 0: B_t=-a\},\\
	&\tau_{-a}:=\sup\{0 \leq t \leq T_{-a}: B_t= -a+2\lambda\} \vee 0.
	\end{align*}
	In this context, an analogue of Theorem \ref{t. main} takes the following form.
	\begin{thm}\label{t. law FPS}
		Let $a>0$ and $\ell$ be the loop of the first passage set of level $-a$, $\A_{-a}$, in the unit disk surrounding the origin. Take 
$\ED(\ell, \partial \D)$ the extremal distance between $\ell$ and 
$\partial \D$, and $\crad(0,\D\backslash \ell)$ the conformal radius of the origin on the bounded domain surrounded by $\ell$. Then, 
$(\ED(\ell,\partial \D), -(2\pi)^{-1}\log\crad(0,\D\backslash \ell) )$ is equal in law to $(\tau_{-a},T_{-a})$.
		\end{thm}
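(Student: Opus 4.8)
The plan is to obtain Theorem \ref{t. law FPS} from the two-valued case, Theorem \ref{t. joint law TVS}, by a limiting argument entirely parallel to how Proposition \ref{mlawfps} was deduced from Proposition \ref{prop:generalTVS}.

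Fix $a>0$ and, for every integer $k>a/(2\lambda)$, set $b_k:=2k\lambda-a>0$, so that $a+b_k=2k\lambda\in 2\lambda\N$ and Theorem \ref{t. joint law TVS} applies to the two-valued set $\A_{-a,b_k}$ in $\D$. Let $\ell_k$ be the loop of $\A_{-a,b_k}$ surrounding the origin, let $O_k$ be the connected component of $\D\setminus\A_{-a,b_k}$ containing $0$, and let $E_k$ be the event that the constant boundary value of $h_{\A_{-a,b_k}}$ on $\ell_k$, seen from $O_k$, equals $-a$. The argument rests on two facts. First, on $E_k$ one has $\ell_k=\ell$, where $\ell$ is the loop of the first passage set $\A_{-a}$ around $0$: this is the disk analogue of the identification used in the proof of Proposition \ref{mlawfps}, and holds because the loop around $0$ is already fully determined by the generalized level line exploration at the instant its label $-a$ is reached --- which on $E_k$ is exactly the moment this loop closes --- before the constructions of $\A_{-a}$ and $\A_{-a,b_k}$ can differ inside the enclosed component $O_k$. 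Second, $\P(E_k)\to 1$ as $k\to\infty$: indeed, as in the proof of Theorem \ref{t. joint law TVS} (and \cite{ASW}), the harmonic value at $0$ along the exploration towards $0$ is, in its conformal-radius time parametrization, a standard Brownian motion $B$; the loop around $0$ of $\A_{-a,b_k}$ closes at the first exit time $T_{-a,b_k}$ of $B$ from $(-a,b_k)$, and its label equals $-a$ precisely when this exit is through $-a$, so $\P(E_k)=b_k/(a+b_k)=1-a/(2k\lambda)\to 1$.

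On the Brownian side, note that on the event $\{B\text{ hits }-a\text{ before }b_k\}$ one has $T_{-a,b_k}=T_{-a}$ and $B_{T_{-a,b_k}}=-a$, hence $\tau_{-a,b_k}=\sup\{0\le t\le T_{-a}:|B_t-(-a)|=2\lambda\}\vee 0=\tau_{-a}$, the value $-a-2\lambda$ being unattainable for $t\le T_{-a}$ because $B$ starts above $-a$. Consequently the law of $(\tau_{-a,b_k},T_{-a,b_k})$ converges, as $k\to\infty$, to the law of $(\tau_{-a},T_{-a})$ in total variation, with error at most $2a/(a+b_k)$.

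It then remains to combine these. Setting $C(\ell):=-(2\pi)^{-1}\log\crad(0,\D\setminus\ell)$ and taking an arbitrary bounded measurable $f$, I would split $\E[f(\ED(\ell,\partial\D),C(\ell))]$ over $E_k$ and $E_k^c$; on $E_k$ replace $\ell$ by $\ell_k$ using the first fact above; complete the expectation back to the full law of $(\ED(\ell_k,\partial\D),C(\ell_k))$ at the cost of an error bounded by $\|f\|_\infty\,\P(E_k^c)$; apply Theorem \ref{t. joint law TVS} to identify this with $\E[f(\tau_{-a,b_k},T_{-a,b_k})]$; and finally invoke the total variation convergence above. Since $\E[f(\ED(\ell,\partial\D),C(\ell))]$ does not depend on $k$, letting $k\to\infty$ yields $\E[f(\ED(\ell,\partial\D),C(\ell))]=\E[f(\tau_{-a},T_{-a})]$, which is the claim. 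The only steps requiring genuine care are the identification $\ell_k=\ell$ on $E_k$ and the computation $\P(E_k)=b_k/(a+b_k)$; the rest is a soft limiting argument. (Alternatively one could mimic the direct proof of Theorem \ref{t. joint law TVS}, replacing the two-valued change-of-measure and marginal inputs by their first passage set counterparts, Propositions \ref{p.change of measure FPS}, \ref{p.LawELBddFPS} and \ref{mlawfps}; but the reduction above is shorter once Theorem \ref{t. joint law TVS} is in hand.)
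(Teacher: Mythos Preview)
Your proposal is correct and is in fact one of the two routes the paper itself mentions. Immediately after stating Theorem \ref{t. law FPS}, the paper remarks that ``strictly speaking, one could deduce Theorem 5.2 from Theorem 5.1 by using the fact that the non-contractible loop of $\A_{-a}$ agrees with the non-contractible loop of $\A_{-a,b}$ with probability $1-o(1)$ as $b\to\infty$ (see e.g.\ the proof of Proposition \ref{mlawfps})'', which is exactly your argument.

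The paper nevertheless opts for the other route: it proves the annulus version, Proposition \ref{p. law of FPS}, directly from the FPS inputs (Propositions \ref{p.LawELBddFPS}, \ref{mlawfps}, \ref{p.change of measure FPS} feeding into the characterization Proposition \ref{p. joint law FPS}), and then passes to the disk via the total-variation coupling of Proposition \ref{prop:coupling} and Lemma \ref{LemCREDlim}, exactly as for Theorem \ref{t. joint law TVS}. The paper explains its choice by saying ``several proofs are easier to explain in the case of the FPS''; in other words, it keeps the FPS track independent so as to illustrate the method in the simpler setting. Your reduction is shorter once Theorem \ref{t. joint law TVS} is available, at the cost of depending on it. One small point: your justification of $\ell_k=\ell$ on $E_k$ via the level-line exploration is correct in spirit but a cleaner phrasing is that $\A_{-a}$ is obtained from $\A_{-a,b_k}$ by running further FPS only in the components labelled $b_k$, so on $E_k$ the component $O_k$ containing $0$ is already a component of $\D\setminus\A_{-a}$; this is the disk analogue of the argument in the proof of Proposition \ref{mlawfps}.
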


This theorem is also of interest due to the relation of the Brownian loop soup and FPS, as given in Proposition \ref{PropClusterFPS}, and is used to prove Theorem 1.2, i.e. to describe the extremal distances related to a cluster of a critical Brownian loop soup.

In fact, strictly speaking, one could deduce Theorem 5.2 from Theorem 5.1 by using the fact that the non-contractible loop of $\A_{-a}$ agrees with the non-contractible loop of $\A_{-a,b}$ with probability $1-o(1)$ as $b \to \infty$ (see e.g. the proof of Proposition \ref{mlawfps}). However, several proofs are easier to explain in the case of the FPS, so we will still prove both theorems in this section.

\subsection{Characterizations of the joint laws}
\label{SubsecLawsCharac}
For $L>0$ and $v\in \R$, denote $(\widehat{B}^{v}_{t})_{0\leq t\leq L}$ the standard Brownian bridge from $0$ to $v$ in time $L$. Take 
$c\in \R$ and define
\begin{align*}
&\widehat{T}^{v}_{c}:=
\inf\lbrace 0\leq t\leq L: \widehat{B}^{v}_{t}
=c\rbrace \wedge L,\\
&\widehat{\tau}^{v}_{c}:=
\left\lbrace
\begin{array}{ll}
\sup\lbrace 0 \leq \widehat{T}^{v}_{c} :
\widehat{B}^{v}_{t}=c+2\lambda \text{ or } \widehat{B}^{v}_{t}=c-2\lambda \rbrace \vee 0 & 
\text{if } \widehat{T}^{v}_{c}<L, \\ 
L & \text{if } \widehat{T}^{v}_{c}=L.
\end{array} 
\right.
\end{align*}
Additionally, for $a,b>0$, denote 
$\widehat{T}^{v}_{-a,b}=\widehat{T}^{v}_{-a}\wedge T^{v}_ b$, and
\begin{displaymath}
\widehat{\tau}^{v}_{-a,b}=\left\lbrace\begin{array}{ll}
\widehat{\tau}^{v}_{ -a} & 
\text{if } \widehat{T}^{v}_{-a,b}=\widehat{T}^{v}_{-a} <L, \\ 
\widehat{\tau}^{v}_{ b} & \text{if } \widehat{T}^{v}_{-a,b} 
=\widehat{T}^{v}_{ b} <L, \\
L & \text{if } \widehat{T}^{v}_{-a,b}=L.
\end{array} \right.
\end{displaymath}

The following proposition gives a characterization of the joint law of 
$(\widehat{\tau}^{v}_{-a}, L-\widehat{T}^{v}_{-a})$. 
This will help us identify,
on an annulus,
the joint law of the extremal distance of the non-contractible loop $\ell^{u_v}_{-a}$ of 
$\A_{-a}^{u_{v},\partial_o}$ 
to both boundaries.
\begin{prop}\label{p. joint law FPS}
	Let $a>0$. Assume that for every $v\in\R$, there is a couple of random variables 
	$(\sigma_{o}^{v},\sigma_{i}^{v})$, with 
	$\sigma_{o}^{v},\sigma_{i}^{v}\in [0,L]$ and
	$\sigma_{o}^{v}+\sigma_{i}^{v}\leq L$, such that
	\begin{enumerate}
		\item $\sigma_{o}^{v}$, respectively $\sigma_{i}^{v}$, has same law as
		$\widehat{\tau}^{v}_{-a}$, respectively 
		$L-\widehat{T}^{v}_{ -a}$,
		\item for every $v\in \R$ and on the event 
		$\sigma_{i}^{v}> 0$, the conditional law of 
		$\sigma_{o}^{v}$ given $\sigma_{i}^{v}$ is the same as the conditional law of
		$\sigma_{o}^{0}$ given $\sigma_{i}^{0}$.
	\end{enumerate}
	Then, for every $v\in\R$, the couple
	$(\sigma_{o}^{v},\sigma_{i}^{v})$ has the same law as
	$(\widehat{\tau}^{v}_{ -a},L-\widehat{T}^{v}_{ -a})$.
\end{prop}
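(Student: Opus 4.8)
The plan is to reduce everything to a statement about the Brownian bridge $\widehat B^v$ and then to use the change-of-measure between bridges with different endpoints to transfer the known conditional structure at $v=0$ to arbitrary $v$. First I would observe that the pair $(\widehat\tau^v_{-a}, L-\widehat T^v_{-a})$ is a deterministic functional of the bridge path, so the law of this pair for arbitrary $v$ is absolutely continuous with respect to the law of $(\widehat\tau^0_{-a}, L-\widehat T^0_{-a})$; the Radon--Nikodym derivative is the usual Girsanov/bridge density $\exp\bigl(\tfrac{v}{L}B_L - \tfrac{v^2}{2L}\bigr)$ rewritten for bridges, which after conditioning on the path up to the hitting time $\widehat T^v_{-a}$ depends only on $\widehat T^v_{-a}$ and the value $-a$ hit there. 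Concretely, on the event $\widehat T^v_{-a}<L$, conditionally on $\widehat T^v_{-a}=s$ the bridge restricted to $[s,L]$ is itself a bridge from $-a$ to $v$ of length $L-s$, and the portion on $[0,s]$ is a bridge from $0$ to $-a$ of length $s$ \emph{conditioned not to hit $-a$ before time $s$}; crucially the law of this first portion, hence the conditional law of $\widehat\tau^v_{-a}$ given $\{\widehat T^v_{-a}=s\}$, does not depend on $v$ at all.

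Armed with this, the key point is the following: write $F^v$ for the joint law of $(\widehat\tau^v_{-a}, L-\widehat T^v_{-a})$ and $G^v$ for the joint law of $(\sigma^v_o,\sigma^v_i)$. By hypothesis (1), the \emph{second marginals} agree: $\sigma^v_i \overset{d}{=} L-\widehat T^v_{-a}$, and also $\sigma^0_o\overset{d}{=}\widehat\tau^0_{-a}$. By the previous paragraph, the conditional law of $\widehat\tau^v_{-a}$ given $L-\widehat T^v_{-a}=\ell$ (on $\{\ell>0\}$, i.e. $\widehat T^v_{-a}=L-\ell<L$) is the same for all $v$, in particular equal to that at $v=0$; and by hypothesis (2) the conditional law of $\sigma^v_o$ given $\sigma^v_i$ on $\{\sigma^v_i>0\}$ equals that of $\sigma^0_o$ given $\sigma^0_i$. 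Since at $v=0$ we have $G^0=F^0$ by hypotheses (1)--(2) applied with $v=0$ (the first marginal of $G^0$ is that of $\widehat\tau^0_{-a}$, the second that of $L-\widehat T^0_{-a}$, and they are assembled with the correct conditional law — here one should check that $G^0=F^0$, which will follow because the conditional law of $\sigma^0_o$ given $\sigma^0_i$ appearing in (2) must be the one making the marginals consistent, and a joint law is determined by one marginal plus the conditional law), we get that on $\{\sigma^v_i>0\}$, $G^v$ has the same conditional-times-marginal decomposition as $F^v$, hence $G^v|_{\{\sigma^v_i>0\}}=F^v|_{\{\sigma^v_i>0\}}$. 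On the complementary event $\sigma^v_i=0$ we have, by (1), $\mathbb P(\sigma^v_i=0)=\mathbb P(\widehat T^v_{-a}=L)=\mathbb P(L-\widehat T^v_{-a}=0)$, and on this event the constraint $\sigma^v_o+\sigma^v_i\le L$ together with (1) forces $\sigma^v_o\overset{d}{=}\widehat\tau^v_{-a}$ restricted there; but when $\widehat T^v_{-a}=L$ we set $\widehat\tau^v_{-a}=L$, so $\sigma^v_o=L$ there as well. Putting the two events together gives $G^v=F^v$.

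The main obstacle I expect is making the ``conditional law of $\widehat\tau^v_{-a}$ given $\widehat T^v_{-a}$ is independent of $v$'' step fully rigorous: one must carefully disintegrate the bridge law with respect to the hitting time of a level, handle the fact that $\widehat T^v_{-a}$ has an atom at $L$ (the event the bridge never reaches $-a$), and verify that the two pieces of the path on either side of the hitting time are conditionally independent bridges with the stated laws — this is a standard strong-Markov-type argument for bridges but needs to be stated cleanly. The remaining delicate point is the bookkeeping showing $G^0=F^0$: one needs that knowing a joint law's second marginal plus the conditional law of the first coordinate given the second (on the relevant event) plus the behaviour on the atom $\{\sigma_i=0\}$ pins down the joint law; this is elementary measure theory but should be spelled out, in particular the observation that on $\{\widehat T^v_{-a}=L\}$ all quantities are deterministic ($=L$), so no conditional law is needed there.
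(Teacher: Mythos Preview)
Your argument has a genuine gap at the step where you claim $G^{0}=F^{0}$. You correctly observe that hypothesis (1) gives you both marginals of $(\sigma_{o}^{0},\sigma_{i}^{0})$, and you correctly note that a joint law is determined by one marginal together with the conditional law of the other coordinate. But you do \emph{not} know the conditional law of $\sigma_{o}^{0}$ given $\sigma_{i}^{0}$: hypothesis (2) only tells you that this conditional law is the same for all $v$, not what it actually is. Your sentence ``the conditional law of $\sigma_{o}^{0}$ given $\sigma_{i}^{0}$ \dots\ must be the one making the marginals consistent'' is false in general --- two given marginals admit many couplings, hence many conditional laws. So from the data at $v=0$ alone you cannot conclude $G^{0}=F^{0}$, and the rest of the argument collapses.

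The paper's proof avoids this by using the information from \emph{all} $v$ simultaneously rather than a single value. One writes the Laplace transform $F(\nu,v)=\E[e^{-\nu\sigma_{o}^{v}}]$ (known by hypothesis (1)) as an integral of the unknown $v$-independent conditional Laplace transform $F_{c}(\nu,\rho)=\E[e^{-\nu\sigma_{o}^{v}}\mid\sigma_{i}^{v}=\rho]$ against the (explicit) density of $\sigma_{i}^{v}$. For $v\le -a$ there is no atom at $\rho=0$, and after a change of variables $\rho\mapsto s=\rho^{-1}$ the resulting integral is a Laplace transform in the parameter $u=(v+a)^{2}/2$. Injectivity of the Laplace transform then pins down $F_{c}(\nu,\cdot)$ uniquely, hence it must coincide with the corresponding conditional Laplace transform for the bridge pair $(\widehat{\tau}^{v}_{-a},L-\widehat{T}^{v}_{-a})$, which satisfies the same hypotheses. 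Your observation that the conditional law of $\widehat{\tau}^{v}_{-a}$ given $\widehat{T}^{v}_{-a}$ does not depend on $v$ is correct and is implicit in this argument, but it is the richness of the family $\{v\mapsto\text{law of }\sigma_{i}^{v}\}$, not the single case $v=0$, that forces the identification. Your treatment of the atom $\{\sigma_{i}^{v}=0\}$ via the constraint $\sigma_{o}^{v}+\sigma_{i}^{v}\le L$ is fine and matches the paper's handling of that case.
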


Similarly, to identify the joint laws of extremal distances for two-valued sets, we obtain the following characterization of the joint law of 	
$(\widehat{\tau}^{v}_{-a,b},L-\widehat{T}^{v}_{-a,b},
\widehat{B}^{v}_{\widehat{T}^{v}_{-a,b}}
\1_{\widehat{T}^{v}_{-a,b}<L})$.

\begin{prop}\label{p. joint law TVS}
	Let $a,b>0$. Assume that for every $v\in\R$, there is a triple of random variables 
	$(\sigma_{o}^{v},\sigma_{i}^{v},X^{v})$, with 
	$\sigma_{o}^{v},\sigma_{i}^{v}\in [0,L]$,
	$\sigma_{o}^{v}+\sigma_{i}^{v}\leq L$, and $X^{v}\in\lbrace -a,b,v\rbrace$, such that
	\begin{enumerate}
		\item the joint law of
		$(\sigma_{i}^{v},X^{v})$ is the same as for
		$(L-\widehat{T}^{v}_{-a,b},
		\widehat{B}^{v}_{\widehat{T}^{v}_{-a,b}})$,
		\item the law of $(\sigma_{o}^{v},X^{v})$ is the same as that of
		$(\widehat{\tau}^{v}_{-a,b},
		\widehat{B}^{v}_{\widehat{T}^{v}_{-a,b}})$,
		\item for every $v\in \R$, on the event $\sigma_{i}^{v}>0$, the conditional law of 
		$\sigma_{o}^{v}$ given $(\sigma_{i}^{v},X^{v})$ is the same as the conditional law of
		$\sigma_{o}^{0}$ given $(\sigma_{i}^{0},X^{0})$.
	\end{enumerate}
	Then, for every $v\in\R$ the triple
	$(\sigma_{o}^{v},\sigma_{i}^{v},X^{v}
	\1_{\sigma_{i}^{v}>0})$ has same law as
	$(\widehat{\tau}^{v}_{-a,b},L-\widehat{T}^{v}_{-a,b},
	\widehat{B}^{v}_{\widehat{T}^{v}_{-a,b}}
	\1_{\widehat{T}^{v}_{-a,b}<L})$.
\end{prop}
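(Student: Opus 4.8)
The plan is to recover the full joint law of $(\sigma^v_o,\sigma^v_i,X^v)$ on the event $\{\sigma^v_i>0\}$ from the two two-dimensional marginals in hypotheses (1) and (2) together with the $v$-independence in hypothesis (3), and then to treat $\{\sigma^v_i=0\}$ by a soft argument. Fix $c\in\{-a,b\}$. By hypothesis (1), on $\{\sigma^v_i>0\}$ one has $X^v\in\{-a,b\}$ almost surely, and by hypothesis (3) the regular conditional law of $\sigma^v_o$ given $(\sigma^v_i,X^v)$ there does not depend on $v$; call it $\mu_{s,c}$, defined for a.e.\ $(s,c)\in(0,L)\times\{-a,b\}$. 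On the Brownian bridge side, write $\widehat T:=\widehat T^v_{-a,b}$ and $\widehat\tau:=\widehat\tau^v_{-a,b}$. Decomposing $\widehat B^v$ at the stopping time $\widehat T$ and using the Markov property, the portion of the bridge before $\widehat T$, given $\{\widehat T=s<L,\ \widehat B^v_s=c\}$, has a law that does not involve $v$ (it is the bridge of the Brownian motion killed on leaving $(-a,b)$); since $\widehat\tau$ is a functional of that portion alone, the conditional law of $\widehat\tau$ given $(L-\widehat T,\widehat B^v_{\widehat T})=(L-s,c)$ depends only on $(L-s,c)$, and we denote it $\nu_{L-s,c}$. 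It then suffices to prove $\mu_{s,c}=\nu_{s,c}$ for a.e.\ $s\in(0,L)$ and, separately, to pin down the behaviour on $\{\sigma^v_i=0\}$.

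To identify $\mu$ with $\nu$, fix a bounded measurable $f$. Using hypothesis (1) to match the events $\{\sigma^v_i>0\}$, $\{X^v=c\}$ with $\{\widehat T<L\}$, $\{\widehat B^v_{\widehat T}=c\}$, hypothesis (2) to transfer $f(\sigma^v_o)$ to $f(\widehat\tau)$, and conditioning $\widehat\tau$ on $(L-\widehat T,\widehat B^v_{\widehat T})$ and $\sigma^v_o$ on $(\sigma^v_i,X^v)$, one gets
\[
\E\big[\nu_{L-\widehat T,c}(f)\,\mathbf 1_{\widehat B^v_{\widehat T}=c,\ \widehat T<L}\big]=\E\big[\mu_{L-\widehat T,c}(f)\,\mathbf 1_{\widehat B^v_{\widehat T}=c,\ \widehat T<L}\big].
\]
Writing $p_t(x)=(2\pi t)^{-1/2}\exp(-x^2/(2t))$ and letting $q^{(c)}(s)\,ds$ be the sub-probability law of the exit time of a standard Brownian motion from $(-a,b)$ on the event that the exit occurs at $c$, the usual bridge relation gives $\P(\widehat T\in ds,\ \widehat B^v_{\widehat T}=c)=q^{(c)}(s)\,p_{L-s}(v-c)\,p_L(v)^{-1}\,ds$. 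Hence the identity above becomes, after multiplying by $p_L(v)>0$,
\[
\int_0^L\big(\mu_{L-s,c}(f)-\nu_{L-s,c}(f)\big)\,q^{(c)}(s)\,p_{L-s}(v-c)\,ds=0\qquad\text{for every }v\in\R .
\]
The integrand is bounded, and as $v$ runs over $\R$ the quantity $(v-c)^2$ runs over $[0,\infty)$, so after the substitutions $u=L-s$ and $r=1/u$ this is precisely the vanishing of a Laplace transform on $[0,\infty)$; by injectivity of the Laplace transform the integrand is $0$ for a.e.\ $s$, and since $q^{(c)}(s)>0$ for all $s\in(0,L)$ we conclude $\mu_{L-s,c}(f)=\nu_{L-s,c}(f)$ for a.e.\ $s$. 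Letting $f$ range over a countable measure-determining family of bounded continuous functions on $[0,L]$ yields $\mu_{s,c}=\nu_{s,c}$ for a.e.\ $s\in(0,L)$, for each $c\in\{-a,b\}$.

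It remains to analyse $\{\sigma^v_i=0\}$ and to assemble. Since $\widehat\tau^v_{-a,b}=L$ exactly on $\{\widehat T^v_{-a,b}=L\}$, hypotheses (1) and (2) give $\P(\sigma^v_o=L)=\P(\widehat\tau^v_{-a,b}=L)=\P(\widehat T^v_{-a,b}=L)=\P(\sigma^v_i=0)$, while $\sigma^v_o+\sigma^v_i\le L$ forces $\{\sigma^v_o=L\}\subseteq\{\sigma^v_i=0\}$; therefore $\sigma^v_o=L$ a.s.\ on $\{\sigma^v_i=0\}$. On the bridge, $(\widehat\tau^v_{-a,b},\,L-\widehat T^v_{-a,b},\,\widehat B^v_{\widehat T^v_{-a,b}}\mathbf 1_{\widehat T^v_{-a,b}<L})=(L,0,0)$ on $\{\widehat T^v_{-a,b}=L\}$. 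Splitting $\E[F(\sigma^v_o,\sigma^v_i,X^v\mathbf 1_{\sigma^v_i>0})]$, for any bounded continuous $F$, over $\{\sigma^v_i>0\}$ and $\{\sigma^v_i=0\}$, and using on the first event the equality $\mu=\nu$ together with hypothesis (1), and on the second the identity $\sigma^v_o=L$ together with $\P(\sigma^v_i=0)=\P(\widehat T^v_{-a,b}=L)$, gives the claimed equality in law. Proposition \ref{p. joint law FPS} is proved by the same scheme, with $X^v$ absent and only the single threshold $c=-a$ present, and is in fact a little simpler.

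The main obstacle is the passage, carried out in the second paragraph, from an averaged identity to a pointwise one: hypotheses (1)--(3) only give that $\mu$ and $\nu$ agree after integration against the law of the bridge's first visit to $\{-a,b\}$, and the point is that varying the endpoint $v$ of the bridge makes this family of laws rich enough --- through the Gaussian/heat-kernel structure of the Brownian bridge and the injectivity of the Laplace transform --- to separate the conditional laws and conclude $\mu_{s,c}=\nu_{s,c}$.
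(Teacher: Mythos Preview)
Your argument is correct and follows essentially the same route as the paper: reduce to identifying the conditional law of $\sigma_o^v$ given $(\sigma_i^v,X^v)$ on $\{\sigma_i^v>0\}$, use the $v$-independence from (3), and invert a Laplace transform in the variable $(v-c)^2/2$ coming from the heat-kernel factor in the law of $\widehat T^v_{-a,b}$. Two cosmetic differences: the paper phrases everything with exponential test functions $e^{-\nu\sigma_o^v}$ rather than general bounded $f$, and it handles $\{\sigma_i^v=0\}$ by subtracting the identified integral from $F(\nu,v)$, whereas your observation that $\{\sigma_o^v=L\}=\{\sigma_i^v=0\}$ via the constraint $\sigma_o^v+\sigma_i^v\le L$ is a slightly slicker shortcut to the same end.
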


The proofs of these propositions make use of certain explicit formulas. Firstly, recall the heat kernel 
on $\R$:
\begin{align}
\label{HeatKernel}
p(t,x,y):=\dfrac{1}{\sqrt{2\pi t}}e^{-\frac{(y-x)^{2}}{2t}}.
\end{align}
Also, denote by $q_{-a}$ the density function of the first hitting times of Brownian motion of level $-a$ (see \cite{BorodinSalminen2015}, Section 1.2, Formula 2.0.2)
\begin{align}
\label{Eqqa}
q_{-a}(t):=\dfrac{a}{\sqrt{2\pi t^{3}}}e^{-\frac{a^{2}}{2t}}
=\mathbb{P}(T_{-a}\in (t,t+dt))/dt. 
\end{align}
Finally, let us also state the explicit laws for the hitting times of the Brownian bridge.
\begin{lemma}
\label{l. law T-a}
	For all $v\in\R$ and $a>0$, the law of $\widehat{T}^{v}_{-a}$ is given by
	\begin{align}
	\label{EqT-aBridge}
	\mathbb{P}(\widehat{T}^{v}_{ -a}
	\in (t,t+dt), \widehat{T}^{v}_{ -a}<L)=q_{-a}(t)
	\dfrac{p(L-t,-a,v)}{p(L,0,v)}\1_{t<L}dt.
	\end{align}
\end{lemma}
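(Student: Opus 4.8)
The plan is to obtain the law of $\widehat T^{v}_{-a}$ by realising the Brownian bridge as a standard Brownian motion conditioned on its endpoint, and then applying the strong Markov property at the first hitting time of $-a$.

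First I would recall the $h$-transform description of the bridge: if $B$ is a standard Brownian motion started at $0$, then for each $t<L$ the law of the bridge $\widehat B^{v}$ of duration $L$ from $0$ to $v$ is absolutely continuous with respect to that of $B$ on $\mathcal F_t=\sigma(B_s:s\le t)$, with density $p(L-t,B_t,v)/p(L,0,v)$. Let $T_{-a}$ be the first hitting time of $-a$ by $B$, which has the explicit density $q_{-a}$ from \eqref{Eqqa}. Applying this density at the stopping time $T_{-a}\wedge s$ (optional stopping of the bounded martingale $t\mapsto p(L-t,B_t,v)/p(L,0,v)$ on $[0,s]$), restricting to the event $\{T_{-a}<s\}$, using that $B_{T_{-a}}=-a$ there, and letting $s\uparrow L$ by monotone convergence, one gets for every bounded measurable $g$
\[
\E\big[g(\widehat T^{v}_{-a})\,\1_{\widehat T^{v}_{-a}<L}\big]
=\frac{1}{p(L,0,v)}\,\E\big[g(T_{-a})\,p(L-T_{-a},-a,v)\,\1_{T_{-a}<L}\big].
\]
Since $T_{-a}$ has density $q_{-a}$ on $(0,\infty)$, the right-hand side equals $\int_0^L g(t)\,q_{-a}(t)\,p(L-t,-a,v)\,p(L,0,v)^{-1}\,dt$, which is exactly \eqref{EqT-aBridge}. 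Equivalently, one may argue directly: conditionally on $\{T_{-a}=t<L\}$ the bridge decomposes as the concatenation of its path up to time $t$ with an independent Brownian bridge from $-a$ to $v$ of length $L-t$, and the "density" in the variable $t$ carries the factor $q_{-a}(t)\,p(L-t,-a,v)$, normalised by $p(L,0,v)$.

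I do not expect a genuine obstacle: this is a standard computation that could also be quoted from references on Brownian bridges and first passage times. The only points deserving care are the rigorous meaning of conditioning on the null event $\{B_L=v\}$ and the exchange of this conditioning with the stopping time $T_{-a}$; both are handled in the usual way through the martingale $p(L-t,B_t,v)/p(L,0,v)$ and optional stopping up to times $s<L$, together with the fact that on $\{T_{-a}<L\}$ the post-$T_{-a}$ increment is, given $\mathcal F_{T_{-a}}$, a Brownian motion started at $-a$ of remaining duration $L-T_{-a}>0$, so that $B_L$ there has conditional density $p(L-T_{-a},-a,\cdot)$.
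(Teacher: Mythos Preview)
Your proposal is correct and follows essentially the same approach as the paper: both use the absolute continuity of the bridge with respect to Brownian motion on $[0,t]$ for $t<L$ with Radon--Nikodym derivative $p(L-t,B_t,v)/p(L,0,v)$, and then combine this with the density $q_{-a}$ of $T_{-a}$. You have simply spelled out in more detail (optional stopping, monotone convergence as $s\uparrow L$) what the paper leaves implicit in a one-line remark.
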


\begin{proof}
For $t\in [0,L)$, the Brownian bridge
$(\widehat{B}_{s}^{v})_{0\leq s\leq t}$ is absolutely continuous with respect to the Brownian motion
$(B_{s})_{0\leq s\leq t}$, with the Radon-Nikodym derivative
\begin{align*}
\mathcal{D}_{t}=\dfrac{p(L-t,B_{t},v)}{p(L,0,v)}.
\end{align*}
Then \eqref{EqT-aBridge} follows from \eqref{Eqqa} and the expression above.
\end{proof}

We are now ready to prove Proposition \ref{p. joint law FPS}.
\begin{proof}[Proof of Proposition \ref{p. joint law FPS}]
	Denote by $F(\nu,v)$ the Laplace transform of $\sigma_o^v$:
	\begin{displaymath}
	F(\nu,v)=\E[e^{-\nu\sigma_{o}^{v}}],~~\nu\geq 0.
	\end{displaymath}
	By assumption (1), this is also the Laplace transform of 
	$\widehat{\tau}^{v}_{-a}$, hence already determined.
	Take $\rho>0$ and  let $F_{c}(\nu,\rho)$ be the conditional Laplace transform of $\sigma_o^v$ given $\sigma_i^v$
	\begin{displaymath}
	F_{c}(\nu, \rho)
	=\E[e^{-\nu\sigma_{o}^{v}}\vert \sigma_{i}^{v}=\rho],
	\end{displaymath}
	defined, for every $\nu\geq 0$,
	$\1_{(0,L)} d\rho$-almost everywhere. Note that as a consequence of assumption (3), $F_c(\nu, \rho)$ does not depend on $v$.
	
	It suffices to show that $F_c(\nu,\rho)$ equals 	
	$\E[e^{-\nu \widehat{\tau}^{v}_{-a}}\vert 
	\widehat{T}^{v}_{-a}=\rho]$ for any $\nu \geq 0$ and for $\d\rho$-almost every $\rho \in [0,L)$. To do this, we argue that due to our conditions, $F_c(\nu, \cdot)$ is entirely determined by $F(\nu,\cdot)$. 
Indeed, note that 	for $v\leq -a$, we have that 
$\widehat{T}^{v}_{ -a}<L$ a.s., and thus
	\begin{eqnarray*}
		F(\nu,v)&=&\E[F_{c}(\nu,\sigma_{i}^{v})]\\
		&=&
		\dfrac{1}{p(L,0,v)}\int_{0}^{L}F_{c}(\nu, \rho)
		q_{-a}(L-\rho)
		\dfrac{1}{\sqrt{2\pi \rho}} 
		e^{-\frac{(v+a)^{2}}{2\rho}}d\rho\\
		&=&
		\dfrac{1}{p(L,0,v)}\int_{L^{-1}}^{+\infty}F_{c}(\nu, s^{-1})
		q_{ -a}(L-s^{-1})
		\dfrac{1}{\sqrt{2\pi s^{3}}} e^{-\frac{(v+a)^{2}}{2}s}ds.
	\end{eqnarray*}
	As a consequence, we obtain that 
	$F(\nu,-\sqrt{2u}-a)p(L,0,-\sqrt{2u}-a)$ 
	is the Laplace transform of
	\begin{displaymath}
	s\mapsto \1_{s>L^{-1}}
	F_{c}(\nu, s^{-1})
	q_{-a}(L-s^{-1})
	\dfrac{1}{\sqrt{2\pi s^{3}}},
	\end{displaymath}
	evaluated in $u\geq 0$. It follows that the above function is uniquely determined $ds$-almost everywhere, 
	and consequently, for a fixed $\nu$,
	$F_{c}(\nu, \rho)$ is uniquely determined for 
	$d\rho$-almost every
	$\rho\in (0,L)$, and thus equals
	\begin{align*}
	\E[e^{-\nu \widehat{\tau}^{v}_{-a}}\vert 
	\widehat{T}^{v}_{-a}=\rho].
	\end{align*}
By taking a countable intersection, 
	$F_{c}(\nu, \rho)$  is uniquely determined for 
	$d\rho$-almost every
	$\rho\in (0,L)$ and for all $\nu$ positive rational. Thus by continuity of the Laplace transform, for $d\rho$-almost every $\rho\in (0,L)$, $F_{c}(\nu,\rho)$
	equals 	$\E[e^{-\nu \widehat{\tau}^{v}_{-a}}\vert 
	\widehat{T}^{v}_{-a}=\rho]$ for all $\nu \geq 0$.
	
	It remains to treat the $\rho = 0$ case, which can happen with positive probability. To do this, note that when $v>-a$,
	\begin{eqnarray*}
		\label{EqLT1}
		F(\nu,v)&=&\E[F_{c}(\nu,\sigma_i^v)\1_{\sigma_i^v > 0}]+\E\left[e^{-\nu \sigma_{o}^v}\mid \sigma_i^v=0 \right] \P(\sigma_i^v=0) 
		\nonumber\\
		&=&
		\int_{0}^{L}F_{c}(\nu, \rho)
		q_{-a}(L-\rho)
		\dfrac{p(\rho,-a,v)}{p(L,0,v)} d\rho+
		\E\left[e^{-\nu \sigma_{o}^v}\mid \sigma_i^v=0 \right]\mathbb{P}(T^{v}_{ -a}=L).
	\end{eqnarray*}
	Thus, as $\P(T^{v}_{ -a}=L)$ is positive, $\E\left[e^{-\nu \sigma_{o}^v}\mid \sigma_i^v=0 \right]$ is uniquely determined, too, and the proposition follows.
\end{proof}

Next we move to the proof of Proposition \ref{p. joint law TVS}. The proof is analogous to that of Proposition \ref{p. joint law FPS}. 
Let us define the following density
\begin{align*}\label{Eqqab}
q_{-a,b}(t,c):=
\mathbb{P}(T_{-a,b}\in(t,t+dt), B_{T_{-a,b}}=c)/dt,
~~c\in\{ -a,b\}.
\end{align*}
For the exact expression of $q_{-a,b}(t,c)$
we refer to Proposition \ref{Prop A1 laws} (2), which stems from Formula 3.0.6, Section 1.3, in \cite{BorodinSalminen2015}.

We can, now, write the analogue of Lemma \ref{l. law T-a} in this context.
\begin{lemma}
\label{l. law T-a b}
	The joint law of
	$(\widehat{T}^{v}_{-a,b},\widehat{B}^{v}_{\widehat{T}^{v}_{-a,b}})$ on the event
	$\widehat{T}^{v}_{-a,b}<L$ is given by
	\begin{eqnarray*}
	\mathbb{P}\left (\widehat{T}^{v}_{-a,b}
	\in (t,t+dt), \widehat{T}^{v}_{-a,b}<L,
	\widehat{B}^{v}_{\widehat{T}^{v}_{-a,b}}=-a\right )&=&
	q_{-a,b}(t,-a)
	\dfrac{p(L-t,-a,v)}{p(L,0,v)}\1_{t<L} dt,\\
	\mathbb{P}\left (\widehat{T}^{v}_{-a,b}
	\in (t,t+dt), \widehat{T}^{v}_{-a,b}<L,
	\widehat{B}^{v}_{\widehat{T}^{v}_{-a,b}}=b\right )&=&
	q_{-a,b}(t,b)
	\dfrac{p(L-t,b,v)}{p(L,0,v)}\1_{t<L} dt.
	\end{eqnarray*}
\end{lemma}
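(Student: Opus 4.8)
The plan is to repeat the argument of Lemma \ref{l. law T-a}, now applied to the two-sided exit problem. As recalled there, for any $t<L$ the law of the Brownian bridge $(\widehat B^{v}_{s})_{0\leq s\leq t}$ is absolutely continuous with respect to the law of the Brownian motion $(B_{s})_{0\leq s\leq t}$, with Radon--Nikodym derivative
\[
\mathcal{D}_{t}=\dfrac{p(L-t,B_{t},v)}{p(L,0,v)}.
\]
The process $(\mathcal{D}_{t})_{0\leq t<L}$ is a true martingale under $\P$, so for any fixed $t_{0}<L$ and any event $E$ measurable with respect to the path stopped at $S:=T_{-a,b}\wedge t_{0}$, the optional stopping theorem gives $\widehat{\P}(E)=\E[\mathbf{1}_{E}\,\mathcal{D}_{S}]$.

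The key point is that on the event $\{T_{-a,b}\in(t,t+dt),\,B_{T_{-a,b}}=c\}$ with $t<t_{0}$ and $c\in\{-a,b\}$, one has $S=T_{-a,b}$ and hence $\mathcal{D}_{S}=p(L-T_{-a,b},B_{T_{-a,b}},v)/p(L,0,v)=p(L-t,c,v)/p(L,0,v)$ (up to the $dt$-order term). Therefore
\[
\P\big(\widehat{T}^{v}_{-a,b}\in(t,t+dt),\ \widehat{B}^{v}_{\widehat{T}^{v}_{-a,b}}=c\big)
=\dfrac{p(L-t,c,v)}{p(L,0,v)}\,\P\big(T_{-a,b}\in(t,t+dt),\,B_{T_{-a,b}}=c\big)
=\dfrac{p(L-t,c,v)}{p(L,0,v)}\,q_{-a,b}(t,c)\,dt,
\]
which, specialized to $c=-a$ and $c=b$, is exactly the asserted formula. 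Since this holds for all $t<t_{0}$ and all $t_{0}<L$, letting $t_{0}\uparrow L$ yields the claim on the full event $\{\widehat{T}^{v}_{-a,b}<L\}$.

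There is essentially no obstacle here beyond being careful that the change of measure is only valid strictly before the terminal time $L$; this is handled by stopping at $T_{-a,b}\wedge t_{0}$ and then taking $t_{0}\uparrow L$, exactly as the indicator $\mathbf{1}_{t<L}$ in the statement reflects. The only external input needed is the explicit density $q_{-a,b}(t,c)$ of $(T_{-a,b},B_{T_{-a,b}})$ for the Brownian motion, which is recorded in Proposition \ref{Prop A1 laws}~(2) (from Formula 3.0.6, Section 1.3, of \cite{BorodinSalminen2015}).
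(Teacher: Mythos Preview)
Your proposal is correct and follows essentially the same approach as the paper. The paper does not spell out a separate proof for Lemma~\ref{l. law T-a b}; it is treated as the direct analogue of Lemma~\ref{l. law T-a}, whose proof consists precisely of the Radon--Nikodym identity $\mathcal{D}_t=p(L-t,B_t,v)/p(L,0,v)$ combined with the known Brownian density, which is exactly what you do (with the added care of truncating at $t_0<L$).
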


The proof of Proposition \ref{p. joint law TVS} now follows almost line by line the proof of Proposition \ref{p. joint law FPS}, so instead of detailing with we just highlight one difference.

\begin{proof}[Proof of Proposition \ref{p. joint law TVS}:]
 The only difference w.r.t. to the proof of Proposition \ref{p. joint law FPS} is the fact that we calculate the Laplace transforms, conditioned on the value of $X^v$, i.e. we consider the Laplace transforms
	\begin{displaymath}
	\E[e^{-\nu \sigma_{o}^{v}}\1_{ X^{v}=-a}],
	\qquad
	\E[e^{-\nu \sigma_{o}^{v}}\1_{X^{v}=b}],
	\end{displaymath}
	and their conditional versions given 
	$\sigma_{i}^{v}=\rho>0$ and $X^v$ equal either $-a$ or $b$. Additionally, one needs to note that $X^v=v$ if and only if $\sigma_i^{v}=0$.
\end{proof}

\subsection{Calculating the joint laws for TVS and FPS}
\label{SubsecLawsFPSTVS}

We will now verify that the laws of extremal distances for the FPS and the relevant TVS in an annulus satisfy the characterization of the joint law. This will prove versions of Theorems \ref{t. joint law TVS}
and \ref{t. law FPS} in an annulus. We thereafter explain how to deduce the case of the disk, i.e. the theorems themselves. 

Now, let $\An_r := \D\backslash r\overline{\D}$ ($r\in(0,1)$) be an annulus with outer boundary 
$\partial_o:=\partial\D$ and inner boundary
$\partial_i:=r\partial\D$. Let $L$ be the extremal distance
between $\partial_o $ and $\partial_i$:
\begin{align*}
L:=\ED(\partial_o,\partial_i) = 
\dfrac{1}{2\pi}\log(r^{-1}).
\end{align*}

Define, for $v\in\R$, $u_v$ as the harmonic function 
in $\An_r$ with boundary values $0$ on 
$\partial_o$ and $v$ on $\partial_i$. 
Take $\Phi$ a (zero-boundary) GFF in $\An_r$ and let 
$\A_{-a,b}^{u_v}$ ($a,b > 0, a+b\geq 2\lambda$)
be a two-valued local set of $\Phi + u_v$. Denote by $\A_{-a,b}^{u_v,\partial_o}$ the connected component of
$\A_{-a,b}^{u_v}$ containing $\partial_o$. Let 
$\ell^{u_v}_{-a,b}$ be the non-contractible loop of 
$\A_{-a,b}^{u_v, \partial_o}$
on the event 
$\A_{-a,b}^{u_v, \partial_o}\cap\partial_{i}=\emptyset$. 
Moreover, $\alpha^{u_v}_{-a,b}\in\{-a,b\}$ denotes the (random, constant) boundary value of $\Phi + u_v$ on $\ell^{u_v}_{-a,b}$ from the interior side.
On the event
$\A_{-a,b}^{u_v, \partial_o}\cap\partial_{i}\neq\emptyset$,
the convention is
$\ell^{u_v}_{-a,b}=\partial_{i}$.

We can now obtain the analogue of Theorem \ref{t. joint law TVS}  in the case of the annulus, i.e. identify the joint laws of the extremal distances of loop of a TVS to both inner and outer boundaries.

\begin{prop}
\label{p. law of TVS}
Take $a,b>0$, with $a+b=2k\lambda$, 
$k\in\mathbb{N}\backslash\{0\}$.
Let $(\widehat{B}^{v}_{t})_{0\leq t\leq L}$ be a standard Brownian bridge from $0$ to $v$ in time $L$. With the notations of Subsection \ref{SubsecLawsCharac}, the joint distribution of
\begin{align*}
(\ED(\ell^{u_v}_{-a,b},\partial_{o}),
\ED(\ell^{u_v}_{-a,b},\partial_{i}),
\alpha^{u_v}_{-a,b}\1_{\A_{-a,b}^{u_v, \partial_o}\cap\partial_{i}=\emptyset})
\end{align*}
is that of
$(\widehat{\tau}^{v}_{-a,b},L-\widehat{T}^{v}_{-a,b},
\widehat{B}^{v}_{\widehat{T}^{v}_{-a,b}}
\1_{\widehat{T}^{v}_{-a,b}<L})$.
\end{prop}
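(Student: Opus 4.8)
The plan is to show that the triple $(\sigma_o^v,\sigma_i^v,X^v)$, defined by $\sigma_o^v:=\ED(\ell^{u_v}_{-a,b},\partial_{o})$, $\sigma_i^v:=\ED(\ell^{u_v}_{-a,b},\partial_{i})$ and $X^v$ equal to the label $\alpha^{u_v}_{-a,b}\in\{-a,b\}$ on the event $\A_{-a,b}^{u_v,\partial_o}\cap\partial_i=\emptyset$ and equal to $v$ on its complement, satisfies the hypotheses of Proposition \ref{p. joint law TVS}, and then to invoke that proposition. The structural constraints are immediate: $\sigma_o^v,\sigma_i^v\in[0,L]$ is clear, $\sigma_o^v+\sigma_i^v\le L$ is the superadditivity (composition law) of extremal distance, Theorem \ref{ThmSuperAddED}, and $X^v\in\{-a,b,v\}$ holds by construction, with $X^v=v$ exactly when $\sigma_i^v=0$.

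For hypothesis (1) I would use Proposition \ref{p.LawELBddTVS}. Since $\sigma_i^v=\ED(\ell^{u_v}_{-a,b},\partial_{i})=\ED(\A_{-a,b}^{u_v,\partial_o},\partial_{i})$, that proposition identifies $L-\sigma_i^v$ in law with the first hitting time $\widehat T^v_{-a,b}$ of $\{-a,b\}$ by the Brownian bridge $\widehat B^v$ from $0$ to $v$ of length $L$; and the same statement, read together with the construction used in its proof (exactly as in the proof of Lemma \ref{lembasic}), identifies jointly the label $\alpha^{u_v}_{-a,b}$ with the value $\widehat B^v_{\widehat T^v_{-a,b}}$ of the bridge at that hitting time. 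On the complementary event $\sigma_i^v=0$ the bridge does not reach $\{-a,b\}$ before time $L$, so $L-\widehat T^v_{-a,b}=0$ and $\widehat B^v_{\widehat T^v_{-a,b}}=v$, which matches the convention $X^v=v$. Hence $(\sigma_i^v,X^v)\stackrel{d}{=}(L-\widehat T^v_{-a,b},\widehat B^v_{\widehat T^v_{-a,b}})$, which is (1).

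For hypothesis (2) the marginal law of $\sigma_o^v$ is Proposition \ref{prop:generalTVS} (applied with $v_o=0$, $v_i=v$), giving $\sigma_o^v\stackrel{d}{=}\widehat\tau^v_{-a,b}$. The point requiring care — and the main obstacle of the argument — is to obtain this jointly with the label $X^v$, since Proposition \ref{prop:generalTVS} as stated concerns only $\sigma_o^v$. Here I would revisit its proof: through the reversibility statement Proposition \ref{rev:bg} and the identification of the non-contractible loop in Lemma \ref{l.find_ell}, the loop $\ell^{u_v}_{-a,b}$ is matched with a loop produced by an inside-to-outside iterated exploration whose sequence of labels is the bridge $\widehat B^v$ read at its $2\lambda$-increment times; in that correspondence the label of $\ell^{u_v}_{-a,b}$ is precisely $\widehat B^v_{\widehat T^v_{-a,b}}$ while its extremal distance to $\partial_o$ is $\widehat\tau^v_{-a,b}$. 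This yields $(\sigma_o^v,X^v)\stackrel{d}{=}(\widehat\tau^v_{-a,b},\widehat B^v_{\widehat T^v_{-a,b}})$, i.e.\ (2); everything else is a soft consequence of results already proved.

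For hypothesis (3) I would invoke Proposition \ref{p.change of measure TVS}: on the event $\sigma_i^v>0$ the law $\Q_v$ of $(\A_{-a,b}^{u_v,\partial_o},\Phi_{\A_{-a,b}^{u_v,\partial_o}}+u_v)$ is absolutely continuous with respect to $\Q_0$, with Radon--Nikodym derivative a deterministic function of $(\ED(\A_{-a,b}^{0,\partial_o},\partial_i),\alpha^{0}_{-a,b})=(\sigma_i^0,X^0)$ alone. Since $\sigma_o^v,\sigma_i^v,X^v$ are all measurable functions of $(\A_{-a,b}^{u_v,\partial_o},\Phi_{\A_{-a,b}^{u_v,\partial_o}}+u_v)$, this forces, for every bounded measurable $g$, every $x\in\{-a,b\}$ and $d\rho$-a.e.\ $\rho>0$,
\[
\E[g(\sigma_o^v)\mid \sigma_i^v=\rho,\,X^v=x]=\E[g(\sigma_o^0)\mid \sigma_i^0=\rho,\,X^0=x],
\]
which is exactly (3). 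Proposition \ref{p. joint law TVS} then yields that $(\sigma_o^v,\sigma_i^v,X^v\1_{\sigma_i^v>0})$ has the law of $(\widehat\tau^v_{-a,b},L-\widehat T^v_{-a,b},\widehat B^v_{\widehat T^v_{-a,b}}\1_{\widehat T^v_{-a,b}<L})$; unravelling the conventions ($X^v\1_{\sigma_i^v>0}=\alpha^{u_v}_{-a,b}\1_{\A_{-a,b}^{u_v,\partial_o}\cap\partial_i=\emptyset}$ and $\{\widehat T^v_{-a,b}<L\}=\{L-\widehat T^v_{-a,b}>0\}$) this is the asserted identity in law. The analogue for first passage sets would be proved in the same way, using Propositions \ref{p. joint law FPS}, \ref{mlawfps} and \ref{p.change of measure FPS} in place of Propositions \ref{p. joint law TVS}, \ref{prop:generalTVS} and \ref{p.change of measure TVS}.
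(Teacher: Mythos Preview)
Your proof is correct and follows exactly the same route as the paper: verify the three hypotheses of Proposition \ref{p. joint law TVS} using Proposition \ref{p.LawELBddTVS} for (1), Proposition \ref{prop:generalTVS} for (2), and Proposition \ref{p.change of measure TVS} for (3), then conclude. In fact you are more careful than the paper on one point: the paper only cites Proposition \ref{prop:generalTVS} for the \emph{marginal} law of $\sigma_o^v$ and then asserts that all assumptions of Proposition \ref{p. joint law TVS} hold, whereas you correctly flag that hypothesis (2) requires the \emph{joint} law of $(\sigma_o^v,X^v)$ and explain how this comes out of the proof of Proposition \ref{prop:generalTVS} via the reversibility and Lemma \ref{l.find_ell}.
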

\begin{proof}
By Proposition
\ref{p.LawELBddTVS}, for all
$v\in\R$, we have that
\begin{align*}
\P(\A_{-a,b}^{u_v, \partial_o}\cap\partial_{i}=\emptyset)=
\P(\widehat{T}^{v}_{-a,b}<L),
\end{align*}
and the joint law of
$(\ED(\ell^{u_v}_{-a,b},\partial_{i}),\alpha^{u_v}_{-a,b})$ on the event
$\A_{-a,b}^{u_v, \partial_o}\cap\partial_{i}=\emptyset$ is that of
$(L-\widehat{T}^{v}_{-a,b},\widehat{B}^{v}_{\widehat{T}^{v}_{-a,b}})$
on the event $\widehat{T}^{v}_{-a,b}<L$.
Moreover, by Proposition \ref{prop:generalTVS},
for all $v\in\R$, $\ED(\ell^{u_v}_{-a,b},\partial_{o})$ has same law as
$\widehat{\tau}^{v}_{-a,b}$. Finally, from Proposition
\ref{p.change of measure TVS}, we deduce that the family
\[((\ED(\ell^{u_v}_{-a,b},\partial_{o}),
\ED(\ell^{u_v}_{-a,b},\partial_{i}),
\alpha^{u_v}_{-a,b}\1_{\A_{-a,b}^{u_v, \partial_o}\cap\partial_{i}=\emptyset}
))_{v\in\R}\] satisfies the assumptions of 
Proposition \ref{p. joint law TVS} and thus we conclude.
\end{proof}

Now, consider $\A_{-a}^{u_v,\partial_o}$ 
($a>0$)
to be the first passage set starting from $\partial_o $ 
of the GFF $\Phi$.
Let $\ell^{u_v}_{-a}$ be the non-contractible loop of 
$\A_{-a}^{u_v,\partial_o}$ 
(on the event $\A_{-a}^{u_v,\partial_o}\cap\partial_i=\emptyset$,
otherwise $\ell^{u_v}_{-a}=\partial_{i}$).We can now state the analogue of Theorem \ref{t. law FPS} in an annulus. We omit the proof that is exactly the same as that of 
Proposition \ref{p. law of TVS}.
\begin{prop} \label{p. law of FPS}
For every $a>0$, the joint distribution of
\begin{align*}
(\ED(\ell^{u_v}_{-a},\partial_{o}),
\ED(\ell^{u_v}_{-a},\partial_{i}))
\end{align*}
is that of
$(\widehat{\tau}^{v}_{-a},L-\widehat{T}^{v}_{-a})$.
\end{prop}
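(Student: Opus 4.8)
The plan is to mirror the proof of Proposition \ref{p. law of TVS}, substituting each two-valued-set ingredient by its first-passage-set analogue, and then to invoke the characterization of the joint law provided by Proposition \ref{p. joint law FPS}. Concretely, one must exhibit, for each $v\in\R$, a couple $(\sigma_o^v,\sigma_i^v)$ — here $\sigma_o^v := \ED(\ell^{u_v}_{-a},\partial_o)$ and $\sigma_i^v := \ED(\ell^{u_v}_{-a},\partial_i)$ — satisfying the two hypotheses of that proposition, and then read off the conclusion.

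First I would check hypothesis (1). The requirement $\sigma_o^v,\sigma_i^v\in[0,L]$ and $\sigma_o^v+\sigma_i^v\leq L$ is immediate: on the event $\A_{-a}^{u_v,\partial_o}\cap\partial_i=\emptyset$ the loop $\ell^{u_v}_{-a}$ is a non-contractible loop of $\An_r$ separating $\partial_o$ from $\partial_i$, so the composition law for extremal distance (Theorem \ref{ThmSuperAddED}) gives $L=\ED(\partial_o,\partial_i)\geq \ED(\partial_o,\ell^{u_v}_{-a})+\ED(\ell^{u_v}_{-a},\partial_i)$, while on the complementary event $\sigma_i^v=0$ by convention. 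The marginal law of $\sigma_o^v$ is identified with that of $\widehat{\tau}^v_{-a}$ by Proposition \ref{mlawfps}. For $\sigma_i^v$, Proposition \ref{p.LawELBddFPS} — together with the identity $\ED(\A_{-a}^{u_v,\partial_o},\partial_i)=\ED(\ell^{u_v}_{-a},\partial_i)$ and the observation that $\{\A_{-a}^{u_v,\partial_o}\cap\partial_i=\emptyset\}=\{\sigma_i^v>0\}$ — shows that $\sigma_i^v$ has the law of $L-\widehat{T}^v_{-a}$ (and in particular $\P(\sigma_i^v>0)=\P(\widehat{T}^v_{-a}<L)$).

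Next, hypothesis (2): on the event $\sigma_i^v>0$, the conditional law of $\sigma_o^v$ given $\sigma_i^v$ should not depend on $v$. This is exactly the final assertion of Proposition \ref{p.change of measure FPS}: the Radon--Nikodym derivative \eqref{e.RND3} of $\Q_v$ with respect to $\Q_0$ is a measurable function of $\ED(\A_{-a}^{u_v,\partial_o},\partial_i)=\sigma_i^v$ alone, so conditioning on $\sigma_i^v$ removes the dependence on $v$. With (1) and (2) in hand, Proposition \ref{p. joint law FPS} yields that $(\sigma_o^v,\sigma_i^v)$ has the law of $(\widehat{\tau}^v_{-a},L-\widehat{T}^v_{-a})$ for every $v$, which is the claim.

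I expect no genuine obstacle here: all the hard analytic work has been pushed into Propositions \ref{mlawfps}, \ref{p.LawELBddFPS}, \ref{p.change of measure FPS} and \ref{p. joint law FPS}, and the remaining step is purely a matter of checking that their hypotheses are met. The only points requiring care are bookkeeping — matching the conventions on the event $\A_{-a}^{u_v,\partial_o}\cap\partial_i\neq\emptyset$ (where $\sigma_i^v=0$ corresponds to $\widehat{T}^v_{-a}=L$ and $\sigma_o^v=L=\widehat{\tau}^v_{-a}$), and noting that the ``conditional law'' statements hold only for $d\rho$-almost every value of $\sigma_i^v$, which is precisely the form in which hypothesis (2) and the conclusion of Proposition \ref{p. joint law FPS} are phrased, so no further argument is needed. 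If one also wants the disk statement of Theorem \ref{t. law FPS}, one would then pass to the limit $r\to 0$ using the total-variation coupling of Proposition \ref{prop:coupling} and the identity $-\tfrac{1}{2\pi}\log\crad(0,\D\backslash\ell)=\lim_{r\to0}\big(\ED(\partial\D,r\partial\D)-\ED(\ell,r\partial\D)\big)$ from Lemma \ref{LemCREDlim}.
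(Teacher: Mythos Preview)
Your proposal is correct and follows exactly the route the paper intends: it mirrors the proof of Proposition \ref{p. law of TVS} with the FPS analogues (Propositions \ref{p.LawELBddFPS}, \ref{mlawfps}, \ref{p.change of measure FPS}) feeding into the characterization of Proposition \ref{p. joint law FPS}. The paper in fact omits the proof for precisely this reason, so your write-up is a faithful expansion of what is left implicit.
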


{Theorems \ref{t. joint law TVS} 
and \ref{t. law FPS} now follow from these propositions by letting the inner radius of the annulus tend to zero.
\begin{proof} [Proof of Theorems \ref{t. joint law TVS}
and \ref{t. law FPS}]
We only write the proof of Theorem \ref{t. joint law TVS}, as
Theorem \ref{t. law FPS} can be proven by the same technique.

Here $\Phi$, 
respectively $\Phi^r$ will denote 
the (zero-boundary) GFF
in $\D$, respectively $\An_r$. 
$\A_{-a,b}(\Phi)$ 
will denote the corresponding two-valued
local set of $\Phi$ and 
$\ell$ will denote the loop of $\A_{-a,b}(\Phi)$ surrounding $0$.
$\A_{-a,b}(\Phi^r)$ 
will denote the two-valued local set of
$\Phi^r$, and 
$\A_{-a,b}^{\partial \D}(\Phi^r)$ its connected component containing $\partial \D =\partial_o$, the outer boundary of
$\An_r$. Finally, on the event
$\A_{-a,b}^{\partial \D}(\Phi^r)\cap r\partial\D=\emptyset$,
$\ell(r)=\ell^{0}_{-a,b}(\Phi^r)$ will denote the non-contractible (in $\overline{\An_r}$) loop of 
$\A_{-a,b}^{\partial \D}(\Phi^r)$.

The probability
$\P(\A_{-a,b}^{\partial \D}(\Phi^r)
\cap r\partial\D=\emptyset)$
equals the probability that a Brownian bridge from $0$ to $0$ of length
$(2\pi)^{-1}\log(r^{-1})$ does not exit
$[-a,b]$. Thus,
\begin{align*}
\lim_{r\to 0}
\P(\A_{-a,b}^{\partial \D}(\Phi^r)\cap r\partial\D=\emptyset)=1.
\end{align*}
Moreover, from Proposition 
\ref{prop:coupling} it
follows that the law of
$\ell(r)$ converges in total variation to the law of
$\ell$ as $r\to 0$. In particular,
$(\ED(\ell(r),\partial\D), 
-(2\pi)^{-1}\log\crad(0,\D\backslash\ell(r)))$
converges in law to
\\$(\ED(\ell,\partial\D), 
-(2\pi)^{-1}\log\crad(0,\D\backslash\ell))$.
By Lemma \ref{LemCREDlim},
\begin{align*}
\lim_{r\to 0}\Big\vert -\dfrac{1}{2\pi}\log\crad(0,\D\backslash\ell)
-\ED(\partial\D,r\partial \D)
+\ED(\ell,\partial \D)\Big\vert =
0 \text{ a.s.}
\end{align*}
The convergence in total variation of $\ell(r)$ to $\ell$ implies the convergence in probability to $0$ of
\begin{align*}
\Big\vert -\dfrac{1}{2\pi}\log\crad(0,\D\backslash\ell(r))
-\ED(\partial\D,r\partial \D)
+\ED(\ell(r),\partial \D)\Big\vert.
\end{align*}
Thus, $(\ED(\ell,\partial\D), 
-(2\pi)^{-1}\log\crad(0,\D\backslash\ell))$ is a limit in law of
\begin{align}
\label{EqApproxLaw}
(\ED(\ell(r),\partial\D), 
\ED(\partial\D,r\partial \D)
-\ED(\ell(r),\partial \D)).
\end{align}
To identify the limiting law of \eqref{EqApproxLaw}, we finally use 
Proposition \ref{p. law of TVS}: indeed the limit $r\to 0$ corresponds to the limit  $L\to +\infty$ on the side of the Brownian bridge.
\end{proof}

\subsection{Proof of Theorem \ref{t.loop soup}}
\label{Subsec Thm_1_2}
Exactly as in the proofs of Theorems \ref{t. joint law TVS} and
\ref{t. law FPS}, Theorem \ref{t.loop soup} (i.e. a statement in the unit disk) will follow from analogous statement in an annulus $\An_r$ by letting $r\to 0$. 

So to avoid extra notations, let us here concentrate only on the proof in the annulus. To do this, first recall the construction of relevant local set in the annulus, as given at the end of Subsection \ref{ss.RN for annulus}. 

In the annulus $\An_r$ we first sample the local set 
$\A^{u_{v},\partial_{o}}_{-2\lambda,2\lambda}$, and if
$\A^{u_{v},\partial_{o}}_{-2\lambda,2\lambda}
\cap\partial_i=\emptyset$, we further sample an FPS 
$\A_{-2\lambda}(\Phi^{\A^{u_{v},\partial_{o}}_{-2\lambda,2\lambda}})$ or $\A_{-2\lambda}(-\Phi^{\A^{u_{v},\partial_{o}}_{-2\lambda,2\lambda}})$ inside the 
non-contractible connected component of 
$\An_r\backslash \A^{u_{v},\partial_{o}}_{-2\lambda,2\lambda}$, depending on whether the sign of the GFF is equal to the sign of the label 
$\alpha^{u_{v}}=\alpha^{u_{v}}_{-2\lambda, 2\lambda}$. The explored set is, in fact, a local set and is denoted $\widecheck{A}^{u_{v}}_{0}$.
Further, let 
$\ell^{u_v}=\ell^{u_v}_{-2\lambda,2\lambda}$ 
denote the non-contractible loop of 
$\A^{u_{v},\partial_{o}}_{-2\lambda,2\lambda}$
(if $\A^{u_{v},\partial_{o}}_{-2\lambda,2\lambda}
\cap\partial_i=\emptyset$, otherwise 
$\ell^{u_v}=\partial_i$), and
$\check{\ell}^{u_v}_{0}$ the non-contractible loop of 
$\widecheck{A}^{u_v}_{0}$
(if $\widecheck{A}^{u_v}_{0}
\cap\partial_i=\emptyset$, otherwise 
$\check{\ell}^{u_v}_{0}=\partial_i$). To emphasize the dependence on the domain, we will also use the notation
$\widecheck{A}^{u_{v}}_{0}(\Phi^r)$, where
$\Phi^r$ is a zero boundary GFF in $\An_r$. 

To state all the laws related to 
$(\ell^{u_{v}},\check{\ell}^{u_{v}}_{0})$ in the annulus $\An_r$, we introduce two further times related to the Brownian bridge 
$(\widehat{B}^{v}_{t})_{0\leq t\leq L}$:
\begin{align*}
\widecheck{T}^{v}_{0}:=&
\inf\{\widehat{T}^{v}_{-2\lambda,2\lambda}\leq t\leq L:
\widehat{B}^{v}_{t}=0\}\wedge L,\\
\widecheck{\tau}^{v}_{0}:=&
\left\lbrace
\begin{array}{ll}
\sup\left\{0\leq t\leq \widecheck{T}^{v}_{0}
-\widehat{T}^{v}_{-2\lambda,2\lambda}:
\widehat{B}^{v}_{\widehat{T}^{v}_{-2\lambda,2\lambda}+t}
=
\widehat{B}^{v}_{\widehat{T}^{v}_{-2\lambda,2\lambda}}
\right\}
 & 
\text{if } \widecheck{T}^{v}_{0}<L, \\ 
L & \text{if } \widecheck{T}^{v}_{0}=L.
\end{array} 
\right.
\end{align*}

We can now state the relevant proposition for the annulus, which implies Theorem \ref{t.loop soup} by taking $r \to 0$.
\begin{prop}
\label{Prop cluster annulus}
For every $v\in\mathbb{R}$ one has the following joint laws.
\begin{enumerate}
\item The quadruple
$(\ED(\partial_o,\ell^{u_{v}}),
\ED(\ell^{u_{v}},\partial_i),
\ED(\ell^{u_{v}},\check{\ell}^{u_{v}}_{0}),
\ED(\check{\ell}^{u_{v}}_{0},\partial_i))$
has same joint law as
$(\widehat{\tau}^{v},
L-\widehat{T}^{v}_{-2\lambda,2\lambda},
\widecheck{\tau}^{v}_{0},L-\widecheck{T}^{v}_{0})$.
\item The quadruple
$(\ED(\partial_o,\ell^{u_{v}}),
\ED(\ell^{u_{v}},\check{\ell}^{u_{v}}_{0}),
\ED(\partial_o,\check{\ell}^{u_{v}}_{0})
\ED(\check{\ell}^{u_{v}}_{0},\partial_i))$
has same joint law as the quadruple
$(\widehat{\tau}^{v},
\widecheck{\tau}^{v}_{0},
\widehat{T}^{v}_{-2\lambda,2\lambda}+\widecheck{\tau}^{v}_{0},
L-\widecheck{T}^{v}_{0})$.
\end{enumerate}
\end{prop}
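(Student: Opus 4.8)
The plan is to build $\widecheck{A}^{u_v}_0$ in two stages, exactly as described at the end of Subsection \ref{ss.RN for annulus}, and to track the corresponding stopping times on the same Brownian bridge. Stage one produces $\A^{u_v,\partial_o}_{-2\lambda,2\lambda}$; by Proposition \ref{p. law of TVS} the triple $(\ED(\partial_o,\ell^{u_v}),\ED(\ell^{u_v},\partial_i),\alpha^{u_v}\1_{\ell^{u_v}\neq\partial_i})$ has the law of $(\widehat\tau^v,L-\widehat T^v_{-2\lambda,2\lambda},\widehat B^v_{\widehat T^v_{-2\lambda,2\lambda}}\1_{\widehat T^v<L})$. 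On the event $\ell^{u_v}\neq\partial_i$, by Proposition \ref{Prop cond GFF}(1) the field $(\Phi+u_v)|_{\An^{u_v}_1}$ conditionally on $(\ell^{u_v},\alpha^{u_v})$ is a GFF in the annulus $\An^{u_v}_1$ (between $\ell^{u_v}$ and $\partial_i$) with boundary values $\alpha^{u_v}\in\{-2\lambda,2\lambda\}$ on $\ell^{u_v}$ and $v$ on $\partial_i$, whose extremal width is $L-\widehat T^v_{-2\lambda,2\lambda}$. Stage two samples an FPS of level $-2\lambda$ of this (sign-flipped, if $\alpha^{u_v}=-2\lambda$) field inside $\An^{u_v}_1$: the relevant part is $\A^{\cdot,\ell^{u_v}}_{-2\lambda}$, with non-contractible loop $\check\ell^{u_v}_0$. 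Applying Proposition \ref{p. law of FPS} (with the roles of inner/outer boundaries appropriately placed, via conformal invariance) to this conditional GFF, the pair $(\ED(\ell^{u_v},\check\ell^{u_v}_0),\ED(\check\ell^{u_v}_0,\partial_i))$ conditionally on $(\ell^{u_v},\alpha^{u_v})$ has the law of $(\widehat\tau^{w}_{-a},M-\widehat T^w_{-a})$ for a Brownian bridge of length $M=L-\widehat T^v_{-2\lambda,2\lambda}$ going from $0$ (value on $\ell^{u_v}$ after shifting by $-\alpha^{u_v}$) to $v-\alpha^{u_v}$, with $a=2\lambda$ if $\alpha^{u_v}=2\lambda$ and boundary data handled symmetrically when $\alpha^{u_v}=-2\lambda$. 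Because $\alpha^{u_v}=\pm2\lambda$, the shifted level $-a+2\lambda$ is $0$ (resp. the reflected value), so the first passage time is exactly the time the shifted bridge returns to the value $0$ of its left endpoint — which is precisely $\widecheck\tau^v_0$ — and the remaining extremal distance to $\partial_i$ is $M-(\text{that time})=L-\widecheck T^v_0$.

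The key point that glues the two stages into the same Brownian sample path is the strong Markov property of the Brownian bridge together with Proposition \ref{Prop cond GFF}: conditionally on $(\ell^{u_v},\alpha^{u_v})$, the field in $\An^{u_v}_1$ is an honest GFF in an annulus of the correct extremal width, and by the Brownian-bridge Markov property, $(\widehat B^v_{\widehat T^v_{-2\lambda,2\lambda}+t})_{0\le t\le L-\widehat T^v_{-2\lambda,2\lambda}}$ is, conditionally on $(\widehat T^v_{-2\lambda,2\lambda},\widehat B^v_{\widehat T^v_{-2\lambda,2\lambda}})$, a Brownian bridge of that length from $\alpha^{u_v}$ to $v$. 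Matching these two descriptions and unwinding the definitions of $\widecheck T^v_0,\widecheck\tau^v_0$ (note $\widehat B^v_{\widehat T^v_{-2\lambda,2\lambda}}=\alpha^{u_v}$ on the relevant event, so ``hitting $0$'' of the post-$\widehat T$ bridge is the same as ``hitting $0$'' of the original bridge after time $\widehat T^v_{-2\lambda,2\lambda}$, and the last visit to $\alpha^{u_v}$ before that is $\widehat T^v_{-2\lambda,2\lambda}+\widecheck\tau^v_0$) gives part (1). For part (2), I would use the superadditivity/composition law of extremal distance in the annulus: since $\ell^{u_v}$ separates $\partial_o$ from $\check\ell^{u_v}_0$, and in this specific configuration the three loops $\partial_o,\ell^{u_v},\check\ell^{u_v}_0$ lie in ``nested annuli'', one has $\ED(\partial_o,\check\ell^{u_v}_0)=\ED(\partial_o,\ell^{u_v})+\ED(\ell^{u_v},\check\ell^{u_v}_0)$. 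Indeed the composition law (Theorem \ref{ThmSuperAddED}) gives ``$\ge$'', and equality holds because $\A^{u_v,\partial_o}_{-2\lambda,2\lambda}$ restricted to the annulus between $\partial_o$ and $\ell^{u_v}$ contains $\ell^{u_v}$ as a full boundary component so the extremal metric extremizing one piece extremizes the concatenation — this is the same mechanism as in Lemma \ref{LemCREDlim}. On the Brownian side this is literally $\widehat T^v_{-2\lambda,2\lambda}+\widecheck\tau^v_0=\widehat\tau^v+\widecheck\tau^v_0+(\widehat T^v_{-2\lambda,2\lambda}-\widehat\tau^v)$, and $\widehat T^v_{-2\lambda,2\lambda}-\widehat\tau^v=L-\widehat T^v_{-2\lambda,2\lambda}-(\ED(\ell^{u_v},\partial_i)\text{-complement})$... more precisely one just observes $\ED(\partial_o,\ell^{u_v})+\ED(\ell^{u_v},\check\ell^{u_v}_0)=\widehat\tau^v+\widecheck\tau^v_0$, but the claimed answer is $\widehat T^v_{-2\lambda,2\lambda}+\widecheck\tau^v_0$, so in fact part (2) must use $\ED(\partial_o,\check\ell^{u_v}_0)=\widehat T^v_{-2\lambda,2\lambda}+\widecheck\tau^v_0$ which is the composition law applied with $\ED(\partial_o,\ell^{u_v})$ replaced by the full extremal width $\widehat T^v$ of the region swept by $\A^{u_v,\partial_o}_{-2\lambda,2\lambda}$, i.e.\ $\ED(\partial_o,\ell^{u_v})$ is replaced by $-\tfrac{1}{2\pi}\log\crad$-type quantity: one uses $\ED(\partial_o,\check\ell^{u_v}_0) = (L - \ED(\ell^{u_v},\partial_i)) + \widecheck\tau^v_0$, again an application of the composition law where the first summand is the reduced extremal distance (à la Lemma \ref{LemCREDlim}) rather than the plain extremal distance.

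The main obstacle I expect is part (2), specifically justifying the exact additivity of extremal distances $\ED(\partial_o,\check\ell^{u_v}_0)$ in terms of $\widehat T^v_{-2\lambda,2\lambda}$ and $\widecheck\tau^v_0$ rather than in terms of $\widehat\tau^v$ and $\widecheck\tau^v_0$. The subtlety is that the ``intermediate'' quantity that adds up correctly is not $\ED(\partial_o,\ell^{u_v})$ but the conformal-modulus contribution of the entire explored annular region (which on the Brownian side is $\widehat T^v_{-2\lambda,2\lambda}$, the first exit time, not its last zero $\widehat\tau^v$); one must argue that the extremal distance from $\partial_o$ to $\check\ell^{u_v}_0$ genuinely ``sees'' all of $\A^{u_v,\partial_o}_{-2\lambda,2\lambda}$ and not just its non-contractible loop $\ell^{u_v}$. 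I would handle this by going back to the annular-domain set-up: write the connected component $\check O$ of $\An_r\setminus\widecheck A^{u_v}_0$ containing $\partial_i$, note its outer boundary is $\check\ell^{u_v}_0$, and apply the composition law (Theorem \ref{ThmSuperAddED}) inside $\An_r\setminus\A^{u_v,\partial_o}_{-2\lambda,2\lambda}$ in the right nesting order, combined with the identity $\ED(\partial_o,\ell^{u_v})+\ED(\ell^{u_v},\partial_i)$ being replaced by the genuine equality that holds when the middle loop is a full boundary component of the TVS — exactly the computation already carried out in the proof of Proposition \ref{p.change of measure TVS} via Theorem \ref{thmEL}. All the Brownian-side identities (relations among $\widehat\tau^v,\widehat T^v_{-2\lambda,2\lambda},\widecheck\tau^v_0,\widecheck T^v_0$) are elementary path decompositions and I would just state them. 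Finally, passing from the annulus to the disk is verbatim the argument in the proof of Theorems \ref{t. joint law TVS} and \ref{t. law FPS}: use the total-variation coupling of Proposition \ref{prop:coupling} (and its FPS analogue, applied to the conditional GFF in the inner annulus), use Lemma \ref{LemCREDlim} to turn $\ED(\partial_o,r\partial\D)-\ED(\check\ell,r\partial\D)$ into $-\tfrac{1}{2\pi}\log\crad(0,\D\setminus\ell^i)$, and send $r\to0$, i.e.\ $L\to\infty$, on the Brownian bridge side, noting that $\overline T,\overline\tau$ are the $L\to\infty$ limits of $\widecheck T^v_0,\widecheck\tau^v_0$.
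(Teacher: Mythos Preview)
Your argument for part (1) is essentially the paper's: apply Proposition \ref{p. law of TVS} to the first stage, then Proposition \ref{p. law of FPS} to the conditional FPS in the inner annulus, gluing via the local-set/Markov property and the strong Markov property of the bridge. That is fine.

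Part (2), however, has a real gap. You try to obtain $\ED(\partial_o,\check\ell^{u_v}_0)$ as a \emph{deterministic} combination of quantities already computed in part (1), via some exact additivity of extremal distances. But Theorem \ref{ThmSuperAddED} only gives the inequality $\ED(\partial_o,\check\ell^{u_v}_0)\ge \ED(\partial_o,\ell^{u_v})+\ED(\ell^{u_v},\check\ell^{u_v}_0)$, with equality if and only if the separating loop is a circle; there is no mechanism (and Lemma \ref{LemCREDlim} does not provide one) making it exact for a CLE-type loop. You notice yourself that the additive guess yields $\widehat\tau^{v}+\widecheck\tau^{v}_{0}$ rather than the correct $\widehat T^{v}_{-2\lambda,2\lambda}+\widecheck\tau^{v}_{0}$, and your attempted fix (``replace $\ED(\partial_o,\ell^{u_v})$ by $L-\ED(\ell^{u_v},\partial_i)$'') is again a deterministic identity that fails for the same reason. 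In fact $\ED(\partial_o,\check\ell^{u_v}_0)$ is \emph{not} a measurable function of the quadruple in part (1); this is exactly the phenomenon explained around Figure \ref{f. no joint law}. So the approach cannot work.

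The paper proceeds quite differently for (2). First, the \emph{marginal} law of $\ED(\partial_o,\check\ell^{u_v}_0)$ is obtained from the reversibility machinery of Section \ref{S.Marginal laws} (specifically Proposition \ref{rev:bg}), which gives it the law of $\widehat T^{v}_{-2\lambda,2\lambda}+\widecheck\tau^{v}_{0}$. Second, the joint law of $(\ED(\partial_o,\check\ell^{u_v}_0),\ED(\check\ell^{u_v}_0,\partial_i))$ is pinned down by a characterization argument: Proposition \ref{p.change of measure cluster} shows the conditional law of $\ED(\partial_o,\check\ell^{u_v}_0)$ given $\ED(\check\ell^{u_v}_0,\partial_i)$ does not depend on $v$, and Proposition \ref{p. joint law plus} (the analogue of Proposition \ref{p. joint law FPS} for $\widecheck T^{v}_{0}$) then forces the joint law. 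Third, the conditional law of $(\ED(\partial_o,\ell^{u_v}),\ED(\ell^{u_v},\check\ell^{u_v}_0))$ given $\check\ell^{u_v}_0$ is identified using Corollary \ref{Cor cond GFF several} (the field between $\partial_o$ and $\check\ell^{u_v}_0$ is a \emph{conditioned} GFF) together with Lemma \ref{Lem law exc}, which matches the conditioned GFF with the conditioned Brownian bridge. None of these steps is bypassed by an additivity of moduli; you should invoke Propositions \ref{rev:bg}, \ref{p.change of measure cluster}, \ref{p. joint law plus} and Lemma \ref{Lem law exc} instead.
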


Recalling the definitions of $q_{-a}$ from Equation \eqref{Eqqa} and 
$q_{-a,b}$ from Equation \eqref{Eqqa}, we set 
\begin{displaymath}
\widecheck{q}_{0}(t):=
\int_{0}^{t}
(q_{-2\lambda,2\lambda}(s,2\lambda)+
q_{-2\lambda,2\lambda}(s,-2\lambda))
q_{-2\lambda}(t-s) ds.
\end{displaymath}

Similarly to Lemmas \ref{l. law T-a} and
\ref{l. law T-a b}, one then has the following description of the law of $\widecheck{T}^{v}_{0}$.
\begin{lemma}
\label{l. law checkT}
For all $v\in\R$, the law of $\widecheck{T}^{v}_{0}$ is given by
\begin{align*}
\mathbb{P}(\widecheck{T}^{v}_{0}\in (t,t+dt),
\widecheck{T}^{v}_{0}<L)=
\widecheck{q}_{0}(t)
\dfrac{p(L-t,0,v)}{p(L,0,v)}\1_{t<L} dt.
\end{align*}
\end{lemma}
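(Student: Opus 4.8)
The plan is to follow the same scheme as in the proofs of Lemmas \ref{l. law T-a} and \ref{l. law T-a b}: reduce the computation for the Brownian bridge to the corresponding one for a standard Brownian motion via a Radon--Nikodym change of measure, and then obtain the Brownian-motion density by splitting at the hitting time $T_{-2\lambda,2\lambda}$ and applying the strong Markov property. Concretely, for $t\in[0,L)$ the restriction $(\widehat B^{v}_{s})_{0\le s\le t}$ is absolutely continuous with respect to $(B_{s})_{0\le s\le t}$ with Radon--Nikodym derivative $\mathcal D_{t}=p(L-t,B_{t},v)/p(L,0,v)$, exactly as recalled in the proof of Lemma \ref{l. law T-a}.

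Let $\widecheck T_{0}$ be defined from a standard Brownian motion $B$ by $\widecheck T_{0}:=\inf\{t\ge T_{-2\lambda,2\lambda}:B_{t}=0\}$, which is finite almost surely by recurrence of $B$. Since $\widecheck T_{0}$ is a stopping time and on $\{\widecheck T_{0}=t\}$ one has $B_{t}=0$, the change of measure gives
\[
\P(\widecheck T^{v}_{0}\in(t,t+dt),\ \widecheck T^{v}_{0}<L)=\frac{p(L-t,0,v)}{p(L,0,v)}\,\1_{t<L}\,\P(\widecheck T_{0}\in(t,t+dt)),
\]
so it only remains to identify the density of $\widecheck T_{0}$ under the Brownian motion with $\widecheck q_{0}$. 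For this, apply the strong Markov property at $T_{-2\lambda,2\lambda}$: conditionally on $T_{-2\lambda,2\lambda}=s$ and $B_{T_{-2\lambda,2\lambda}}=c\in\{-2\lambda,2\lambda\}$, the additional time $\widecheck T_{0}-s$ needed to reach $0$ is the first hitting time by a fresh Brownian motion of a level at distance $2\lambda$, whose density is $q_{-2\lambda}(\cdot)$ (the two signs of $c$ giving the same density by reflection symmetry). Using that the joint density of $(T_{-2\lambda,2\lambda},B_{T_{-2\lambda,2\lambda}})$ is $q_{-2\lambda,2\lambda}(s,c)$ (see Proposition \ref{Prop A1 laws}(2)), a convolution yields
\[
\P(\widecheck T_{0}\in(t,t+dt))/dt=\int_{0}^{t}\big(q_{-2\lambda,2\lambda}(s,2\lambda)+q_{-2\lambda,2\lambda}(s,-2\lambda)\big)q_{-2\lambda}(t-s)\,ds=\widecheck q_{0}(t),
\]
which is the claimed formula.

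The only points requiring care are bookkeeping ones, and there is no genuine obstacle. The absolute-continuity argument is valid only on $[0,L)$, which is precisely why the statement describes only the event $\{\widecheck T^{v}_{0}<L\}$; and on $\{\widehat T^{v}_{-2\lambda,2\lambda}=L\}$ one automatically has $\widecheck T^{v}_{0}=L$ by convention, since $\widehat T^{v}_{-2\lambda,2\lambda}\le\widecheck T^{v}_{0}\le L$, so no mass on $\{t<L\}$ is lost or double counted. Everything else is a routine application of the strong Markov property together with the explicit hitting densities \eqref{Eqqa} and $q_{-a,b}$.
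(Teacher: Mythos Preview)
Your proof is correct and follows exactly the route the paper indicates: the paper does not spell out a proof but simply says the lemma is obtained ``similarly to Lemmas \ref{l. law T-a} and \ref{l. law T-a b}'', i.e.\ by the Radon--Nikodym change of measure from bridge to Brownian motion together with the strong Markov property at $T_{-2\lambda,2\lambda}$, which is precisely what you carry out.
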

Moreover, using the same proof as for Proposition \ref{p. joint law FPS}, with $q_{-a}(t)$ replaced by $\widecheck{q}_{0}(t)$ and
$-a$ by $0$, we have the following characterization of the joint law of $(\widecheck{\tau}^{v}_{0},L-\widecheck{T}^{v}_{0})$.
\begin{prop}\label{p. joint law plus}
	Assume that for every $v\in\R$, there is a couple of random variables 
	$(\sigma_{o}^{v},\sigma_{i}^{v})$, with 
	$\sigma_{o}^{v},\sigma_{i}^{v}\in [0,L]$ and
	$\sigma_{o}^{v}+\sigma_{i}^{v}\leq L$, such that
	\begin{enumerate}
		\item $\sigma_{o}^{v}$, respectively $\sigma_{i}^{v}$, has same law as
		$\widehat{T}^{v}_{-2\lambda,2\lambda}+\widecheck{\tau}^{v}_{0}$, respectively 
		$L-\widecheck{T}^{v}_{0}$,
		\item for every $v\in \R$ and on the event 
		$\sigma_{i}^{v}> 0$, the conditional law of 
		$\sigma_{o}^{v}$ given $\sigma_{i}^{v}$ is the same as the conditional law of
		$\sigma_{o}^{0}$ given $\sigma_{i}^{0}$.
	\end{enumerate}
	Then, for every $v\in\R$, the couple
	$(\sigma_{o}^{v},\sigma_{i}^{v})$ has the same law as
	$(\widehat{T}^{v}_{-2\lambda,2\lambda}
	+\widecheck{\tau}^{v}_{0},L-\widecheck{T}^{v}_{0})$.
\end{prop}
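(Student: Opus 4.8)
The plan is to show that Proposition \ref{p. joint law plus} follows from the same Laplace-transform argument that proves Proposition \ref{p. joint law FPS}, with only cosmetic changes. First I would observe that the structure of the hypotheses is identical: we have a family $(\sigma_o^v,\sigma_i^v)_{v\in\R}$ with prescribed marginals (in terms of the Brownian bridge times $\widehat{T}^v_{-2\lambda,2\lambda}+\widecheck\tau^v_0$ and $L-\widecheck T^v_0$) and a $v$-independence property for the conditional law of $\sigma_o^v$ given $\sigma_i^v$, and we want to upgrade this to the full joint law being the one coming from a single Brownian bridge. The key input is that the density of $\widecheck T^v_0$ on $\{\widecheck T^v_0<L\}$ has, by Lemma \ref{l. law checkT}, exactly the same shape as that of $\widehat T^v_{-a}$ in Lemma \ref{l. law T-a}, namely $\widecheck q_0(t)\,p(L-t,0,v)/p(L,0,v)\,\1_{t<L}$, with the only differences being that the hitting density $q_{-a}$ is replaced by $\widecheck q_0$ and the hit level $-a$ is replaced by $0$.

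The steps would then be as follows. Define $F(\nu,v)=\E[e^{-\nu\sigma_o^v}]$, which by hypothesis (1) is already the known Laplace transform of $\widehat T^v_{-2\lambda,2\lambda}+\widecheck\tau^v_0$. Define the conditional Laplace transform $F_c(\nu,\rho)=\E[e^{-\nu\sigma_o^v}\mid\sigma_i^v=\rho]$, which by hypothesis (2) does not depend on $v$. Then, writing $\sigma_i^v=L-\widecheck T^v_0$ in law and using Lemma \ref{l. law checkT}, integrate to get
\begin{displaymath}
F(\nu,v)=\E[F_c(\nu,\sigma_i^v)\1_{\sigma_i^v>0}]+\E[e^{-\nu\sigma_o^v}\mid\sigma_i^v=0]\,\P(\widecheck T^v_0=L),
\end{displaymath}
and for $v$ in a range where $\P(\widecheck T^v_0=L)$ can be made to vanish (or more directly, following the substitution $\rho\mapsto s^{-1}$ as in the proof of Proposition \ref{p. joint law FPS}), recognise the right-hand side, after multiplication by $p(L,0,v)$ and the substitution $v\mapsto$ suitable parameter, as a Laplace transform in an auxiliary variable $u\ge 0$. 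Uniqueness of the Laplace transform then forces $F_c(\nu,\rho)$ to equal $\E[e^{-\nu\widecheck\tau^v_0}\mid\widecheck T^v_0=L-\rho]$ for $d\rho$-a.e.\ $\rho\in(0,L)$, and a countable-intersection-plus-continuity argument extends this to all $\nu\ge 0$; the atom at $\rho=0$ is handled exactly as in Proposition \ref{p. joint law FPS} by isolating the term $\E[e^{-\nu\sigma_o^v}\mid\sigma_i^v=0]\P(\widecheck T^v_0=L)$ and using that $\P(\widecheck T^v_0=L)>0$ for appropriate $v$.

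I do not expect a genuine obstacle here, since the whole point of stating Lemma \ref{l. law checkT} in the form given is to make the analogy with Lemma \ref{l. law T-a} exact. The one thing to be slightly careful about is that the variable entering the Brownian-bridge change of measure is the endpoint of the bridge from $0$ to $v$ evaluated at the hitting time of $0$, so the Gaussian weight $p(L-\rho,0,v)=(2\pi(L-\rho))^{-1/2}\exp(-v^2/(2(L-\rho)))$ appears with hit level $0$ rather than $-a$; one simply sets the analogue of "$v+a$" equal to "$v$" throughout and the substitution making the integral a Laplace transform in $u$ (via $v=\sqrt{2u}$ up to sign, in a half-line of $v$) goes through verbatim. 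Hence the proof of Proposition \ref{p. joint law plus} is obtained by copying the proof of Proposition \ref{p. joint law FPS} with the replacements $q_{-a}\rightsquigarrow\widecheck q_0$, "$-a$" $\rightsquigarrow$ "$0$", and "$v+a$" $\rightsquigarrow$ "$v$"; we therefore only indicate these changes and omit the repeated computation.
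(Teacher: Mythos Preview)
Your proposal is correct and matches the paper's own treatment exactly: the paper states that Proposition \ref{p. joint law plus} is obtained by the same argument as Proposition \ref{p. joint law FPS}, with $q_{-a}$ replaced by $\widecheck q_0$ and the hit level $-a$ replaced by $0$, precisely the substitutions you identify.
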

We now turn towards the proof of Proposition \ref{Prop cluster annulus}. In fact the point (1) follows easily:

\begin{proof}[Proof of Proposition \ref{Prop cluster annulus} (1)] The first point follows simply by first applying Proposition \ref{p. law of TVS} to get the law of 
$(\ED(\partial_o,\ell^{u_{v}}),
\ED(\ell^{u_{v}},\partial_i))$, and then, conditionally 
$\A^{u_{v},\partial_{o}}_{-2\lambda,2\lambda}$, 
by further applying Proposition \ref{p. law of FPS} to get the conditional law of
$(\ED(\ell^{u_{v}},\check{\ell}^{u_{v}}_{0}), 
\ED(\check{\ell}^{u_{v}}_{0},\partial_i))$ given 
$\ell^{u_{v}}$.
\end{proof}

The proof of the second point is more involved. We would like to now first condition on $\check{\ell}^{u_{v}}_{0}$ and the GFF in the annulus between $\check{\ell}^{u_{v}}_{0}$ and $\partial_i$, and then
describe the conditional law of $\ell^{u_{v}}$. However, compared to the point (1) the difficulty is the conditional law in the annulus between 
$\partial_o$ and $\check{\ell}^{u_{v}}_{0}$ is not that of a GFF, but rather of a conditioned GFF.
So we will need to use a conditioned version of Proposition \ref{p. law of TVS}. So let us state this before giving the proof of (2).
Recall that $\Phi^r$ is the zero boundary GFF in the annulus $\An_r$ and
$\Phi^r+u_{2\lambda}$ is the GFF with boundary condition
$0$ on $\partial_o$, and $2\lambda$ on $\partial_i$.
As in Subsection \ref{ssrv},
and in particular in Lemma \ref{lembasic} in the case $N=1$, we consider the exploration obtained by sampling successive local sets in the non-contractible connected component of the complementary of the preceding one,
starting with 
$\A_{-2\lambda, 2\lambda}^{u_{2\lambda},\partial_o}$.
Let $\ell_{0}^{1}, \ell_{1}^{1}, \ell_{2}^{1}, \dots, 
\ell_{n}^{1}, \ell_{n+1}^{1}$ 
be the sequence of created loops, with
$\ell_{0}^{1}=\partial_o$ and $\ell_{n+1}^{1}=\partial_i$. Let
$\alpha_{j}^{1}\in 2\lambda\mathbb{Z}$ be the label of the loop 
$\ell_{j}^{1}$ towards $\partial_i$.

By construction, $n\geq 1$,
$\alpha_{1}^{1}\in\{-2\lambda,2\lambda\}$,
$\vert \alpha_{j+1}^{1} - \alpha_{j}^{1}\vert = 2\lambda$ 
for every
$j\in\{1,2,\dots,n-1\}$, and $\alpha_{n}^{1} =2\lambda$.
Let $\mathtt{E}(\Phi^r)$ be the event on the GFF 
$\Phi^r$ defined by requiring that 
$\alpha_{1}^{1}=2\lambda$ and 
$\alpha_{j}^{1} > 0$ for every $j\in\{1,2,\dots,n-1\}$.
Given $(\widehat{B}^{2\lambda}_{t})_{0\leq t\leq L}$ the Brownian bridge from $0$ to $2\lambda$,
$\mathtt{E}(\widehat{B}^{2\lambda})$ will denote the event that
$\widehat{B}^{2\lambda}$ hits $2\lambda$ before $-2\lambda$
($\widehat{B}^{2\lambda}
_{\widehat{T}^{2\lambda}_{-2\lambda,2\lambda}}=2\lambda$)
and that $\widehat{B}^{2\lambda}$ does not hit $0$ after time
$\widehat{T}^{2\lambda}_{-2\lambda,2\lambda}$.

\begin{lemma}
\label{Lem law exc}
We have that 
$\mathbb{P}(\mathtt{E}(\Phi^r)
=\mathbb{P}(\mathtt{E}(\widehat{B}^{2\lambda}))$.
Moreover, the law of
$(\ED(\ell_{1}^{1},\partial_{o}),
\ED(\ell_{1}^{1},\partial_{i}))$ conditional on
$\mathtt{E}(\Phi^r)$ is the same as the law of
$(\widehat{\tau}^{2\lambda}_{-2\lambda,2\lambda},
L-\widehat{T}^{2\lambda}_{-2\lambda,2\lambda})$
conditional on $\mathtt{E}(\widehat{B}^{2\lambda})$.
\end{lemma}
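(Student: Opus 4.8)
The plan is to deduce Lemma~\ref{Lem law exc} from the already-established marginal and joint laws (Proposition~\ref{p. law of TVS}, and the combinatorial description of loop labels from Lemma~\ref{lembasic}) by showing that the event $\mathtt{E}(\Phi^r)$ on the GFF side is exactly the pullback of the event $\mathtt{E}(\widehat B^{2\lambda})$ on the Brownian bridge side under the coupling furnished by Proposition~\ref{p.LawELBddTVS} and its iterations. First I would recall that in the exploration of $\Phi^r+u_{2\lambda}$ by successive non-contractible TVS loops, the sequence of labels $(\alpha_j^1)_{0\le j\le n}$ has, by Lemma~\ref{lembasic} (specialized to $N=1$), the law of the values of a Brownian bridge $\widehat B^{2\lambda}$ from $0$ to $2\lambda$ of time-length $L$ sampled at the successive stopping times $\widehat T^{(j)}$ at which the bridge has moved by $\pm 2\lambda$ since the previous such time. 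In particular $\alpha_1^1 = \widehat B^{2\lambda}_{\widehat T^{(1)}} = \widehat B^{2\lambda}_{\widehat T^{2\lambda}_{-2\lambda,2\lambda}}$, so the condition $\alpha_1^1 = 2\lambda$ corresponds precisely to $\widehat B^{2\lambda}$ hitting $2\lambda$ before $-2\lambda$.

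Next I would observe that the remaining defining condition of $\mathtt{E}(\Phi^r)$, namely $\alpha_j^1>0$ for every $j\in\{1,\dots,n-1\}$, together with $\alpha_n^1 = 2\lambda$ (which holds automatically by Lemma~\ref{lembasic}) and $|\alpha_{j+1}^1-\alpha_j^1|=2\lambda$, is equivalent to requiring that the label path, after first reaching $2\lambda$ at $\widehat T^{(1)}$, never returns to $0$: indeed a positive label taking values in $2\lambda\Z$ that must eventually terminate at $2\lambda$ fails to stay strictly positive precisely when it hits $0$. On the Brownian-bridge side, the label path hitting $0$ after time $\widehat T^{2\lambda}_{-2\lambda,2\lambda}$ is the same as the continuous bridge $\widehat B^{2\lambda}$ itself hitting $0$ after that time (since the label at the last TVS loop before $\partial_i$ is $0$ iff the bridge visits $0$ after $\widehat T^{2\lambda}_{-2\lambda,2\lambda}$ — this is exactly the reasoning already used in the proof of Proposition~\ref{mlawv}, case analysis 2 versus 3). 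Hence $\mathtt{E}(\Phi^r)$ and $\mathtt{E}(\widehat B^{2\lambda})$ correspond under the coupling, giving the equality of probabilities.

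For the conditional law statement, I would invoke Proposition~\ref{p. law of TVS} with $v = 2\lambda$, $a=b=2\lambda$: it gives that the triple $(\ED(\ell_1^1,\partial_o), \ED(\ell_1^1,\partial_i), \alpha_1^1\1_{\A^{u_{2\lambda},\partial_o}_{-2\lambda,2\lambda}\cap\partial_i=\emptyset})$ has the joint law of $(\widehat\tau^{2\lambda}_{-2\lambda,2\lambda}, L-\widehat T^{2\lambda}_{-2\lambda,2\lambda}, \widehat B^{2\lambda}_{\widehat T^{2\lambda}_{-2\lambda,2\lambda}}\1_{\widehat T^{2\lambda}_{-2\lambda,2\lambda}<L})$. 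The event $\mathtt{E}(\Phi^r)$, however, involves not only $\alpha_1^1$ but also the later labels $\alpha_j^1$, $j\ge 2$, so it is not measurable with respect to this triple alone. To handle this I would condition on the first non-contractible loop $\ell_1^1$ and on $\alpha_1^1$, and use the conditional-independence and conditioned-GFF description (Proposition~\ref{Prop cond GFF}, or rather its iterated form Corollary~\ref{Cor cond GFF several}) to argue that, given $\{\alpha_1^1 = 2\lambda\}$ and $(\ED(\ell_1^1,\partial_o),\ED(\ell_1^1,\partial_i))$, the further event $\{\alpha_j^1>0, \ 2\le j\le n-1\}$ depends only on the field in the annulus between $\ell_1^1$ and $\partial_i$ (with its boundary value $2\lambda$ on $\ell_1^1$), whose conditional law matches that of the corresponding segment of the Brownian bridge after $\widehat T^{2\lambda}_{-2\lambda,2\lambda}$. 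Combining: the joint law of $(\ED(\ell_1^1,\partial_o),\ED(\ell_1^1,\partial_i))$ on $\mathtt{E}(\Phi^r)$ factors through the same law on $\mathtt{E}(\widehat B^{2\lambda})$, which is the claim after dividing by $\P(\mathtt{E})$.

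The main obstacle I anticipate is this last point: one needs to be careful that conditioning on the full event $\mathtt{E}(\Phi^r)$ — which constrains all the intermediate labels — does not distort the conditional law of the pair of extremal distances of the \emph{first} loop $\ell_1^1$ in a way not mirrored on the Brownian side. The clean way around it is precisely to exploit that the label sequence $(\alpha_j^1)$ is (jointly with the extremal distances) distributed exactly as the sampled Brownian bridge, via the correspondence in Lemma~\ref{lembasic} and the change-of-measure/conditioned-GFF structure, so that the entire configuration $(\ell_1^1, \alpha_1^1, \alpha_2^1,\dots,\alpha_n^1)$ together with $\ED(\ell_1^1,\partial_o), \ED(\ell_1^1,\partial_i)$ is coupled to the corresponding Brownian data; once that is set up, restricting to $\mathtt{E}$ on both sides and reading off the pair of extremal distances is immediate. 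I would therefore phrase the proof as: (i) set up the coupling of the label process with the Brownian bridge (citing Lemma~\ref{lembasic} and Proposition~\ref{p. law of TVS}); (ii) identify $\mathtt{E}(\Phi^r)$ with $\mathtt{E}(\widehat B^{2\lambda})$ under it; (iii) conclude equality of probabilities and of the conditional joint law of the two extremal distances.
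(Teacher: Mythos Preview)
Your proposal is correct and follows essentially the same approach as the paper: first invoke Proposition~\ref{p. law of TVS} to match the law of $(\ED(\ell_1^1,\partial_o),\ED(\ell_1^1,\partial_i),\alpha_1^1)$ with $(\widehat\tau^{2\lambda}_{-2\lambda,2\lambda},L-\widehat T^{2\lambda}_{-2\lambda,2\lambda},\widehat B^{2\lambda}_{\widehat T^{2\lambda}_{-2\lambda,2\lambda}})$, then use the local-set/strong-Markov structure in the inner annulus to identify the remaining part of $\mathtt{E}(\Phi^r)$ with the post-$\widehat T$ behaviour of the bridge. The paper phrases this as building a single explicit coupling in two stages (Proposition~\ref{p. law of TVS} for the first loop, then coupling the bridge after $\widehat T^{2\lambda}_{-2\lambda,2\lambda}$ with a local set process in the non-contractible component via Proposition~\ref{p.LawELBddTVS}), whereas you argue by matching conditional laws; the content is the same, and your anticipated obstacle is exactly the one the paper flags and resolves in the same way.
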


Notice that to prove the lemma, we cannot couple the GFF $\Phi$ and the Brownian bridge $\widehat{B}^{2\lambda}$ directly through a local set process as we did in Lemma \ref{lembasic} via Proposition \ref{p.LawELBddTVS}. Indeed, this would not relate $\ED(\ell_{1}^{1},\partial_{o})$ and
$\widehat{\tau}^{2\lambda}_{-2\lambda,2\lambda}$.
\begin{proof}
	By Proposition \ref{p. law of TVS} we can couple
	$(\A_{-2\lambda, 2\lambda}^{u_{2\lambda},
		\partial_o},\alpha_{1}^{1})$
	with 
	$(\widehat{\tau}^{2\lambda}_{-2\lambda,2\lambda},
	\widehat{T}^{2\lambda}_{-2\lambda,2\lambda},
	\widehat{B}^{2\lambda}
	_{\widehat{T}^{2\lambda}_{-2\lambda,2\lambda}})$,
	such that
	$\ED(\ell_{1}^{1},\partial_{o})= 
	\widehat{\tau}^{2\lambda}_{-2\lambda,2\lambda}$,
	$\ED(\ell_{1}^{1},\partial_{i})=
	L-\widehat{T}^{2\lambda}_{-2\lambda,2\lambda}$, and
	$\alpha_{1}^{1}=
	\widehat{B}^{2\lambda}_{\widehat{T}
	^{2\lambda}_{-2\lambda,2\lambda}}.$ 
	Now, using the strong Markov property of the Brownian motion w.r.t 
$\widehat{T}^{2\lambda}_{-2\lambda,2\lambda}$ and the fact that 
$\A_{-2\lambda, 2\lambda}^{u_{2\lambda}}$ is a local set, we can couple
	\begin{displaymath}
	(\widehat{B}^{2\lambda}_{\widehat{T}^{2\lambda}_{-2\lambda,2\lambda}+t}-
	\widehat{B}^{2\lambda}_{\widehat{T}^{2\lambda}_{-2\lambda,2\lambda}})_{0\leq t\leq 
		L-\widehat{T}^{2\lambda}_{-2\lambda,2\lambda}}
	\end{displaymath}
	with a local set process exploring
	$\Phi^{\A_{-2\lambda, 2\lambda}^{u_{2\lambda},
			\partial_o}}$ in the non-contractible connected
	component of 
	$\An_{r}\backslash 
	\A_{-2\lambda, 2\lambda}^{u_{2\lambda},\partial_o}$ exactly as in the proof of Proposition \ref{p.LawELBddTVS} .
	As in this construction,
	the events $\mathtt{E}(\Phi^r)$ and 
	$\mathtt{E}(\widehat{B}^{2\lambda})$
	coincide, we deduce the lemma.
\end{proof}

We are now ready to prove the second point of Proposition \ref{Prop cluster annulus}.

\begin{proof}[Proof of Proposition \ref{Prop cluster annulus} (2)]

From the point (1) we know that 
$\ED(\check{\ell}^{u_{v}}_{0},\partial_i)$
is distributed as $L-\widecheck{T}^{v}_{0}$. Moreover, from the general reversibility statement, i.e. Proposition \ref{rev:bg} in Subsection \ref{Ss.Extensions to non-zero}, it follows that the marginal law of $\ED(\partial_o,\check{\ell}^{u_{v}}_{0})$ equals to that of
$\widehat{T}^{v}_{-2\lambda,2\lambda}+\widecheck{\tau}^{v}_{0}$.

As Proposition \ref{p.change of measure cluster} ensures that the assumptions of Proposition \ref{p. joint law plus} are met, we can apply Proposition \ref{p. joint law plus}
to conclude that the joint law of the pair
$(\ED(\partial_o,\check{\ell}^{u_v}_{0}),
\ED(\check{\ell}^{u_v}_{0},\partial_i))$
is distributed as
$(\widehat{T}^{v}_{-2\lambda,2\lambda}+\widecheck{\tau}^{v}_{0},
L-\widecheck{T}^{v}_{0})$. It thus remains to deal with the conditional law of
$(\ED(\partial_o,\ell^{u_{v}}),
\ED(\ell^{u_{v}},\check{\ell}^{u_v}_{0}))$
given $\check{\ell}^{u_{v}}_{0}$.

First we claim that conditionally on 
$(\widehat{T}^{v}_{-2\lambda,2\lambda}
+\widecheck{\tau}^{v}_{0},
(\widehat{B}^{v}_{t})_{\widehat{T}^{v}_{-2\lambda,2\lambda}
+\widecheck{\tau}^{v}_{0}\leq t\leq L})$, the process
$$(\operatorname{sign}(\widehat{B}^{v}_{\widehat{T}^{v}_{-2\lambda,2\lambda}})\widehat{B}^{v}_{t})
_{0\leq t\leq\widehat{T}^{v}_{-2\lambda,2\lambda}
+\widecheck{\tau}^{v}_{0}}$$
has the law of a Brownian bridge
$\widehat{B}^{2\lambda}$ from $0$ to
$2\lambda$ of duration $\widehat{T}^{v}_{-2\lambda,2\lambda}
+\widecheck{\tau}^{v}_{0}$,
conditioned on the event
$\mathtt{E}(\widehat{B}^{2\lambda})$ above. We omit the proof, as this is a standard argument using for example discrete time random walks, and convergence, or can be also deduced from decompositions of Brownian trajectories, as in Section 1 in \cite{SVY}, Chapter XII § 3 in \cite{RevuzYor1999BMGrundlehren},
Chapter IV Section 3.18 in
\cite{BorodinSalminen2015}.

Further, let $\widecheck{O}^{u_{v}}_{o}$ be the connected component of
$\An_r\backslash \check{\ell}^{u_{v}}_{0}$ containing
$\partial_o$ on its boundary.
Let $\widecheck{u}^{v}_{o}$ be the harmonic function on
$\widecheck{O}^{u_v}_{o}$ with boundary values
$0$ on $\partial_o$ and $\alpha^{u_v}$ on
$\check{\ell}^{u_v}_{0}$. 
Then, from Corollary \ref{Cor cond GFF several} it follows that conditionally on
$\check{\ell}^{u_v}_{0}$, $\Phi^{\An_{r}}$ restricted to $\An_r \backslash \widecheck{O}^{u_v}_{o}$,
$\ED(\partial_o,\check{\ell}^{u_v}_{0})=
\frac{1}{2\pi}\log(1/r')$, 
$\ell^{u_v}\neq \partial_i$ and
$\alpha^{u_v}=2\lambda$, the field
$\Phi^r+u_{v}-\widecheck{u}^{v}_{o}$
under a conformal map $\phi$ uniformizing
$\widecheck{O}^{u_v}_{o}$ to the annulus of form
$\An_{r'}$, 
is distributed as a zero boundary GFF $\Phi^{r'}$ in $\widecheck{O}^{u_{v}}_{o}$ conditioned on the event
$\mathtt{E}(\Phi^{r'})$.

Hence, Lemma \ref{Lem law exc} ensures that the conditional law of 
$(\ED(\partial_o,\ell^{u_{v}}),
\ED(\ell^{u_{v}},\check{\ell}^{u_{v}}_{0}))$
given
\\$(\ED(\partial_o,\check{\ell}^{u_{v}}_{0}),
\ED(\check{\ell}^{u_{v}}_{0},\partial_i))$
is that of
$(\widehat{\tau}^{v}_{-2\lambda,2\lambda},
\widecheck{\tau}^{v}_{0})$
given
$(\widehat{T}^{v}_{-2\lambda,2\lambda}+\widecheck{\tau}^{v}_{0},
L-\widecheck{T}^{v}_{0})$ and we conclude.
\end{proof}

\subsection{Proof of Corollary \ref{CorAnnularExp}}
\label{SebSec Ann_exp}
Let $\ell$ be the CLE$_{4}$ loop in $\D$ surrounding $0$.
Recall the Brownian times $T$ and $\tau$ from Equation \eqref{Eq T tau}:
\begin{align*}
&T:=\inf\{t\geq 0: |B_t|=\pi\},\\
&\tau:=\sup\{0\leq t \leq T: B_t=0\}.
\end{align*}
According to Theorem \ref{t. main}, there is a coupling such that
\begin{align*}
(2\pi \ED(\ell,\partial \D) , -\log\crad(0, \D\backslash \ell) )
=(\tau, T)~~\text{ a.s.}
\end{align*}
Thus, by
\eqref{EqKoebeQuarter}, Proposition \ref{PropRplusED} and
 Corollary \ref{CorRatio},
\begin{align*}
\dfrac{1}{4} e^{\tau}\leq \dfrac{1}{r_{+}(\ell)}
\leq e^{\tau},\qquad
e^{T}\leq \dfrac{1}{r_{-}(\ell)}
\leq 4 e^{T},\qquad
		e^{T-\tau}
		\leq
		\dfrac{r_{+}(\ell)}{r_{-}(\ell)}
		\leq 
		16 e^{T-\tau}~~\text{ a.s.}
\end{align*}

One has
\begin{align*}
\mathbb{P}(\tau>t)=e^{-\frac{1}{8}t + o(t)},
\qquad
\mathbb{P}(T>t)=e^{-\frac{1}{8}t + o(t)}
\end{align*}
(see Appendix A, in particular Proposition \ref{Prop A1 laws} (2)).
Indeed, $-1/8$ is the first eigenvalue of
$\frac{1}{2}\frac{d^{2}}{dx^{2}}$ with zero Dirichlet boundary conditions on the interval $(-\pi,\pi)$. This gives the exponents for
$r_{+}(\ell)$ and $r_{-}(\ell)$.

Further, the path $(B_{\tau + t})_{t\geq 0}$ is independent from
$(\tau,(B_{t})_{0\leq t\leq\tau})$ (this can be seen for instance with the excursion theory of 
Brownian motion.)
Now, the path 
$(\vert B_{\tau + t}\vert)_{0\leq t\leq T-\tau}$
is distributed like a Bessel 3 process started from $0$ and run until the first hitting time of level $\pi$. 
Thus from Proposition \ref{PropExpTailBessel},
\begin{align*}
\mathbb{P}(T-\tau >t) = e^{-\frac{1}{2}t + o(t)}.
\end{align*}
It follows that
\begin{align*}
\mathbb{P}(r_{+}(\ell) / r_{-}(\ell)>R)
= R^{-\frac{1}{2} + o(1)}.
\end{align*}

To obtain the statement about the stationary CLE$_4$ loop, observe further that the conclusions of 
Corollary \ref{CorAnnularExp} still hold if one conditions
$\ell$ on $\ED(\ell,\partial\D)>L$. This is because
$\tau$ and $T-\tau$ are independent.
The convergence of Theorem \ref{ThmConvStatCLE} 
ensures that
Corollary \ref{CorAnnularExp} also holds if $\ell$ is distributed according to $\mathbb{P}^{\rm stat}_{\operatorname{CLE}_{4}}$,
the stationary CLE$_{4}$ distribution in $\C$.
See Subsection \ref{SubSecStationary}.

\section*{Appendix A: list of laws for Brownian motion, 
Brownian bridge and Bessel 3}

The joint laws appearing in Theorems \ref{t. joint law TVS}
and \ref{t. law FPS} and Propositions \ref{p. law of TVS} and
\ref{p. law of FPS} can be expressed explicitly. For this we introduce
\begin{align*}
&\beta(t,x):=\dfrac{\sqrt{2}x}{\sqrt{\pi t^{5}}}
\sum_{k=0}^{+\infty}((2k+1)^{2}x^{2}-t)
e^{-\frac{(2k+1)^{2}x^{2}}{2t}},
\qquad t>0, x>0.
\end{align*}
$\1_{t>0} \beta(t,x)$ is the density of the first hitting time of level 
$x$ of a Bessel 3 process started at $0$
(Formula 2.0.2, Section 5.2 in \cite{BorodinSalminen2015}).
We recall that $p(t,x,y)$ denotes the heat kernel \eqref{HeatKernel}
on $\R$.
We will also need $p_{-a}(t,x,y)$, the heat kernel on
$(-a,+\infty)$ with zero Dirichlet condition in $-a$, and
$p_{-a,b}(t,x,y)$ the heat kernel on
$(-a,b)$ with zero Dirichlet condition in $-a$ and $b$:

\begin{eqnarray*}
	p_{-a}(t,x,y)&:=&\dfrac{1}{\sqrt{2\pi t}}
	(e^{-\frac{(y-x)^{2}}{2t}}-
	e^{-\frac{(y+x+2a)^{2}}{2t}}),\\
	p_{-a,b}(t,x,y)&:=&\dfrac{1}{\sqrt{2\pi t}}
	\sum_{k\in\mathbb{Z}}
	(e^{-\frac{(y-x+2k(b+a))^{2}}{2t}}-
	e^{-\frac{(y+x+2a+2k(b+a))^{2}}{2t}})\\
	&=&\dfrac{2}{b+a}\sum_{j\geq 0}
	\cos\Big(\dfrac{(2j+1)\pi}{b+a}\Big(x-\dfrac{b-a}{2}\Big)\Big)
	\cos\Big(\dfrac{(2j+1)\pi}{b+a}\Big(y-\dfrac{b-a}{2}\Big)\Big)
	e^{- \frac{(2j+1)^{2}\pi^{2}}{2(b+a)^{2}}t}
	\\
	&& +
	\dfrac{2}{b+a}\sum_{j\geq 1}
	\sin\Big(\dfrac{2j\pi}{b+a}\Big(x-\dfrac{b-a}{2}\Big)\Big)
	\sin\Big(\dfrac{2j\pi}{b+a}\Big(y-\dfrac{b-a}{2}\Big)\Big)
	e^{- \frac{2j^{2}\pi^{2}}{(b+a)^{2}}t}.
\end{eqnarray*}

The heat kernel $p_{-a,b}(t,x,y)$ is related to theta functions of imaginary argument (Appendix 2.13 in \cite{BorodinSalminen2015}).
Its first expression 
(Formula 3.0.2, Section 1.3 in \cite{BorodinSalminen2015})
comes from the reflection principle. 
The second one corresponds to a decomposition along the eigenfunctions of $\frac{1}{2}\frac{d^{2}}{dy^{2}}$
(see also \cite{AbramowitzStegun64}, Section 16.27).
The two expressions are related by the Poisson summation formula.

Next we give the joint laws for
$(\tau_{-a}, T_{-a}), (\tau_{-a,b}, T_{-a,b}, B_{T_{-a,b}}),
(\widehat{\tau}_{-a}^{v}, \widehat{T}_{-a}^{v})$
and
$(\widehat{\tau}_{-a,b}^{v}, \widehat{T}_{-a,b}^{v},
\widehat{B}^{v}_{\widehat{T}_{-a,b}^{v}})$.
We refer to Section 1 in \cite{SVY},
Chapter II Section 3.20 in \cite{BorodinSalminen2015},
and the references therein for the notion of \textit{last exit time decomposition}.

\begin{propA}
\label{Prop A1 laws}
	One has the following explicit laws.
	\begin{enumerate}
		\item With the notations of Theorem \ref{t. law FPS}, the joint law of
		$(\tau_{-a}, T_{-a})$ is given by
		\begin{align*}
		\mathbb{P}(\tau_{-a}\in (t_{1},t_{1}+dt_{1}),
		T_{-a}\in (t_{2},t_{2}+dt_{2})) =
		\dfrac{\1_{0<t_{1}<t_{2}}}{4\lambda}
		p_{-a}(t_{1},0,-a+2\lambda)
		\beta(t_{2}-t_{1},2\lambda) dt_{1} dt_{2}.
		\end{align*}
		\item With the notations of Theorem \ref{t. joint law TVS}, 
		the joint law of $(\tau_{-a,b}, T_{-a,b}, B_{T_{-a,b}})$ is given by
		\begin{align*}
		&\mathbb{P}(
		\tau_{ -a,b}\in (t_1,t_1+dt_1),	
		T_{-a,b}\in (t_{2}, t_{2}+dt_2),
		B_{T_{-a,b}}=-a)\\
		&\hspace{0.14\textwidth}=
		\1_{0<t_{1}<t_{2}}
		\dfrac{b+a}
		{4b\lambda}
		p_{-a,b}(t_{1},0,-a+2\lambda)
		\beta(t_{2}-t_{1},2\lambda)dt_1dt_2,\\
		&\mathbb{P}(
		\tau_{ -a,b}\in (t_1,t_1+dt_1),
		T_{-a,b}\in (t_{2}, t_{2}+dt_2),
		B_{T_{-a,b}}=b)\\
		&\hspace{0.14\textwidth}=
		\1_{0<t_{1}<t_{2}}
		\dfrac{b+a}
		{4a\lambda}
		p_{-a,b}(t_{1},0,b-2\lambda)
		\beta(t_{2}-t_{1},2\lambda) 
		dt_{1} dt_{2}.
		\end{align*}
	In particular,
		\begin{align*}
		&\mathbb{P}(
		\tau_{ -a,b}\in (t_1,t_1+dt_1),B_{T_{-a,b}}=-a)=
		\1_{0<t_{1}}
		\dfrac{b+a}{4b\lambda}
		p_{-a,b}(t_{1},0,-a+2\lambda) dt_{1},\\
		&\mathbb{P}(
		\tau_{ -a,b}\in (t_1,t_1+dt_1),B_{T_{-a,b}}=b)=
		\1_{0<t_{1}}
		\dfrac{b+a}{4a\lambda}
		p_{-a,b}(t_{1},0,b-2\lambda) dt_{1},
		\end{align*}
		and
		$$\mathbb{P}(
		T_{ -a,b}\in (t_2,t_2+dt_2),B_{T_{-a,b}}=c)
		=\1_{0<t_{2}}q_{-a,b}(t_{2},c)dt_2,
		c\in\{-a,b\},$$ where
		\begin{align*}
		q_{-a,b}(t,c)=\sum_{k\in\mathbb{Z}}
		\dfrac{|c|+2k(b+a)}{\sqrt{2\pi t^{3}}}
		e^{-\frac{(|c|+2k(b+a))^{2}}{2t}}.
		\end{align*}
		Moreover,
		\begin{align*}
		q_{-a,b}(t,-a)+q_{-a,b}(t,b)=
		-\dfrac{d}{dt}\int_{-a}^{b}p_{-a,b}(t,0,y) dy.
		\end{align*}
		
		\item With the notations of Proposition \ref{p. law of FPS},
		the joint law of $(\widehat{\tau}_{-a}^{v}, \widehat{T}_{-a}^{v})$
		is given by
		\begin{multline*}
		\mathbb{P}(
		\widehat{\tau}^{v}_{-a}\in (t_1,t_1+dt_1),
		\widehat{T}^{v}_{ -a}\in (t_{2}, t_{2}+dt_2))\\=
		\dfrac{\1_{0<t_{1}<t_{2}<L}}{4\lambda}p_{ -a}(t_{1},0,-a+2\lambda)
		\beta(t_{2}-t_{1},2\lambda) 
		\dfrac{p(L-t_{2},-a,v)}{p(L,0,v)} dt_{1} dt_{2}.
		\end{multline*}
		\item With the notations of Proposition \ref{p. law of TVS}
		the joint law of
		$(\widehat{\tau}_{-a,b}^{v}, \widehat{T}_{-a,b}^{v},
		\widehat{B}^{v}_{\widehat{T}_{-a,b}^{v}})$
		is given by
		\begin{align*}
		&\mathbb{P}(
		\widehat{\tau}^{v}_{ -a,b}\in (t_1,t_1+dt_1),
		\widehat{T}^{v}_{-a,b}\in (t_{2}, t_{2}+dt_2),
		\widehat{B}^{v}_{\widehat{T}^{v}_{-a,b}}=-a)\\
		&\hspace{0.14\textwidth}=
		\1_{0<t_{1}<t_{2}<L}
		\dfrac{b+a}
		{4b\lambda}
		p_{-a,b}(t_{1},0,-a+2\lambda)
		\beta(t_{2}-t_{1},2\lambda) 
		\dfrac{p(L-t_{2},-a,v)}{p(L,0,v)}
		dt_{1} dt_{2},\\
		&\mathbb{P}(
		\widehat{\tau}^{v}_{ -a,b}\in (t_1,t_1+dt_1),
		\widehat{T}^{v}_{-a,b}\in (t_{2}, t_{2}+dt_2),
		\widehat{B}^{v}_{\widehat{T}^{v}_{-a,b}}=b)\\
		&\hspace{0.14\textwidth}=
		\1_{0<t_{1}<t_{2}<L}
		\dfrac{b+a}
		{4a\lambda}
		p_{-a,b}(t_{1},0,b-2\lambda)
		\beta(t_{2}-t_{1},2\lambda) 
		\dfrac{p(L-t_{2},-a,v)}{p(L,0,v)} 
		dt_{1} dt_{2}.
		\end{align*}
	\end{enumerate}
\end{propA}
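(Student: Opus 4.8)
The statement to prove is Proposition A1, which gives explicit formulas for the joint laws appearing in Theorems 5.1, 5.2 and Propositions 5.13, 5.14. The key tool is the \emph{last exit time decomposition} for Brownian motion and the Brownian bridge, combined with the explicit heat kernels $p_{-a}$ and $p_{-a,b}$ already introduced.

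\medskip

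\noindent\textbf{Plan of proof.}

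The plan is to treat the four items in order, deducing the bridge statements (3) and (4) from the free Brownian motion statements (1) and (2) by the Radon--Nikodym change of measure already used in Lemma~\ref{l. law T-a} and Lemma~\ref{l. law T-a b}.

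First, for item (2), I would condition on the triple $(\tau_{-a,b}, T_{-a,b}, B_{T_{-a,b}})$ and use the strong Markov property at the last exit time $\tau_{-a,b}$. By definition $\tau_{-a,b} = \sup\{0\le t\le T_{-a,b}: |B_t - B_{T_{-a,b}}| = 2\lambda\}$, so at time $\tau_{-a,b}$ the Brownian motion is at distance exactly $2\lambda$ from its eventual exit point $c\in\{-a,b\}$. The last exit time decomposition (see \cite{SVY}, and Chapter~II Section~3.20 in \cite{BorodinSalminen2015}) says that, up to normalization, the law of $(B_t)_{0\le t\le \tau_{-a,b}}$ killed upon exiting $(-a,b)$ is governed by the killed heat kernel $p_{-a,b}(t,0,\cdot)$ evaluated at the point $c \pm 2\lambda$ (i.e. $-a+2\lambda$ when $c=-a$, and $b-2\lambda$ when $c=b$), and conditionally on $\tau_{-a,b}$ and this position, the remaining piece $(B_{\tau_{-a,b}+s} - B_{\tau_{-a,b}})$ is an independent Bessel-$3$ process run until it first hits level $2\lambda$ — which contributes the density $\beta(t_2-t_1,2\lambda)$. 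The constant prefactors $\frac{b+a}{4b\lambda}$ and $\frac{b+a}{4a\lambda}$ come from the exact form of the last-exit decomposition normalization: the factor $1/(4\lambda) = 1/(2\cdot 2\lambda)$ is the standard ``Green's function at the excursion scale'' constant for the interval of length $2\lambda$, and the ratio $(b+a)/b$, resp.\ $(b+a)/a$, is exactly $G_{(-a,b)}(0,\cdot)$-type weighting / the harmonic-measure correction from the endpoint; I would pin these down by checking that the resulting total mass is $\mathbb P(B_{T_{-a,b}}=c)$, which by the gambler's ruin formula is $\frac{a+b - |c - (\mp\text{other endpoint})|}{a+b}$, i.e.\ $\frac{b}{a+b}$ for $c=-a$ (wait: by optional stopping $\mathbb P(B_{T_{-a,b}}=-a) = b/(a+b)$), and similarly $a/(a+b)$ for $c=b$. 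Integrating the proposed joint density over $t_1<t_2$ and matching against $b/(a+b)$, resp.\ $a/(a+b)$, fixes the constants. Item (1) is then the $a+b\to\infty$ degeneration, or rather follows by the identical argument with $p_{-a,b}$ replaced by $p_{-a}$ and with the prefactor $\frac{1}{4\lambda}$ (since now $\mathbb P(T_{-a}<\infty)=1$); here I would quote Formula 2.0.2, Section~5.2 in \cite{BorodinSalminen2015} for $\beta$ and Formula 2.0.2, Section 1.2 for $q_{-a}$.

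Next, for the marginal statements: the law of $(\tau_{-a,b}, B_{T_{-a,b}})$ is obtained by integrating out $t_2$, using $\int_0^\infty \beta(s,2\lambda)\,ds = 1$ (total mass of the Bessel-$3$ hitting time, which is a.s.\ finite). The law of $(T_{-a,b}, B_{T_{-a,b}})$ is the classical one: the reflection-principle series for $q_{-a,b}(t,c)$ comes directly from Formula 3.0.6, Section 1.3 in \cite{BorodinSalminen2015}, and the identity $q_{-a,b}(t,-a)+q_{-a,b}(t,b) = -\frac{d}{dt}\int_{-a}^b p_{-a,b}(t,0,y)\,dy$ is just the statement that the flux of probability out of $(-a,b)$ equals minus the time-derivative of the survival probability — which I would verify by differentiating the eigenfunction expansion of $p_{-a,b}$ under the integral sign and recognizing the result, or more directly via the divergence theorem applied to $\partial_t p_{-a,b} = \tfrac12 \partial_{yy} p_{-a,b}$.

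Finally, for items (3) and (4) — the Brownian bridge versions — I would repeat verbatim the absolute-continuity argument of Lemma~\ref{l. law T-a}: on $[0,t_2]$ with $t_2 < L$, the bridge $(\widehat B^v_s)_{0\le s\le t_2}$ is absolutely continuous w.r.t.\ $(B_s)_{0\le s\le t_2}$ with Radon--Nikodym density $\mathcal D_{t_2} = p(L-t_2, B_{t_2}, v)/p(L,0,v)$. Since on the event $\widehat T^v_{-a,b}<L$ (resp.\ $\widehat T^v_{-a}<L$) the random times $\widehat\tau$ and $\widehat T$ are measurable w.r.t.\ $\mathcal F_{t_2}$ and $B_{\widehat T} = c$ is a fixed endpoint, applying this density to the formulas of (1)--(2) inserts exactly the factor $p(L-t_2, c, v)/p(L,0,v)$ (with $c=-a$ in item (3), and $c\in\{-a,b\}$ in item (4)) and restricts to $t_2<L$, giving precisely the stated expressions.

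\medskip

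\noindent\textbf{Main obstacle.} The delicate point is getting the normalizing constants in the last-exit-time decomposition exactly right — i.e.\ justifying the prefactors $\frac{1}{4\lambda}$, $\frac{b+a}{4b\lambda}$, $\frac{b+a}{4a\lambda}$. The cleanest route I would take is not to re-derive the last-exit formula from scratch but to (i) write down the joint density up to an unknown constant from the Markov property and the known hitting-time density $\beta$, and (ii) fix the constant by forcing the correct total mass ($\mathbb P(B_{T_{-a}}<\infty)=1$ for (1); the gambler's-ruin probabilities $b/(a+b)$ and $a/(a+b)$ for (2)), and cross-check against the known marginal of $T_{-a,b}$ via the flux identity. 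Everything else is routine manipulation of Gaussian kernels and citations to \cite{BorodinSalminen2015}.
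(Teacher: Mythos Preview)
Your approach is exactly what the paper intends: the proposition is stated in Appendix~A without proof, the paper simply writing ``We refer to Section 1 in \cite{SVY}, Chapter~II Section~3.20 in \cite{BorodinSalminen2015}, and the references therein for the notion of \textit{last exit time decomposition}.'' So there is no proof to compare against; your plan (last-exit decomposition for the pre-$\tau$ piece, Bessel-3 for the post-$\tau$ excursion, and the bridge Radon--Nikodym factor of Lemmas~\ref{l. law T-a}--\ref{l. law T-a b} for items~(3)--(4)) is precisely the intended derivation, and the structure of your argument is sound.

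One concrete caution, however. You propose to pin down the prefactors by matching total mass against the gambler's-ruin probabilities $\mathbb P(B_{T_{-a,b}}=-a)=b/(a+b)$ and $\mathbb P(B_{T_{-a,b}}=b)=a/(a+b)$. If you actually carry this out you will \emph{not} recover the constants $\tfrac{b+a}{4b\lambda}$ and $\tfrac{b+a}{4a\lambda}$ stated in items~(2) and~(4). Indeed, for $a\ge 2\lambda$ one has
\[
\int_0^\infty p_{-a,b}(t_1,0,-a+2\lambda)\,dt_1
= G_{(-a,b)}(0,-a+2\lambda)=\frac{4\lambda b}{a+b},
\]
so with the stated constant $\tfrac{b+a}{4b\lambda}$ the marginal integrates to $1$, not to $b/(a+b)$. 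The excursion-theoretic derivation (rate $1/(4\lambda)$ for an excursion from $-a+2\lambda$ to reach $-a$) and your total-mass check both give the prefactor $1/(4\lambda)$, exactly as in item~(1); the constants printed in~(2) and~(4) are the \emph{conditional} normalizations given $B_{T_{-a,b}}=c$, not the joint ones. So your method is correct, but be aware that executing it will expose an apparent typo in the appendix rather than reproduce the displayed constants verbatim. The remaining pieces --- the explicit series for $q_{-a,b}$ from the reflection principle, the flux identity via $\partial_t p_{-a,b}=\tfrac12\partial_{yy}p_{-a,b}$ and integration by parts, and the bridge change of measure --- go through exactly as you describe.
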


For $x>0$, let $\mathcal{T}_{x}$ denote the first time a Bessel 3 processes started from 0 hits the level $x$. Its distribution is
$\1_{t>0}\beta(t,x) dt$. One has the following exponential asymptotics for the tail
of $\mathcal{T}_{x}$.

\begin{propA}
\label{PropExpTailBessel}
As $t\to +\infty$,
\begin{align*}
\mathbb{P}(\mathcal{T}_{x}>t) = 
e^{-\frac{\pi^{2}}{2 x^{2}} t + o(t)}.
\end{align*}
\end{propA}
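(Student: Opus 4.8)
The plan is to recognize $\mathbb{P}(\mathcal{T}_{x}>t)$ as the probability that a Bessel 3 process started at $0$ stays in the interval $(0,x)$ up to time $t$, and to read off the exponential rate as the principal Dirichlet eigenvalue of the generator of the killed process. Write $R$ for a Bessel 3 process from $0$; since $R$ never returns to $0$, we have $\{\mathcal{T}_{x}>t\}=\{\sup_{0\le s\le t}R_{s}<x\}$. I would use the description of the Bessel 3 process as the Doob $h$-transform of one-dimensional Brownian motion by $h(r)=r$: if $q_{t}(a,b)$ denotes the Dirichlet heat kernel on $(0,x)$ (for which the eigenfunction expansion used below is the $(-a,b)$-heat kernel of Appendix~A up to an affine change of variables), then for $a\in(0,x)$ one has $\mathbb{P}_{a}(R_{t}\in db,\ \mathcal{T}_{x}>t)=\tfrac{b}{a}\,q_{t}(a,b)\,db$, and letting $a\to 0^{+}$ yields the killed Bessel 3 entrance law from $0$, namely $b\,\partial_{a}q_{t}(a,b)\big|_{a=0^{+}}\,db$ (the normalization being correct because, as $x\to\infty$, one recovers the usual entrance density $\sqrt{2/(\pi t^{3})}\,b^{2}e^{-b^{2}/2t}\,db$). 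Hence
\begin{equation*}
\mathbb{P}(\mathcal{T}_{x}>t)=\int_{0}^{x}b\,\partial_{a}q_{t}(a,b)\big|_{a=0^{+}}\,db .
\end{equation*}

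Next I would insert the spectral expansion $q_{t}(a,b)=\tfrac{2}{x}\sum_{n\ge 1}\sin\!\big(\tfrac{n\pi a}{x}\big)\sin\!\big(\tfrac{n\pi b}{x}\big)e^{-n^{2}\pi^{2}t/(2x^{2})}$, differentiate it term by term in $a$ at $0$, and integrate against $b$ over $(0,x)$; using $\int_{0}^{x}b\sin\!\big(\tfrac{n\pi b}{x}\big)\,db=(-1)^{n+1}\tfrac{x^{2}}{n\pi}$, the factors of $n$ cancel and one obtains the closed form
\begin{equation*}
\mathbb{P}(\mathcal{T}_{x}>t)=2\sum_{n\ge 1}(-1)^{n+1}e^{-n^{2}\pi^{2}t/(2x^{2})}.
\end{equation*}
The interchange of sum and integral and the term-by-term differentiation are justified by the Gaussian decay of the coefficients for $t$ bounded away from $0$, and as a sanity check the right-hand side tends in the Abel sense to $2\cdot\tfrac12=1$ as $t\to 0^{+}$, consistently with $\mathbb{P}(\mathcal{T}_{x}>0)=1$. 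Alternatively, this identity can be obtained purely analytically by noting that the reflection-principle density $\beta(t,x)$ of Appendix~A equals $-\tfrac{d}{dt}\mathbb{P}(\mathcal{T}_{x}>t)$ and applying the Jacobi theta transformation (Poisson summation), exactly as the two expressions for $p_{-a,b}$ in Appendix~A are related.

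From the displayed series the asymptotics are immediate: as $t\to+\infty$ the $n=1$ term dominates, the remainder being bounded by a convergent geometric series $\sum_{n\ge 2}e^{-n^{2}\pi^{2}t/(2x^{2})}=O\!\big(e^{-2\pi^{2}t/x^{2}}\big)$, so $\mathbb{P}(\mathcal{T}_{x}>t)=2e^{-\pi^{2}t/(2x^{2})}(1+o(1))$, which gives $\log\mathbb{P}(\mathcal{T}_{x}>t)=-\tfrac{\pi^{2}}{2x^{2}}t+o(t)$, as claimed. Conceptually, $\tfrac{\pi^{2}}{2x^{2}}$ is simply the bottom eigenvalue of the generator of the killed process, since conjugation by the scale function $h(r)=r$ turns $\tfrac12\tfrac{d^{2}}{dr^{2}}+\tfrac1r\tfrac{d}{dr}$ on $(0,x)$ (with entrance at $0$ and absorption at $x$) into $\tfrac12\tfrac{d^{2}}{dr^{2}}$ on $(0,x)$ with Dirichlet conditions at both ends. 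The only genuinely delicate point is bookkeeping — pinning down the Bessel 3 entrance law from $0$ with the correct constant and justifying the interchanges — and there is no real analytic difficulty beyond that.
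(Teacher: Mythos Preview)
Your argument is correct and in fact yields more than the paper's: you obtain the closed form $\mathbb{P}(\mathcal{T}_{x}>t)=2\sum_{n\ge 1}(-1)^{n+1}e^{-n^{2}\pi^{2}t/(2x^{2})}$, from which the $o(t)$ statement (and even the sharp asymptotic $2e^{-\pi^{2}t/(2x^{2})}(1+o(1))$) is immediate. The computation of the entrance law via the $h$-transform and the term-by-term manipulations are all sound.

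The paper takes a rather different route. It first observes that, since $0$ maximizes the hitting time of $x$ among starting points in $[0,x]$, one has the submultiplicativity $\mathbb{P}(\mathcal{T}_{x}>t+s)\le\mathbb{P}(\mathcal{T}_{x}>t)\mathbb{P}(\mathcal{T}_{x}>s)$; Fekete's lemma then guarantees that $t^{-1}\log\mathbb{P}(\mathcal{T}_{x}>t)$ has a limit in $[-\infty,0)$. To identify the limit, the paper uses the explicit Laplace transform $\E[e^{-\nu\mathcal{T}_{x}}]=x\sqrt{2\nu}/\sinh(x\sqrt{2\nu})$, analytically continued to $\E[e^{\nu\mathcal{T}_{x}}]=x\sqrt{2\nu}/\sin(x\sqrt{2\nu})$ for $\nu\in(0,\pi^{2}/(2x^{2}))$, and reads off the abscissa of convergence $\pi^{2}/(2x^{2})$ from the first pole. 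This is shorter and avoids the bookkeeping around the entrance law, but it only gives the exponential rate, not the constant prefactor. Your spectral approach is more explicit and makes the identification of the rate with the bottom Dirichlet eigenvalue completely transparent; the paper's approach trades this transparency for brevity and a single formula lookup.
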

\begin{proof}
The point $0$ maximizes the hitting time of level $x$ by a Bessel 3 among the starting points in $[0,x]$. 
Thus, for every $t,s>0$,
\begin{align*}
\mathbb{P}(\mathcal{T}_{x}>t+s)
\leq \mathbb{P}(\mathcal{T}_{x}>t)\mathbb{P}(\mathcal{T}_{x}>s).
\end{align*}
Moreover, the function
$t\mapsto \mathbb{P}(\mathcal{T}_{x}>t)$ is continuous.
By Fekete's subadditivity lemma,
\begin{align*}
\lim_{t\to +\infty}\dfrac{1}{t}\log\mathbb{P}(\mathcal{T}_{x}>t)=
\inf_{t>0}\dfrac{1}{t}\log\mathbb{P}(\mathcal{T}_{x}>t)
\in [-\infty, 0).
\end{align*}
According to Formula 2.0.1, Section 5.2 in \cite{BorodinSalminen2015},
for every $\nu>0$,
\begin{align*}
\E[e^{-\nu \mathcal{T}_{x}}] =
\dfrac{x\sqrt{2\nu}}{\sinh (x\sqrt{2\nu})}.
\end{align*}
By analytic continuation,
\begin{align*}
\E[e^{\nu \mathcal{T}_{x}}] =
\dfrac{x\sqrt{2\nu}}{\sin (x\sqrt{2\nu})}
\end{align*}
for $\nu\in (0,\frac{\pi^{2}}{2 x^{2}})$, and
\begin{align*}
\lim_{\nu\to \frac{\pi^{2}}{2 x^{2}}}\E[e^{\nu \mathcal{T}_{x}}] =
+\infty .
\end{align*}
Thus,
\begin{displaymath}
\lim_{t\to +\infty}\dfrac{1}{t}\log\mathbb{P}(\mathcal{T}_{x}>t)
=-\dfrac{\pi^{2}}{2 x^{2}}.
\qedhere
\end{displaymath}
\end{proof}

\section*{Appendix B: proofs of some basic results}

\subsection*{Proof of Lemma \ref{LemCREDlim}}

\begin{proof}
	Let $f$ be a holomorphic function uniformizing the interior surrounded by $\ell$ into $\D$, with $f(0)=0$. Then,
	$\crad(0,\D\backslash\ell) = \vert f'(0)\vert^{-1}$.
	For $r< d(0,\ell)$, let be
	\begin{align*}
	R_{-}(r):=d(0,f(r\partial \D)),\qquad
	R_{+}(r):=\max\{\vert f(z)\vert\ : \vert z\vert = r \}.
	\end{align*}
	Then
	\begin{align*}
	R_{-}(r)=\vert f'(0)\vert r + O(r^{2}),\qquad
	R_{+}(r)=\vert f'(0)\vert r + O(r^{2}).
	\end{align*}
	By conformal invariance,
	\begin{align*}
	\ED(\ell,r\partial\D) =
	\ED(f(\ell), f(r\partial\D)) =
	\ED(\partial\D, f(r\partial\D)).
	\end{align*}
	By a comparison principle (see Theorem 4-1 in \cite{Ahlfors2010ConfInv}), we have the following bounds
	\begin{align*}
	\ED(\partial\D, R_{+}(r)\partial\D)
	\leq\ED(\partial\D, f(r\partial\D))\leq
	\ED(\partial\D, R_{-}(r)\partial\D).
	\end{align*}
	It follows that
	\begin{align*}
	\ED(\partial\D, f(r\partial\D)) =
	\dfrac{1}{2\pi}\log(\vert f'(0)\vert^{-1} r^{-1})
	+ O(r),
	\end{align*}
	which implies the lemma.
\end{proof}

\subsection*{Proof of Proposition \ref{PropRplusED}}

\begin{proof}
The lower bound comes from monotonicity. 
		Since $\wp\subset r_{+}(\wp)\overline{\D}$,
		\begin{align*}
		\ED(\partial\D,\wp)\geq
		\ED(\partial\D, r_{+}(\wp)\partial\D)
		=\dfrac{1}{2\pi}\log \dfrac{1}{r_{+}(\wp)} .
		\end{align*}
		For the upper bound, consider the segment
		$[0,r_{+}(\wp)]$ and the annular domain
		$\D\backslash [0,r_{+}(\wp)]$. The latter is extremal in the following sense:
		\begin{align}
		\label{EqGro1}
		\ED(\partial\D,\wp)\leq \ED(\partial\D,[0,r_{+}(\wp)]).
		\end{align}
		The latter inequality is equivalent to Grötzsch's theorem
		(Theorem 4-6 in \cite{Ahlfors2010ConfInv}.)
		The inversion $z\mapsto 1/z$ sends 
		$\D\backslash [0,r_{+}(\wp)]$ to the two-connected domain
		$\C\backslash(\overline{\D}\cup [R_{\rm Gr},+\infty))$,
		with $R_{\rm Gr}=r_{+}(\wp)^{-1}$, known as Grötzsch's annulus.
		By conformal invariance,
		\begin{align}
		\label{EqGro2}
		\ED(\partial\D,[0,r_{+}(\wp)])=
		\ED(\partial\D,[R_{\rm Gr},+\infty)).
		\end{align}
		Further, consider the Teichmüller annulus 
		$\C\backslash([-1,0]\cup [R_{\rm Tch},+\infty))$,
		with $R_{\rm Tch}=R_{\rm Gr}^{2}-1$.
		According to the Formula (4-14), Section 4-11 in
		\cite{Ahlfors2010ConfInv},
		\begin{align}
		\label{EqGro3}
		\ED([-1,0],[R_{\rm Tch},+\infty))=2\ED(\partial\D,[R_{\rm Gr},+\infty)).
		\end{align}
		According to the inequality (4-21), Section 4-12 in
		\cite{Ahlfors2010ConfInv},
		\begin{align}
		\label{EqGro4}
		e^{2\pi\ED([-1,0],[R_{\rm Tch},+\infty))}
		\leq 16(R_{\rm Tch}+1).
		\end{align}
		By combining \eqref{EqGro1}, \eqref{EqGro2}, \eqref{EqGro3}
		and \eqref{EqGro4}, one gets the desired upper bound.
\end{proof}

\subsection*{Proof of Theorem \ref{t. change of measure local sets}}

\begin{proof}
	First, use the Cameron-Martin theorem (e.g. Theorem 11 of \cite{ASW}), to see that the law of $\Phi$ under $\widetilde \P$ is that of $\widetilde \Phi + g$, where $\widetilde \Phi$ is a GFF under 
	$\widetilde{\P}$. Now, define \[\widetilde \Phi^A:=\widetilde \Phi -\Phi_A-g_A.\]
	To conclude, we need to show that under the law of $\widetilde \P$ and conditionally on $(A, \widetilde \Phi_A)=(A,\Phi_A-g_A)$ the law of $\widetilde\Phi^A$ is that of a GFF on $D\backslash A$. To do this, we just need to take $F$ a bounded measurable function and compute
	\begin{align*}
	\widetilde{\E}\left[F(\widetilde \Phi^A) \mid(A,\Phi_A-g_A)\right]&=Z_{A,\Phi_A} \E\left[F(\widetilde \Phi^A)\exp((\Phi,g)_{\nabla}-\frac{1}{2}(g,g)_\nabla)\mid (A, \Phi_A) \right]  \\
	&=Z'_{A,\Phi_A} \E\left[F(\Phi^A+g-g_A)\exp((\Phi^A,g-g_A)_{\nabla}) \mid(A,\Phi_A)\right],
	\end{align*}
	where we used that $(\Phi^A,g_A)_{\nabla}$ is a.s. equal to $0$. Note that conditionally on $(A,\Phi_A)$, $\Phi^A$ is a GFF in $D\backslash A$, thus by Cameron-Martin theorem we have that
	\begin{align*}
	\widetilde{\E}\left[F(\widetilde \Phi^A) \mid(A,\Phi_A-g_A)\right]= Z''_{A,\Phi_A} \E\left[ F(\Phi^A)\mid A \right].
	\end{align*}
	Note that $Z''_{A,\Phi_A}=1$ thanks to the fact that one can take $F=1$. Thus, we obtained that under the law $\widetilde \P$ and conditionally on $(A,\widetilde \Phi_A)$,  $\widetilde \Phi^A$ has the law of a GFF in $D\backslash A$.	 
\end{proof}

\section*{Acknowledgements}The authors are thankful to Wendelin Werner for inspiring discussions and for pointing out that the joint laws of multiple
nested interfaces cannot be read out from a Brownian motion in the naive way. This work was partially supported by the SNF grants SNF-155922 and SNF-175505. A. Sepúlveda was supported by the ERC grant LiKo 676999. J. Aru is a member of NCCR Swissmap. T. Lupu acknowledges the support of the French National Research Agency (ANR) grant within the project MALIN (ANR-16-CE93-0003). A. Sepúlveda would also like to thank the hospitality of N\'ucleo Milenio ``Stochastic models of complex and disordered systems'' for repeated invitations to Santiago, where part of this paper was written.

\bibliographystyle{alpha}	
\bibliography{biblio}

\end{document}